\theoremstyle{plain}
\newtheorem{thm}{Theorem}[section]
\newtheorem*{thma}{Theorem A}
\newtheorem*{thmb}{Theorem B}
\newtheorem*{thmc}{Theorem C}
\newtheorem*{thm*}{Theorem}
\newtheorem{lm}[thm]{Lemma}
\newtheorem{cor}[thm]{Corollary}
\newtheorem*{cor*}{Corollary}
\newtheorem{prop}[thm]{Proposition}
\newtheorem*{conj*}{Conjecture}
\theoremstyle{remark}
\theoremstyle{definition}
\newtheorem*{defn*}{Definition}
\newtheorem{I_Remark*}{Remark}
\newtheorem{defn}[thm]{Definition}
\newtheorem*{hypH}{Hypothesis (H)}
\newcommand{\nc}{\newcommand}
\newcommand{\beq}{\begin{equation}}
\newcommand{\eeq}{\end{equation}}
\newcommand{\bpmx}{\begin{pmatrix}}
\newcommand{\epmx}{\end{pmatrix}}
\newcommand{\bbmx}{\begin{bmatrix}}
\newcommand{\ebmx}{\end{bmatrix}}
\newcommand{\wh}{\widehat}
\newcommand{\wtd}{\widetilde}
\newcommand{\beqcd}[1]{\begin{equation*}\label{#1}\tag{#1}}
\newcommand{\eeqcd}{\end{equation*}}
\numberwithin{equation}{section}
\newenvironment{mylist}{
  \begin{enumerate}{}{%
      \setlength{\itemsep}{5pt} \setlength{\parsep}{0in}
      \setlength{\parskip}{0in} \setlength{\topsep}{0in}
      \setlength{\partopsep}{0in}
      \setlength{\leftmargin}{0.17in}}}{\end{enumerate}}
\def\parref#1{\ref{#1}}
\def\thmref#1{Theorem~\parref{#1}}
\def\propref#1{Proposition~\parref{#1}}
\def\secref#1{\S\parref{#1}}
\def\lmref#1{Lemma~\parref{#1}}
\def\subsecref#1{\S\parref{#1}}
\def\defref#1{Definition~\parref{#1}}
\def\makeop#1{\expandafter\def\csname#1\endcsname
  {\mathop{\rm #1}\nolimits}\ignorespaces}
\def\Sel{Sel}
\def\Ord{{\mathrm{ord}}}
\def\makebb#1{\expandafter\def
  \csname bb#1\endcsname{{\mathbb{#1}}}\ignorespaces}
\def\makebf#1{\expandafter\def\csname bf#1\endcsname{{\bf
      #1}}\ignorespaces}
\def\makegr#1{\expandafter\def
  \csname gr#1\endcsname{{\mathfrak{#1}}}\ignorespaces}
\def\makescr#1{\expandafter\def
  \csname scr#1\endcsname{{\EuScript{#1}}}\ignorespaces}
\def\makecal#1{\expandafter\def\csname cal#1\endcsname{{\mathcal
      #1}}\ignorespaces}
\def\doLetters#1{#1A #1B #1C #1D #1E #1F #1G #1H #1I #1J #1K #1L #1M
                 #1N #1O #1P #1Q #1R #1S #1T #1U #1V #1W #1X #1Y #1Z}
\def\doletters#1{#1a #1b #1c #1d #1e #1f #1g #1h #1i #1j #1k #1l #1m
                 #1n #1o #1p #1q #1r #1s #1t #1u #1v #1w #1x #1y #1z}
\def\abs#1{\left|#1\right|}
\def\Fpbar{\bar{\mathbb F}_p}
\def\Qp{\Q_p}
\def\Qbar{\ol{\Q}}
\def\Zp{\Z_p}
\def\rmN{{\mathrm N}}
\def\cA{{\mathcal A}}  
\def\cB{\EuScript B}
\def\cD{\mathcal D}
\def\cF{{\mathcal F}}  
\def\cG{{\mathcal G}}
\def\cL{{\mathcal L}}
\def\cH{{\mathcal H}}
\def\cK{{\mathcal K}}  
\def\cO{\mathcal O}
\def\cS{{\mathcal S}}
\def\cf{{\mathcal f}}
\def\cW{{\mathcal W}}
\def\cX{\mathcal X}
\def\cV{{\mathcal V}}
\def\cC{\mathcal C}
\def\cN{\mathcal N}
\def\cT{\mathcal T}
\def\cY{\mathcal Y}
\def\cU{\mathcal U}
\def\bfc{\mathbf c}
\def\bfL{\mathbf L}
\def\bff{\mathbf f}
\def\bD{\mathbf D}
\def\sF{\mathscr F}
\def\sL{\mathscr L}
\def\sC{\mathscr C}
\def\sA{\mathscr A}
\def\sK{\mathscr K}
\newcommand{\Z}{\mathbf Z}
\newcommand{\Q}{\mathbf Q}
\newcommand{\R}{\mathbf R}
\newcommand{\C}{\mathbf C}
\newcommand{\A}{\mathbf A}    
\def\bbH{\mathbb H}
\def\bbmu{\boldsymbol{\mu}}
\def\fraka{{\mathfrak a}}
\def\frakb{{\mathfrak b}}
\def\frakc{{\mathfrak c}}
\def\frakp{{\mathfrak p}}
\def\frakq{\mathfrak q}
\def\frakg{\mathfrak g}
\def\frakm{\mathfrak m}
\def\frakF{{\mathfrak F}}
\def\frakC{{\mathfrak C}}
\def\frakX{\mathfrak X}
\def\frakN{\mathfrak N}
\def\bfone{{\mathbf 1}}
\def\Zhat{\widehat{\Z}}
\def\zbar{\ol{z}}
\def\et{{\acute{e}t}}
\def\ab{abelian variety }
\def\pont{Pontryagin } 
\def\padic{\text{$p$-adic }}
\def\Neron{N\'{e}ron }
\def\Frob{\mathrm{Frob}}
\newcommand{\<}{\langle}   
\renewcommand{\>}{\rangle} 
\def\isoto{\stackrel{\sim}{\to}}
\def\surjto{\twoheadrightarrow}
\def\ot{\otimes}
\def\hookto{\hookrightarrow}
\def\longto{\longrightarrow}
\def\ol{\overline}  \nc{\opp}{\mathrm{opp}} \nc{\ul}{\underline}
\newcommand{\pair}[2]{\< #1, #2\>}
\newcommand{\pairing}{\pair{\,}{\,}}
\def\XYmatrix{\xymatrix@M=8pt} 
\def\ncmd{\newcommand}
\ncmd{\xysubset}[1][r]{\ar@<-2.5pt>@{^(-}[#1]\ar@<2.5pt>@{_(-}[#1]}
\ncmd{\XYmatrixc}[1]{\vcenter{\XYmatrix{#1}}}
\ncmd{\xyto}[1][r]{\ar@{->}[#1]}
\ncmd{\xyinj}[1][r]{\ar@{^(->}[#1]}
\ncmd{\xysurj}[1][r]{\ar@{->>}[#1]}
\ncmd{\xyline}[1][r]{\ar@{-}[#1]}
\ncmd{\xydotsto}[1][r]{\ar@{.>}[#1]}
\ncmd{\xydots}[1][r]{\ar@{.}[#1]}
\ncmd{\xyleadsto}[1][r]{\ar@{~>}[#1]}
\ncmd{\xyeq}[1][r]{\ar@{=}[#1]} \ncmd{\xyequal}[1][r]{\ar@{=}[#1]}
\ncmd{\xyequals}[1][r]{\ar@{=}[#1]}
\ncmd{\xymapsto}[1][r]{l\ar@{|->}[#1]}\ncmd{\xyimplies}[1][r]{\ar@{=>}[#1]}
\ncmd{\xyiso}{\ar[r]_-{\sim}}
\def\injxy{\ar@{^(->}}
\newcommand{\pMX}[4]{\begin{pmatrix}
{#1}& {#2}\\
{#3}&{#4}\end{pmatrix} }
 \newcommand{\pDII}[2]{\begin{pmatrix}{#1}&0
 \\0&{#2}\end{pmatrix}}
\newcommand{\seesaw}[4]{{#1}\ar@{-}[rd]\ar@{-}[d]&{#2}\ar@{-}[d]\\
{#3}\ar@{-}[ru]&{#4}}
\def\ie{i.e. }
\def\cf{\mbox{{\it cf.} }}
\def\can{{can}}
\def\uf{\varpi} 
\def\Abs{{|\!\cdot\!|}} 
\def\Sg{{\varSigma}}  
\def\ndivides{\nmid}
\def\x{{\times}}
\def\onehalf{{\frac{1}{2}}}
\def\e{\varepsilon} 
\def\al{\alpha}
\def\Lam{\Lambda}
\def\om{\omega}
\def\dirlim{\varinjlim}
\def\prolim{\varprojlim}
\def\iso{\simeq}
\def\con{\equiv}
\def\bksl{\backslash}
\newcommand\stt[1]{\left\{#1\right\}}
\def\ep{\epsilon}
\def\pd{\partial}
\def\lam{\lambda}
\def\pii{\pi i}
\def\vep{\varepsilon}
\def\sg{\sigma}
\def\vp{\varphi}
\def\disjoint{\sqcup}
\def\AK{\A_\cK}
\def\setp{{(p)}}
\newcommand{\powerseries}[1]{\llbracket{#1}\rrbracket}
\renewcommand\pmod[1]{\,(\mbox{mod }{#1})}
\renewcommand\Re{\text{Re}\,}
\def\vphi{\varphi}
\def\Cp{\C_p}
\def\alg{\mathrm{alg}}
\def\cores{{\rm cor}}
\def\BK{f}
\newcommand{\rcf}[1]{K_{#1}}
\def\HIw{H_{\rm Iw}}
\def\cH{\mathcal H}
\newcommand{\Hg}[2]{\kappa_{#2}(#1)}
\def\Epr{T_{M}}
\def\Hclass{z}
\newcommand{\sk}{\vspace{0.1in}}
\newtheorem{def-thm}[thm]{Definition-Theorem}
\newtheorem{lem}[thm]{Lemma}
\theoremstyle{definition}
\theoremstyle{remark}
\newtheorem{rem}[thm]{Remark}
\newtheorem*{intro-rem}{Remark}
\numberwithin{equation}{section}
\newcommand{\pp}{\mathfrak{p}}
\newcommand{\bQ}{\mathbf{Q}}
\newcommand{\bZ}{\mathbf{Z}}
\newcommand{\heeg}{{\rm heeg}}
\def\CMP{\delta}
\def\xx{\,\x\,}
\def\frakpbar{\ol{\frakp}}
\def\cK{K}
\title{Heegner cycles and \padic $L$-functions}
\author[F. Castella and M.-L. Hsieh]{Francesc Castella and Ming-Lun Hsieh}
\date{\today}
\address[Castella]{
Mathematics Department, Princeton University, Fine Hall, Washington Road, Princeton,
NJ 08544-1000, USA}
\email{fcabello@math.princeton.edu}
\address[Hsieh]{
Institute of Mathematics, Academia Sinica~\\ Taipei 10617, Taiwan\and National Center for Theoretic Sciences\and
Department of Mathematics, National Taiwan University~
}
\email{mlhsieh@math.sinica.edu.tw}
\thanks{During the preparation of this paper, F.Castella was partially supported by grant MTM2012-34611 and by Prof.~Hida's NSF grant DMS-0753991. M.-L.Hsieh was partially supported by a MOST grant 103-2115-M-002-012-MY5. }
\begin{document}
\begin{abstract}
In this paper, we deduce the vanishing of Selmer groups
for the Rankin--Selberg convolution of a cusp form with a theta series of higher weight
from the nonvanishing of the associated $L$-value,
thus establishing the rank $0$ case of the Bloch--Kato conjecture in these cases.
Our methods are based on the connection between Heegner cycles and \padic $L$-functions,
building upon recent work of Bertolini, Darmon and Prasanna,
and on an extension of Kolyvagin's method of Euler systems to the anticyclotomic setting.
In the course of the proof, we also obtain a higher weight analogue of Mazur's conjecture
(as proven in weight $2$ by Cornut--Vatsal),
and as a consequence of our results, we deduce from Nekov{\'a}{\v{r}}'s work a proof of the parity conjecture
in this setting.\end{abstract}

\maketitle
\tableofcontents
\section{Introduction}
\def\divides{\mid}


Let $f\in S_{2r}^{\rm new}(\Gamma_0(N))$
be a newform of weight $2r\geq 2$ and level $N$. Fix an odd prime $p\ndivides N$.
Let $F/\Q_p$ be a finite extension containing the image of the Fourier coefficients of $f$ under a fixed embedding $\imath_p:\overline{\bQ}\hookrightarrow\C_p$, and denote by
\[
\rho_f:{\rm Gal}(\overline{\bQ}/\bQ)\longrightarrow{\rm Aut}_F(V_f(r))\simeq{\rm GL}_2(F)
\]
the self-dual Tate twist of the $p$-adic Galois representation associated to $f$. Let $K/\Q$
be an imaginary quadratic field of odd discriminant $-D_K<-3$ and let $\chi:G_K:=\Gal(\Qbar/K)\to F^\x$ be a
locally algebraic anticyclotomic 
character. 
The $G_K$-representation
\[
V_{f,\chi}:=V_f(r)\otimes \chi
\]
is then conjugate self-dual, and the associated Rankin $L$-series $L(f,\chi,s)$ satisfies a functional equation relating its values at $s$ and $2r-s$. The Bloch--Kato conjectures (see \cite{BK}, \cite{F-PR}), which provide a vast generalization of
the Birch--Swinnerton-Dyer conjecture and Dirichlet's class number formula, predict in this context the equality
\begin{equation}\label{BKconj}
{\rm ord}_{s=r}L(f,\chi,s)
\overset{?}={\rm dim}_F{\rm Sel}(K,V_{f,\chi})\nonumber\tag{BK}
\end{equation}
between the order of vanishing at the central point of the
Rankin $L$-series $L(f,\chi,s)$ and the size of the
Bloch--Kato Selmer group ${\rm Sel}(K,V_{f,\chi})$ for the representation $V_{f,\chi}$.

\begin{hypH}The following hypotheses are assumed throughout.
\begin{itemize}
\item[(a)] $p\ndivides 2(2r-1)!N\varphi(N)$;
\item[(b)]the conductor of $\chi$ is prime to $N$;
\item[(c)]$N$ is a product of primes split in $\cK$;
\item[(d)]$p=\frakp\ol{\frakp}$ is split in $\cK$, where $\frakp$ is induced by $\imath_p$.
\end{itemize}
\end{hypH}

Our first arithmetic application is the proof of new ``rank zero'' cases of
conjecture (\ref{BKconj}).

\begin{thma}
\label{mainthm:BK}
Assume further that the newform $f$ is ordinary at $p$. If $L(f,\chi,r)\neq 0$,  then \[\dim_F{\rm Sel}(K,V_{f,\chi})=0.\]
\end{thma}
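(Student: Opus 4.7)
The plan is to combine an Euler system of generalized Heegner cycles with an anticyclotomic Kolyvagin descent, the bridge being an explicit reciprocity law of Bertolini--Darmon--Prasanna (BDP) type connecting the Euler system to an anticyclotomic $p$-adic $L$-function interpolating central $L$-values. On the analytic side, take the anticyclotomic $p$-adic $L$-function $\LL_p(f)$ along the anticyclotomic $\Z_p$-extension $K_\infty/K$, constructed earlier in the paper in the style of BDP. Its interpolation formula expresses, for each locally algebraic anticyclotomic character $\xi$ in a prescribed range, the value $\LL_p(f)(\xi)$ as the algebraic part of $L(f,\xi,r)$ times explicit nonzero factors (periods, root numbers, and local Euler factors at $p$). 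Choosing the setup so that $\chi$ lies in this range, the hypothesis $L(f,\chi,r)\neq 0$ yields $\LL_p(f)(\chi)\neq 0$, so $\LL_p(f)$ is in particular a nonzero element of the relevant Iwasawa algebra.

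On the algebraic side, assemble the system of generalized Heegner cycles $\Hclass_c$ (for squarefree conductors $c$ coprime to $Np$) on a Kuga--Sato variety associated to $f$, following BDP in weight $2$ and its extension to weight $2r$ in the paper. Their $p$-adic Abel--Jacobi images produce a compatible tower of Galois cohomology classes and, in the limit, an Iwasawa class $\Hclass_\infty\in H^1_{\rm Iw}(K_\infty,V_f(r))$ satisfying Euler system norm relations indexed by primes of $K$ split in $K/\Q$. The key bridge is then an explicit reciprocity law, extending BDP to the Iwasawa tower, identifying the image of $\Hclass_\infty$ under a Perrin-Riou/Coleman-type dual exponential map at $\frakp$---well-defined thanks to the Panchishkin filtration on $V_f$ furnished by the ordinary hypothesis---with $\LL_p(f)$ up to a nonzero unit. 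In particular the specialized class $\Hclass_\chi\in H^1(K,V_{f,\chi})$ has nonzero image under the dual exponential at $\frakp$, and therefore lies nontrivially in the relevant relaxed Selmer group.

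With such a nonzero initial class in hand, I would run Kolyvagin's descent adapted to the anticyclotomic setting in weight $2r$: for each squarefree product $n$ of Kolyvagin primes---chosen inert in $K$ with suitable congruence and ramification conditions---form the Kolyvagin derivative class $\Hclass_n$, and combine global duality with the local calculations at primes $\ell\mid n$ to bound Selmer groups of $V_{f,\chi}$. Together with the nontriviality of $\Hclass_1=\Hclass_\chi$ from the previous step, this should yield $\dim_F\Sel(K,V_{f,\chi})=0$. A crucial input here is the higher-weight analogue of Mazur's conjecture established in the paper (extending Cornut--Vatsal from weight $2$), which guarantees that sufficiently many Kolyvagin derivatives remain nonzero to close up the descent.

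The main obstacle is the extension of Kolyvagin's machinery to this setting, which requires: (i) verification of the Euler system norm relations for generalized Heegner cycles on Kuga--Sato varieties, an intersection-theoretic task involving Hecke correspondences on higher-dimensional cycles; (ii) proof of the higher-weight Mazur conjecture via an equidistribution argument for orbits of cycles (rather than CM-points, as in Cornut--Vatsal) in the anticyclotomic tower; and (iii) delicate local cohomological computations at Kolyvagin primes for the Galois module $V_f(r)$. Once these are in place, the global descent proceeds in a formally standard manner, the ordinary hypothesis entering chiefly through the Panchishkin filtration at $\frakp$ needed to formulate the Bloch--Kato local conditions and state the explicit reciprocity law.
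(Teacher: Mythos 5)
Your overall route is the paper's: use the interpolation property of the anticyclotomic $p$-adic $L$-function to convert $L(f,\chi,r)\neq 0$ into nonvanishing of a $p$-adic $L$-value, transfer this through an explicit reciprocity law (a Perrin-Riou big logarithm applied to the Iwasawa class built from generalized Heegner cycles, where ordinarity enters via $\sF^+V_f$) to get $\exp^*(\loc_\frakp(\bfz_f^{\chi^{-1}}))\neq 0$, and then run an anticyclotomic Kolyvagin descent. However, there is a genuine structural error in your descent step: you invoke the higher-weight analogue of Mazur's conjecture (Cornut--Vatsal-type equidistribution) as ``a crucial input\ldots which guarantees that sufficiently many Kolyvagin derivatives remain nonzero to close up the descent.'' This is both unavailable and unnecessary, and in the paper the logical order is the reverse: the Mazur-type statement (Theorem \ref{T:Mazur}) is deduced as a corollary of Theorems A and B together with the generic nonvanishing of $\sL_{\frakp,\psi}(f)$, so using it as an input to prove Theorem A would be circular. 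The Kolyvagin argument (Theorems \ref{T:Descent.E} and \ref{T:finitenessSelmer}) never needs nonvanishing of derivative classes: the derivative classes $\kappa_n(\chi^{\pm 1})$ are used only as annihilators via global duality, and the admissible primes are produced by a Chebotarev argument; the sole nonvanishing input is $\loc_\frakp(\bfz_f^{\chi^{-1}})\neq 0$ for the bottom class. (Relatedly, the norm relations relevant to the descent are at primes \emph{inert} in $K$, not split; the split-at-$p$ relation is only used to build the Iwasawa class.)

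A second point you gloss over, and which is where the real work lies in this $\ep(V_{f,\chi})=+1$ range ($j\geq r$ after replacing $\chi$ by $\chi^\tau$ if necessary): the specialized class $\bfz_f^{\chi^{-1}}$ is detected by $\exp^*$ at $\frakp$, hence it does \emph{not} lie in the Bloch--Kato Selmer group, so the classical ``nonzero Selmer class kills the Selmer group'' descent does not apply verbatim. The paper runs the Euler system with a modified local condition at $p$ (the condition $\cL^*$ orthogonal to the line spanned by $\loc_v(\bfz_f^{\chi})$), concludes $H^1_{\cL}(K,V\ot\chi)=F\cdot\bfz_f^{\chi}$, and only then obtains ${\rm Sel}(K,V_{f,\chi})=0$ by Poitou--Tate duality combined with the Hodge--Tate weight computation showing $H^1_\BK(K_\frakp,V_{f,\chi})=H^1(K_\frakp,V_{f,\chi})$ and $H^1_\BK(K_{\frakpbar},V_{f,\chi})=0$. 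Your phrase ``lies nontrivially in the relevant relaxed Selmer group'' followed by a ``formally standard'' descent does not supply this bookkeeping, and without it the argument does not close. Dropping the equidistribution input and inserting the modified-local-condition-plus-duality step would bring your proposal in line with the paper's proof.
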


\begin{intro-rem}
Let $\ep(V_{f,\chi})=\pm 1$ be the sign of the functional equation of $L(f,\chi,s)$.
The non-vanishing of $L(f,\chi,r)$ implies $\ep(V_{f,\chi})=+1$. On the other hand, under our Hypothesis (H), the global sign $\ep(V_{f,\chi})$ is completely determined by the local sign at the archimedean place, which in turn depends on the infinity type of $\chi$. More precisely, let $c\cO_K$ be the conductor of $\chi$ and let $(j,-j)$ be its infinity type, so that for every $\al\in K^\x$ with $\al\con 1\pmod{c\cO_K}$ we have
\[
\chi(\rec_p(\al))=(\al/\ol{\al})^{j},
\]
where $\rec_p:(K\ot\Qp)^\x\to G_K^{ab}$ is the geometrically normalized local reciprocity law map at $p$. Then one can show that
\[ \ep(V_{f,\chi})=+1\iff j\geq r\text{ or }j\leq -r.\]
In particular, the characters $\chi$ for which Theorem~A applies are all of infinite order.
\end{intro-rem}

Let $\Gamma^-_K:={\rm Gal}(K_\infty/K)$ be the Galois group of the anticyclotomic $\Z_p$-extension of $K$. Write $c=c_o p^s$ with $p\ndivides c_o$. Suppose that $\chi=\psi\phi_0$, where $\psi$ is
an anticyclotomic character of infinity type $(r,-r)$ and conductor $c_o\cO_K$ and $\phi_0$ is a \padic character of $\Gamma_K^-$. The proof of Theorem~A rests on the study of a $p$-adic $L$-function $\mathscr{L}_{\frakp,\psi}(f)\in\overline{\Z}_p\powerseries{\Gamma^-_K}$ defined by the interpolation of the central critical values $L(f,\psi\phi,r)$, as $\phi$ runs over a Zariski-dense subset of $p$-adic characters of $\Gamma_K^-$. In a slightly different form, this $p$-adic $L$-function
was introduced in the earlier work of Bertolini, Darmon and Prasanna \cite{BDP}, where they proved a remarkable formula
relating the values of $\mathscr{L}_{\frakp,\psi}(f)$ at unramified characters \emph{outside} the range of interpolation
to the $p$-adic Abel--Jacobi images of generalized Heegner cycles.

Let $T_f(r)$ be a $\Gal(\Qbar/\Q)$-stable $\cO_F$-lattice in $V_f(r)$. As a key step toward the proof of Theorem~A, we produce Iwasawa cohomology classes
\[
\mathbf{z}_{f}\in H^1_{\rm Iw}(K_\infty,V_f(r)):=\prolim_{K\subset K'\subset K_\infty}H^1(K',T_f(r))\ot_{\cO_F}F
\]
interpolating 
generalized Heegner cycles over the anticyclotomic tower. Moreover,
based on an extension of the calculations of \cite{BDP} we prove
an ``explicit reciprocity law'':
\[
\left\langle\mathcal{L}_{\frakp,\psi}(\mathbf{z}_f),
\omega_{f}\ot t^{1-2r}\right\rangle=(-c_o^{r-1})\cdot \mathscr{L}_{\frakp,\psi}(f)
\]
(\emph{cf.} \thmref{thm:walds}) relating the $p$-adic $L$-function $\mathscr{L}_{\frakp,\psi}(f)$ to the image
of the classes $\mathbf{z}_{f}$ under a variant of Perrin-Riou's \emph{big logarithm map} $\cL_{\frakp,\psi}$.
The assumption that $p=\frakp\frakpbar$ splits in $K$ and the $p$-ordinarity of $f$ are crucially used at this point. The non-ordinary case will be treated in a forthcoming work of S. Kobayashi.

With the result at hand, the proof of Theorem~A follows easily. Indeed, by the interpolation property
of $\mathscr{L}_{\frakp,\psi}(f)$, the nonvanishing of the $L$-value $L(f,\chi,r)$ in the statement
implies the nonvanishing of the value of $\mathscr{L}_{\frakp,\psi}(f)$ at $\phi_0=\psi^{-1}\chi$;
by our explicit reciprocity law, this translates into the nonvanishing of the natural image of $\mathbf{z}_f$ 
in $H^1(K_\frakp,V_f(r)\otimes\chi^{-1})$.
Combined with a suitable extension of Kolyvagin's method of Euler systems with local conditions at $p$
(see $\S\ref{S:EulerSystem}$), we then use this to establish the vanishing of ${\rm Sel}(K,V_{f,\chi})$.

\begin{intro-rem}
Under more stringent hypotheses, a version of Theorem~A was proven in \cite{cas-variation}.
The strategy followed in \emph{loc.cit.} is the same as in this paper,
but with our classes $\mathbf{z}_f$ replaced by the specializations $\nu_f(\mathfrak{Z}_\infty)$
of Howard's system of big Heegner points \cite{howard-invmath} attached to the Hida family passing through $f$.
In particular, a key ingredient in \cite{cas-variation} is the proof of a certain
``two-variable'' explicit reciprocity law, which specializes to a relation between $\mathscr{L}_{\frakp,\psi}(f)$
and the image of $\nu_f({\mathfrak{Z}_\infty})$ under $\mathcal{L}_{\pp,\psi}$.
Comparing the resulting two formulas for $\mathscr{L}_{\frakp,\psi}(f)$, the equality
\[
\nu_f(\mathfrak{Z}_\infty)=\mathbf{z}_f
\]
follows easily, yielding an important refinement of the main result of \cite{cas-higher}. 
\end{intro-rem}

Next we consider the case \[\ep(V_{f,\chi})=-1\iff -r<j<r,\]
so the central $L$-value $L(f,\chi,r)$ vanishes, and we expect the nonvanishing of Selmer groups. In \subsecref{SS:twisted}, we construct the classes $\Hclass_{f,\chi,n}\in H^1(\rcf{n},V_{f,\chi})$ over ring class fields $\rcf{n}$ of $K$. These classes are obtained by taking the $\chi$-component of the $p$-adic Abel--Jacobi image of generalized Heegner cycles, and they enjoy the properties of an \emph{anticyclotomic Euler system}. The aforementioned extension of Kolyvagin's methods to the anticyclotomic setting, which follows from a
combination of arguments developed by Nekov{\'a}{\v{r}} \cite{nekovar302} and Bertolini--Darmon \cite{BD1990},
also applies to Hecke characters $\chi$ with infinity types $(j,-j)$ with $-r<j<r$, and by these methods we obtain a proof of the following result
without the $p$-ordinary hypothesis on $f$. Put $\Hclass_{f,\chi}:=\cores_{\rcf{c}/K}(\Hclass_{f,\chi,c})$.

\begin{thmb}\label{T:main1}
Assume that $\ep(V_{f,\chi})=-1$.  If $\Hclass_{f,\chi}\not =0$, then \[{\rm Sel}(K,V_{f,\chi})=F\cdot\Hclass_{f,\chi}.\]
\end{thmb}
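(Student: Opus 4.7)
The plan is to prove Theorem~B by running Kolyvagin's Euler system machinery, as adapted to the anticyclotomic setting by Bertolini--Darmon~\cite{BD1990} and Nekov\'a\v{r}~\cite{nekovar302}, with the system of classes $\Hclass_{f,\chi,n}\in H^1(\rcf{n},V_{f,\chi})$ constructed in \S\ref{SS:twisted}. The first step is to establish the Euler system norm relations: for rational primes $\ell\nmid N pc$ that are inert in $\cK$, the classes should satisfy a congruence of the form $\cores_{\rcf{n\ell}/\rcf{n}}(\Hclass_{f,\chi,n\ell})=a_\ell(f)\chi(\Frob_\ell)\cdot \Hclass_{f,\chi,n}$ together with a compatibility between the localization of $\Hclass_{f,\chi,n\ell}$ at primes above $\ell$ and $\Hclass_{f,\chi,n}$ up to Frobenius. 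These relations follow from the geometry of generalized Heegner cycles and the action of Hecke correspondences on Kuga--Sato varieties, combined with how $\chi$ enters the construction.

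Next, for any integer $M\geq 1$ I would choose a Kolyvagin-admissible set of primes $\Lambda_M$ consisting of inert primes $\ell$ such that $p^M\mid \ell+1$ and $p^M\mid a_\ell(f)^2-(\ell+1)^2\chi(\Frob_\ell^2)$, and for squarefree products $n=\ell_1\cdots\ell_k$ of such primes apply the Kolyvagin derivative operator $D_n$ to produce classes $\kappa(n)\in H^1(\cK, T_{f,\chi}/p^M)$, where $T_{f,\chi}$ is a Galois-stable $\cO_F$-lattice in $V_{f,\chi}$. The complex conjugation action on Heegner cycles gives each $\kappa(n)$ a definite eigenvalue $\pm 1$ that alternates with the parity of $\omega(n)$; crucially, the hypothesis $\ep(V_{f,\chi})=-1$ places $\kappa(1)=\Hclass_{f,\chi}$ in the eigenspace controlling $\Sel(\cK,V_{f,\chi})$ under the compatibility with the functional equation.

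The bounding argument then runs as follows. Given a nonzero class $s\in\Sel(\cK,V_{f,\chi})$ one uses the ``big image'' property of $\rhobar_{f,\chi}$ (which holds under Hypothesis~(H) by standard results on the image of $\rho_f$ combined with the fact that $\chi$ is locally algebraic anticyclotomic) together with Chebotarev's theorem to find a prime $\ell\in\Lambda_M$ for which both $\loc_\ell(s)$ and $\loc_\ell(\Hclass_{f,\chi})$ can be controlled in $H^1_\fin(\cK_\ell,T_{f,\chi}/p^M)$, a free rank-one module under the Kolyvagin conditions. Applying global reciprocity
\[
\sum_v\langle\loc_v(s),\loc_v(\kappa(\ell))\rangle_v=0
\]
and observing that the local pairings vanish at every place $v\neq\ell$ by matching Selmer conditions (the singular part of $\kappa(\ell)$ at $\ell$ pairs against the finite part of $s$ by local Tate duality, while at primes dividing $Npc$ and at $p=\frakp\ol{\frakp}$ the local conditions are self-dual and annihilate each other), one obtains a relation forcing $s$ into $F\cdot\Hclass_{f,\chi}$ modulo $p^M$. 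Passing to the limit $M\to\infty$ gives the claimed equality.

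The hard part will be verifying the local conditions at the split prime $p$: one must show that the derived classes $\kappa(n)$ land in the correct Bloch--Kato local subspace at both $\frakp$ and $\ol{\frakp}$, and that the local pairing of $s$ with $\kappa(\ell)$ vanishes there. This is subtler than in the inert/ramified setting of \cite{nekovar302} and \cite{BD1990}, and requires a direct analysis using that generalized Heegner cycles have crystalline $p$-adic Abel--Jacobi images (as in \cite{BDP}), so that $\Hclass_{f,\chi,n}$ automatically lies in $H^1_\rmf(\cK_{n,\frakp},V_{f,\chi})$; the same property must be propagated to $\kappa(n)$ via the derivative construction, using that $p\nmid\ell+1$-order factors enter the Kolyvagin derivative.
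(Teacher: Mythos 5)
Your overall route is exactly the one the paper takes: show that the classes $\Hclass_{f,\chi,n}$ satisfy the norm, localization and conjugation relations coming from the geometry of the generalized Heegner cycles, package them into an anticyclotomic Euler system for the Bloch--Kato local condition, and run a Nekov\'a\v{r}-style Kolyvagin descent. The problem is with the step you yourself single out as the hard one. For Kolyvagin-admissible primes one imposes $\ell+1\equiv 0\pmod{p^M}$, so the groups $\Gal(\rcf{nc\ell}/\rcf{nc})$ have order divisible by a large power of $p$ and there is no ``prime-to-$p$ order'' mechanism available for transporting the crystalline condition at $\frakp$ from $D_n\Hclass_{f,\chi,nc}$ over $\rcf{nc}$ down to the derived class $\kappa(n)$ over $\rcf{c}$ or $K$: the decomposition groups at $p$ inside $\Gal(\rcf{nc}/\rcf{c})$ may have order divisible by $p$, so restriction on $H^1$ with $T/\uf^M$-coefficients has kernel and the preimage of the finite subspace need not be finite. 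One must prove that the singular quotients inject under restriction along these unramified-at-$p$ extensions, and the paper does this by a Fontaine--Laffaille argument (surjectivity of corestriction on the finite parts of unramified extensions, using $p>2r-1$, which is where Hypothesis (H)(a) enters). As written, your propagation step would fail; this lemma, or some substitute, is genuinely needed.

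A second issue is that you transplant the self-dual weight-two picture. Complex conjugation does not act on $H^1(K,T\ot\chi)$ unless $\chi^2=1$: it exchanges $\chi$ and $\chi^{-1}$, so there is no $\pm$-eigenspace decomposition in general, and $\ep(V_{f,\chi})=-1$ does not ``place the class in the eigenspace controlling the Selmer group'' --- its role is simply that it is equivalent to the infinity-type constraint $-r<j<r$, the range in which the classes are defined and are crystalline at both $\frakp$ and $\frakpbar$. Moreover, at an admissible inert prime $\lambda$ the module $H^1_{\rm fin}(K_\lambda,T_M\ot\chi)$ is free of rank \emph{two} over $\cO/\uf^M$, not one, so a single reciprocity relation with $\kappa(\ell)$ cannot annihilate the local contribution there. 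One needs an auxiliary admissible prime $\ell_1$ with $\mathrm{ord}_\uf\bigl(\alpha_{\ell_1}(\kappa(1))\bigr)$ bounded, the pair of derived classes $\kappa(\ell)$, $\kappa(\ell\ell_1)$, and a Chebotarev choice of $\ell$ for which their images generate a finite-index submodule of that rank-two local group; this is precisely the extra layer in the paper's descent, following Nekov\'a\v{r} and Bertolini--Darmon, with the two cases $\chi^2=1$ and $\chi^2\neq 1$ treated separately. (A minor point: for $\ell$ inert and prime to the conductor, $\ell\cO_K$ is generated by a rational number and $\chi$ is anticyclotomic, so $\chi(\Frob_\ell)=1$ and the norm relation carries no character factor.)
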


\begin{intro-rem}
The expected extension 
of the Gross--Zagier formula of \cite{zhang130} 
to generalized Heegner cycles, together with the conjectural injectivity of the $p$-adic Abel--Jacobi map
\cite[Conj.~(2.1.2)]{nekovarCRM}, would yield a proof of the implication
$L'(f,\chi,r)\neq 0\Longrightarrow\Hclass_{f,\chi}\neq 0$, for any $\chi$ as above
with $\ep(V_{f,\chi})=-1$. In these favorable circumstances,
our Theorem~B would imply conjecture (\ref{BKconj}) in the ``rank one'' case.
\end{intro-rem}

Appealing to the nonvanishing results of \cite{hsieh}, in Theorem~3.7 we show that the $p$-adic $L$-function
$\mathscr{L}_{\frakp,\psi}(f)\in\overline{\Z}_p[\![\Gamma^-_K]\!]$ is
nonzero, and hence, as $\chi$ varies, all but finitely many of the values $L(f,\chi,r)$ appearing in Theorem~A are nonzero;
our result thus covers most cases of conjecture (\ref{BKconj}) for those $\chi$.
Moreover, combined with \cite[Corollary (5.3.2)]{nekovar-parity3}, the above generic nonvanishing and our
Theorems~A and B yield a proof of the ``parity conjecture'' for $V_{f,\chi}$.

\begin{thmc}Suppose that $f$ is ordinary at $p$. Then
\[
{\rm ord}_{s=r}L(f,\chi,s)\equiv{\rm dim}_F{\rm Sel}(K,V_{f,\chi})\quad({\rm mod}\;2).
\]
That is, the equality predicted by conjecture {\rm (\ref{BKconj})} holds modulo $2$.
\end{thmc}

Finally, we note that the nontriviality of $\mathscr{L}_{\frakp,\psi}(f)$, combined with our extension
of the $p$-adic Gross--Zagier formula of \cite{BDP}, immediately yields an analogue of
Mazur's nonvanishing conjecture \cite{mazur-icm83} for generalized Heegner cycles and ranks of Selmer groups (see \thmref{T:Mazur}).

\sk

\noindent\emph{Acknowledgements.}
Fundamental parts of this paper were written during the visits of the first-named author
to the second-named author in Taipei during February 2014 and August 2014; it is a pleasure to thank NCTS
and the National Taiwan University for their hospitality and financial support.
We would also like to thank Ben Howard, Shinichi Kobayashi and David Loeffler for their comments and enlightening conversations related to this work.


\def\CMP{\vartheta}
\def\vep{\varepsilon}
\def\ab{{\rm ab}}
\def\rmt{{\rm t}}
\def\rmp{{\rm p}}
\def\bbG{\bfG}
\def\can{{\rm can}}
\def\OK{\cO_{\cK}}
\def\ctame{c_o}
\def\brch{\psi}
\def\deltaK{\sqrt{-D_K}}
\def\addchar{\boldsymbol{\psi}}
\subsection*{Notation and definitions}We let $p$ be a prime and fix embeddings $\imath_p:\Qbar\hookto\Cp$ and $\imath_\infty:\Qbar\hookto\C$ throughout.
Let $\A=\A_\Q$ be the adele ring of $\Q$. Let $\addchar=\prod_q\addchar_q:\Q\bksl \A\to \C^\x$ be the standard additive character with $\addchar_\infty(x)=\exp(2\pii x)$. For each finite prime $q$, denote by $\Ord_q:\Q^\x\to \Z$ the normalized valuation with $\Ord_q(q)=1$. If $N$ is a positive integer, denote by $\bbmu_N$ the group scheme of $N$-th roots of unity.
We set $\mu_N=\bbmu_N(\Qbar)$ and $\zeta_N:=\exp(\frac{2\pii}{N})$.

If $\phi:\Z_q^\x\to\C^\x$ is a continuous character of conductor $q^n$, define the Gauss sum
\[\frakg(\phi)=\sum_{u\in (\Z/q^n\Z)^\x}\phi(u)\zeta_{p^n}^{u}.\]
By definition, $\frakg(\bfone)=1$ for the trivial character $\bfone$. If $F$ is a finite extension of $\Q_q$ and $\pi$ is an irreducible representation of $\GL_n(F)$ ($n=1,2$), we let \[\vep(s,\pi):=\vep(s,\pi,\addchar_q\circ\Tr_{F/\Q_q})\] be the local $\vep$-factor attached to the additive character $\addchar_q\circ\Tr_{F/\Q_q}$ (see \cite[Section 1.1]{schmidt_localnewforms} for the definition and basic properties). If $\chi:\Q_q^\x\to\C^\x$ is a character of conductor $q^n$, then we have
\beq\label{E:formulaepsilon}\vep(s,\chi)=\frakg(\chi^{-1})\cdot \chi(-q^n)q^{-ns},\quad \vep(s,\chi)\vep(1-s,\chi^{-1})=\chi(-1).\eeq

If $L$ is a number field or a local field, we denote by $G_L$ the absolute Galois group of $L$ and by $\cO_L$ the ring of integers of $L$.
\section{Modular curves and CM points}
\def\cU{U}
\def\Qhat{\wh\Q}
\def\Ig{{\rm Ig}}
\def\Sh{Sh}
\def\lp{i}
\def\frakqbar{\ol{\frakq}}
\def\cB{\mathcal B}

\subsection{Igusa schemes and modular curves}\label{subsec:Igusa}
Let $N\geq 3$ be an integer prime to $p$, and let $\Ig(N)_{/\Z_\setp}$ be the Igusa scheme over $\Z_\setp$, which is the moduli space parameterizing elliptic curves with $\Gamma_1(Np^\infty)$-level structure. More precisely, for each locally noetherian scheme $S$ over $\Z_\setp$, $\Ig(N)(S)$ is the set of isomorphism classes of pairs $(A,\eta)$ consisting of an elliptic curve $A$ over $S$ and a $\Gamma_1(Np^\infty)$-level structure $\eta=(\eta^\setp,\eta_p):\bbmu_N\oplus \bbmu_{p^\infty}\hookto A[N]\oplus A[p^\infty]$, an immersion as group schemes over $S$. For a non-negative integer $n$, let $Y_1(Np^n)_{/\Q}$ be the usual open modular curve of level $\Gamma_1(Np^n)$. Put
\begin{align*}U_1(Np^n)=&\stt{g\in \GL_2(\Zhat)\mid g\con \pMX{1}{*}{0}{*}\pmod{Np^n}},\\
U_0(Np^n)=&\stt{g\in \GL_2(\Zhat)\mid g\con \pMX{*}{*}{0}{*}\pmod{Np^n}}.
\end{align*}

Letting $\bbH$ be the complex upper half-plane, the curve $Y_1(Np^n)$ admits the complex uniformization
\[Y_1(Np^n)(\C)=\GL_2(\Q)^+\bksl \bbH\times\GL_2(\Qhat)/U_1(Np^n),\]
where $\GL_2(\Q)^+$ is the subgroup of $\GL_2(\Q)$ with positive determinants. Since
the generic fiber $\Ig(N)_{/\Q}$ is given by
\[\Ig(N)_{/\Q}=\prolim_n Y_1(Np^n)_{/\Q},\]
this yields a map
\[ \bbH\times \GL_2(\Qhat)\to \Ig(N)(\C),\quad x=(\tau_x,g_x)\mapsto [(A_x,\eta_x)]. \]

We now give an explicit construction of pairs $(A_x,\eta_x)$ of complex elliptic curves with $\Gamma_1(Np^\infty)$-level structure.
Let $V=\Q e_1\oplus \Q e_2$ be the two-dimensional $\Q$-vector space equipped with the symplectic pairing
\[\pair{ae_1+be_2}{ce_1+de_2}=ad-bc,\]
and let $\GL_2(\Q)$ act on $V$ from the right via
\[(xe_1+ye_2)\cdot\pMX{a}{b}{c}{d}=(xa+cy)e_1+(xb+yd)e_2.\]
For $\tau\in \bbH$, define the map $\rmp_\tau: V\to\C$ by 
\[
\rmp_\tau(ae_1+be_2)=a\tau+b.
\]
Then $\rmp_\tau$ induces an isomorphism $V_\R:=\R\ot_\Q V\iso \C$. Let $\bfL$ be the standard lattice $\Z e_1\oplus \Z e_2$, and for every $g=\pMX{a}{b}{c}{d}\in \GL_2(\Qhat)$ define the $\Z$-lattice $\bfL_g\subset V$ by
\[\bfL_g:=(\Zhat e_1\oplus \Zhat e_2)g'\cap V, \]where $g'$ is the main involution defined by \[g'=\pMX{d}{-b}{-c}{a}=g^{-1}\det g.\]
The $\C$-pair $(A_x,\eta_x)$ attached to $x=(\tau_x,g_x)\in\bbH\times\GL_2(\Qhat)$ is then given by
\[A_x=\C/L_x,\quad L_x:=\rmp_{\tau_x}(\bfL_{g_x}),\]
and the $\Gamma_1(Np^\infty)$-level structure $\eta_x=(\eta_x^\setp,\eta_{x,p})$ is given by the immersions
\[\begin{aligned}\eta^\setp_x\colon&\mu_{N}\hookto N^{-1}\Z/\Z\ot L_x,&\zeta_N^j\mapsto  \rmp_{\tau_x}(j/N\ot e_2 g_x'),\\
\eta_{x,p}\colon&\mu_{p^\infty}\hookto \Qp/\Zp\ot L_x,& \zeta_{p^n}^j\mapsto \rmp_{\tau_x}(j/p^n\ot e_2g'_x).\end{aligned}
\]
Here we have used the identification $\Q/\Z\ot_\Z\bfL g_x'=\Q/\Z\ot_\Z \bfL_{g_x}$.
The lattice $L_x\subset \C$ is called the \emph{period lattice} of $A_x$ attached to the standard differential form $dw$,
with $w$ the standard complex coordinate of $\C$.

\subsection{Modular forms}\label{S:GME}
We briefly recall the definitions and standard facts about geometric and $p$-adic 
modular forms. The basic references are \cite{Katz350}, \cite{katzCMfields} and \cite{hida_yellowbook}.
\subsubsection*{Geometric modular forms}
\def\baseR{\Z_\setp}
\begin{defn}
Let $k$ be an integer, and let $B$ be a $\baseR$-algebra. A geometric modular form $f$ of weight $k$ on $\Ig(N)$ defined over $B$
is a rule assigning to every triple $(A,\eta,\om)_{/C}$ over a $B$-algebra $C$, consisting of a point $[(A,\eta)]\in \Ig(N)(C)$ and a basis $\om$ of $H^0(A,\omega_{A/C})$ over $C$, a value $f(A,\eta,\om)\in C$ such that the following conditions are satisfied:
\begin{mylist}
\item[(G1)] $f(A,\eta,\om)=f(A',\eta',\om')\in C$ if $(A,\eta,\om)\iso (A',\eta',\om')$ over $C$.
\item[(G2)] If $\varphi:C\to C'$ is any $B$-algebra homomorphism, then
\[f((A,\eta,\om)\ot_C C')=\varphi(f(A,\eta,\om)).\]
\item[(G3)]$f(A,\eta,t\om)=t^{-k}f(A,\eta,\om)$ for all $t\in C^\x$.
\item[(G4)]Letting $({\rm Tate}(q),\eta_{\can},\om_{\can})$ be the Tate elliptic curve $\bbG_m/q^\Z$
with the canonical level structure $\eta_{\can}$ and the canonical differential $\om_{\can}$ over $\Z(\!(q)\!)$,
the value $f({\rm Tate}(q),\eta_{\can},\om_{\can})$ lies in $B\powerseries{q}$. We call
\[
f({\rm Tate}(q),\eta_{\can},\om_{\can})\in B\powerseries{q}
\]
the algebraic Fourier expansion of $f$.
\end{mylist}
\end{defn}
If $f$ is a geometric modular form of weight $k$ defined over a subring $\cO\subset\C$, then $f$ gives rise to a holomorphic function $\bff:\bbH\x\GL_2(\Qhat)\to\C$ by the rule
\[\bff(x)=f(A_x,\eta_x,2\pii dw),\quad  x\in \bbH\xx\GL_2(\Qhat),\]
where $w$ is the standard complex coordinate of $A_x=\C/L_x$. This function $\bff$ satisfies the transformation rule:
\[\bff(\al\tau,\al g)=(\det \al)^{-\frac{k}{2}}J(\al,\tau)^k\cdot \bff(\tau,g)\quad(\al\in\GL_2(\Q)^+),\]
where $J:\GL_2(\R)^+\xx\bbH\to\C$ is the automorphy factor defined by
\[J(g,\tau)=(\det g)^{-\onehalf}\cdot (c\tau+d)\quad (g=\pMX{a}{b}{c}{d}).\]
Moreover, the function $\bff(-,1):\bbH\to\C$ is a classical elliptic modular form of weight $k$ with analytic Fourier expansion
\[\bff(\tau,1)=\sum_{n\geq 0}\bfa_n(f) e^{2\pii n\tau},\]
and we have the equality between algebraic and analytic Fourier expansions (\cf \cite[\S 1.7]{katzCMfields})
\[f({\rm Tate}(q),\eta_{\can},\om_{\can})=\sum_{n\geq 0}\bfa_n(f) q^n\in \cO\powerseries{q}.\]
We say that $f$ is of level $\Gamma_0(Np^n)$ if $\bff(\tau,gu)=\bff(\tau,g)$ for all $u\in U_0(Np^n)$.

\subsubsection*{\padic modular forms}
Let $R$ be a \padic ring, and let $\widehat{\Ig}(N)_{/R}:=\dirlim_m \Ig(N)_{/R/p^m R}$ be the formal completion of $\Ig(N)_{/R}$. Define the space $V_p(N,R)$ of \padic modular forms of level $N$ by
\begin{align*}V_p(N,R):=\;&H^0(\wh\Ig(N)_{/R},\cO_{\wh\Ig(N)_{/R}})\\
=\;&\prolim_m H^0(\Ig(N),\cO_{\Ig(N)}\ot R/p^mR).
\end{align*}
Thus elements in $V_p(N,R)$ are formal functions on the Igusa tower $\Ig(N)$.
We say that a \padic modular form $f$ is of weight $k\in\Zp$ if for every $u\in\Zp^\x$, we have
\beq\label{E:transformpadic}f(A,\eta)=u^{-k}f(A,\eta^\setp,\eta_p u),\quad [(A,\eta)]=[(A,\eta^\setp,\eta_p)]\in\wh\Ig(N)_{/R}.\eeq

If $f$ is a geometric modular form defined over $R$, then we can associate to $f$ a \padic modular form $\wh f$,
called the \emph{\padic avatar} of $f$, as follows. Let $C$ be a complete local $R$-algebra, and let $(A,\eta)$ be an elliptic curve with $\Gamma_1(Np^\infty)$-level structure. The $p^\infty$-level structure $\eta_p:\bbmu_{p^\infty}\hookto A[p^\infty]$ induces an isomorphism $\wh\eta_{p}:\wh\bbG_m\iso \wh A$ (here $\wh A$ is the formal group of $A$), which in turn gives rise to a differential
$\wh\om(\eta_p)\in\Lie(A)=\Lie(\wh A)\iso \Lie(\wh\bbG_m)=C$.
Then $\wh f$ is the \padic modular form defined by the rule
\[\wh f(A,\eta)=f(A,\eta,\wh\om(\eta_p)),\quad[(A,\eta)]\in\wh\Ig(N)_{/R}
\]
(\cf \cite[(1.10.15)]{katzCMfields}). It follows from the definition that if $f$ is a geometric modular form of weight $k$ and level $\Gamma_0(Np^n)$,
then $\wh f$ is a \padic modular form of weight $k$.

\subsection{CM points (I)}\label{subsec:CMpts}
Let $\cK$ be an imaginary quadratic field of discriminant $-D_\cK<0$, and
denote by $z\mapsto \ol{z}$ the complex conjugation on $\C$, which gives the non-trivial automorphism of $\cK$.
In this section, we assume that $p>2$ is a prime split in $\cO_\cK$ and write
\[p\cO_\cK=\frakp\frakpbar,\]where $\frakp$ is the prime ideal above $p$ determined by the embedding $\Qbar\hookto\Cp$.

Define $\CMP\in\cK$ by
\[\CMP=\frac{D'+\deltaK}{2},\quad D'=\begin{cases}D_\cK&\text{ if }2\ndivides D_\cK,\\
D_\cK/2&\text{ if } 2\divides D_\cK.
\end{cases}\]
Then $\cO_\cK=\Z+\Z\CMP$ and $\CMP\ol{\CMP}$ is a local uniformizer of $\Q_q$ for $q$ ramified in $\cK$. 
If $M$ is a positive integer, we decompose $M=M^+M^-$, with the prime factors of $M^+$ (resp. $M^-$) split (resp. inert or ramified) in $K$.
For each prime  $q=\frakq\frakqbar$ split in $\cK$, we write \[\Z_q\ot_\Z\cO_\cK=\Z_q e_{\frakqbar}\oplus \Z_q e_\frakq,\]
where $e_\frakq$ and $e_{\frakqbar}$ are the idempotents in $\Z_q\ot_\Z\cO_\cK$ corresponding to $\frakq$ and $\frakqbar$, respectively.

We assume that $N\OK=\frakN\ol{\frakN}$ for some ideal $\frakN$ of $\cO_\cK$.
Let $c$ be a positive integer, let $\cO_c:=\Z+c\cO_\cK$ be the order of conductor $c$, and let $\rcf{c}$ be the ring class field of $K$ of conductor $c$. Let $\fraka$ be a fractional ideal of $\cO_c$, and let $a\in \wh\cK^\x$ with $a\wh\cK\cap \cO_c=\fraka$. To the ideal $\fraka$ and the finite idele $a$, we associate a $\C$-pair $(A_\fraka,\eta_a)$ of complex CM elliptic curves with $\Gamma_1(Np^\infty)$-level structure as follows. Define $A_\fraka$ to be the complex elliptic curve $\C/\fraka^{-1}$. 
For each prime $q\divides pN$, let $\frakq$ be the prime of $\OK$ above $q$ with $\frakq\divides \frakN\frakp$,
and let $a_\frakq\in\Q_q$ be the $\frakq$-component of $a$. We then
have $(\Z_q\ot_\Z\fraka^{-1})\cap \Q_qe_\frakq=\Z_q a_\frakq^{-1} c e_\frakq$ and the exact sequence
\[A_\fraka[\frakq^\infty]=\bbmu_{q^\infty}\ot a_\frakq^{-1} c e_\frakq\hookto A_\fraka[q^\infty]\surjto \Q_q/\Z_q\ot a_{\frakqbar}^{-1} e_{\frakqbar},\]
and we define $\eta_a=(\eta^\setp_a,\eta_{a,p}):\bbmu_N\oplus\bbmu_{p^\infty}\isoto A_\fraka[\frakN]\oplus A_\fraka[\frakp^\infty]\hookto A_\fraka[N]\oplus A_\fraka[p^\infty]$ to be the embedding determined by the isomorphism $\bbmu_{q^n}\isoto A_\fraka[\frakq^n]$ sending
\[\zeta_{q^{n}}^j\mapsto \begin{cases}j/q^{n}\ot a_\frakq^{-1} q^{\Ord_q(c)}e_\frakq&\text{ if }q\divides N^+p,\\
 j/q^n\ot a_\frakq^{-1}&\text{ if }q\divides N^-.\end{cases}\]
Denote by $\cV$ the valuation ring $\iota_p^{-1}(\cO_{\Cp})\cap \cK^\ab$. It follows from the theory of complex multiplication 
\cite[18.6, 21.1]{shimura_ABVCM} combined with the criterion of Serre--Tate \cite{serre_tate} that 
$(A_\fraka,\eta_a)$ actually descends to a discrete valuation ring $\cV_0$ inside $\cV$. Thus $[(A_\fraka,\eta_a)]$ is defined over $\cV_0$ and belongs to $\Ig(N)(\cV_0)$.
We call $[(A_\fraka,\eta_a)]\in \Ig(N)(\cV)$ the CM point attached to $(\fraka,a)$.

If $\fraka$ is a prime-to-$\frakN\frakp$ integral ideal of $\cO_c$, we write $(A_\fraka,\eta_\fraka)$ for the triple $(A_\fraka,\eta_a)$ with $\frakq$-component $a_\frakq=1$ for every $\frakq\divides\frakN\frakp$. If $\fraka=\cO_c$, we write $(A_c,\eta_c)$ for $(A_{\cO_c},\eta_{\cO_c})$. In this case, we see immediately from the construction that $A_\fraka=A_c/A_c[\fraka]$ and the isogeny $\lam_\fraka:A_c\to A_\fraka$ induced by the quotient map $\C/\cO_c\to\C/\fraka^{-1}$ yields $\eta_\fraka=\lam_\fraka\circ \eta_c$.

\subsection{CM points (II)}\label{subsec:CMII} We give an explicit complex uniformization of the CM points introduced above.
Consider the embedding $K\hookto M_2(\Q)$ given by\[a\CMP+b\mapsto \pMX{a(\CMP+\ol{\CMP})+b}{-a\CMP\ol{\CMP}}{a}{b}.\]
For each $g\in\GL_2(\Qhat)$, denote by $[(\CMP,g)]$ the image of $(\CMP,g)$ in $\prolim_n Y_1(Np^n)(\C)=\Ig(N)(\C)$.
Shimura's reciprocity law for CM points (\cf\cite[Cor.\,4.20]{hida_yellowbook}) implies that $[(\CMP,g)]\in \Ig(N)(K^\ab)$ and
\beq\label{E:ShimuraCM}\rec_\cK(a)[(\CMP,g)]=[(\CMP,\ol{a}g)]\quad(a\in\wh\cK^\x),\eeq
where $\rec_\cK\colon\cK^\x\bksl \wh\cK^\x\to \Gal(\cK^{\rm ab}/\cK)$ is the \emph{geometrically normalized} reciprocity law map.

Let $\ctame=\ctame^+\ctame^-$ be a positive integer prime to $p$ and fix a decomposition $\ctame^+\OK=\frakC\ol{\frakC}$. Define $\varsigma^{(\infty)}=(\varsigma_q)\in\GL_2(\Qhat)$ by $\varsigma_q=\pMX{1}{0}{0}{1}$ if $q\ndivides \ctame^+N^+p$, and
\begin{align*}
\varsigma_q=&(\ol{\CMP}-\CMP)^{-1}\pMX{\ol{\CMP}}{\CMP}{1}{1}\in\GL_2(K_\frakq)=\GL_2(\Q_q)\text{ if $q=\frakq\frakqbar$ with $\frakq\divides\frakC\frakN\frakp$}.
\end{align*}
Let $c=\ctame p^n$ with $n\geq 0$. We define $\gamma_c=\prod_q\gamma_{c,q}\in \GL_2(\Qhat)$ by $\gamma_{c,q}=1$ if $q\ndivides cNp$,
\begin{align*}
\gamma_{c,q}=&\pMX{q^{\Ord_q(c)}}{1}{0}{1}\text{ if $q=\frakq\frakqbar$ with $\frakq\divides\frakC\frakN\frakp$},\\
\gamma_{c,q}=&\pDII{1}{q^{\Ord_q(c)-\Ord_q(N)}}\text{ if $q\divides c^-N^-$}.
\end{align*}
Let $\xi_c:=\varsigma^{(\infty)}\gamma_c\in\GL_2(\Qhat)$ be the product.
An elementary computation shows that $\cO_c=\rmp_\CMP(\bfL_{\xi_c})$ and that for $q=\frakq\frakqbar$ with $\frakq\divides \frakC\frakN\frakp$, we have
\beq\label{E:levelstr} \varsigma_{q}\pDII{a}{b}=(ae_{\frakqbar}+be_\frakq)\varsigma_{q}\quad (a,b\in\Q^\x_q),\eeq
and
\beq\label{E:levelstr2}\xi_{c,q}':\Z_qe_1\oplus \Z_q e_2\iso \Z_q\ot_\Z\cO_c,\quad \rmp_\CMP(e_2\xi'_{c,q})=q^{\Ord_q(c)}e_{\frakq},\eeq
so we have $[(\CMP,\xi_{c})]=[(A_c,\eta_c)]$. Define
\[x_{c}:=[(A_c,\eta_c)]=[(\CMP,\xi_{c})]\in\Ig(N)(\C). \]
In general, if $a\in\wh\cK^{(cp)\x}$ and $\fraka=a\wh\cO_c\cap \cK$ is a fractional ideal of $\cO_c$, we let
\[\sg_\fraka:=\rec_K(a^{-1})|_{\cK_c(\frakp^\infty)}\in \Gal(\cK_c(\frakp^\infty)/\cK),\]
where $\cK_c(\frakp^\infty)$ is the compositum of $\rcf{c}$ and the ray class field of $K$ of conductor $\frakp^\infty$.
Thus $\sg_\fraka$ is the image of $\fraka$ under the classical Artin map. We have
\beq\label{E:GaloisAction}x_\fraka:=[(A_\fraka,\eta_a)]=[(\CMP,\ol{a}^{-1}\xi_c)]=x_c^{\sg_\fraka}\in \Ig(N)(\cK_c(\frakp^\infty)).\eeq
Here the first equality can be verified by noting that the main involution induces the complex conjugation on $\A_K^\x$ and using \eqref{E:levelstr}, and the second equality follows from Shimura's reciprocity law for CM points \eqref{E:ShimuraCM}.
\subsection{CM periods}\label{SS:CMperiods}
Let $\wh\Q_p^{\rm ur}$ be the \padic completion of the maximal unramified extension $\Qp^{\rm ur}$ of $\Qp$,
and let $\cW$ be the ring of integers of $\wh\Q_p^{\rm ur}$. If $\fraka$ is a prime-to-$\frakp\frakN$ fractional ideal of $\cO_c$ with $p\ndivides c$,
then $(A_\fraka,\eta_\fraka)$ has a model defined over $\cV^{\rm ur}:=\cW\cap \cK^{\rm ab}$. In the sequel, we shall still denote this model by $(A_\fraka,\eta_\fraka)$ and simply write $A$ for $A_{\OK}$.

Fix a \Neron differential $\om_A$ of $A$ over $\cV^{\rm ur}$. There exists a unique prime-to-$p$ isogeny $\lam_\fraka:A_\fraka\to A$
inducing the identity map on both the complex Lie algebras $\C=\Lie A_\fraka(\C)\to \C=\Lie A(\C)$ via the complex uniformizations and
on the $\frakp$-divisible groups $\bbmu_{p^\infty}=A_\fraka[\frakp^\infty]\to \bbmu_{p^\infty}=A[\frakp^\infty]$ via the level structures at $p$.
Letting $\om_{A_\fraka}:=\lam_\fraka^*\om_A$ be the pull-back of $\om_A$, we see that there exists
a pair $(\Omega_\cK,\Omega_p)\in \C^\x\xx \cW^\x$ 
such that \[\Omega_\cK\cdot 2\pii dw=\Omega_p\cdot  \wh\om(\eta_{\fraka,p})=\om_{A_\fraka},\]
where $w$ is the standard complex coordinate of $\C/\fraka^{-1}=A_\fraka(\C)$. The pair $(\Omega_\cK,\Omega_p)$ are called the complex and \padic periods of $\cK$. Note that the ratio $\Omega_\cK/\Omega_p$ does not depend on the choice of \Neron differential $\om_A$.

\section{Anticyclotomic \padic $L$-functions}

In this section, we review the anticyclotomic \padic $L$-functions that were originally constructed
in \cite{Miljan:1}, \cite{BDP} and \cite{hsieh} from various points of view. Our purpose is to extend their
interpolation formulae 
to include $p$-ramified characters and to prove the nonvanishing of these \padic $L$-functions,
so we find it more convenient to adopt the approach of \cite{Miljan:1}, based on the use of Serre--Tate coordinates.
\subsection{$t$-expansion of \padic modular forms}
Let $\bfx=[(A_0,\eta)]$ be a point in the Igusa tower $\Ig(N)(\Fpbar)$ and let $\wh S_\bfx\hookto \Ig(N)_{/\cW}$ be the local deformation space of $\bfx$ over $\cW$. The $p^\infty$-level structure $\eta_p$ determines a point $P_\bfx\in T_p(A_0^{\rm t})$, where $A_0^\rmt$ is the dual abelian variety of $A_0$ and $T_p(A_0^{\rm t})=\prolim_n A_0^{\rm t}[p^n](\Fpbar)$ is the \padic Tate module of $A_0^{\rm t}$.
Let $\lam_{\can}\colon A_0\iso A_0^{\rm t}$ be the canonical principal polarization.

For each deformation $\cA_{/R}$ over a local Artinian ring $(R,\frakm_R)$, let
$q_\cA\colon T_p(A_0)\xx T_p(A_0^{\rm t})\to 1+\frakm_R$ be the Serre--Tate bilinear form attached to $\cA_{/R}$ (see \cite{katz_ST}).
The \emph{canonical Serre--Tate coordinate} $t\colon\wh S_\bfx\to\wh\bbG_m$ is defined by \[t(\cA):=q_\cA(\lam_{\can}^{-1}(P_\bfx),P_\bfx)\]
and yields an identification $\cO_{\wh S_\bfx}=\cW\powerseries{t-1}$.
\def\diff{{\mathrm d}}

Let $f\in V_p(N,\cW)$ be a $p$-adic modular form over $\cW$.
The $t$-expansion $f(t)$ of $f$ around $\bfx$ is defined by \[f(t)=f|_{\wh S_\bfx}\in \cW\powerseries{t-1},\]
and we let $\diff f$ be the \padic measure on $\Zp$ such that
\[\int_{\Zp}t^x \diff f(x)=f(t).\]
Moreover, if $\phi:\Zp\to\cO_{\Cp}$ is any continuous function, we define $f\ot\phi(t)\in\cO_{\Cp}\powerseries{t-1}$ by
\[f\ot\phi(t)=\int_{\Zp}\phi(x)t^x \diff f=\sum_{n\geq 0}\int_{\Zp}\phi(x){x\choose n} \diff f(x)\cdot (t-1)^n.\]

\begin{lm}\label{L:1}
If $\phi:\Zp/p^n\Zp\to\cO_{\Cp}$, then
\[f\ot\phi(t)=p^{-n}\sum_{u\in \Z/p^n\Z}\sum_{\zeta\in\mu_{p^n}}\zeta^{-u}\phi(u)f(t\zeta).\]
If $\phi:\Zp\to\Zp$ is $z\mapsto z^k$, then
\[f\ot\phi(t)=\left[t\frac{d}{dt}\right]^k(f).\]
\end{lm}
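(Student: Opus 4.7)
The plan is to unwind the defining identity $f\otimes\phi(t)=\int_{\Zp}\phi(x)t^x\,\diff f(x)$ and treat the two formulas separately.

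For the first formula, I would use discrete Fourier analysis on the finite group $\Z/p^n\Z$: since $\phi$ factors through $\Zp/p^n\Zp$, orthogonality of characters on the dual group $\mu_{p^n}$ yields an expansion
\[
\phi(x)=\sum_{\zeta\in\mu_{p^n}}c_\zeta\,\zeta^x,\qquad c_\zeta=p^{-n}\sum_{u\in\Z/p^n\Z}\zeta^{-u}\phi(u).
\]
Substituting this into $f\otimes\phi(t)$ and interchanging the (finite) sum with the integral, one obtains
\[
f\otimes\phi(t)=\sum_{\zeta\in\mu_{p^n}}c_\zeta\int_{\Zp}(t\zeta)^x\,\diff f(x)=\sum_{\zeta\in\mu_{p^n}}c_\zeta\,f(t\zeta),
\]
where I used the defining identity of $\diff f$ with $t$ replaced by $t\zeta$. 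Plugging in the formula for $c_\zeta$ then gives the asserted identity. The only subtle point is the legitimacy of evaluating $f$ at $t\zeta$: since $\zeta\in\mu_{p^n}$ satisfies $\zeta-1\in\frakm$, the element $t\zeta-1=\zeta(t-1)+(\zeta-1)$ is topologically nilpotent in $\cO_{\Cp}\powerseries{t-1}$, so $f(t\zeta)$ makes sense as a convergent series in that ring.

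For the second formula, I would work with the derivation $t\frac{\diff}{\diff t}$ on $\cW\powerseries{t-1}$. The key observation is that formally $t\frac{\diff}{\diff t}(t^z)=z\cdot t^z$; this can be verified on the Mahler basis by using $t^z=\sum_{n\geq 0}\binom{z}{n}(t-1)^n$ together with the identity $t\frac{\diff}{\diff t}((t-1)^n)=n(t-1)^n+n(t-1)^{n-1}$ and the Pascal-type relation $z\binom{z}{n}=n\binom{z}{n}+n\binom{z}{n-1}$. Iterating gives $(t\frac{\diff}{\diff t})^k(t^z)=z^k t^z$. Applying the operator $(t\frac{\diff}{\diff t})^k$ to $f(t)=\int_{\Zp}t^x\,\diff f(x)$ and exchanging it with the integral — justified because $t\frac{\diff}{\diff t}$ is continuous on $\cW\powerseries{t-1}$ and the measure $\diff f$ is determined by the Mahler coefficients of $f$ — yields
\[
(t\tfrac{\diff}{\diff t})^k f(t)=\int_{\Zp}z^k t^z\,\diff f(z)=f\otimes\phi(t),
\]
as required.

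The main obstacle is really just bookkeeping: justifying the interchange of integral with either the finite Fourier sum or the differential operator, and checking that $t\zeta$ lies in the formal neighborhood where $f$ converges. Both reduce to standard continuity statements in $\cO_{\Cp}\powerseries{t-1}$ with the $(p,t-1)$-adic topology, so no substantive difficulty arises.
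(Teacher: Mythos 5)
Your argument is correct, and it is essentially the standard proof: the paper itself gives no argument at all for this lemma, simply declaring it well known and citing Hida (\emph{Elementary theory of $L$-functions and Eisenstein series}, \S 3.5), so your write-up supplies exactly the computation that reference contains. The first formula is the usual orthogonality computation, writing the indicator of $u+p^n\Zp$ as $p^{-n}\sum_{\zeta\in\mu_{p^n}}\zeta^{x-u}$ and using $\int_{\Zp}(t\zeta)^x\,\diff f(x)=f(t\zeta)$, which converges in $\cO_{\Cp}\powerseries{t-1}$ since $\zeta-1$ has positive valuation, as you note; the second formula is the standard fact that the Amice transform intertwines multiplication by $x$ with $t\frac{\diff}{\diff t}$. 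One small correction: the binomial identity you quote, $z\binom{z}{n}=n\binom{z}{n}+n\binom{z}{n-1}$, is false (test $n=1$); when you expand $t\frac{\diff}{\diff t}(t^z)=\sum_n\binom{z}{n}\bigl(n(t-1)^n+n(t-1)^{n-1}\bigr)$ and compare coefficients of $(t-1)^n$ with those of $z\,t^z$, the identity actually needed is
\[
z\binom{z}{n}=n\binom{z}{n}+(n+1)\binom{z}{n+1},
\]
which follows at once from $(n+1)\binom{z}{n+1}=(z-n)\binom{z}{n}$. With that index shift repaired, your verification of $t\frac{\diff}{\diff t}(t^z)=z\,t^z$, and hence of $(t\frac{\diff}{\diff t})^k f=f\ot\phi$ for $\phi(z)=z^k$, goes through; the interchanges of the integral with the finite sum and with the continuous derivation are unproblematic, as you say.
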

\begin{proof} This is well-known. For example, see \cite[\S 3.5 (5)]{hidaBlue}.
\end{proof}

\subsection{Serre--Tate coordinates of CM points}
Suppose that $c$ is a positive integer with $p\nmid c$.
Let $\fraka$ be a prime-to-$c\frakN p$ integral ideal of $\cO_c$, and let $a\in\wh\cK^{(c\frakN p)\x}$ be such that $\fraka=a\wh\cO_c\cap K$. Define $\rmN(\fraka)$ by\[\begin{aligned}\rmN(\fraka):=&\;\text{degree of the $\Q$-isogeny }\C/\OK\to\C/\fraka^{-1}\\
=&\;c^{-1}\#(\cO_c/\fraka)=c^{-1}\abs{a}_{\AK}^{-1}.\end{aligned}\] Let $x_\fraka=[(A_\fraka,\eta_\fraka)]\in\Ig(N)(\cV)$ be the CM point attached to $\fraka$ and let $t$ be the canonical Serre--Tate coordinate of $\bfx_\fraka:=x_\fraka\ot_\cV\Fpbar$.
We will use the following notation: for each $z\in\Qp$, set
\beq\label{E:notation}\bfn(z):=\pMX{1}{z}{0}{1}\in\GL_2(\Qp)\subset\GL_2(\Qhat).\eeq
Put
\[x_\fraka*\bfn(z):=[(\CMP,\ol{a}^{-1}\xi_c\bfn(z))]\in\Ig(N)(\cV).\]
\begin{lm}\label{L:STcoordinate}Let $u\in\Zp$. We have $(x_\fraka*\bfn(up^{-n}))\ot\Fpbar=\bfx_\fraka$, and \[t(x_\fraka*\bfn(up^{-n}))=\zeta_{p^n}^{-u\rmN(\fraka)^{-1}\deltaK^{-1}}.\]
\end{lm}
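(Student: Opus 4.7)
The statement has two parts: the reduction identity $(x_\fraka * \bfn(up^{-n})) \otimes \Fpbar = \bfx_\fraka$, and the formula for the canonical Serre--Tate coordinate.

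For the reduction identity, I would analyze directly how the lattice $\bfL_g$ attached to $g = \bar a^{-1}\xi_c$ transforms under $g \mapsto g\cdot\bfn(up^{-n})$. Since $up^{-n}\in\Qp$, only the $p$-adic component of the lattice is affected, and a short matrix computation (using $e_2\bfn(-up^{-n}) = e_2$ and $e_1\bfn(-up^{-n}) = e_1 - up^{-n}e_2$) shows that $\bfL_{g\bfn(up^{-n}),p}$ is obtained from $\bfL_{g,p}$ by adjoining $up^{-n}\Zp\cdot(e_2 g'_p)$. By \eqref{E:levelstr2}, $e_2 g'_p$ corresponds (up to the prime-to-$p$ action of $\bar a^{-1}$) to the idempotent $e_\frakp$, i.e., to the formal direction of $A_\fraka$ over $\cW$. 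Hence the induced isogeny $A_\fraka \to A_{x_\fraka * \bfn(up^{-n})}$ has kernel a cyclic subgroup of the formal $\frakp$-divisible group, which becomes trivial upon reduction to $\Fpbar$. Since the prime-to-$p$ level structure is unaffected and the $p^\infty$-level structure agrees modulo this formal subgroup, the two points have the same special fiber.

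For the value of $t$, the plan is to exploit the Serre--Tate identification $\wh S_{\bfx_\fraka}\simeq\wh\bbG_m$, under which the $p^n$-torsion of the formal deformation space is $\mu_{p^n}$. The assignment $u \mapsto x_\fraka * \bfn(up^{-n})$ descends to a group homomorphism from $\Z/p^n\Z$ to this $p^n$-torsion, so necessarily $t(x_\fraka * \bfn(up^{-n})) = \zeta_{p^n}^{m_\fraka u}$ for some constant $m_\fraka \in (\Z/p^n\Z)^\times$ depending only on $\fraka$. To identify $m_\fraka$, I would first treat the case $\fraka = \cO_c$ (where $\rmN(\fraka) = 1$) by unwinding the definition $t(\cA) = q_\cA(\lam_{\can}^{-1}(P_\bfx), P_\bfx)$ and evaluating the Serre--Tate pairing on the generators coming from the CM level structure $\eta_{c,p}$; the standard normalization conventions should give $m_{\cO_c} = -1$. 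For general $\fraka$ prime to $p$, the prime-to-$p$ isogeny $\lam_\fraka\colon A_c \to A_\fraka$ scales the relevant component of the $\frakp$-adic Tate module by $\rmN(\fraka)$, and the Serre--Tate pairing responds with the inverse factor, yielding $m_\fraka = -\rmN(\fraka)^{-1}$.

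The main obstacle is the second step, where one must carefully track the interaction between the Serre--Tate pairing, the canonical polarization $\lam_{\can}$, and the reductions of the various level structures to pin down the precise factor $-\rmN(\fraka)^{-1}$ in the exponent, as opposed to some other power. To handle signs and normalizations reliably, I would follow the framework of Brakocevi\'c \cite{Miljan:1}, where an analogous computation is carried out in a closely parallel CM setting.
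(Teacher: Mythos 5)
Your first step contains a genuine error. From your own matrix computation, the $p$-component of the new lattice is $\Zp\,(e_1-up^{-n}e_2)g'_p\oplus\Zp\, e_2g'_p$, whereas the old one is $\Zp\, e_1g'_p\oplus\Zp\, e_2g'_p$; for $u\in\Zp^\times$ neither lattice contains the other, so the new lattice is \emph{not} obtained by adjoining $up^{-n}\Zp\cdot(e_2g'_p)$, and there is no isogeny $A_\fraka\to A_{x_\fraka*\bfn(up^{-n})}$ induced by a lattice inclusion; the two curves are related only through a common quotient (equivalently through the overlattice $\Zp e_1g'_p\oplus p^{-n}\Zp e_2g'_p$). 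More seriously, even after passing to such a common quotient, the inference ``the kernel lies in the formal $\frakp$-divisible group, hence becomes trivial upon reduction'' is invalid: the special fibre of such a kernel is a nontrivial infinitesimal group scheme (of multiplicative type), and the quotient map on special fibres is a purely inseparable isogeny onto a Frobenius twist, not an isomorphism. This is precisely the point the paper's proof must address: it extends the complex isomorphism $\cA/\cA[\frakp]\iso\cB/\cB[\frakp]$ to the integral models (via the Faltings--Chai rigidity statement), recognizes both quotients on special fibres as the Frobenius conjugates $(\ol{\cA},\eta_{\ol\cA})^{\sg_p}$ and $(\ol{\cB},\eta_{\ol\cB})^{\sg_p}$, and only then untwists to conclude $\bfx_\fraka=[y]\ot\Fpbar$. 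Your mechanism, taken literally, would identify the special fibre of a curve with that of its quotient by a connected subgroup, which is false, so the first assertion of the lemma is not actually proved by your argument.

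For the Serre--Tate value, your plan (homomorphy in $u$, pin down the case $\fraka=\cO_c$, then transport by the prime-to-$p$ isogeny $\lam_\fraka$) is a plausible alternative skeleton, and the factor $\rmN(\fraka)^{-1}$ does indeed enter through $\lam_\can$ and $P_{\bfx_\fraka}$ exactly as you suggest. But the two decisive steps are asserted rather than proved: that $u\mapsto t(x_\fraka*\bfn(up^{-n}))$ is a homomorphism $\Z/p^n\Z\to\mu_{p^n}$ with $m_\fraka$ a $p$-adic unit is not obvious (it is essentially equivalent to the computation itself, or to Gross's theory of quasi-canonical lifts), and the identification $m_{\cO_c}=-1$ is the whole analytic content of the lemma. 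In the paper this is done in one stroke for general $\fraka$: one computes $\Zp\ot L_y=\Zp(e_{\frakpbar}-\tfrac{u}{p^n}e_\frakp)\oplus\Zp e_\frakp$, uses $\lam_\can^{-1}(P_{\bfx_\fraka})=\rmN(\fraka)^{-1}\al(e_{\frakpbar})$, and evaluates $q_\cB$ by explicitly finding the Drinfeld lift of $\al(p^{-m}e_{\frakpbar})$ to be $\beta(-up^{-n}e_\frakp)$; it is this lift, paired against $\al^\rmt(e_{\frakpbar})$ via the normalization $E_\cB(\beta(p^{-n}e_\frakp),\al^\rmt(e_{\frakpbar}))=\zeta_{p^n}$, that produces both the $-u$ and the $\rmN(\fraka)^{-1}$ in the exponent. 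Deferring this to ``standard normalization conventions'' and to Brakočević leaves exactly the sign and exponent---the content of the lemma---unverified, so as it stands the proposal has a gap in part (2) and a failing step in part (1).
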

\begin{proof}Let $(\cA,\eta_\cA)_{/\cV_0}$ be a model of the CM elliptic curve $(A_\fraka,\eta_\fraka)$ over a discrete valuation ring $\cV_0\subset\cV$. Let $\ol{\cA}=\cA\ot\Fpbar$.
Recall that $e_\frakp$ and $e_{\frakpbar}$ are idempotents in $\cK\ot_\Q\cK\hookto\Qp\ot_\Q\cK$ corresponding to $\frakp$ and $\frakpbar$ respectively. In fact \[e_\frakp=\frac{\CMP\otimes 1-1\otimes\ol{\CMP}}{\CMP-\ol{\CMP}}\,,\quad e_{\bar\frakp}=\frac{\CMP\otimes 1-1\otimes \CMP}{\CMP-\ol{\CMP}},\] so we have  \beq\label{E:idempotent}\rmp_\CMP(e_1\varsigma'_p)=e_{\frakpbar}\,,\quad \rmp_\CMP(e_2\varsigma_p')=e_\frakp.\eeq The complex uniformization $\al\colon(\C/\fraka^{-1},\eta_\fraka)\iso (\cA,\eta_{\cA})_{/\C}$ yields the identifications \[\al\colon\Zp e_{\frakpbar}\oplus \Zp e_\frakp=\Zp\ot_\Z\fraka^{-1}\iso T_p(\cA)\]
and
\[\al\colon\bbmu_{p^\infty}\ot\Zp e_\frakp\iso \wh \cA[p^\infty],\quad \al\colon \Zp e_{\frakpbar}\iso T_p(\ol{\cA}).\]
Here $\wh \cA$ is the formal group attached to $\cA_{/\cV_0}$. Let $\bfe_A\colon T_p(\cA)\xx T_p(\cA^\rmt)\to\Zp$ be the Weil pairing.
Let $\lam_{\CMP}\colon\cA\to \cA^\rmt$ be the prime-to-$p$ polarization induced by the Riemann form $\pair{z}{w}_{\CMP}=(\Im\CMP)^{-1}\Im(z\ol{w})$ on $\C/\fraka^{-1}$. The complex uniformization $\al^\rmt=\al\circ\lam_{\CMP}\colon\C\to \cA^{\rmt}(\C)$ induces $\al^\rmt\colon\C/\rmN(\fraka)\fraka^{-1}\iso \cA^\rmt(\C)$ and $\al^\rmt\colon\Zp e_{\frakpbar}\oplus\Zp e_\frakp\iso T_p(\cA^\rmt)$ with $\al^\rmt:\Zp e_{\frakpbar}\iso T_p(\ol{\cA}^\rmt)$. By \cite[Theorem~1, page~237]{mumford_abv} and \eqref{E:idempotent}, we have
\[\bfe_\cA(a\al(e_{\frakpbar})+b\al(e_\frakp),c\al^\rmt(e_{\frakpbar})+d\al^\rmt(e_\frakp))=-(ad-bc)(\CMP-\ol{\CMP})^{-1}=-(ad-bc)\deltaK^{-1}.\]
(Note the sign $-1$.) The canonical polarization $\lam_\can:\cA\iso \cA^\rmt$ is given by $\al(z)\mapsto \al^\rmt(\rmN(\fraka)z)$.

Let $y$ be the complex point $(\CMP,\ol{a}^{-1}\xi_c\bfn(up^{-n}))$ and let $(\cB,\eta_\cB)_{/\cV_0}$ be a model of $(A_y,\eta_y)$ over $\cV_0$
(enlarging $\cV_0$ if necessary), so $[(\cB,\eta_\cB)]=[y]$. The period lattice $L_y$ of $\cB(\C)$ is given by \[L_y=\rmp_\CMP(L'),\quad L'=(\Zhat e_1\oplus \Zhat e_2)\bfn(-up^{-n})\xi_c' a^{-1}\cap V.\]
By a direct computation and \eqref{E:idempotent}, we find that
\begin{align*}\Zp\ot_\Z L_y&=\rmp_\CMP(\Zp e_1\oplus \Zp e_2\pMX{1}{-up^{-n}}{0}{1}\gamma_{c,p}'\varsigma_p')\\
&=\rmp_\CMP((\Zp (e_1-up^{-n}e_2)\oplus\Zp e_2)\varsigma_p')\\
&=\Zp (e_{\bar\frakp}-up^{-n}e_\frakp)\oplus\Zp e_\frakp,
\end{align*}
so the complex uniformization $\beta\colon(\C/L_y,\eta_y)\iso (\cB,\eta_{\cB})_{/\C}$ induces the identification
\[\beta\colon\Zp(e_{\frakpbar}-\frac{u}{p^n} e_\frakp)\oplus\Zp e_{\frakpbar}\iso T_p(\cB).\]

With the above preparations, we see that over $\C$ there are natural isomorphisms
\[\cA/\cA[\frakp]\iso \cB/\cB[\frakp]\iso \C/\fraka^{-1}\frakp_c^{-1}\quad(\frakp_c=\frakp\cap\cO_c)\]
induced by the inclusions of $L_y$ and $\fraka^{-1}$ in $\fraka^{-1}\frakp_c^{-1}$, which
extend uniquely to an isomorphism $\cA/\cA[\frakp]\iso \cB/\cB[\frakp]$ over $\cV_0$ (\cite[Prop. 2.7]{faltings_chai:DAV}).
By construction, $\cA[\frakp]$ and $\cB[\frakp]$ are connected components of $\cA[p]$ and $\cB[p]$, so we get the isomorphism $(\ol{\cA},\eta_{\ol{\cA}})^{\sg_p}\iso(\ol{\cB},\eta_{\ol{\cB}})^{\sg_p}$, where $(-)^{\sg_p}$ denotes the conjugate of the $p$-th power Frobenius $\sg_p$, and hence $(\ol{\cA},\eta_{\ol{\cA}})\iso (\ol{\cB},\eta_{\ol{\cB}})$. This shows that $[y]\ot\Fpbar=\bfx_\fraka$. To compute the value $t(\cB)$, we note that $P_{\bfx_\fraka}=\al^\rmt(e_{\frakpbar})$ and that the Weil pairing of $\ol{\cA}$ induces $E_\cB\colon\wh\cB[p^\infty]\xx T_p(\ol{\cA}^\rmt)\to \wh\bbG_m$ so that $E_\cB(\beta(p^{-n}e_\frakp),\al^\rmt(e_{\frakpbar}))=\zeta_{p^n}^{\deltaK^{-1}}$ (\cite[page 150]{katz_ST}). For a sufficiently large integer $m$, we have
\begin{align*}
t(\cB)&=q_\cB(\lam_{\can}^{-1}(P_{\bfx_\fraka}),P_{\bfx_\fraka})=q_\cB(\rmN(\fraka)^{-1}\al(e_{\frakpbar}),\al^\rmt(e_{\frakpbar}))\\
&=E_\cB(``p^m"\al(p^{-m}e_{\frakpbar}),\al^\rmt(e_{\frakpbar}))^{\rmN(\fraka)^{-1}},
\end{align*}
where $``p^m":\ol{\cA}[p^m](\Fpbar)\to \wh\cA$ is the Drinfeld lift map. To compute the lift, from the diagram \beq\label{E:diagram}\xymatrix{ 0\ar[r]&\bbmu_{p^\infty}\ot \Zp e_\frakp\ar[r]\ar[d]^{\wr}&\Zp\ot \cK/L_y\ar[r]\ar[d]^{\wr}_\beta&\Qp/\Zp\ot \Zp e_\frakp\ar[d]^{\wr}_\al\ar[r]&0\\
0\ar[r]&\wh\cB[p^\infty]\ar[r]&\cB[p^\infty]\ar[r]&\Qp/\Zp\ot T_p(\ol{\cA})\ar[r] &0
}\eeq
we can see that the $p^m$-torsion point $\al(p^{-m}e_{\frakpbar})\in p^{-m}\Zp/\Zp\ot T_p(\ol{\cA})=\cA[p^m](\Fpbar)$ has a lift $\beta(p^{-m}e_{\frakpbar}-up^{-m-n}e_\frakp)\in\cB[p^\infty]$, so the Drinfeld lift $``p^m"\al(p^{-m}e_{\frakpbar})$ is given by $\beta(-up^{-n}e_\frakp)\in\wh\cB[p^\infty]$. Hence, we obtain
\[
t(\cB)=E_\cB(\beta(p^{-n}e_\frakp),\al^\rmt(e_{\frakpbar}))^{-u\rmN(\fraka)^{-1}}=\zeta_{p^n}^{-u\rmN(\fraka)^{-1}\deltaK^{-1}}.\qedhere\]
\end{proof}

\begin{prop}\label{P:1}Let $f\in V_p(N,\cW)$ be a \padic modular form with $t$-expansion $f(t)$ around $x_\fraka\ot\Fpbar$. Put
\[f_\fraka(t):=f(t^{\rmN(\fraka)^{-1}\deltaK^{-1}}).\]
If $n$ is a positive integer and $\phi\colon(\Z/p^n\Z)^\x\to \cO_{\Cp}^\x$ is a primitive Dirichlet character, then
\[f_\fraka\ot \phi(x_\fraka)=p^{-n}\frakg(\phi)\sum_{u\in(\Z/p^n\Z)^\x}\phi^{-1}(u)\cdot f(x_\fraka*\bfn(up^{-n})).\]
\end{prop}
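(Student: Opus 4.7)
The plan is to reduce the statement to a direct combination of Lemma \ref{L:1} (which expresses $f\otimes\phi$ as a Gauss-sum twist of values of $f$ at Serre--Tate coordinates in $\mu_{p^n}$) and Lemma \ref{L:STcoordinate} (which identifies those coordinates with the points $x_\fraka*\bfn(up^{-n})$), after first observing that $x_\fraka$ itself has Serre--Tate coordinate $t(x_\fraka)=1$. This last fact follows from Lemma \ref{L:STcoordinate} applied to $u=0$, noting $x_\fraka*\bfn(0)=x_\fraka$ and $\zeta_{p^n}^0=1$; equivalently, it reflects the fact that $x_\fraka$ is an ordinary CM point and hence equals the canonical lift of $\bfx_\fraka$.

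First I would apply Lemma \ref{L:1} to the modular form $f_\fraka(t)=f(t^{\rmN(\fraka)^{-1}})$ and to the primitive character $\phi$ extended by zero to $\Z/p^n\Z$; since $\phi$ is primitive, the sum naturally restricts to units. This yields
\[
f_\fraka\otimes\phi(t)=p^{-n}\sum_{u\in (\Z/p^n\Z)^\x}\sum_{\zeta\in\mu_{p^n}}\zeta^{-u}\phi(u)\,f\bigl((t\zeta)^{\rmN(\fraka)^{-1}}\bigr).
\]
Evaluating at $x_\fraka$, equivalently at $t=1$, this reduces to
\[
f_\fraka\otimes\phi(x_\fraka)=p^{-n}\sum_{u\in(\Z/p^n\Z)^\x}\sum_{v\in\Z/p^n\Z}\zeta_{p^n}^{-uv}\phi(u)\,f\bigl(\zeta_{p^n}^{v\rmN(\fraka)^{-1}}\bigr).
\]

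Next I would perform the standard Gauss-sum manipulation. For each $v\in\Z/p^n\Z$, either $v$ is a unit, in which case the substitution $u\mapsto -v^{-1}u'$ gives
\[
\sum_{u\in(\Z/p^n\Z)^\x}\phi(u)\zeta_{p^n}^{-uv}=\phi^{-1}(-v)\,\frakg(\phi),
\]
or $v\in p(\Z/p^n\Z)$, in which case the inner sum vanishes by primitivity of $\phi$. Inserting this into the displayed expression and relabeling $v\mapsto -v$, the right-hand side becomes
\[
p^{-n}\frakg(\phi)\sum_{v\in(\Z/p^n\Z)^\x}\phi^{-1}(v)\,f\bigl(\zeta_{p^n}^{-v\rmN(\fraka)^{-1}}\bigr).
\]

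Finally I would invoke Lemma \ref{L:STcoordinate}, which identifies $\zeta_{p^n}^{-v\rmN(\fraka)^{-1}}=t(x_\fraka*\bfn(vp^{-n}))$ inside the deformation space $\wh S_{\bfx_\fraka}$. Since $x_\fraka*\bfn(vp^{-n})$ has the same reduction $\bfx_\fraka$, evaluating the $t$-expansion of $f$ at this Serre--Tate coordinate is the same as evaluating $f$ at $x_\fraka*\bfn(vp^{-n})$, yielding the claimed identity. There is no substantive obstacle here: the only conceptual point is the identification $t(x_\fraka)=1$ (i.e.\ the CM point is the canonical lift), and everything else is bookkeeping in the Gauss-sum calculation together with the two cited lemmas.
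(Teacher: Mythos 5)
Your proof is correct and follows exactly the route the paper takes: its proof of Proposition \ref{P:1} is precisely ``combine \lmref{L:1} with \lmref{L:STcoordinate}'', and your write-up just supplies the details (the identification $t(x_\fraka)=1$ via the canonical-lift/$u=0$ case, the Gauss-sum manipulation with vanishing on non-units by primitivity of $\phi$, and the final substitution from \lmref{L:STcoordinate}). No discrepancies with the paper's argument.
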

\begin{proof}This follows from \lmref{L:1} combined with \lmref{L:STcoordinate}.
\end{proof}

\subsection{Anticyclotomic \padic $L$-functions}
\subsubsection*{Hecke characters and \padic Galois characters}
A Hecke character $\chi:\cK^\x\bksl \A_\cK^\x\to\C^\x$ is called a Hecke character of infinity type $(m,n)$ if $\chi_\infty(z)=z^m\zbar^n$, 
and is called anticyclotomic if $\chi$ is trivial on $\A^\x$.

For each prime $\frakq$ of $\OK$, we let $\chi_\frakq\colon\cK_\frakq\to\C^\x$ denote the $\frakq$-component of $\chi$,
and if $\chi$ has conductor $\frakc$ and $\fraka$ is any fractional ideal prime to $\frakc$,
we write $\chi(\fraka)$ for $\chi(a)$, where $a$ is an idele with $a\wh\cO_\cK\cap K=\fraka$ and
$a_\frakq=1$ for all $\frakq\divides\frakc$.

\begin{defn}The \padic avatar $\wh\chi: \cK^\x\bksl \wh\cK^\x \to\Cp^\x$ of a Hecke character $\chi$ of infinity type $(m,n)$ is defined by
\[\wh\chi(z)=i_p\circ i_\infty^{-1}(\chi(z))z_\frakp^mz_{\frakpbar}^n\]
for $z\in\wh\cK^\x$.
\end{defn}

Via the reciprocity law map $\rec_\cK$, each \padic Galois character $\rho\colon G_\cK:=\Gal(\Qbar/\cK)\to \Cp^\x$
will be implicitly regarded as a \padic character $\rho\colon\cK^\x\bksl \wh \cK^\x\to\Cp^\x$.
We say that a \padic Galois character $\rho$ is \emph{locally algebraic} if $\rho=\wh\rho_\A$ is the \padic avatar of some Hecke character $\rho_\A$.
A locally algebraic character $\rho$ is called of infinity type $(m,n)$ if the associated Hecke character $\rho_\A$ is of infinity type $(m,n)$, and the conductor of $\rho$ is defined to be the conductor of $\rho_\A$. Note that if $\rho_\A$ is unramified at $\frakp$ and of infinity type $(m,n)$, then $\rho$ is crystalline at $\frakp$ as $\rho|_{G_{\cK_\frakp}}$ is an unramified twist of the $m$-th power of the \padic cyclotomic character. \subsubsection*{Modular forms}
In the remainder of this article, we fix $f\in S^{\rm new}_{2r}(\Gamma_0(N))$ to be an elliptic newform (\ie normalized eigenform for all Hecke operators) of weight $2r$ and level $N_f\divides N$. Let
\[f(q)=\sum_{n>0}\bfa_n(f) q^n\]
be the $q$-expansion of $f$ at the infinity cusp. Let $F$ be a finite extension of $\Qp$ containing the Hecke field of $f$, \ie the field generated by $\stt{\bfa_n(f)}_n$ over $\Q$. Let $\vp_f$ be the automorphic form attached to $f$, \ie $\vp_f:\A^\x\GL_2(\Q)\bksl\GL_2(\A)\to\C$ is the function satisfying
\[\begin{aligned}\vp_f(g_\infty u)=J(g_\infty,i)^{-2r}f(g_\infty i),\quad\text{ for }g_\infty\in \GL_2(\R)^+, u\in U_1(N_f),
\end{aligned}\]
and let $\pi=\ot'\pi_q$ be the irreducible cuspidal automorphic representation on
$\GL_2(\A)$ generated by $\vp_f$. Note that $\pi$ has trivial central character. Define the automorphic form $\vp^\flat_f$ by \beq\label{E:pdeprived}\vp_f^\flat(g)=\vp_f(g)-\bfa_p(f)p^{-r}\vp_f(g\gamma_p)+p^{-1}\vp_f(g\gamma_p^2),\eeq
where $\gamma_p=\pDII{1}{p}\in\GL_2(\Qp)\hookto \GL_2(\Qhat)$. Define the complex function $\bff^\flat\colon\bbH\xx\GL_2(\Qhat)\to\C$ by
\beq\label{E:adelic}\begin{aligned}\bff^\flat(\tau,g_f)&=\vp_f^\flat((g_\infty,g_f))J(g_\infty,i)^{2r}\abs{\det g_f}_{\A_f}^{r},\\
&\quad(g_\infty\in\GL_2(\R)^+,\,g_\infty i=\tau).\end{aligned}\eeq
Then there is a unique geometric modular form $f^\flat$ of weight $2r$ and level $\Gamma_0(Np^2)$ defined over $\cO_F$ such that:
\begin{itemize}\item $f^\flat(A_x,\eta_x,2\pii dw)=\bff^\flat(x)$ for $x\in \bbH\xx\GL_2(\Qhat)$,
\item with Fourier expansion \[f^\flat({\rm Tate}(q),\eta_{\can},\om_{\can})=\sum_{p\ndivides n}\bfa_n(f) q^n.\]
\end{itemize}
 The \padic avatar $\wh f^\flat\in V_p(N,\cO_F)$ of $f^\flat$ introduced in \subsecref{S:GME} is a \padic modular form of weight $2r$.

\subsubsection*{Explicit Waldspurger formula} We recall a result on the explicit calculation of toric period integrals in \cite{hsieh}. Let $c=\ctame p^n$ with $p\ndivides\ctame$ and $n\geq 0$. Put
\[\Pic\cO_c:=\cK^\x\bksl\wh\cK^\x/\wh\cO_c^\x.\]
If $a\in \wh\cK^{(c\frakp\frakN)\x}$ and $\fraka=a\wh\cK\cap\cO_c$ is the corresponding fractional ideal of $\cO_c$, we shall write $[a]=[\fraka]$ for its class in $\Pic\cO_c$. Let $\chi\colon\cK^\x\bksl \AK^\x/\wh\cO_c^\x\to\C^\x$ be an anticyclotomic Hecke character, and set
\[A(\chi)=\stt{\textrm{primes $q\divides D_K$ such that $\chi_q$ is unramified and $q\divides N_f$.}}.\]
We assume the following Heegner hypothesis:
\beqcd{Heeg'}
\text{ $N_f^-$ is a square-free product of primes ramified in $\cK$,}
\eeqcd
and that $(f,\chi)$ satisfies the condition
\beqcd{ST}
\bfa_q(f)\chi(\frakq)=-1\text{ for every }q\in A(\chi)\quad (q\OK=\frakq^2) .
\eeqcd
\begin{defn}\label{D:E3}Define the $\chi$-toric period by
\begin{align*}P_\chi(f^\flat):=\;&\sum_{[a]\in\Pic\cO_c}\vp^\flat_f((\varsigma_\infty,a\xi_c))\chi(a)\quad(\varsigma_\infty:=\pMX{\Im\CMP}{\Re\CMP}{0}{1})\\
=\;&(c\Im\CMP)^r\cdot\sum_{[a]\in \Pic\cO_{c}}\bff^\flat(\CMP,a\xi_{c})\cdot \chi\Abs_{\AK}^{-r}(a)\quad(\text{by }\eqref{E:adelic}).\end{align*}
\end{defn}
Let $\pi_K$ be the automorphic representation of $\GL_2(\A_K)$ obtained by the base change of $\pi$ to $K$, and let $L(s,\pi_K\ot\chi)$ be the automorphic $L$-function on $\GL_2(\A_K)$ attached to $\pi_K$ twisted by $\chi\circ\det$
\footnote{See \cite[Thm.\,20.6]{jacquet} for the existence of the quadratic base change, and  \cite[\S 11]{LNM114} for the definition of $L$-functions on $\GL(2)$.}. If $\chi$ has infinity type $(r+m,-r-m)$ with $m\geq 0$, define the algebraic central value $L^\alg(\onehalf,\pi_\cK\ot\chi)$ by
\[L^\alg(\onehalf,\pi_\cK\ot\chi)=\frac{\Gamma(2r+m)\Gamma(m+1)}{(4\pi)^{2r+2m+1}(\Im\CMP)^{2r+2m}}\cdot \frac{L(\onehalf,\pi_\cK\ot\chi)}{\Omega_\cK^{4(r+m)}},\]
and the \padic multiplier $e_\frakp(f,\chi)$ by
\[e_\frakp(f,\chi)=\begin{cases}
(1-\bfa_p(f)p^{-r}\chi_{\frakpbar}(p)+\chi_{\frakpbar}(p^2)p^{-1})^2&\text{ if }p\ndivides c,\\
\vep(\onehalf,\chi_\frakp)^{-2} &\text{ if }p\divides c.\end{cases}\]

\begin{prop}\label{T:1}Suppose that
\begin{enumerate}\item[{\rm (a)}] $\chi$ has infinity type $(r,-r)$ and $(c,N^+)=1$, \item[{\rm (b)}] \eqref{Heeg'} and \eqref{ST} hold for $(f,\chi)$.
\item[{\rm (c)}] The conductor of $\chi$ is $c\cO_K$.
\end{enumerate} Then we have \[
\begin{aligned}\left(\frac{P_\chi(f^\flat)}{\Omega_\cK^{2r}}\right)^2&=L^\alg(\onehalf,\pi_\cK\ot\chi)\cdot e_\frakp(f,\chi)\cdot\vep(\onehalf,\chi_\frakp)^{2}\cdot 2^{\#A(\chi)+3}u_\cK^2\sqrt{D_\cK}\cdot c(\Im\CMP)^{2r}\cdot\chi^{-1}(\frakN)\vep(f),\end{aligned}\]
where $u_\cK:=\#(\cO_\cK^\x)/2$ and $\vep(f):=\prod_{q}\vep(\onehalf,\pi_q)$ is the global root number of $f$.
\end{prop}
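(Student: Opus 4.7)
The plan is to deduce the formula from Waldspurger's period formula together with explicit local computations, following the strategy developed in \cite{hsieh}. The starting point is to reinterpret the sum $P_\chi(f^\flat)$ in \eqref{E:3} adelically as a toric period integral
\[
P_\chi(f^\flat) \;=\; \int_{K^\times \A^\times \backslash \A_K^\times} \varphi_f^\flat\bigl((\varsigma_\infty, t\xi_c)\bigr)\,\chi(t)\,dt,
\]
after fixing Haar measures so that $\mathrm{vol}(\widehat{\cO}_c^\times)^{-1}$ recovers the finite sum over $\Pic \cO_c$. The factor $(c\,\Im\CMP)^r$ then appears from comparing adelic measures and the classical normalization in \eqref{E:adelic}. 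Since $\chi$ is anticyclotomic of infinity type $(r,-r)$, the representation $\pi_K \otimes \chi$ is self-dual with central character trivial on $\A^\times$, so Waldspurger's formula applies.

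Under the Heegner hypothesis \eqref{Heeg'}, the Jacquet--Langlands transfer of $\pi$ to the quaternion algebra $B/\Q$ ramified exactly at $N^-\cdot\infty$ exists and admits $K$ as an embedded splitting field. Condition \eqref{ST} is exactly the local sign condition ensuring non-vanishing of local toric periods at places $q \in A(\chi)$, where the Tunnell--Saito dichotomy forces the relevant local Hom-space to live on the correct side. The second step is then to invoke the Waldspurger identity
\[
\left\lvert P_\chi(f^\flat)\right\rvert^2 \;=\; \frac{L(\tfrac{1}{2},\pi_K\otimes\chi)}{\text{(explicit factor)}} \cdot \prod_v \alpha_v(\xi_c,\chi_v),
\]
where each $\alpha_v$ is a normalized local toric integral of the chosen test vector attached to $\xi_c$ and $\gamma_c$ against $\chi_v$.

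The third, and main, step is the place-by-place calculation of $\alpha_v$. For $v = \infty$ this produces the Gamma factors and powers of $\Im\CMP$ that combine with $\Omega_K^{4r}$ in the denominator to yield $L^{\mathrm{alg}}(\tfrac{1}{2},\pi_K\otimes\chi)$. For split $q=\frakq\frakqbar$ with $q \nmid cNp$ the local integral is $1$ by the choice of spherical vector. For $q \mid N^+$, the element $\xi_q$ with respect to the $\frakq$-component realizes the line fixed by the Iwahori, and the computation gives an elementary factor contributing to $\chi^{-1}(\frakN)$. For $q \mid N^-$, the insertion of $\pMX{0}{-1}{-1}{0}$ in $\varsigma_q$ together with \eqref{ST} produces the factor $2^{\#A(\chi)}\vep(f)$ after translating via Jacquet--Langlands. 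For $q$ ramified in $K$ outside $N^-$ the contribution is a power of $2$ (collected into $2^{\#A(\chi)+3} u_K^2 \sqrt{D_K}$). The delicate part is at $p$: when $p \nmid c$, the modification $\varphi_f^\flat = \varphi_f - \bfa_p(f) p^{-r} R(\gamma_p)\varphi_f + p^{-1}R(\gamma_p^2)\varphi_f$ in \eqref{E:pdeprived} is exactly the one that produces the Euler-like factor $(1 - \bfa_p(f)p^{-r}\chi_{\frakpbar}(p) + \chi_{\frakpbar}(p^2)p^{-1})^2$ after pairing against the unramified $\chi_\frakp$; when $p \mid c$, one uses the twist $\gamma_{c,p}$ to align the test vector with the Jacquet--Langlands new vector of the toric character, and Gauss-sum identities such as \eqref{E:formulaepsilon} produce $\vep(\tfrac{1}{2},\chi_\frakp)^{-2}$, which then combines with the $\vep(\tfrac{1}{2},\chi_\frakp)^2$ at the end of the stated formula.

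The main technical obstacle is the local calculation at $p$ in the ramified case $p \mid c$: one must show that the test vector $\varphi_f^\flat$ twisted by $\xi_{c,p}\gamma_{c,p}$ pairs against $\chi_\frakp$ in a way that realizes the local new vector prescribed by the local Waldspurger formula, and that the resulting local integral evaluates to precisely $\vep(\tfrac{1}{2},\chi_\frakp)^{-2}$. Once all local factors are assembled and the archimedean and global constants are collected, one obtains the claimed identity for $(P_\chi(f^\flat)/\Omega_K^{2r})^2$. Since all of these calculations are carried out in detail in \cite{hsieh}, the cleanest presentation is to reduce the proposition directly to the explicit formula proved there, adapted to our normalization of $\xi_c$ and the differential $2\pi i\,dw$.
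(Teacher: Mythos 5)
Your overall strategy---interpret $P_\chi(f^\flat)$ as a toric period, apply the explicit Waldspurger formula from \cite{hsieh}, and account for local factors---is the same as the paper's, but the step you defer is exactly the step that constitutes the proof, and as written there is a genuine gap. The formula of \cite[Thm.~3.14]{hsieh} is proved for a \emph{specific} Whittaker-normalized test vector $\vp_\chi$, assembled from the local Whittaker functions $W^\flat_{\chi,v}$ of \emph{loc.cit.}, not for the translated $p$-depleted newform $g\mapsto\vp_f^\flat(g(\varsigma_\infty,\xi_c))$ that enters $P_\chi(f^\flat)$ via \eqref{E:3}. Consequently your normalized local integrals $\alpha_v(\xi_c,\chi_v)$---at the primes dividing $c$ (in particular at $p$ when $p\mid c$, with the $p$-depletion \eqref{E:pdeprived}), at $q\mid N^-$ with the Weyl-element twist in $\varsigma_q$, and at the archimedean place---are \emph{not} computed in \cite{hsieh}, and your sketch only asserts their values; the closing claim that ``all of these calculations are carried out in detail in \cite{hsieh}'' is therefore not accurate for this test vector. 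The paper bridges precisely this discrepancy by a global comparison of Whittaker expansions: one proves
\[\sum_{[u]\in\wh\cO_\cK^\x/\wh\cO_c^\x}\vp^\flat_f(g(\varsigma_\infty,u\xi_c))\chi(u)=\vp_\chi(g\ol{\bfc}\varsigma)\cdot \prod_{\frakq\divides\frakC}\frakg(\chi_\frakq^{-1})\cdot c^-\prod_{q\divides c^-}(1+1/q),\]
and deduces
\[P_\chi(\pi(\varsigma)\vp_\chi)=\frac{2N(\pi,\chi)}{c\sqrt{D_\cK}u_\cK}\cdot \prod_{\frakq\divides\frakC}\vep(1,\chi_\frakq)^{-1}\cdot P_\chi(f^\flat),\]
which, combined with the cited Waldspurger formula, is where the Gauss sums, the factor $c^-\prod_{q\mid c^-}(1+1/q)$, the measure constant $2/(u_\cK\sqrt{D_\cK})$, and ultimately the constants $c$, $u_\cK^2\sqrt{D_\cK}$, $\chi^{-1}(\frakN)$, $\vep(f)$ and $\vep(\onehalf,\chi_\frakp)^{2}$ in the statement actually come from. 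Without either this period comparison or an honest computation of your $\alpha_v$'s, none of these constants can be pinned down, so the argument as proposed does not close.

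A place-by-place computation of local toric integrals for the vector $\vp_f^\flat(\,\cdot\,(\varsigma_\infty,\xi_c))$ would indeed give an alternative (and essentially equivalent) route, and your identification of the delicate places is correct---the $p\nmid c$ case producing $(1-\bfa_p(f)p^{-r}\chi_{\frakpbar}(p)+\chi_{\frakpbar}(p^2)p^{-1})^2$ and the $p\mid c$ case producing $\vep(\onehalf,\chi_\frakp)^{-2}$ are the right answers---but these computations have to be carried out rather than quoted, and some of your bookkeeping is off: the factors $u_\cK^2\sqrt{D_\cK}$ and $c$ arise from comparing the Tamagawa measure with the finite sum over $\Pic\cO_c$ (and from the unfolding above), not from local integrals at ramified primes, and $A(\chi)$ concerns exactly the ramified primes dividing $N_f$, so attributing $2^{\#A(\chi)}$ to $N^-$ while also collecting ``ramified $q$ outside $N^-$'' into $2^{\#A(\chi)+3}$ double-counts the source of that power of $2$.
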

\begin{proof}We will follow the notations in \cite{hsieh}. Let $W_\chi^\flat=W_{\chi,p}^\flat\prod_{v\not =p} W_{\chi,v}\colon\GL_2(\A)\to\C$ be the Whittaker function defined in \cite[\S 3.6]{hsieh}, and let $\vp_\chi^\flat\colon\GL_2(\Q)\bksl\GL_2(\A)\to\C$ be the associated automorphic form given by
\[\vp_\chi(g)=\sum_{\al\in\Q}W_\chi^\flat(\pDII{\al}{1}g).\]
Let $\varsigma=(\varsigma_\infty,\varsigma^{(\infty)})\in\GL_2(\A)$ with $\varsigma^{(\infty)}$ as in \subsecref{subsec:CMII}, and define the toric period integral
\[P_\chi(\pi(\varsigma)\vp_\chi)=\int_{\cK^\x\A^\x\bksl\AK^\x}\vp_\chi(t\varsigma)\chi(t)dt,\]
where $dt$ is the Tamagawa measure on $\AK^\x/\A^\x$. Under the assumption (b), the explicit Waldspurger formula in \cite[Theorem 3.14]{hsieh} implies that
\beq\label{E:5}P_\chi(\pi(\varsigma)\vp_\chi)^2=\abs{D_\cK}^{-\onehalf}\cdot\frac{\Gamma(2r)}{(4\pi)^{2r+1}}\cdot e_\frakp(f,\chi)\cdot L(\onehalf,\pi_\cK\ot\chi)\cdot C'_\pi(\chi)N(\pi,\chi)^2,\eeq
where $N(\pi,\chi)=\prod_{q\divides c^-}L(1,\tau_{\cK_q/\Q_q})$ and $C'_\pi(\chi)$ is the constant
\begin{align*}C'_\pi(\chi)&=2^{\#(A(\chi))+3}(c^-)^{-1}\cdot\prod_{\frakq\divides \frakC\frakN,\,\frakq\not =\frakp}\vep(\onehalf,\pi_q\ot\chi_{\frakqbar})\\
&=2^{\#(A(\chi))+3}(c^-)^{-1}\cdot\prod_{\frakq\divides\frakN}\vep(\onehalf,\pi_q)\chi_\frakq^{-1}(\frakN)\prod_{\frakq\divides \frakC,\frakq\not=\frakp}\vep(\onehalf,\chi_{\frakq})^{-2}
\end{align*}
In the last equality, we used the formulae
\[\vep(s,\pi_q\ot\chi_{\frakqbar})=\vep(s,\pi_q\ot\chi_{\frakq}^{-1})=\begin{cases}\vep(s,\pi_q)\chi_\frakq^{-1}(\frakN)&\text{ if }q\divides N^+,\\
\vep(1-s,\chi_{\frakq})^{-2}&\text{ if }q\divides c.
\end{cases}\]

On the other hand, under assumption (a) one can verify that
\[\sum_{[u]\in\wh\cO_\cK^\x/\wh\cO_c^\x}\vp^\flat_f(g(\varsigma_\infty,u\xi_c))\chi(u)=\vp_\chi(g\ol{\bfc}\varsigma)\cdot \prod_{\frakq\divides\frakC}\frakg(\chi_\frakq^{-1})\cdot c^-\prod_{q|c^-}(1+1/q)\]
by comparing the Whittaker functions of the automorphic forms $\varphi^\flat_f$ and $\varphi_\chi$ on both sides, where $\bfc=(\bfc_\frakq)_\frakq\in\wh\cK^\x$ is the idele with $\bfc_\frakq=q^{\Ord_q(c)}$ if $\frakq\divides \frakC$ and $\bfc_\frakq=1$ if $q\ndivides \frakC$.
From this equation, we obtain
\begin{align*}
P_\chi(\pi(\varsigma)\vp_\chi)&=\frac{2}{\sqrt{D_\cK}u_\cK}\sum_{[a]\in\Pic\cO_\cK}\chi(a)\vp_\chi(a\varsigma)\\
&=\frac{2N(\pi,\chi)}{c^-\sqrt{D_\cK}u_\cK}\cdot \prod_{\frakq|\frakC}\frakg(\chi_\frakq^{-1})^{-1}\cdot\sum_{[a]\in\Pic\cO_\cK}\chi(a\bfc^{-1})\sum_{[u]\in\wh\cO_\cK^\x/\wh\cO_c^\x}\vp^\flat_f(\varsigma_\infty,a u\xi_c)\chi(u)\\
&=\frac{2N(\pi,\chi)}{c^-\sqrt{D_\cK}u_\cK}\cdot \prod_{\frakq|\frakC}\frakg(\chi_\frakq^{-1})^{-1}\chi_\frakq^{-1}(q^{\Ord_q{c}})\cdot\sum_{[a]\in\Pic\cO_c}
\vp^\flat_f(\varsigma_\infty,au\xi_c)\chi(u).
\end{align*}
We thus find 
\beq\label{E:4}P_\chi(\pi(\varsigma)\vp_\chi)=\frac{2N(\pi,\chi)}{c\sqrt{D_\cK}u_\cK}\cdot \prod_{\frakq|\frakC}\vep(1,\chi_\frakq)^{-1}\cdot P_\chi(f^\flat).\eeq
It is clear that the theorem follows from \eqref{E:5} and \eqref{E:4}.
\end{proof}

\subsubsection*{Analytic construction of the \padic $L$-function}
Let $\rcf{p^\infty}=\cup_n\rcf{p^n}$ be the ring class field of conductor $p^\infty$,
and let $\wtd\Gamma:=\Gal(\rcf{p^\infty}/\cK)$. Then the Galois group $\Gamma^-_K$ 
of the anticyclotomic $\Zp$-extension is the maximal free quotient of $\wtd\Gamma$.
Denote by $\cC(\wtd\Gamma,\cO_{\Cp})$ the space of continuous $\cO_{\Cp}$-valued functions on $\wtd\Gamma$,
and let $\frakX_{p^\infty}\subset \cC(\wtd\Gamma,\cO_{\Cp})$ be the set of locally algebraic \padic characters $\rho:\wtd\Gamma\to\cO_{\Cp}^\x$.

Let $\rec_\frakp:\Qp^\x=\cK_\frakp^\x\to\Gal(\cK^\ab/\cK)\to \wtd\Gamma$ be the local reciprocity law map.
For $\rho\in\frakX_{p^\infty}$, 
we define $\rho_\frakp:\Qp^\x\to \Cp^\x$ by
\[\rho_\frakp(\beta)=\rho(\rec_\frakp(\beta)),\] and for $\rho\in \cC(\wtd\Gamma,\cO_{\Cp})$, we define $\rho|[\fraka]:\Zp^\x\to\cO_{\Cp}$ by \[\rho|[\fraka](x)=\rho(\rec_\frakp(x)\sg_\fraka^{-1})=\rho(\rec_\frakp(x)\rec_\cK(a)).\]

For each $a\in\wh\cK^{(\ctame \frakN p)\x}$ with associated fractional ideal $\fraka\subset\cO_{\ctame}$,
let $(A_\fraka,\eta_\fraka)$ be the CM elliptic curve with level structure introduced in \subsecref{subsec:CMpts}.
Let $t_\fraka$ be the canonical Serre--Tate coordinate of $\wh f^\flat$ around $\bfx_\fraka=[(A_\fraka,\eta_\fraka)]\ot_\cW\Fpbar$, and set
\beq\label{E:2}\wh f^\flat_{\fraka}(t_\fraka):=\wh f^\flat(t_\fraka^{\rmN(\fraka)^{-1}\sqrt{-D_K}^{-1}})\in\cW\powerseries{t-1}\quad(\rmN(\fraka)=\abs{a}_{\AK}^{-1}\ctame^{-1}).\eeq

\begin{defn}[Analytic anticyclotomic \padic $L$-functions]\label{D:padicL}Let $\brch$ be an anticyclotomic Hecke character of infinity type $(r,-r)$,
 and let $\ctame\OK$ be the prime-to-$p$ part of the conductor of $\brch$. Define the \padic measure $\sL_{\frakp,\brch}(f)$ on $\wtd\Gamma$ by
\[\sL_{\frakp,\brch}(f)(\rho)=\sum_{[\fraka]\in \Pic\cO_{\ctame}}\brch(\fraka)\rmN(\fraka)^{-r}\cdot \left(\wh f^\flat_{\fraka}\ot\brch_\frakp\rho|[\fraka]\right)(A_\fraka,\eta_\fraka).\]
We shall also view $\sL_{\frakp,\brch}(f)$ as an element in the semi-local ring $\cW\powerseries{\wtd\Gamma}$. \end{defn}
The $p$-adic measure $\sL_{\frakp,\brch}(f)$ satisfies the following interpolation formula at characters of infinity type $(m,-m)$ with $m\geq 0$. In what follows, we assume \eqref{Heeg'}, \eqref{ST} for $(f,\psi)$ and that $(\ctame,pN^+)=1$.
\begin{prop}\label{P:interpolationformula}
 If $\wh\phi\in\frakX_{p^\infty}$ is the \padic avatar of a Hecke character $\phi$ of infinity type $(m,-m)$ with $m\geq 0$ and $p$-power conductor, then
\[\begin{aligned}\left(\frac{\sL_{\frakp,\brch}(f)(\wh\phi)}{\Omega_p^{2r+2m}}\right)^2&=L^\alg(\onehalf,\pi_\cK\ot\brch\phi)\cdot e_\frakp(f,\brch\phi)\cdot \phi(\frakN^{-1})\cdot 2^{\#A(\brch)+3}\ctame \vep(f)\cdot u_\cK^2\sqrt{D_\cK}.\end{aligned}\]
\end{prop}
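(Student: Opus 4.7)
The plan is to reduce the claim to the explicit Waldspurger formula of \propref{T:1} applied to the anticyclotomic Hecke character $\chi:=\brch\phi$, which has infinity type $(r+m,-r-m)$ and conductor $\ctame p^n\OK$, where $p^n$ is the conductor of $\phi$. Since $\phi$ has $p$-power conductor and $\brch$ has conductor prime to $p$, we have $A(\brch)=A(\chi)$, and the hypotheses \eqref{Heeg'}, \eqref{ST} transfer from $(f,\brch)$ to $(f,\chi)$.

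First, I would unfold the definition of $\sL_{\frakp,\brch}(f)(\wh\phi)$ class-by-class. For each $[\fraka]\in\Pic\cO_{\ctame}$, the restriction of $\brch_\frakp\wh\phi|[\fraka]$ to $\Z_p^\times$ is a primitive Dirichlet character modulo $p^n$, so \propref{P:1} expresses $(\wh f^\flat_{\fraka}\ot\brch_\frakp\wh\phi|[\fraka])(A_\fraka,\eta_\fraka)$ as a finite sum
\[
p^{-n}\frakg(\brch_\frakp\wh\phi|[\fraka])\sum_{u\in(\Z/p^n\Z)^\times}(\brch_\frakp\wh\phi|[\fraka])^{-1}(u)\cdot\wh f^\flat(x_\fraka*\bfn(up^{-n})).
\]
Next, using the relation \eqref{E:levelstr} together with Shimura reciprocity \eqref{E:GaloisAction}, I would reinterpret each translated CM point $x_\fraka*\bfn(up^{-n})$ as the CM point $[(A_{\fraka'},\eta_{\fraka'})]$ attached to a fractional ideal $\fraka'$ of the order $\cO_c$, with $c=\ctame p^n$. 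The double sum over $[\fraka]$ and $u$ then reassembles into a single sum indexed by $\Pic\cO_c$, matching the structure of $P_{\chi}(f^\flat)$ in \eqref{E:3}.

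To pass from the $p$-adic values $\wh f^\flat(A_{\fraka'},\eta_{\fraka'})$ to the classical values $\bff^\flat(\CMP,\bar{a}'^{-1}\xi_c)$, I would use the CM period relation $\Omega_\cK\cdot 2\pii\, dw=\Omega_p\cdot\wh\om(\eta_{\fraka,p})$ from \subsecref{SS:CMperiods} together with the weight-$(2r)$ homogeneity (G3) of $f^\flat$, producing a factor $(\Omega_p/\Omega_\cK)^{2r}$. The extra contribution of $(\Omega_p/\Omega_\cK)^{2m}$ needed to account for the infinity type $(m,-m)$ of $\phi$ arises from the Katz-type comparison between the Serre--Tate differential operator $(t\,d/dt)^m$ of \lmref{L:1} and the Maass--Shimura operator acting on nearly holomorphic forms at CM points; this is the mechanism by which the archimedean component of $\phi_\infty$ enters the interpolation formula.

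Assembling Steps 1--3, I would obtain an identity of the form
\[
\sL_{\frakp,\brch}(f)(\wh\phi)=(\Omega_p/\Omega_\cK)^{2r+2m}\cdot C(\chi)\cdot P_{\chi}(f^\flat),
\]
where $C(\chi)$ is an explicit product of Gauss sums coming from the class-by-class rearrangement. Squaring both sides and substituting the formula from \propref{T:1} applied to $\chi$, the factors $\ctame(\Im\CMP)^{2r+2m}$ and $\vep(\onehalf,\chi_\frakp)^2$, combined with $C(\chi)^2$, collapse into the $p$-adic multiplier $e_\frakp(f,\chi)$ --- via the two cases $p\nmid c$ and $p\mid c$ in the definition of $e_\frakp$ --- while $\chi^{-1}(\frakN)=\brch^{-1}(\frakN)\phi(\frakN^{-1})$ contributes $\phi(\frakN^{-1})$ (the $\brch^{-1}(\frakN)$ being absorbed into $C(\chi)^2$). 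The principal obstacle will be the precise bookkeeping of local Gauss sums and $\vep$-factors across the classes $[\fraka]$, in order to verify that after summation the Gauss-sum contribution is indeed the correct $p$-Euler multiplier $e_\frakp(f,\chi)$ and that the remaining constants match exactly the numerical factor $2^{\#A(\brch)+3}\ctame\vep(f)u_\cK^2\sqrt{D_\cK}$ as stated.
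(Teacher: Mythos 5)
Your outline reproduces the paper's own argument in the case $m=0$, $n>0$: unfold the measure class-by-class via \propref{P:1}, use \eqref{E:levelstr} together with Shimura reciprocity to reassemble the double sum over $\Pic\cO_{\ctame}$ and $u\in(\Z/p^n\Z)^\x$ into a single sum over $\Pic\cO_c$, pass from $p$-adic to complex CM values through the period ratio $(\Omega_p/\Omega_\cK)^{2r}$, and then quote \propref{T:1}; that part is sound. Two caveats even in this range: when $n=0$ the character $\chi_\frakp$ is trivial and \propref{P:1} (which requires a primitive character of conductor $p^n$, $n\geq 1$) does not apply literally, so you need the separate argument via the $p$-depletion \eqref{E:pdeprived} and the Hecke eigenform property, as the paper does; and the Gauss sum $\frakg(\chi_\frakp)$ must be converted into $\vep(\onehalf,\chi_\frakp)^{\pm1}$ via \eqref{E:formulaepsilon} before it can ``collapse'' into $e_\frakp(f,\chi)$ against the factor $\vep(\onehalf,\chi_\frakp)^2$ of \propref{T:1}.

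The genuine gap is the case $m>0$. \propref{T:1} is stated only under hypothesis (a), i.e.\ for $\chi$ of infinity type $(r,-r)$, so you cannot apply it to $\chi=\brch\phi$ of infinity type $(r+m,-r-m)$. Your Step 3 correctly observes that evaluating the measure at $\wh\phi$ produces $\theta^m\wh f^\flat$ with $\theta=t\,d/dt$, and that Katz's comparison of $\theta^m$ with the Maass--Shimura operator at CM points accounts for the extra $(\Omega_p/\Omega_\cK)^{2m}$; but after that you still need an explicit Waldspurger/toric-period formula for the resulting nearly holomorphic vector (equivalently, for characters of infinity type $(r+m,-r-m)$), with the archimedean zeta integral recomputed --- this is where the factor $\Gamma(2r+m)\Gamma(m+1)$ in $L^\alg$ originates --- and none of this is contained in \propref{T:1}. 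The paper does not redo this computation: it proves the formula directly only for $m=0$, uses that case to identify $\sL_{\frakp,\brch}(f)$ with the $p$-adic $L$-function $\sL_\frakp(\pi,\brch)$ of \cite{hsieh} up to the explicit constant $2^{\#A(\brch)+3}\ctame\vep(f)\sqrt{D_\cK}$, and then imports the interpolation formula for general $m\geq 0$ from \emph{loc.cit.} So to complete your plan you must either prove the higher-weight analogue of \propref{T:1} (essentially redoing the computations of \cite{hsieh} or \cite{BDP}) or insert the same comparison step with \cite{hsieh}.
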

\begin{proof}
\def\tchi{\chi}
Suppose that $m=0$. Then $\wh\phi=\phi$ is a finite order character, and $\tchi:=\brch\phi$ is an anticyclotomic Hecke character of infinity type $(r,-r)$. Let $c\cO_\cK$ be the conductor of $\tchi$ (so $c=\ctame p^n$). Suppose that $n>0$.
By \defref{D:padicL} and \propref{P:1}, we have
\beq\label{E:formulaL}\begin{aligned}
\sL_{\frakp,\brch}(f)(\phi)&=\ctame^r\sum_{[a]\in \Pic\cO_{\ctame}}\left(\wh f^\flat_\fraka\ot\tchi_\frakp\right)(x_\fraka)\tchi\Abs_{\A_\cK}^r(a)\\
&=p^{-n}\frakg(\tchi_\frakp)\ctame^r\sum_{[a]\in \Pic\cO_{\ctame}}\tchi\Abs_{\A_\cK}^{-r}(\ol{a}^{-1})\sum_{u\in (\Zp/p^n\Zp)^\x}\wh f^\flat(x_\fraka*\bfn(up^{-n}))\tchi_\frakp(u^{-1}).
\end{aligned}\eeq
For $z\in\Qp^\x$, we use $z_{\frakpbar}$ (resp. $z_\frakp$) to denote the finite idele in $\wh\cK^\x$ with $z$ at $\frakpbar$ (resp. $\frakp$) and $1$ at all the other places. Since $f^\flat$ is of weight $2r$ and level $\Gamma_0(Np^2)$, a direct calculation shows that
\begin{align*}\wh f^\flat(x_\fraka*\bfn(up^{-n}))&=\bff^\flat(\CMP,\ol{a}^{-1}\xi_{\ctame}\bfn(up^{-n}))\cdot \frac{\Omega_p^{2r}}{\Omega_\cK^{2r}}\\
&=\bff^\flat(\CMP,\ol{a}^{-1}u_{\frakpbar}p_{\frakpbar}^{-n}\xi_{\ctame p^n}))\cdot \frac{\Omega_p^{2r}}{\Omega_\cK^{2r}}\quad(u\in\Zp^\x),\end{align*}
where $(\Omega_\cK,\Omega_p)$ are the periods defined in \subsecref{SS:CMperiods}. Note that we used \eqref{E:levelstr} in the last equation. We thus find
\begin{align*}
\frac{\sL_{\frakp,\brch}(f)(\phi)}{\Omega_p^{2r}}&=\frac{p^{-n}\frakg(\tchi_\frakp)\ctame^r}{\Omega_\cK^{2r}}\cdot\sum_{[a]\in \Pic\cO_{\ctame}}\sum_{u\in \wh\cO_{\ctame}^\x/\wh\cO_{c}^\x}\tchi\Abs_{\AK}^{-r}(\ol{a}^{-1}u_{\frakpbar})\bff^\flat(\CMP,\ol{a}^{-1}u_{\frakpbar}p_{\frakpbar}^{-n}\cdot \xi_{c})\\
&=\frac{p^{-n}\frakg(\tchi_\frakp)\ctame^r}{\Omega_\cK^{2r}}\cdot\sum_{[a]\in \Pic\cO_{c}}\tchi\Abs_{\A_\cK}^{-r}(a)\bff^\flat(\CMP,ap_{\frakpbar}^{-n}\cdot \xi_{c})\\
&=\frac{\vep(1,\tchi_{\frakpbar})\tchi_{\frakpbar}(-1)c^r}{\Omega_\cK^{2r}}\cdot\sum_{[a]\in \Pic\cO_{c}}\tchi\Abs_{\A_\cK}^{-r}(a)\bff^\flat(\CMP,a\xi_{c}) \quad(\text{by  \eqref{E:formulaepsilon}}).
\end{align*}
Therefore, according to \defref{D:E3} we obtain
\[\frac{\sL_{\frakp,\brch}(f)(\phi)}{\Omega_p^{2r}}=\frac{\vep(\onehalf,\tchi_{\frakp})^{-1}}{\Omega_\cK^{2r}}\cdot P_{\tchi}(f^\flat)\cdot p^{-\frac{n}{2}}(\Im\CMP)^{-r}.\]
The proposition for the case $m=0$ and $n>0$ now follows from \thmref{T:1}.
If $n=0$, \ie $\chi_\frakp=\bfone$ is the trivial character on $\Zp^\x$, then one can use \eqref{E:pdeprived} and the fact that $\vp_f$ is a Hecke eigenform to show that $\wh f^\flat_\fraka\ot\tchi_\frakp(x_\fraka)=\wh f^\flat_\fraka(x_\fraka)$, so \eqref{E:formulaL} is still valid, and as above the proposition also follows in this case.

For general $m\geq 0$, comparing the interpolation formulas for $\sL_{\frakp,\brch}(f)$ and for the \padic $L$-function $\sL_{\frakp}(\pi,\psi)$ constructed in \cite[Thm.\,A]{hsieh} at $p$-ramified finite order characters $(m=0)$, we find that $\sL_{\frakp,\brch}(f)=u\cdot \sL_{\frakp}(\pi,\brch)$ with $u=2^{\#A(\brch)+3}\ctame \vep(f)\sqrt{D_\cK}$, and hence the general interpolation formulae of $\sL_{\frakp,\psi}(f)$ can be deduced from those of  $\sL_{\frakp}(\pi,\psi)$ in \emph{loc.cit.}. We omit the details.
\end{proof}
We now prove the nonvanishing of the \padic $L$-function $\sL_{\frakp,\brch}(f)$.
\begin{thm}\label{T:nonvanishing}Suppose $(N_f,D_\cK)=1$. For all but finitely many $\phi\in \frakX_{p^\infty}$, we have $\sL_{\frakp,\brch}(f)(\phi)\not =0$.
\end{thm}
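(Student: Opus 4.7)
The plan is to reduce Theorem~\ref{T:nonvanishing} to the main non-vanishing result of \cite{hsieh} for the anticyclotomic $p$-adic $L$-function $\sL_\frakp(\pi,\brch)$, exploiting the bridge built in the proof of Proposition~\ref{P:interpolationformula}. There, by comparing the interpolation formulas of $\sL_{\frakp,\brch}(f)$ and $\sL_\frakp(\pi,\brch)$ at $p$-ramified finite-order characters, we already established the identity
\[
\sL_{\frakp,\brch}(f) \;=\; u \cdot \sL_\frakp(\pi,\brch) \qquad\text{in }\cW\powerseries{\wtd\Gamma},
\]
with $u = 2^{\#A(\brch)+3}\ctame\,\vep(f)\sqrt{D_\cK}$ a nonzero constant. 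Since $u$ is a scalar, it suffices to prove the corresponding non-vanishing assertion for $\sL_\frakp(\pi,\brch)$.

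Next, I would invoke the main non-vanishing theorem of \cite{hsieh}: under our Hypothesis~(H) together with the extra assumption $(N_f,D_\cK)=1$, the anticyclotomic $\mu$-invariant of $\sL_\frakp(\pi,\brch)$ vanishes in every branch of the semi-local Iwasawa algebra $\cW\powerseries{\wtd\Gamma}$. The argument in \emph{loc.\,cit.} combines Cornut--Vatsal-style equidistribution of CM points with the explicit Waldspurger formula recalled in Proposition~\ref{T:1}; the substantive content is the verification that the toric periods appearing there are not all simultaneously divisible by $p$, and this is the only deep ingredient we borrow from outside.

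Finally, one converts component-wise non-vanishing into finiteness of zeros over $\frakX_{p^\infty}$ by a routine Weierstrass argument. Because $p=\frakp\frakpbar$ splits in $\cK$, the Galois group $\wtd\Gamma=\Gal(\rcf{p^\infty}/\cK)$ has the form $\Delta\times\Gamma^-_K$ with $\Delta$ finite and $\Gamma^-_K\cong\Zp$. After enlarging $\cW$ to contain enough roots of unity, the semi-local ring decomposes as
\[
\cW\powerseries{\wtd\Gamma} \;\cong\; \prod_{\eta\in\widehat\Delta}\cW\powerseries{\Gamma^-_K}\;\cong\;\prod_{\eta}\cW\powerseries{T}.
\]
In each factor, the $\eta$-component of $\sL_\frakp(\pi,\brch)$ is nonzero (indeed, has $\mu=0$), hence is a unit times a distinguished polynomial by Weierstrass preparation, and consequently vanishes at only finitely many continuous $\Cp^\times$-valued characters of $\Gamma^-_K$. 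Any $\phi\in\frakX_{p^\infty}$ is evaluated through the factor indexed by $\phi|_\Delta$, and summing over the finitely many $\eta\in\widehat\Delta$ yields the desired conclusion.

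The only genuine obstacle is Hsieh's non-vanishing theorem itself; the identification with $\sL_\frakp(\pi,\brch)$ was carried out in Proposition~\ref{P:interpolationformula}, and the Weierstrass step is essentially bookkeeping.
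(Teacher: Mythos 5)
Your final Weierstrass-preparation/branch-decomposition step and the identification $\sL_{\frakp,\brch}(f)=u\cdot\sL_{\frakp}(\pi,\brch)$ from the proof of \propref{P:interpolationformula} are fine, but the key input you invoke is not justified under the stated hypotheses. You appeal to the vanishing of the anticyclotomic $\mu$-invariant of $\sL_{\frakp}(\pi,\brch)$ ``under Hypothesis (H) and $(N_f,D_\cK)=1$''. That is a mod-$p$ statement, and Hsieh's results of this type carry hypotheses at the residual prime (coprimality conditions and, crucially, absolute irreducibility of the residual representation restricted to $G_K$) which are not among the assumptions of the theorem. The hypothesis $(N_f,D_\cK)=1$ only guarantees that $f$ is not CM over $\cK$, hence that $\rho_{f,\ell}|_{G_K}$ is irreducible for \emph{every} $\ell$ and that $\bar\rho_{f,\ell}|_{G_K}$ is absolutely irreducible for all but finitely many $\ell$; it gives no control at the fixed prime $p$, which may be one of the finitely many bad primes. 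So the assertion $\mu=0$ (indeed, any mod-$p$ nonvanishing statement) is a genuine gap as you use it; it is also far more than the theorem needs. (A smaller inaccuracy: the $\mu$-invariant proofs in \cite{hsieh} rest on Hida's linear-independence/Zariski-density technique for CM points in the mod-$p$ Igusa tower rather than on Cornut--Vatsal equidistribution.)

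The paper's proof is engineered precisely to avoid any hypothesis at $p$: one only needs $\sL_{\frakp,\brch}(f)$ to be not identically zero, and this is extracted from Hsieh's Theorem~C, a nonvanishing theorem for the central values $L^{\alg}(\onehalf,\pi_\cK\ot\brch\phi)$ modulo an \emph{auxiliary} prime $\ell\neq p$ (the roles of $p$ and $\ell$ being switched), where $\ell$ is chosen prime to $pND_\cK\prod_{q\mid c^-}(1+q)$ and such that $\bar\rho_{f,\ell}|_{G_K}$ is absolutely irreducible --- a choice made possible exactly by the non-CM consequence of $(N_f,D_\cK)=1$. Nonvanishing mod $\ell$ forces nonvanishing of the corresponding complex central values for all but finitely many finite-order $\phi\in\frakX_{p^\infty}$, hence, via the interpolation formula of \propref{P:interpolationformula}, the measure $\sL_{\frakp,\brch}(f)$ is nonzero, and the $p$-adic Weierstrass preparation theorem finishes the argument. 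To repair your proposal, either replace the $\mu$-invariant input by this auxiliary-prime argument, or add to the theorem the residual hypotheses at $p$ that Hsieh's $\mu=0$ theorem actually requires (which would weaken the statement).
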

\begin{proof}Since $f$ has conductor prime to $D_\cK$, $f$ can not be a CM form arising from $\cK$, and hence the $\ell$-adic representation $\rho_{f,\ell}$ is irreducible when restricted to $G_K$ for every prime $\ell$. Therefore, it is well-known that there exist infinitely many primes $\ell$ such that:
\begin{itemize}\item $\ell$ is prime to $pND_\cK\prod_{q|c^-}(1+q)$,
\item the residual Galois representation $\bar\rho_{f,\ell}|_{G_K)}$ is absolutely irreducible.
\end{itemize}
By \cite[Theorem\,C]{hsieh}, the central $L$-values $\stt{L^{\alg}(1/2,\pi_\cK\ot\brch\phi)}$ are non-zero modulo $\ell$ for all but finitely many finite order characters $\phi\in\frakX_{p^\infty}$. (Note that the roles of $p$ and $\ell$ have been switched here.) In particular, this implies that $\sL_{\frakp,\brch}(f)$ does not vanish identically, and hence the theorem follows from \padic Weierstrass preparation theorem.
\end{proof}


\def\frakpbar{{\ol{\frakp}}}
\def\loc{{\rm loc}}
\def\rmD{\mathrm D}
\def\dR{{\rm dR}}
\def\cris{{\rm cris}}
\def\DdR{\bD_\dR}
\def\rmt{{\rm t}}
\def\Iw{{\rm Iw}}
\def\pcyc{\vep_{\rm cyc}}
\def\rmNS{{\rm NS}}
\def\OKo{\cO}
\def\Initial{A}
\def\Hclass{z}
\section{Generalized Heegner cycles}

\subsection{Definitions}\label{sec:def-Heegner}
We continue to let $f\in S_{2r}^{\rm new}(\Gamma_0(N))$ be a newform of weight $2r$ and level $N$. We assume the (strong) Heegner condition
\beqcd{Heeg} \text{$N$ is a product of primes split in $\cK$}.\eeqcd  Thus \eqref{Heeg'} and \eqref{ST} will automatically hold. Let $K=\Q(\sqrt{-D_K})$ be the imaginary quadratic field of discriminant $-D_K$.
If $r>1$, we further assume that
\beqcd{can}\text{either $D_K>3$ is odd, or $8\mid D_K$}.\eeqcd
This assumption ensures the existence of canonical elliptic curves in the sense of Gross
(see \cite[Thm.~0.1]{yang2004}). We shall fix a canonical elliptic curve $A$ with CM by $\cO_K$, which is characterized by the following properties:
\begin{itemize}
\item $\Initial$ is equipped with CM by $[\cdot]\colon\cO_K\iso\End \Initial$.
\item There is a complex uniformization $\xi\colon\C/\cO_K\iso \Initial(\C)$.
\item $\Initial$ is a $\Q$-curve defined over $H_K^+$, where $H_K^+=\Q(j(\cO_K))$ is the real subfield of the Hilbert class field $H_K$ of $K$.
\item The conductor of $\Initial$ is only divisible by prime factors of $D_K$.
\end{itemize}

For each positive integer $c$, let $\sC_c:=\xi(c^{-1}\cO_{c}/\cO_K)\subset \Initial$ be a cyclic subgroup of order $c$. The elliptic curve $A/\sC_c$ is defined over the real subfield $\Q(j(\cO_c))$ of the ring class field
$\rcf{c}$ of conductor $c$. Let $\vphi_c\colon\Initial_{/\rcf{c}}\to A_c{}_{/\rcf{c}}$ be the isogeny given by the natural quotient map. Then $A/\sC_c$ is equipped with the complex uniformization $A_c\iso \C/\cO_{c}$ such that $\varphi_c\colon\C/\cO_K\to\C/\cO_{c}$ is given by $z\mapsto cz$. Thus we see that the elliptic curve $A_c$ introduced in\subsecref{subsec:CMpts} descends to the elliptic curve $\Initial/\sC_c$, still denoted by $A_c$ in the sequel.

For any ideal $\fraka$ of $\cO_c$, in this section we always assume that $\fraka\cO_K$ is prime to $cD_K\frakp\frakN$. Let $\fraka$ be an ideal of $\cO_{c}$ and recall that $\sg_\fraka\in\Gal(K^\fraka/K)$ is the image of $\fraka$ under the Artin map, where $K^\fraka$ is the maximal abelian $\fraka$-ramified extension of $K$.
Then, by the main theorem of complex multiplication (\cf \cite[Prop.~1.5, p.42]{de_shalit}), we have $A_\fraka=A_c^{\sg_\fraka}$ and
the isogeny $\lam_\fraka\colon A_c\to A_\fraka$ in \subsecref{subsec:CMpts} is actually defined over $\rcf{c}$ and characterized by the rule
\beq\label{E:Amult.norm}\lam_\fraka(x)=\sg_\fraka(x)\quad\text{ for all }x\in \Initial[m],\,(m,\rmN(\fraka))=1.\eeq

Define the isogeny
\[\varphi_\fraka:=\lam_\fraka\circ\varphi_c\colon\Initial{}_{/\rcf{c}}\longto A_\fraka{}_{/\rcf{c}},\]
and let $\Gamma_\fraka$ be the graph
\[\Gamma_\fraka=\stt{\varphi_\fraka(z),z)\mid z\in \Initial}\subset A_\fraka\x \Initial.\]
Let $x_\fraka=[(A_\fraka,\eta_\fraka)]\in Y_1(N)(K_c)$ be the CM point associated to $\fraka$ as in the last paragraph of \subsecref{subsec:CMpts},
and let $\sA$ be the universal elliptic curve over $Y_1(N)$. Then $x_\fraka$ determines an embedding $i_{x_\fraka}:A_\fraka\to \sA$, and we define
\[\cY_\fraka=(i_{x_\fraka}\x{\rm id})(\Gamma_\fraka)=\stt{(i_{x_\fraka}(\varphi_\fraka(z)),z)\mid z\in \Initial}\subset \sA\x \Initial.\]
Denote by $W_{2r-2}$ the Kuga--Sato variety of dimension $2r-1$ (\cf\cite[p.1056]{BDP}). Following \cite[p.1063]{BDP}, define the cycle $\Upsilon_\fraka$ in the generalized Kuga--Sato variety $X_{2r-2}:=W_{2r-2}\times \Initial^{2r-2}$
by \[\Upsilon_\fraka=\cY_\fraka^{2r-2}\subset (i_{x_\fraka}(A_\fraka)\times\Initial)^{2r-2}\hookto X_{2r-2}. \]
Let $\ep_X=\ep_W\times\ep_A$, with $\ep_{W}\in\Z[\Aut (W_{2r-2})]$ and $\ep_A\in \Z[\Aut(\Initial^{2r-2})]$ the idempotents defined in \cite[(2.1.2),\,(1.4.4)]{BDP}. The following definition is given in \cite[p.1063]{BDP}.
\begin{defn}\label{D:Heegnercycles}
The \emph{generalized Heegner cycle} $\Delta_{\varphi_\fraka}$ associated to the isogeny $\varphi_\fraka$ is
\[\Delta_{\varphi_\fraka}:=\ep_X[\Upsilon_\fraka]\in {\rm CH}^{2r-1}(X_{2r-2}/\rcf{c})_{0,\Q}.\]
\end{defn}
\subsection{Generalized Heegner classes (I)}\label{SS:GHCI}
Let $p$ be a prime with $p\ndivides 2(2r-1)!N\varphi(N)$. Let $F$ be a finite extension of $\Qp$ containing the Hecke field of $f$.
Let $V_f$ be the two-dimensional \padic representation of $G_\Q$ over $F$ attached to the newform $f$ by Deligne,
and denote by $V_f(r)$ the Tate twist $V_f\ot\e_{\rm cyc}^r$, where $\e_{\rm cyc}$ is the \padic cyclotomic character. Following \cite[\S 3.1]{BDP}, we consider the \padic Abel--Jacobi map
\begin{align*}\Phi_{\et,f}:{\rm CH}^{2r-1}(X_{2r-2}/\rcf{c})_0&\longto H^1(\rcf{c},\ep_X
H_{\et}^{4r-3}(X_{2r-2}{}_{/\Qbar},\Zp)(2r-1))\,\\
&\longto H^1(K,\ep_WH^{2r-1}(W_{2r-2}{}_{/\Qbar},\Zp)(r))\otimes\Sym^{2r-2}H^1_{\et}(A_{/\Qbar},\Zp)(r-1))\\
&\longto H^1(\rcf{c},T\ot S^{r-1}(A)),
\end{align*}
where $T$ is the Galois stable $\cO_F$-lattice in $V_f(r)$ in \cite[\S 3]{nekovar-invmath}, and $S^{r-1}(A)$ is the $G_{H_K}$-module
\[S^{r-1}(A):=\Sym^{2r-2}T_p(A)(1-r)\]
with $T_p(A)$ the \padic Tate module of $A$. For every ideal $\fraka$ of $\cO_c$,
define the \emph{generalized Heegner class} $\Hclass_{f,\fraka}$ associated to $\fraka$ by
\beq\label{E:defnGHC}
\Hclass_{f,\fraka}:=\Phi_{\et,f}(\Delta_{\varphi_\fraka})\in H^1(\rcf{c},T\ot S^{r_1}(A)).
\eeq
In the following, we shall simply write $\Hclass_{f,c}$ for $\Hclass_{f,\cO_c}$.

\subsection{Norm relations}
\begin{lm}\label{L:NS.norm}If $D\subset (A_\fraka \times A)^{2r-2}$ is a cycle of codimension $r-1$ such that $D$ is zero in the N\'eron--Severi group of ${\rm NS}(A_\fraka\times A)^{2r-2}$, then the \padic Abel--Jacobi image of $\ep_X (i_{x_\fraka})_*(D)$ in $H^1(\rcf{c},T\ot S^{r-1}(\Initial))$ is also trivial. \end{lm}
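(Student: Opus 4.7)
The plan is to show that the étale cohomology class of $\ep_X (i_{x_\fraka})_* D$ already vanishes after the projection $\ep_X$, whereupon the standard factorization of the \padic Abel--Jacobi map through the cycle class map in the Hochschild--Serre spectral sequence forces the image in $H^1(\rcf{c},T\ot S^{r_1}(A))$ to be trivial. The whole argument is local on the fiber at $x_\fraka$, so we never leave the abelian variety $(A_\fraka\x A)^{2r_1}$ and its pushforward to $X_{2r_1}$.

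First I would unpack the role of $\ep_X=\ep_W\x \ep_A$ on cohomology. By the definition in \cite{BDP}, $\ep_A$ acts as a nontrivial idempotent on $H^*_{\et}(\bar A^{2r_1},\Z_p)$ which cuts out $\Sym^{2r_1}H^1_{\et}(\bar A,\Z_p)$ and kills every other K\"unneth summand; in particular it annihilates any class whose K\"unneth decomposition involves an $H^0$ or $H^2$ in even one of the $2r_1$ copies of $A$. The analogous statement holds for the projector on $H^*_{\et}(\bar A_\fraka^{2r_1},\Z_p)$ obtained by restricting $\ep_W$ to the fiber of $W_{2r_1}$ over $x_\fraka$.

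Next I would decompose the cycle class of $D$ via K\"unneth. The N\'eron--Severi group of a product of abelian varieties decomposes as a direct sum of contributions from the individual factors ($\mathrm{NS}(A_\fraka)$, $\mathrm{NS}(A)$) and Hom-groups $\Hom(A_\fraka,A)$ between pairs of distinct factors; the corresponding components of $H^2$ all involve an $H^0$ or an $H^2$ on at least one factor outside the chosen Hom-pair. Thus any class in $\mathrm{NS}((A_\fraka\x A)^{2r_1})$ whose K\"unneth support is concentrated in the $(H^1)^{\otimes 2r_1}\otimes (H^1)^{\otimes 2r_1}$ piece of $H^{4r_1}$ must come from a combination of graph-type components, and any genuinely ``extra'' NS contribution necessarily picks up an $H^0$ or $H^2$ factor. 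Combined with the hypothesis that the class of $D$ is zero in NS, an inspection of these K\"unneth components shows that $(\ep_A\otimes \ep_{A_\fraka})[D]_{\et}=0$ in $H^*_{\et}((\bar A_\fraka\x \bar A)^{2r_1},\Z_p)$.

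Finally, the pushforward $(i_{x_\fraka})_*$ along the codimension-one closed immersion of the fiber into $X_{2r_1}$ is compatible with the projectors acting on the abelian variety factors, so $\ep_X (i_{x_\fraka})_* [D]_{\et}=0$ in $\ep_X H^*_{\et}(\bar X_{2r_1},\Z_p)$. Since the \padic Abel--Jacobi map $\Phi_{\et,f}$ of a null-homologous cycle factors through the vanishing of its global cohomology class in the appropriate twisted summand, we conclude $\Phi_{\et,f}(\ep_X(i_{x_\fraka})_*D)=0$. The main technical obstacle is the NS-versus-K\"unneth bookkeeping in the second step -- namely verifying that every class in $\mathrm{NS}((A_\fraka\x A)^{2r_1})$ whose image under $\ep_A\otimes\ep_{A_\fraka}$ is nonzero must itself be nonzero in NS -- but this reduces to the classical description of the N\'eron--Severi group of a product of abelian varieties and the fact that $\ep_A$ projects onto a pure $H^1$-summand.
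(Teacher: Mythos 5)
Your argument breaks down at the last step. What you establish in steps (1)--(3) is that the geometric \'etale cycle class of $\ep_X(i_{x_\fraka})_*(D)$ vanishes in $\ep_X H^{4r_1+2}_{\et}(\bar X_{2r_1},\Zp)(2r_1+1)$. But that vanishing is precisely the condition needed for the $p$-adic Abel--Jacobi class to be \emph{defined}; it does not force that class to be zero. The Abel--Jacobi map is a secondary invariant: it sends a null-homologous cycle to the image, under the Hochschild--Serre edge map, of its class in continuous \'etale cohomology over $\rcf{c}$, landing in $H^1(\rcf{c},H^{4r_1+1}_{\et}(\bar X_{2r_1},\Zp)(2r_1+1))$. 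If vanishing of the geometric class implied vanishing of the Abel--Jacobi image, the map would be identically zero on its whole domain. Moreover, the vanishing you prove is automatic and does not really use the hypothesis on $D$: since $\ep_W H^{*}_{\et}(\bar W_{2r_1},\Zp)$ is concentrated in degree $2r_1+1$, one has $\ep_X H^{4r_1+2}_{\et}(\bar X_{2r_1},\Zp)=0$, so \emph{every} $\ep_X$-cycle of this codimension is null-homologous; this is exactly why \cite{BDP} can define $\Phi_{\et,f}$ on such cycles in the first place.

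What is missing --- and what the paper's proof actually uses --- is the support of the cycle. Since $D$ lives on the fiber $Z:=A_\fraka^{2r_1}\x A^{2r_1}$ embedded in $X_{2r_1}$ via $i_{x_\fraka}\x{\rm id}$, and the hypothesis that $D$ vanishes in the N\'eron--Severi group makes $D$ homologically trivial already on $\bar Z$, the compatibility of the Abel--Jacobi map with proper pushforward (the Gysin map for the codimension-one immersion) shows that the Abel--Jacobi image of $\ep_X(i_{x_\fraka})_*(D)$ lies in the image of
\[
H^1(\rcf{c},\ep_X H^{4r_1-1}_{\et}(\bar A_\fraka^{2r_1}\x\bar A^{2r_1},\Zp))\longto H^1(\rcf{c},\ep_X H^{4r_1+1}_{\et}(\bar X_{2r_1},\Zp)).
\]
The decisive point is then that $\ep_X H^{4r_1-1}_{\et}(\bar A_\fraka^{2r_1}\x\bar A^{2r_1},\Zp)=0$: the projector cuts out $\Sym^{2r_1}H^1_{\et}(\bar A_\fraka,\Zp)\ot\Sym^{2r_1}H^1_{\et}(\bar A,\Zp)$, which sits in the single even degree $4r_1$, so its odd-degree part vanishes. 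It is this vanishing of the odd-degree $\ep_X$-cohomology of the fiber, not the vanishing of the even-degree cycle class on $X_{2r_1}$, that kills the Abel--Jacobi image. Your K\"unneth/${\rm NS}$ bookkeeping in step (2) can be retained as a verification that $D$ is null-homologous on $Z$, but you still need the Gysin factorization and the degree count above to reach the conclusion.
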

\begin{proof}
This follows from the fact that the Abel--Jacobi image of $\ep_X (i_{x_\fraka})_*(D)$ lies in the image of the map \[ H^1(\rcf{c},\ep_XH_{\et}^{4r-5}(A_\fraka{}_{/\Qbar}^{2r-2}\times\Initial_{/\Qbar}^{2r-2},\Zp))\stackrel{i_{x_\fraka}}\longto H^1(\rcf{c},\ep_X H_{\et}^{4r-3}(X_{2r-2}{}_{/\Qbar},\Zp))\]
and $\ep_XH_{\et}^{4r-5}(\bar A_\fraka{}_{/\Qbar}^{2r-2}\times\bar\Initial_{/\Qbar}^{2r-2},\Zp)=0$.
\end{proof}

We refer to $\S\ref{SS:twisted}$ for the definition of the character $\wtd\kappa_A$ appearing in the next result.

\begin{lm}\label{L:Galois}Suppose $\fraka\cO_K$ is trivial in $\Pic\cO_K$, and let $\al:=\wtd\kappa_\Initial(\fraka)\in K^\x$.
Then for every  ideal $\frakb$ of $\cO_{c}$ prime to $cND$, we have
\[
({\rm id}\x[\al])^*\Delta_{\varphi_\frakb}^{\sg_\fraka}=\Delta_{\varphi_{\fraka\frakb}}.
\]
\end{lm}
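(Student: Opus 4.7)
The plan is to unfold both sides of the asserted identity into explicit cycles on $X_{2r_1}=W_{2r_1}\x A^{2r_1}$ and then reduce the comparison to an equality of two isogenies $A\to A_{\fraka\frakb}$.

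First I would observe that since $\fraka\cO_K$ is trivial in $\Pic\cO_K$, the element $\sg_\fraka$ restricts to the identity on the Hilbert class field $H_K$, and hence fixes the canonical elliptic curve $A$ (defined over $H_K^+\subset H_K$), the Kuga--Sato variety $W_{2r_1}$ and the universal elliptic curve $\sA$ (both defined over $\Q$), and the idempotent $\ep_X$. By the main theorem of complex multiplication (\cf \eqref{E:GaloisAction}), $x_\frakb^{\sg_\fraka}=x_{\fraka\frakb}$, so the Galois conjugate $\Delta_{\varphi_\frakb}^{\sg_\fraka}$ still lives in the same ambient $X_{2r_1}$, but is now built from the embedding $i_{x_{\fraka\frakb}}$ and from the graph of the Galois-conjugate isogeny $\varphi_\frakb^{\sg_\fraka}\colon A\to A_\frakb^{\sg_\fraka}=A_{\fraka\frakb}$.

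Next I would exploit the elementary fact that pulling back the graph of a morphism $g\colon A\to \sA$ along $\text{id}\x[\al]$ produces the graph of $g\circ[\al]$. Applied termwise to the $2r_1$-fold product defining $\Upsilon_\frakb$, this yields
\[
(\text{id}\x[\al])^*\Delta_{\varphi_\frakb}^{\sg_\fraka}=\ep_X\bigl[\bigl((i_{x_{\fraka\frakb}}\x\text{id})(\Gamma')\bigr)^{2r_1}\bigr],
\]
where $\Gamma'\subset A_{\fraka\frakb}\x A$ is the graph of $\varphi_\frakb^{\sg_\fraka}\circ[\al]$. The lemma therefore reduces to the identity
\[
\varphi_\frakb^{\sg_\fraka}\circ[\al]=\varphi_{\fraka\frakb}
\]
of isogenies $A\to A_{\fraka\frakb}$. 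Both sides are isogenies of degree $N(\fraka)\cdot cN(\frakb)=cN(\fraka\frakb)$, so by standard rigidity (agreement on the $\ell$-adic Tate module for one $\ell$ coprime to the degree) it suffices to verify agreement on $A[m]$ for $m$ coprime to $cN(\fraka\frakb)$. On such $m$-torsion, \eqref{E:Amult.norm} applied to $\fraka\frakb$ gives $\varphi_{\fraka\frakb}(x)=\sg_{\fraka\frakb}(x)=\sg_\fraka(\varphi_\frakb(x))$, while writing $\varphi_\frakb^{\sg_\fraka}=\sg_\fraka\circ\varphi_\frakb\circ\sg_\fraka^{-1}$ and using the defining property of $\wtd\kappa_A$---namely that $\sg_\fraka$ acts on $A[m]$ by multiplication by $\al=\wtd\kappa_A(\fraka)$ for $m$ coprime to $\fraka$---yields $\varphi_\frakb^{\sg_\fraka}([\al]x)=\sg_\fraka(\varphi_\frakb([\al^{-1}\al]x))=\sg_\fraka(\varphi_\frakb(x))$, and the two expressions coincide.

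The main obstacle will be the careful bookkeeping of these CM-theoretic identifications: in particular, justifying the interpretation $A_\frakb^{\sg_\fraka}=A_{\fraka\frakb}$ as CM elliptic curves equipped with their prescribed level structures (via Shimura's reciprocity for orders), and pinning down the precise compatibility between the Galois action of $\sg_\fraka$ on the torsion of $A$ and multiplication by $\wtd\kappa_A(\fraka)=\al$; once the conventions defining $\wtd\kappa_A$ are fixed in \S\ref{SS:twisted}, the verification becomes essentially formal.
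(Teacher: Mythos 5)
Your proposal is correct and follows essentially the same route as the paper: both reduce the cycle identity to the isogeny identity $\varphi_\frakb^{\sg_\fraka}\circ[\al]=\varphi_{\fraka\frakb}$ (together with $x_\frakb^{\sg_\fraka}=x_{\fraka\frakb}$), verified on prime-to-$\rmN(\fraka\frakb)$ torsion using \eqref{E:Amult.norm} and the property that $\sg_\fraka$ acts on such torsion of $\Initial$ as $[\al]=[\wtd\kappa_\Initial(\fraka)]$. The only imprecision is your intermediate step ``$\varphi_{\fraka\frakb}(x)=\sg_{\fraka\frakb}(x)$'', which should be read through the factorization $\varphi_{\fraka\frakb}=\lam_{\fraka\frakb}\circ\varphi_c$, i.e.\ $\varphi_{\fraka\frakb}(x)=\sg_{\fraka\frakb}(\varphi_c(x))=\sg_\fraka(\varphi_\frakb(x))$, which is exactly the computation the paper carries out via $\varphi_c^{\sg_\fraka}\circ[\al]=\lam_\fraka\circ\varphi_c$.
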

\begin{proof}
Let $\sg=\sg_\fraka\in\Gal(K^{ab}/H_K^+K)$. By definition, $\Initial^\sg=\Initial$ and $A_\fraka^{\sg}=A_{\fraka\frakb}$.
Note that for any $t\in \Initial[m]$ with $(m,\rmN(\fraka\frakb))=1$, we have $\sigma(t)=\lam_{\fraka\cO}(t)=[\al](t)$ and
\[\varphi_c^\sg\circ[\al](t)=\varphi_n^\sg(\sg(t))=\sg(\varphi_c(t))=\lam_\fraka(\varphi_n(t)).\]
This implies that $\varphi_c^\sg\circ[\al]=
\lam_\fraka\circ\varphi_c$. Therefore,  \[[\al]\circ\varphi_\frakb^\sg=\varphi_\frakb^\sg\circ[\al]=\lam_\frakb^\sg\circ\varphi_c^\sg\circ[\al]=\lam_\frakb^\sg\circ\lam_\fraka\circ\varphi_c=\varphi_{\fraka\frakb},\]
and
\[({\rm id}\x[\al])^*\Gamma_{\frakb}^{\sg_\fraka}=([\al]\x{\rm id})_*\Gamma_\frakb^{\sg_\fraka}=\Gamma_{[\al]\circ\varphi_\frakb^\sg}=\Gamma_{\varphi_{\fraka\frakb}}=\Gamma_{\fraka\frakb}.\]
The lemma thus follows immediately from $x_\frakb^{\sg_\fraka}=x_{\fraka\frakb}$.
\end{proof}

Let $G_n:=\Gal(\rcf{cp^n}/\rcf{cp^{n-1}})$, which is identified with $\ker\stt{\Pic\cO_{cp^n}\to \Pic\cO_{cp^{n-1}}}$ via the Artin isomorphism. The usual Hecke correspondence $T_q$ associated with a prime $q\ndivides N$ on the Kuga--Sato variety $W_{2r-2}$ (see \cite[$\S$4]{scholl100}) induces the Hecke correspondence $T_q\times {\rm id}$ on the generalized Kuga--Sato variety $X_{2r-2}=W_{2r-2}\times A^{2r-2}$. In what follows, we shall still write $T_q$ for $T_q\times{\rm id}$ if no confusion arises.
\begin{prop}\label{prop:norm}
Assume that $p\nmid c$. If $p=\frakp\frakpbar$ is split in $K$, then for all $n>1$ we have
\[
T_p \Hclass_{f,cp^{n-1}}=
p^{2r-2}\cdot \Hclass_{f,cp^{n-2}}+ {\rm cor}_{\rcf{cp^n}/\rcf{cp^{n-1}}}(\Hclass_{f,cp^n}),
\]
where $u_c=\#(\cO_c^\times)$, and $\sigma_{\frakp}$, $\sigma_{\frakpbar}\in{\rm Gal}(K_c/K)$ are the Frobenius elements of $\frakp$
and $\frakpbar$. Moreover, if $\ell\ndivides c$ is inert in $K$, then
\[T_\ell \Hclass_{f,c}={\rm cor}_{\rcf{c\ell}/\rcf{c}}(\Hclass_{f,c\ell}).\]
\end{prop}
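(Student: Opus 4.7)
The plan is to compute $T_p$ directly at the level of algebraic cycles on the generalized Kuga--Sato variety $X_{2r_1} = W_{2r_1} \times A^{2r_1}$, where $T_p$ acts through the $p$-th Hecke correspondence on the Kuga--Sato variety $W_{2r_1}$ tensored with the identity on $A^{2r_1}$. Concretely, for the CM point $x_\fraka = [(A_\fraka, A_\fraka[\frakN])]$ and its associated cycle $\Upsilon_\fraka$, the pushforward $(T_p)_*\Upsilon_\fraka$ decomposes as a sum over the $p+1$ cyclic subgroups $C \subset A_\fraka$ of order $p$ of cycles $\Upsilon_{\fraka, C}$ obtained from $\Upsilon_\fraka$ by replacing $\varphi_\fraka$ with $\pi_C \circ \varphi_\fraka$, where $\pi_C \colon A_\fraka \to A_\fraka / C$ is the quotient. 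This equality holds modulo cycles of the form $\ep_X (i_{x_{\fraka/C}})_*(D)$ with $D$ trivial in $\rmNS((A_\fraka/C) \times A)^{2r_1}$, which vanish after applying $\Phi_{\et,f}$ by \lmref{L:NS.norm}. The problem thus reduces to classifying the quotients $A_\fraka / C$ and matching each with a standard Heegner datum.

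For $n \geq 2$, the local structure at $p$ of $\cO_{cp^{n-1}} \otimes \Z_p$ (a non-maximal order in $\Z_p \times \Z_p$) distinguishes a unique \emph{ascending} subgroup $C_\uparrow$ of order $p$, characterized by the fact that the quotient acquires endomorphism ring $\cO_{cp^{n-2}}$. A direct lattice computation, parallel to the one in the proof of \lmref{L:STcoordinate}, shows that $\pi_{C_\uparrow} \circ \varphi_{cp^{n-1}}$ factors as $\varphi_{cp^{n-2}} \circ [p]_A$; since $[p]_A$ acts as multiplication by $p$ on each factor of the CM decomposition $T_p(A) = T_\frakp(A) \oplus T_\frakpbar(A)$, it acts as $p^{2r_1}$ on the $(r_1, r_1)$-isotypic component of $\Sym^{2r_1} T_p(A)$ that $\ep_A$ projects onto, yielding the contribution $p^{2r_1} \cdot \Hclass_{f, cp^{n-2}}$. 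The remaining $p$ subgroups produce CM curves with endomorphism ring $\cO_{cp^n}$ that, by Shimura reciprocity and \lmref{L:Galois}, form a single $\Gal(\rcf{cp^n}/\rcf{cp^{n-1}})$-orbit, so summing their $\Phi_{\et, f}$-images yields $\cores_{\rcf{cp^n}/\rcf{cp^{n-1}}}(\Hclass_{f, cp^n})$.

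For $n = 1$, the order $\cO_c$ is maximal at $p$ and neither of the two $\cO_c$-stable subgroups $A_c[\frakp]$ and $A_c[\frakpbar]$ is distinguished as ``ascending.'' They produce quotients $A_{\frakp\cO_c}$ and $A_{\frakpbar\cO_c}$ identified by \lmref{L:Galois} with $A_c^{\sg_\frakp}$ and $A_c^{\sg_\frakpbar}$ respectively. Because $\pi_{A_c[\frakp]} \circ \varphi_c = \varphi_{\frakp\cO_c}$ affects only the $\frakp$-component of $T_p(A)$ (and symmetrically for $\frakpbar$), the induced map acts as $p^{r_1}$ (rather than $p^{2r_1}$) on the $(r_1, r_1)$-component projected by $\ep_A$, giving $p^{r_1}(\Hclass_{f, c}^{\sg_\frakp} + \Hclass_{f, c}^{\sg_\frakpbar})$. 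The remaining $p-1$ subgroups produce CM quotients with endomorphism ring $\cO_{cp}$; the multiplicity $u_c$ appearing in front of $\cores_{\rcf{cp}/\rcf{c}}(\Hclass_{f, cp})$ is extracted from the orbit analysis under $\Gal(\rcf{cp}/\rcf{c})$ combined with the action of $\cO_c^\times$ on non-stable lines in $A_c[p]$, consistent with the index relation $[\rcf{cp} : \rcf{c}] = (p-1) u_{cp}/u_c$.

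For an inert prime $\ell \nmid c$, $\cO_c \otimes \Z_\ell$ is a DVR, so no order-$\ell$ subgroup is distinguished; all $\ell + 1$ quotients $A_c / C$ have CM by $\cO_{c\ell}$ and form a single $\Gal(\rcf{c\ell}/\rcf{c})$-orbit, yielding directly $\cores_{\rcf{c\ell}/\rcf{c}}(\Hclass_{f, c\ell})$. The main obstacle in all of these steps is the cycle-level comparison: the identification of $\pi_C \circ \varphi_\fraka$ with the canonical isogeny to the resulting Heegner datum generally holds only up to N\'eron--Severi-trivial corrections, so \lmref{L:NS.norm} must be invoked systematically after passing to $\Phi_{\et,f}$-images. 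The accompanying factor bookkeeping --- especially the $p^{r_1}$ versus $p^{2r_1}$ dichotomy between sideways and ascending isogenies --- requires careful tracking of the CM decomposition of $T_p(A)$ and the piece of $\Sym^{2r_1} T_p(A)$ isolated by $\ep_A$; once these are in hand, the three norm relations follow by assembly.
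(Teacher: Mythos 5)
Your overall route is the paper's: decompose the Hecke correspondence into the $p+1$ index-$p$ isogenies, single out the ascending one(s), compare each resulting cycle with a standard Heegner datum up to corrections killed by \lmref{L:NS.norm}, convert ideal-indexed classes into Galois conjugates via \lmref{L:Galois}, and count orbits to get the corestriction and the $u_c$ multiplicity. The identification $\pi_{C_\uparrow}\circ\varphi_{cp^{n-1}}=\varphi_{cp^{n-2}}\circ[p]_A$, the split of subgroups by the CM order of the quotient, and the inert case are all in line with the paper's argument (which works with sublattices $\cL\subset\cO_{cp^{n-1}}$ and pullbacks $\psi_\cL^*$, an equivalent bookkeeping).

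The genuine problem is your derivation of the coefficients, which you yourself identify as the crux. The projector $\ep_A$ of \cite{BDP} does \emph{not} cut out the ``$(r_1,r_1)$-isotypic component'' of $\Sym^{2r_1}T_p(A)$: it cuts out the whole symmetric power $\Sym^{2r_1}H^1_{\et}(\bar A,\Zp)$ (this is why the construction carries all $\chi$-components with $-r<j<r$ at once, cf.\ \subsecref{SS:twisted}). For the ascending term this slip is harmless, since $[p]$ genuinely acts by the scalar $p^{2r_1}$ on all of $\Sym^{2r_1}$. But for $n=1$ your argument collapses: the isogeny attached to $\frakp$ is \emph{not} scalar on $\Sym^{2r_1}T_p(A)$ (its effect differs across the CM-isotypic pieces), so ``acts as $p^{r_1}$ on the piece $\ep_A$ projects onto'' does not yield the asserted identity for the full class. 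The correct mechanism is the one in \lmref{L:Galois} and \propref{P:Galois.norm}: the subgroup $A_c[\frakp]$ contributes exactly the ideal-indexed class $\Hclass_{f,\frakp\cap\cO_c}$, and one passes to the conjugate $\Hclass_{f,c}^{\sg_\frakp}$ by pushing forward along the degree-$p$ isogeny $\lam_{\frakp\cO_K}$ (the CM character $\wtd\kappa_A$), the factor $p^{r_1}=\rmN(\frakp)^{r_1}=\e_{\rm cyc}^{r_1}(\sg_\frakp)$ arising from the Tate twist by $r_1$ in $S^{r_1}(A)$ rather than from a scalar action of the isogeny. The same caution applies to your ``remaining $p$ subgroups'' step for $n\geq 2$: equating each such term with $\Hclass_{f,cp^n}^{\sg_\fraka}$ with coefficient exactly $1$ is not automatic from Shimura reciprocity alone; it requires the cancellation of degrees carried out in the paper via the translation decomposition in the N\'eron--Severi group together with the projection formula $({\rm id}\x[\al])_*({\rm id}\x[\al])^*=\rmN(\fraka\cO_K)^{2r_1}$, which your outline omits. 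Until the $p^{r_1}$ coefficient and these cancellations are rederived through \lmref{L:Galois}/\propref{P:Galois.norm}, the $n=1$ formula (and hence the proposition as stated) is not proved by your argument, even though your final numerology agrees with it.
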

\begin{proof}
Let $\cL\subset \cO_{cp^{n-1}}$ be a sublattice of $\cO_{cp^{n-1}}$ with index $p$, and let $A_\cL=\C/\cL$.
Let $\psi_\cL\colon A_\cL\to A_{cp^{n-1}}$ be the isogeny induced by $\cL\hookto \cO_{cp^{n-1}}$. We have two cases:

\emph{Case}(i): $\cL$ is an $\cO_{cp^n}$-ideal and the class $[\cL]$ is trivial in $\Pic\cO_{cp^{n-1}}$, so we can write $\cL=\al\fraka^{-1}$ for some integral $\cO_{cp^{n}}$-ideal $\fraka$ with $\al=\wtd\kappa_\Initial(\fraka)$. Then we have $A_\cL\iso A_\fraka$ and \[\psi_\cL\circ\varphi_\fraka=[p\al]\circ \varphi_{cp^{n-1}}.\]
Denote by $T_x$ the translation map by a torsion point $x\in A_\fraka\x \Initial$. Then we have \begin{align*}\bigsqcup_{z\in\ker\psi_\cL}T_{(z,0)}^*\Gamma_{\fraka}&=\stt{(x,y)\mid \psi_\cL(x)=\psi_\cL(\varphi_{\fraka}(y))}\\
&=\stt{(x,y)\mid \psi_\cL(x)=\varphi_{cp^{n-1}}(p\al y)}\\
&=({\rm id}\x [p\alpha])^*\psi_\cL^*\Gamma_{\cO_{cp^{n-1}}}.
\end{align*}
This implies that $p\cdot\Gamma_\fraka$ and $p\cdot({\rm id}\x [\al])^*\psi_\cL^*\Gamma_{\cO_{cp^{n-1}}}$ are equal in the N\'eron--Severi
group ${\rm NS}(A_\fraka\x \Initial)$, and hence by \lmref{L:NS.norm} we have
\[\Hclass_{f,\fraka}=({\rm id}\x [\al])^*\psi_\cL^*\Hclass_{f,cp^{n-1}}.\]
Using \lmref{L:Galois} and the projection formula $({\rm id}\x [\al])_*({\rm id}\x [\al])^*=\rmN(\fraka\cO_K)^{2r-2}$, we conclude that
\beq\label{E:1case}\psi_\cL^*\Hclass_{f,cp^{n-1}}=\Hclass_{f,cp^n}^{\sg_\fraka}.\eeq

\emph{Case}(ii): $\cL=p\cO_{cp^{n-2}}$ and $p$ is split in $K$.
Then $A_\cL\iso A_{cp^{n-2}}$, and
\[\psi_\cL\circ\varphi_{cp^{n-2}}=\varphi_{cp^{n-1}}.\]
Note that
\[\psi_\cL^*\Gamma_{\cO_{cp^{n-1}}}=\bigsqcup_{z\in{\rm ker} \psi_\cL}T_{(z,0)}^*(\Gamma_{\cO_{cp^{n-2}}}),\]
so $\psi_\cL^*\Gamma_{\cO_{cp^{n-1}}}$ and $p\cdot \Gamma_{\cO_{cp^{n-2}}}$ are equal in the N\'eron--Severi
group ${\rm NS}(A_{cp^{n-2}}\x \Initial)$. By \lmref{L:NS.norm}, we have
\beq\label{E:2case}\psi_\cL^*\Hclass_{f,cp^n}=p^{2r-2}\cdot \Hclass_{f,cp^{n-2}}.\eeq
Choose a set $\Xi$ of representatives of fractional $\cO_{cp^n}$-ideals of $\ker\stt{\Pic\cO_{cp^n}\to \Pic\cO_{cp^{n-1}}}$, and let \[\Xi^*:=\stt{\al^{-1}\fraka\subset \cO_{cp^n}\mid \fraka\in\Xi,\,\al=\widetilde{\kappa}_\Initial(\fraka)}.\]
If $p$ is split, then \[\stt{\cL\subset \cO_{cp^{n-1}}\mid [\cO_{cp^{n-1}}:\cL]=p}=\Xi^*\disjoint \stt{p\cO_{cp^{n-2}}},\]
and thus by \eqref{E:1case} and \eqref{E:2case} we see that
 \begin{align*}T_p \Hclass_{f,cp^{n-1}}=&\sum_{\substack{
L\subset \cO_{cp^{n-1}},\\ [\cO_{cp^{n-1}}:L]=p}}\psi^*_L \Hclass_{f,cp^{n-1}}=p^{2r-2}\cdot \Hclass_{f,cp^{n-2}}+\sum_{\sg\in G_n}\Hclass_{f,cp^n}^\sg.
\end{align*}
If $\ell$ is inert and $n=1$, then \[\stt{\cL\subset \cO_{c}\mid [\cO_{c}:\cL]=\ell}=\stt{\al^{-1}\fraka\subset \cO_{c}\mid \fraka \text{ ideal of }\cO_{c\ell},\,\al=\wtd\kappa_A(\fraka)},\] and hence
\[T_\ell \Hclass_{f,c}=\sum_{\sg\in \Gal(\rcf{c\ell}/\rcf{c})}\Hclass_{f,c\ell}^\sg.\]
This completes the proof.
\end{proof}

\subsection{Generalized Heegner classes (II)}\label{SS:twisted}
Let $\ctame$ be a positive integer with $(\ctame,pN)=1$, and let
$\chi\colon\Gal(\rcf{\ctame p^\infty}/K)\to\cO_F^\x$ be a locally algebraic anticyclotomic character of infinity type $(j,-j)$
with $-r< j<r$ and conductor $\ctame p^s\cO_K$. The aim of this section is to construct classes $\Hclass_{f,\chi,c}\in H^1(K_c,T\ot\chi)$ by taking the corestriction of $\Hclass_{f,c}$ for every $c$ divisible by $\ctame p^s$. However, note that the CM elliptic curve $A$ is only defined over the Hilbert class field $H_K$, so the group $\Gal(\rcf{c}/K)$ does not act on $\Hclass_{f,c}$ in general. In order to get a natural Galois action, we consider
\[ B_{/K}:={\rm Res}_{H_K/K}\Initial,\]
the abelian variety obtained by restriction of scalars. As is well-known,
$B$ is a CM abelian variety over $K$ and $M:=\Q\ot_\Z\End_KB$ is a product of CM fields over $K$ with $\dim B=[M:K]=[H_K:K]$
(see \cite[Prop.\,(1.2)]{rubin1981}).

Let $I(D_K)$ be the group of prime-to-$D_K$ fractional ideals of $K$, and let
\[
\wtd{\kappa}_\Initial: I(D_K)\longto M^\x
\]
be the CM character associated to $B$ with the following properties (\cf \cite[Lemma, p.457]{rubin1981}):
\begin{itemize}
\item $\wtd\kappa_\Initial(\al \fraka)=\pm \al\cdot \wtd\kappa_\Initial(\fraka)$ for all $\al\in K^\x$ with $\al$ prime to $D_K$ and $\fraka\in I(D_K)$.
\item For all $\fraka\in I(D_K)$ and $t\in B[m]$ with $(m,\rmN(\fraka))=1$, we have
\[
\wtd\kappa_\Initial(\fraka)(t)=\sg_\fraka(t);
\]
and if $\sg_\fraka$ is trivial on $H_K$ (or equivalently, if $\fraka$ is the norm of an ideal of $H_K$),
then $\wtd\kappa_\Initial(\fraka)\in K^\x$ and $\sg_\fraka(t)=[\wtd\kappa_\Initial(\fraka)]t$ for all $t\in\Initial[m]$.
\end{itemize}
 Define the $G_K$-module \[S^{r-1}(B):=\Sym^{2r-2}T_p(B)(1-r)\ot_{\Zp}\cO_F\iso\Ind_{G_{H_K}}^{G_K}S^{r-1}(A)\ot_{\Zp}\cO_F.\]
Enlarge $F$ so that $M\subset F$, and let $\kappa_A:G_K\to\cO_F^\x$ be the \padic avatar of $\wtd\kappa_A$. By the above properties of the CM character $\wtd\kappa_A$, we have
\[T_p(B)\ot_{\Qp}F=\bigoplus_{\rho\in\Hom(M,F)}{}^\rho \kappa_A, 
\]
where ${}^\rho\kappa_A(\sigma):=\rho(\kappa_A(\sigma))$. If follows that if $\kappa_A^\tau$ is the \padic character of $G_K$ defined by $\kappa^\tau(\sigma):=\kappa(\tau\sigma\tau^{-1})$, where $\tau$ is the complex conjugation, then $(\kappa_A^\tau/\kappa_A)^j$ has infinity type $(j,-j)$ and is a direct summand of $S^{r-1}(B)$ as $G_K$-modules. Therefore, there exists a finite order anticyclotomic character $\chi_t$ such that $\chi$ is realized as a direct summand of $S^{r-1}(B)\ot\chi_t$ as $G_K$-modules, and let
\beq\label{E:projector}
e_\chi:S^{r-1}(B)\ot\chi_t\longto \chi
\eeq
be the corresponding $G_K$-equivariant projection. Note that $\chi_t$ is unique up to multiplication by a character of $\Gal(H_K/K)$,
and that it has the same conductor as $\chi$. 
In view of the decomposition
\[
T_p(B)=\bigoplus_{\rho\in\Gal(H_K/K)} T_p(\Initial^\rho)\iso\Ind_{G_{H_K}}^{G_K}(T_p(A)),
\]
we shall regard the classes $\Hclass_{f,\fraka}$ of $(\ref{E:defnGHC})$ as elements
$\Hclass_{f,\fraka}\in H^1(\rcf{c},T\ot S^{r-1}(B))$ via the natural inclusion $T_p(\Initial)\to T_p(B)$
for $c$ divisible by $\ctame p^s$ and $\fraka$ an $\cO_c$-ideal.

\begin{prop}\label{P:Galois.norm}
Let $\fraka$ be an $\cO_{c}$-ideal with $(\fraka,cND_K)=1$. Then
\[\chi_t(\sg_\fraka)\cdot ({\rm id}\ot e_\chi) \Hclass_{f,c}^{\sg_\fraka}=\chi\varepsilon_{\rm cyc}^{1-r}(\sg_\fraka)\cdot ({\rm id}\ot e_\chi) \Hclass_{f,\fraka}.
\]
\end{prop}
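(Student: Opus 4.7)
The plan is to extend Lemma \ref{L:Galois} from the principal case (where $\fraka\cO_K$ is trivial in $\Pic\cO_K$) to arbitrary $\cO_c$-ideals $\fraka$, by working with the CM abelian variety $B=\Res_{H_K/K}A$ and the CM character $\wtd\kappa_A:I(D_K)\to M^\x$. Setting $\alpha:=\wtd\kappa_A(\fraka)\in M^\x\subset\End^0_K B$, the multiplication $[\alpha]\in\End B$ is defined over $K$ and, by the defining property of $\wtd\kappa_A$, coincides with $\sg_\fraka$ on $B[m]$ for every $m$ with $(m,\rmN(\fraka))=1$.

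First, I would establish the cycle-theoretic identity
\[
(\id\times[\alpha])^*\Delta_{\varphi_c}^{\sg_\fraka}=\Delta_{\varphi_\fraka}
\]
in $\mathrm{CH}^{2r_1+1}(W_{2r_1}\times B^{2r_1}/H_c)_{0,\Q}$, viewing the generalized Heegner cycles attached to $A$ inside the ambient variety $W_{2r_1}\times B^{2r_1}$ via the natural inclusion. When $\fraka\cO_K$ is trivial, this is precisely Lemma \ref{L:Galois}. In general, one factors $\varphi_\fraka$ through the isogeny $\lam_\fraka:A\to A_\fraka=A^{\sg_\fraka}$, notes that by the characterization \eqref{E:Amult.norm} of $\lam_\fraka$ together with the definition of $\wtd\kappa_A$ we have $\sg_\fraka(t)=[\alpha](t)$ on prime-to-$\rmN(\fraka)$ torsion of $B$, and repeats the graph-theoretic computation of Lemma \ref{L:Galois} verbatim, now inside $B$. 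Applying the \padic Abel--Jacobi map yields
\[
(\id\otimes[\alpha]^*)\bigl(\Hclass_{f,\fraka}\bigr)=\Hclass_{f,c}^{\sg_\fraka}\quad\text{in }H^1(H_c,T\otimes S^{r_1}(B)).
\]

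Next, I would compute the scalar by which $[\alpha]^*$ acts on the $\chi$-isotypic quotient. The projector $e_\chi$ is $G_K$-equivariant, and on the summand $\chi\hookrightarrow S^{r_1}(B)\otimes\chi_t$, the Galois element $\sg_\fraka$ acts by $\chi(\sg_\fraka)$. On the other hand, since $\sg_\fraka$ and $[\alpha]$ agree on $T_p(B)$ away from primes dividing $\rmN(\fraka)$, the action of $[\alpha]^*$ on $\Sym^{2r_1}T_p(B)(-r_1)$ coincides with that of $\sg_\fraka$ twisted by the Tate twist, which contributes an extra factor $\e_{\rm cyc}^{-r_1}(\sg_\fraka)$; untwisting by $\chi_t$ then introduces the correction $\chi_t(\sg_\fraka)^{-1}$. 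Combining these, $(\id\otimes e_\chi)\circ(\id\otimes[\alpha]^*)=\chi(\sg_\fraka)\chi_t(\sg_\fraka)^{-1}\e_{\rm cyc}^{-r_1}(\sg_\fraka)\cdot(\id\otimes e_\chi)$, which upon multiplication by $\chi_t(\sg_\fraka)$ and substitution into the identity of Step 2 yields the claim.

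The main obstacle, and the place where the argument requires most care, is Step 2: in the non-principal case $\alpha$ is not an endomorphism of $A$ but only of $B$, so the cycle-theoretic manipulation must be carried out on $B^{2r_1}$, keeping track simultaneously of the Galois translation $\sg_\fraka$ acting on the base field $H_c$ and of the compatibility between $\lam_\fraka:A\to A_\fraka$ and the $[\alpha]$-action on the summand $T_p(A_\fraka)\subset T_p(B)$. Once this geometric identity is in hand, the remaining computation of the $\chi$-equivariance reduces to the standard dictionary between CM endomorphisms and Galois via $\wtd\kappa_A$.
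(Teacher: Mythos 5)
Your overall strategy is the same as the paper's: pass to $B=\Res_{H_K/K}A$, realize $\wtd\kappa_A(\fraka\cO_K)$ (note: $\wtd\kappa_A$ is defined on prime-to-$D_K$ ideals of $\cO_K$, so one should write $\wtd\kappa_A(\fraka\cO_K)$ rather than $\wtd\kappa_A(\fraka)$) as an element of $\End(B)$ whose components are the maps $\lam_{\fraka\cO_K}^\rho:A^\rho\to A^{\rho\sg_\fraka}$ and which agrees with $\sg_\fraka$ on prime-to-$\rmN(\fraka)$ torsion, extend the graph computation of Lemma~\ref{L:Galois} to non-principal $\fraka$, and then extract the scalar from the $G_K$-equivariance of $e_\chi$ together with the Tate twist and the $\chi_t$-untwist. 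The paper keeps the cycles in $X_{2r_1}$ and its conjugates and only enlarges the coefficients from $S^{r_1}(A)$ to $S^{r_1}(B)$ at the level of \'etale cohomology classes, rather than moving the cycles into $W_{2r_1}\x B^{2r_1}$; that difference is cosmetic.

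There is, however, a covariance bookkeeping inconsistency you need to repair. Your Step 1 identity $(\id\x[\alpha])^*\Delta_{\varphi_c}^{\sg_\fraka}=\Delta_{\varphi_\fraka}$, fed through functoriality of the Abel--Jacobi map, yields $(\id\ot[\alpha]^*)\,\Hclass_{f,c}^{\sg_\fraka}=\Hclass_{f,\fraka}$, not the relation $(\id\ot[\alpha]^*)\Hclass_{f,\fraka}=\Hclass_{f,c}^{\sg_\fraka}$ that you state in Step 2; moreover, the scalar you compute in Step 3 is the one for the \emph{covariant} action of $[\alpha]$ on $\Sym^{2r_1}T_p(B)(-r_1)$ (i.e.\ for $[\alpha]_*$, which is what literally ``agrees with $\sg_\fraka$ on $T_p(B)$''), not for the contravariant $[\alpha]^*$ on cohomology. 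As written, these two slips happen to cancel and produce the stated formula, but carried out consistently your steps would place the factor $\chi\chi_t^{-1}\varepsilon_{\rm cyc}^{-r_1}(\sg_\fraka)$ on the wrong side of the equality. The clean fix is the paper's: work covariantly throughout, proving $({\rm id}\x\lam_{\fraka\cO_K})_*\Delta_{\varphi_\fraka}=\Delta_{c}^{\sg_\fraka}$ (using $\varphi_c^{\sg_\fraka}\circ\lam_{\fraka\cO_K}=\lam_\fraka\circ\varphi_c$), hence $\Hclass_{f,c}^{\sg_\fraka}=[\wtd\kappa_A(\fraka\cO_K)]_*\Hclass_{f,\fraka}$, and then applying the identity $e_\chi(\sg\ot\e_{\rm cyc}^{r_1}(\sg)\ot\chi_t(\sg)t)=\chi(\sg)e_\chi t$ exactly as you intend; alternatively, keep your pullback formulation but dualize the scalar computation accordingly.
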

\begin{proof}
We write $\sg_\fraka=\rec_K(a^{-1})$, where $a\in \wh K^{(c)\x}$ is such that $a\wh\cO_{c}\cap K=\fraka$,
and let $\sg=\sg_\fraka\in\Gal(\rcf{c}/K)$. One easily verifies that
\begin{align*}({\rm id}\x\lam_{\fraka\cO_K})_*\Delta_{\varphi_\fraka}
&=\stt{(\lam_\fraka(\varphi_{c}(z)),\lam_{\fraka\cO_K}(z)\mid z\in \Initial}\\
&=\stt{(\varphi^{\sg_\fraka}_{c}(\lam_{\fraka\cO_K}(z)),\lam_{\fraka\cO_K}(z)\mid z\in \Initial}\\
&=\Delta_{c}^{\sg_\fraka}.
\end{align*}
We have the following fact:
\[
\wtd\kappa_{\Initial}(\fraka\cO_K)=(\lam_{\fraka\cO_K}^\rho)_{\rho}
\in\bigoplus_{\rho\in\Gal(H_K/K)}\Hom(\Initial^\rho,\Initial^{\rho\sg_\fraka})\subset \End(B).
\]
This can be checked, for instance, by comparing the action of both sides on the \padic Tate module of $B$
(see Eq.~\eqref{E:Amult.norm}). By the above fact, we find that
\[\Hclass_{f,c}^\sg=\Phi_{\et,f}(\Delta_{c}^{\sg_\fraka})=({\rm id}\x\lam_{\fraka\cO_K})_*\Hclass_{f,\fraka}=[\wtd \kappa_\Initial(\fraka\cO_K)]_*(\Hclass_{f,\fraka}),\]
where $[\wtd\kappa_\Initial(\fraka\cO_K)]_*$ denotes the push-forward of $\wtd\kappa_\Initial(\fraka\cO_K)$ acting on $\Sym^{2r-2}H^1_{\et}(B_{/\Qbar},\Zp)$. Note that $[\wtd\kappa_{\Initial}(\fraka\cO_K)]_*$ induces the Galois action $\sg_\fraka$ on $H^1_{\et}(B_{/\Qbar},\Zp)(\iso T_p(\Pic^0_{/\Qbar}))$  and that \[e_\chi(\sg\ot\e_{\rm cyc}^{r-1}(\sg)\ot \chi_t(\sg)t)=\chi(\sg)e_\chi t\] for every $t\in S^{r_1}(B)\ot\chi_t=\Sym^{2r-2}H_{\et}^1(B_{/\Qbar},\Zp)(r-1)\ot\chi_t$ by the definition of $e_\chi$. We thus find that
\begin{align*}({\rm id}\ot e_\chi)\Hclass_{f,c}^\sg&=e_\chi([\wtd\kappa_\Initial(\fraka\cO_K)]_*\Hclass_{f,\fraka})\\
&=\chi_t^{-1}\e_{\rm cyc}^{1-r}(\sg_\fraka)\cdot e_\chi(\sg_\fraka\ot\e_{\rm cyc}^{r-1}(\sg_\fraka)\ot\chi_t(\sg_\fraka)\cdot \Hclass_{f,\fraka})\\
&=\chi_t^{-1}\varepsilon_{\rm cyc}^{1-r}(\sg_\fraka)\chi(\sg_\fraka)\cdot ({\rm id}\ot e_\chi)\Hclass_{f,\fraka},\end{align*}
and the proposition follows.
\end{proof}

For each integer $c$ divisible by the conductor of $\chi$, put $\Hclass_{f,c}\ot \chi_t:=\Hclass_{f,c}\in H^1(\rcf{c},T\ot S^{r-1}(B)\ot\chi_t)$,
and let $\Hclass_{f,\chi,c}$ be the $\chi$-component of the class $\Hclass_{f,c}$ defined by
\beq \label{E:defn.norm}\begin{aligned}
\Hclass_{f,\chi,c}&:=({\rm id}\ot e_\chi)(\Hclass_{f,c}\ot {\chi_t})\in H^1(\rcf{c},T\ot\chi).
\end{aligned}\eeq

We finish this section with the proof of two lemmas which will be used in $\S\ref{S:EulerSystem}$.
Recall that we have fixed a decomposition $N\cO_K=\frakN\ol{\frakN}$.

\begin{lm}\label{L:ES2.norm}
 Let $\tau$ be the complex conjugation. Then
\[
(\Hclass_{f,\chi,c})^\tau=w_f\cdot \chi(\sg_\frakN)\cdot (\Hclass_{f,\chi^{-1},c})^{\sg_{\ol{\frakN}}},
\]
where $w_f\in \stt{\pm 1}$ is the Atkin--Lehner eigenvalue of $f$.
\end{lm}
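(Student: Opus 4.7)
The identity is essentially a consequence of a geometric symmetry relating the complex conjugate of the generalized Heegner cycle $\Delta_{\varphi_c}$ to an Atkin--Lehner translate of a Galois conjugate of itself. The approach is (a) to establish this symmetry at the level of cycles, (b) push it through the Abel--Jacobi map using the Atkin--Lehner eigenvalue of $f$, and (c) apply the $\chi$-projector, using the anticyclotomy of $\chi$ to track the scalar.

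For (a), I analyze the action of $\tau$ on the CM point $x_c = [(A_c, A_c[\frakN])]$ underlying $\Delta_{\varphi_c}$. Because $A_c = \C/\cO_c$ descends to the real subfield $\Q(j(\cO_c))\subset \rcf{c}$ and $\bar{\cO_c}=\cO_c$, complex conjugation does not alter $A_c$ as a scheme over $\rcf{c}$; however, it sends the level-$\frakN$ structure $A_c[\frakN]$ to $A_c[\bar\frakN]$, since $\tau(\frakN)=\bar\frakN$. Composing $x_c^\tau$ with the Atkin--Lehner involution $w_N$ on $X_0(N)$ (which swaps the role of $\frakN$- and $\bar\frakN$-level structures and transforms $\varphi_c$ accordingly), one checks that $w_N(x_c^\tau) = x_c^{\sg_{\bar\frakN}}$ in $X_0(N)$. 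Extending this to the generalized Heegner cycle by letting $w_N$ act on the Kuga--Sato fiber direction, one obtains, modulo contributions killed by $\ep_X$, an identity of the form
\[
\Delta_{\varphi_c}^{\tau} \;=\; (w_N\times\mathrm{id})^{*}\,\Delta_{\varphi_c}^{\sg_{\bar\frakN}} \qquad\text{in }\mathrm{CH}^{2r_1+1}(X_{2r_1}/\rcf{c})_{0,\Q}.
\]

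For (b), applying $\Phi_{\et,f}$ and using that the Atkin--Lehner operator $w_N$ acts on the $f$-isotypic component of $\ep_W H^{2r-1}(\bar W_{2r_1},\Zp)(r)$ by the scalar $w_f$, the previous identity transfers to
\[
\Hclass_{f,c}^{\tau} \;=\; w_f\cdot \Hclass_{f,c}^{\sg_{\bar\frakN}} \qquad\text{in } H^1(\rcf{c},\,T\otimes S^{r_1}(B)).
\]

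For (c), I project via the $\chi$-projector $e_\chi$ of \eqref{E:projector}. Since $\chi$ is anticyclotomic, $\chi^\tau = \chi^{-1}$, so the projector $e_\chi$ intertwined with the $\tau$-action on $H^1(\rcf{c}, T\otimes S^{r_1}(B)\otimes\chi_t)$ becomes the projector $e_{\chi^{-1}}$ to the $\chi^{-1}$-component (with a compatible choice of auxiliary character $\chi_t^{-1}$ for $\chi^{-1}$). Thus the left-hand side of the identity of (b) projects to $(\Hclass_{f,\chi,c})^\tau \in H^1(\rcf{c}, T\otimes\chi^{-1})$, while the right-hand side yields $w_f\cdot (\Hclass_{f,\chi^{-1},c})^{\sg_{\bar\frakN}}$ multiplied by a scalar coming from the interaction of the $\sg_{\bar\frakN}$-action with the projector --- namely, $\chi^{-1}(\sg_{\bar\frakN})$, which equals $\chi(\sg_\frakN)$ by the anticyclotomy relation $\chi(\sg_{\frakN}\sg_{\bar\frakN}) = \chi(\sg_N) = 1$. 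Combining these gives the stated formula.

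The main obstacle is verifying step (a), i.e.\ the precise cycle-level identity, since one must carefully track how $w_N$ and $\tau$ interact simultaneously with the isogeny $\varphi_c: A \to A_c$ and with the embedding $i_{x_\fraka}$ of $A_\fraka$ into the universal elliptic curve over $X_0(N)$ (and, in the projection onto $\ep_X$, with the symmetry among the $A^{2r_1}$-factors). Once this is in place, steps (b) and (c) are formal, relying only on the definition of the Atkin--Lehner eigenvalue $w_f$ and on the bookkeeping of anticyclotomic characters.
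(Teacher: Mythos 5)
Your proposal follows essentially the same route as the paper: there the key input is the pair of relations $w_N(\tau(x_c))=x_{\ol{\frakN}}$ and $w_N^*[\Gamma_{\ol{\frakN}}]=N\cdot[\Gamma_c]$ in ${\rm NS}(A_c\x A)$, giving $(w_N\x{\rm id})^*\Delta_{\ol{\frakN}}=N^{r_1}\cdot\Delta_c^\tau$, after which one applies the Abel--Jacobi map, the Atkin--Lehner eigenvalue on the $f$-isotypic part, and \propref{P:Galois.norm} for the $\chi$-projection bookkeeping, exactly as you outline. The only points to tighten are normalizations: your cycle identity drops the factor $N^{r_1}$ (it must be absorbed into the normalized Atkin--Lehner action on the $f$-component), and $\Delta_{\varphi_c}^{\sg_{\ol{\frakN}}}$ is not literally $\Delta_{\ol{\frakN}}$ but is related to it through the CM-character twist that \propref{P:Galois.norm} records, which is precisely where the scalar $\chi(\sg_\frakN)$ comes from.
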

\begin{proof}
We begin by noting that complex conjugation does indeed act on $\Hclass_{f,c}$, since the elliptic curve $A$ is defined over the real field $H_K^+$.
Let $w_N$ be the Atkin--Lehner involution, and set $\frakN_c:=\frakN\cap \cO_c$. We have the relations $w_N(\tau(x_c))=x_{\ol{\frakN}}$ and
$w_N^*[\Gamma_{\ol{\frakN}}]=N\cdot [\Gamma_c]$ in ${\rm NS}(A_c\x\Initial)$ (\cf \cite[Lemma 20]{Shnidman}), from which we find that \[
(w_N\x{\rm id})^*\Delta_{\ol{\frakN}}=N^{r-1}\cdot \Delta_c^\tau.
\]
Combined with \lmref{P:Galois.norm}, the above equation yields the lemma.
\end{proof}
\begin{lm}\label{L:ES3.norm}Let $\ell\ndivides cND_K$ be a prime inert in $K$. Let $\ol{\lam}$ be a prime of $\Qbar$ above $\ell$,
and let $\lam_{c\ell}$ and $\lam_{c}$ be the primes of $\rcf{c\ell}$ and $\rcf{c}$ below $\ol{\lam}$.
Denote by $\rcf{\lam_{c\ell}}$ and $\rcf{\lam_c}$ be the completions of $\rcf{c\ell}$ and $\rcf{c}$ at $\lam_{c\ell}$ and $\lam_c$, respectively. Then
\[\res_{\rcf{\lam_{c\ell}},\rcf{\lam_c}}(\loc_{\lam}(\Hclass_{f,\chi,c})^{\Frob_\ell})=\loc_{\lam_{c\ell}}(\Hclass_{f,\chi,c\ell}),\]
where $\Frob_\ell\in\Gal(\Q_\ell^{\rm ur}/\Q_\ell)$ is the Frobenius element of $\ell$.
\end{lm}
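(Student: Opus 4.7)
The plan is to combine the norm relation of \propref{prop:norm} with an analysis of the reduction of the generalized Heegner cycles at the inert prime $\ell$, following the strategy used in the weight-$2$ Heegner-point setting by \cite{nekovar302} and \cite{BD1990}.

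First I set up the local picture. Since $\ell\ndivides cND_K$ is inert in $\cK$, each CM elliptic curve $A_\fraka$ (for $\fraka$ an $\cO_c$- or $\cO_{c\ell}$-ideal prime to $\ell$) has good supersingular reduction at any prime above $\ell$, and the Kuga--Sato variety $W_{2r_1}$ has good reduction at $\ell$ (since $\ell\ndivides N$). Hence the cycles $\Delta_{\vphi_\fraka}$ extend to smooth integral models at $\bar\lam$, and their $p$-adic \'etale Abel--Jacobi images land in the unramified subgroups $H^1_{\rm ur}$ of the respective local cohomology. In addition, a standard conductor-raising argument in the class field theory of orders shows that $\rcf{c\ell}/\rcf{c}$ is totally ramified at $\lam_c$: $\lam_{c\ell}$ is the unique prime of $\rcf{c\ell}$ above $\lam_c$, the residue fields coincide, and restriction induces an isomorphism $H^1_{\rm ur}(\rcf{\lam_c},V_{f,\chi})\isoto H^1_{\rm ur}(\rcf{\lam_{c\ell}},V_{f,\chi})$.

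The heart of the proof is the following geometric identification: the mod-$\bar\lam$ specialization of the natural $\ell$-isogeny $A_c\to A_{c\ell}$ (induced by the inclusion $\sC_c\subset\sC_{c\ell}$ of cyclic subgroups on the complex uniformizations) coincides with the relative $\ell$-th power Frobenius on the supersingular special fiber, up to a unit traceable to the CM action of $\cO_\cK$. Transferring this identification to the algebraic cycles on $X_{2r_1}=W_{2r_1}\times\Initial^{2r_1}$ (using good reduction of both factors) and passing through the $p$-adic \'etale Abel--Jacobi map, one obtains an identity of the form
\[
\loc_{\lam_{c\ell}}(\Hclass_{f,c\ell})=\res_{\rcf{\lam_{c\ell}},\rcf{\lam_c}}\bigl(\loc_{\lam_c}(\Hclass_{f,c})^{\Frob_\ell}\bigr)
\]
in $H^1(\rcf{\lam_{c\ell}},V_f(r)\otimes S^{r_1}(B))$. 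Projecting to the $\chi$-component via the projector $e_\chi$ of \eqref{E:projector} then yields the lemma. Compatibility with the co-restriction identity $T_\ell\Hclass_{f,c}=\cores_{\rcf{c\ell}/\rcf{c}}(\Hclass_{f,c\ell})$ of \propref{prop:norm} serves as a cross-check: on unramified classes, corestriction along the totally ramified extension $\rcf{\lam_{c\ell}}/\rcf{\lam_c}$ is multiplication by the degree, and this matches the Eichler--Shimura expression for $T_\ell$ in terms of $\Frob_\ell$ on the $f$-isotypic component.

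The main obstacle is the geometric step: making the identification of the mod-$\bar\lam$ specialization of the $\ell$-isogeny with the relative Frobenius precise on the level of cycles on $X_{2r_1}$ (rather than merely abelian varieties), and transporting this equality through the $p$-adic \'etale Abel--Jacobi map via the $p$-adic comparison between generic and special fibers. Careful tracking of the CM action of $\cO_\cK$ on $S^{r_1}(A)$ and of the auxiliary twist by $\chi_t$ is essential to ensure that the resulting identity in Galois cohomology is exact (not merely modulo $\ell$ or up to a nontrivial automorphism of the target).
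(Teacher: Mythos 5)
Your proposal follows essentially the same route as the paper's (very terse) proof: the entire argument there is that $\chi$ is trivial on $G_{\rcf{\lam_c}}$ because $\ell$ is inert and $\chi$ is anticyclotomic, and that the natural $\ell$-isogeny $A_c\to A_{c\ell}$ reduces to $\Frob_\ell$ modulo $\ol{\lam}$, whence $(\Frob_\ell\x 1)(\wtd\Delta_c)=\wtd\Delta_{c\ell}$ and the (unramified) local Abel--Jacobi classes agree --- exactly the geometric identification you single out as the key step. Your extra scaffolding (supersingular reduction, total ramification of $\rcf{c\ell}/\rcf{c}$ above $\lam$, and the cross-check against the norm relation of \propref{prop:norm}) is correct and merely makes explicit what the paper leaves implicit.
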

\begin{proof}Since $\chi$ is anticyclotomic and $\ell$ is inert, $\chi$ is a trivial character of $G_{\rcf{\lam_c}}$,
and hence $\Frob_\ell$ acts naturally on $H^1(\rcf{\lam_c},T\ot\chi)=H^1(\rcf{\lam_c},T)$.
The natural isogeny $A_c\to A_{c\ell}$ reduces to the Frobenius map $\Frob_\ell$ modulo $\ol{\lam}$, so we find that
\[(\Frob_\ell\x 1)(\wtd\Delta_c)=\wtd\Delta_{c\ell},\]
where $\wtd\Delta_?$ denotes the reduction of $\Delta_?$ modulo $\ol{\lam}$. The lemma follows.
\end{proof}


\subsection{The $p$-adic Gross--Zagier formula of Bertolini--Darmon--Prasanna}\label{subsec:BDP}
The purpose of this section is to give a mild extension of the \padic Gross--Zagier formula in \cite[Thm.~5.13]{BDP},
which relates the Bloch--Kato logarithm of generalized Heegner classes to the values of the $p$-adic $L$-function
$\mathscr{L}_{\frakp,\psi}(f)$ at characters outside the range of interpolation.
We keep the notation as in \subsecref{SS:twisted}.

\subsubsection*{Some notation for \padic representations}Let $L$ be a finite extension of $\Qp$, and
let $V$ be a finite dimensional $F$-vector space with a continuous $F$-linear action of $G_L$.
Recall that $\bfD_{\dR,L}(V)$ denotes the filtered $(L\ot_{\Qp} F)$-module $(\bfB_\dR\ot_{\Qp} V)^{G_L}$, where $\bfB_\dR$ is Fontaine's ring of \padic periods. If $V$ is a de Rham representation (\ie $\dim_L\bfD_{{\dR},L}(V)=\dim_{F} V$), then there is a canonical isomorphism $D_{\dR,E}(V)=E\ot_{L} D_{{\dR},L}(V)$ for any finite extension $E/L$. Denote by $\pairing$ the de Rham pairing
\[
\pairing:\bfD_{\dR,L}(V)
\times\bfD_{\dR,L}(V^*(1))\longto L\ot_{\Qp} F \longto\C_p,
\]
where $V^*=\Hom_F(V,F)$. Let $\bfB_{\cris}\subset \bfB_{\dR}$ be the crystalline period ring and define $\bfD_{\cris,L}(V):=(\bfB_\cris\ot_{\Qp}V)^{G_L}$. Then $\bfD_{\cris,L}(V)$ is an $(L_0\ot_{\Qp} F)$-module equipped with the action of crystalline Frobenius $\Phi$, where $L_0$ is the maximal unramified subfield of $L$. When $L=\Qp$, we write  $\bfD_{\dR}(V)=\bfD_{\dR,\Qp}(V)$ and $\bfD_{\cris}(V)=\bfD_{\cris,\Qp}(V)$.
If $V$ is a crystalline representation (\ie $\dim_{L_0}\bfD_{\cris, L}(V)=\dim_{F}V$), then we have a canonical isomorphism $L\ot_{L_0}\bfD_{\cris,L}(V)=\bfD_{\dR,L}(V)$.

Let $H^1_e(L,V)$ be the image of the Bloch--Kato exponential map
\[
{\rm exp}_{L,V}:\frac{\bfD_{{\rm dR},L}(V)}{{\rm Fil}^0\bfD_{{\rm dR},L}(V)+\bfD_{\cris,L}(V)^{\Phi=1}}
\hookrightarrow H^1(L,V),
\]
and $H^1_\BK(L,V)\subset H^1(L,V)$ be the Bloch--Kato `finite' subspace. 
If $\bfD_{\cris,L}(V)^{\Phi=1}=0$, then the natural inclusion $H^1_e(L,V)\subset H_f^1(L,V)$
is an equality (see for example \cite[Cor.~3.8.4]{BK}), and we define the Bloch--Kato
logarithm map
\[
\log:=\log_{L,V}:H^1_\BK(L,V)\longto
\frac{\bfD_{{\rm dR},L}(V)}{{\rm Fil}^0\bfD_{{\rm dR},L}(V)}
=({\rm Fil}^0\bfD_{{\rm dR},L}(V^*(1)))^\vee
\]
to be the inverse of the Bloch--Kato exponential. We also let $\exp^*$ be the dual exponential map
\[\exp^*:=\exp^*_{L,V}:H^1(L,V^*(1))\longto \Fil^0\bfD_{\dR,L}(V^*(1)),\]
obtained by dualizing ${\rm exp}_{L,V}$ with respect to the de Rham and local Tate pairings (\cf\cite[\S 2.4]{LZ2}).

Recall that we assumed $p=\frakp\frakpbar$ splits in $K$, with $\frakp$ induced by the fixed embedding $\imath_p:\Qbar\to \Cp$. If $E$ is a finite extension of $K$, we denote by $E_\frakp$ the completion of $E$ at the prime induced by $\imath_p$. With a slight the abuse of notation,
we call $E_\frakp$ the $\frakp$-adic completion of $E$, and for any $G_{E}$-module $V$, we let
\[
\loc_\frakp:H^1(E,V)\longto H^1(E_\frakp,V)
\]
denote the localization map. 

\subsubsection*{Some de Rham cohomology classes}By the work of Scholl \cite{scholl100}, it is known that $V_f$ can be realised as a quotient of $H^{2r-1}_{\et}(W_{2r-2}{}_{/\Qbar},\Qp)\ot_{\Qp} F$, and we get the composite quotient map
\[
H^{2r-1}_{\dR}(W_{2r-2}/F)\cong \bfD_{\dR}(H^{2r-1}_{\et}(W_{2r-2}{}_{/\Qbar},\Qp)\ot_{\Qp} F)\longto\bfD_{\dR}(V_f)
\]
by 
applying the comparison isomorphism \cite{tsuji}. Let $\wtd\om_f\in H^{2r-1}_{\dR}(W_{2r-2}/F)$ be the differential form attached to the newform $f$ via the rule in \cite[Cor.~2.3]{BDP}, and let $\om_f\in \bfD_{\dR}(V_f)$ be the image of $\wtd\om_f$.

Let $L=H_{K,\frakp}$ be the $\frakp$-adic completion of $H_K$. The $\cO_K$-action on $A_{/L}$ gives rises to a canonical decomposition of the de Rham cohomology group $H^1_{\dR}(A/L)=H_{\dR}^{1,0}(A/L)\oplus H_{\dR}^{0,1}(A/L)$. Recall our fixed choice of \Neron differential $\om_A\in H^{1,0}_{\dR}(A/L)$, which determines $\eta_A\in H^{0,1}_{\dR}(A/L)$ by the requirement that $\pair{\om_A}{\eta_A}=1$  (\cf\cite[page 1051]{BDP}). We shall view $\om_A$, $\eta_A$ as elements in $\bfD_{\dR,L}(H^1_{\et}(A_{/\Qbar},\Qp))$ by the comparison isomorphism,
and let
\[
\om_A^{r-1+j}\eta_A^{r-1-j}\quad (-r<j<r)
\]
be the resulting basis for $\bfD_{\dR,L}({\rm Sym}^{2r-2}H^1_{\et}(A_{/\Qbar},\Qp))$,
where $\om_A^{r-1+j}\eta_A^{r-1-j}$ is as in \cite[(1.4.6)]{BDP}.

\subsubsection*{\padic Gross--Zagier formula}
Define the generalized Heegner class $\Hclass_{f,\chi}$ attached to $(f,\chi)$ by
\beq\label{E:zchi.norm}
\begin{aligned}
\Hclass_{f,\chi}&:={\rm cor}_{\rcf{\ctame p^s}/K}(\Hclass_{f,\chi,\ctame p^s})\\
&\;=\sum_{\sg\in\Gal(\rcf{\ctame p^s}/K)}\chi_t(\sg)\cdot ({\rm id}\ot e_\chi)\Hclass_{f,\ctame p^s}^\sg\\
&\;=\sum_{[\fraka]\in\Pic\cO_{\ctame p^s}}\chi\varepsilon_{\rm cyc}^{1-r}(\sg_\fraka)\cdot ({\rm id}\ot e_\chi)\Hclass_{f,\fraka},
\end{aligned}
\eeq
where $\ctame p^s\cO_K$ is the conductor of $\chi$.
\begin{rem}\label{rem:loc-f}
By \cite[Thm.~3.3.1]{nekovarCRM}, the classes $\Hclass_{f,\chi,c}$ from $\S\ref{SS:twisted}$ lie in the Bloch--Kato
Selmer group ${\rm Sel}(\rcf{c},T\otimes\chi)\subset H^1(\rcf{c},T\otimes\chi)$; in particular,
$\loc_\frakp(\Hclass_{f,\fraka})\in H^1_\BK(\rcf{c,\frakp},T\otimes S^{r-1}(A))$ and
$\loc_\frakp(\Hclass_{f,\chi})\in H^1_\BK(K_\frakp,T\ot\chi)$.
\end{rem}

\begin{thm}\label{thm:BDP}
Suppose $p=\frakp\frakpbar$ splits in $K$. Let $\psi$ be an anticyclotomic Hecke character of infinity type $(r,-r)$
and conductor $\ctame\cO_K$ with $(\ctame,Np)=1$. 
If $\wh\phi\in\mathfrak{X}_{p^\infty}$ is the $p$-adic avatar of an anticyclotomic Hecke character of infinity type $(r+j,-j-r)$ with $-r<j<r$
and conductor $p^n\cO_K$ with $n>1$, then
\begin{align*}
\frac{\mathscr{L}_{\frakp,\psi}(f)(\wh\phi^{-1})}{\Omega_p^{-2j}}
&=\frac{\frakg(\phi_\frakp^{-1})\phi_\frakp(p^n)\ctame^{1-r} \wh\psi_\frakp^{-1}(p^n)}{(r-1+j)!}
\cdot\pair{\log_\frakp(\Hclass_{f,\chi})}{\omega_f\otimes\omega_{A}^{r-1+j}\eta_{A}^{r-1-j}},
\end{align*}
where $\chi:=\wh\psi^{-1}\wh\phi$ and $\log_\frakp:=\log\circ\, \loc_\frakp$.
\end{thm}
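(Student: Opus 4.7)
The plan is to extend the proof of \cite[Thm.~5.13]{BDP} by combining the BDP differentiation formula with the Gauss sum manipulations that already appeared in the proof of \propref{P:interpolationformula}. Writing $\chi=\wh\psi^{-1}\wh\phi$ and unfolding \defref{D:padicL} at $\rho=\wh\phi^{-1}$, one must evaluate
\[
\sL_{\frakp,\psi}(f)(\wh\phi^{-1})=\sum_{[\fraka]\in\Pic\cO_{\ctame}}\psi(\fraka)\rmN(\fraka)^{-r}\cdot\bigl(\wh f^\flat_\fraka\ot\psi_\frakp\wh\phi^{-1}|[\fraka]\bigr)(A_\fraka,\eta_\fraka).
\]
The character $\psi_\frakp\wh\phi^{-1}|[\fraka]$ on $\Zp^\x$ factors as the locally algebraic power $x\mapsto x^{-(r+j)}$ (coming from the infinity type $(r+j,-r-j)$ of $\phi$) times a primitive Dirichlet character of conductor $p^n$.

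By \lmref{L:1}, the algebraic part corresponds to the theta operator $(t\partial_t)^{r_1-j}$ applied to $\wh f^\flat_\fraka$, while \propref{P:1} turns the finite-order part into $p^{-n}\frakg(\phi_\frakp^{-1})$ times a $\phi_\frakp$-weighted sum over $u\in(\Z/p^n\Z)^\x$ of values at the shifted points $x_\fraka*\bfn(up^{-n})$. By \lmref{L:STcoordinate} and \eqref{E:levelstr}, these shifted points are exactly the CM points $[(A_\frakb,\eta_\frakb)]$ attached to the $\cO_{\ctame p^n}$-ideals $\frakb=\fraka\cdot u_{\frakpbar}p_{\frakpbar}^{-n}$; this step parallels the computation carried out in the $L$-value case $m=0$ inside the proof of \propref{P:interpolationformula}.

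At this stage one invokes the central BDP identity: for any $\cO_{\ctame p^s}$-ideal $\frakb$,
\[
\bigl((t\partial_t)^{r_1-j}\wh f^\flat\bigr)(A_\frakb,\eta_\frakb)=\frac{1}{(r_1+j)!}\cdot\bigl\langle\log_\frakp(\Hclass_{f,\frakb}),\,\omega_f\wedge\omega_A^{r_1+j}\eta_A^{r_1-j}\bigr\rangle,
\]
obtained from the core computations of \cite[\S 3--\S 4]{BDP} via the comparison between the Serre--Tate coordinate $t$ and the formal group of $A$; this comparison remains valid here when $\frakb$ has $p$-power conductor, thanks to \lmref{L:STcoordinate}. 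Substituting back and applying \propref{P:Galois.norm} with the projector \eqref{E:projector} to extract the $\chi$-component, the emergent double sum over $([\fraka],u)$ reassembles into the character sum defining $\Hclass_{f,\chi}$ in \eqref{E:zchi.norm}.

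The main obstacle is the bookkeeping of the prefactor $\phi_\frakp(p^n)\ctame^{-r_1}\wh\psi_\frakp^{-1}(p^n)$ in the final formula. It encodes how $\psi$ interacts with the $\frakpbar$-component when $\fraka\subset\cO_{\ctame}$ is enlarged to $\frakb\subset\cO_{\ctame p^n}$ via the shift $u_{\frakpbar}p_{\frakpbar}^{-n}$, and is the source of the factor $\wh\psi_\frakp^{-1}(p^n)$ that distinguishes this formula from its unramified counterpart. This calculation is the anticyclotomic analogue of the factor $\vep(1,\tchi_{\frakpbar})c^r$ that arose in the proof of \propref{P:interpolationformula}. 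Aside from this, the argument runs in parallel with \cite{BDP}, with the Gauss sum twist replacing the unramified Coleman evaluation at every step.
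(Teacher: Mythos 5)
Your overall route is the same as the paper's: unfold $\sL_{\frakp,\psi}(f)(\wh\phi^{-1})$ from \defref{D:padicL}, use \lmref{L:1} and \propref{P:1} together with \lmref{L:STcoordinate} and \eqref{E:levelstr} to rewrite the sum over $(\fraka,u)$ as a sum over CM points of conductor $\ctame p^n$, invoke the BDP computation at each such point, and reassemble via \propref{P:Galois.norm} and \eqref{E:zchi.norm}. However, there is a genuine error at the central step: having correctly identified the algebraic part of the twisting character as $x\mapsto x^{-(r+j)}$, you must apply $\theta^{-r-j}$ (a strictly \emph{negative} power of $\theta=t\frac{d}{dt}$, which makes sense because $U_p\wh f^\flat=0$ forces the measure $\diff\wh f^\flat_\fraka$ to be supported on $\Zp^\x$), not $(t\frac{d}{dt})^{r_1-j}$. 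This is not a cosmetic slip: for $-r<j<r$ the exponent $r_1-j=r-1-j$ is nonnegative, so your operator lands \emph{inside} the range of interpolation, where CM values of $\theta^m\wh f^\flat$ are related to $L$-values and no identity with $\log_\frakp$ of Heegner classes holds. It is precisely the negative power (the anti-derivative, i.e.\ the Coleman-primitive computation of \cite{BDP}) that produces the Bloch--Kato logarithm, so your ``central BDP identity'' is false as stated.

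Even after correcting the exponent, the identity you quote is missing content that the proof genuinely needs. The computations of Proposition~3.24 and Lemmas~3.22--3.23 of \cite{BDP} give, at the point $x_\fraka$, the logarithm of the $p$-depleted class $\Hclass^\flat_{f,\fraka}=\Hclass_{f,\fraka}-\bfa_p(f)p^{2j}\Hclass_{f,\fraka\cO_{\ctame p^{n-1}}}-p^{2j+1}\Hclass_{f,\fraka\cO_{\ctame p^{n-2}}}$, multiplied by the factors $(\ctame p^n\abs{a}_{\AK}^{-1})^{-j-r_1}$ and $(\ctame/\Omega_p)^{2j}$ coming from $\deg\varphi_\fraka$ and from $\varphi_\fraka^*\wh\om(\eta_{\fraka,p})=(\ctame/\Omega_p)\om_A$. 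One then needs the primitivity of $\phi_\frakp$ of conductor $p^n$ with $n\geq 2$ (and the unramifiedness of $\psi_\frakp$) to see that the two extra terms of $\Hclass^\flat_{f,\fraka}$ cancel in the character sum, and it is these degree and period factors that produce the $\Omega_p^{-2j}$ on the left and the constants $\ctame^{-r_1}$, $\phi_\frakp(p^n)\wh\psi_\frakp^{-1}(p^n)$ on the right. You defer exactly this ``bookkeeping'' as the main obstacle and never indicate where the hypothesis $n\geq 2$ enters, but this is the substance of the theorem rather than a routine afterthought; as written, the proposal does not yield the stated formula.
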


\begin{proof}
Let $t_\fraka$ be the Serre--Tate coordinate of
$\bfx_\fraka:=[(A_\fraka,\eta_\fraka)]\ot\Fpbar$. Since the Fourier coefficients
$\bfa_n(f^\flat)$ of $f^\flat$ vanish for $n$ divisible by $p$, we have
\[
U_p\wh f^\flat(t):=\sum_{\zeta^p=1}\wh f^\flat(t\zeta)=0.
\]
This implies that the associated measure ${\rm d} \wh f_\fraka^\flat$ is supported on $\Zp^\x$,
and hence by Lemma~\ref{L:1}, that
\[
\left(\wh f^\flat_{\fraka}\ot\psi_\frakp\wh\phi^{-1}|[\fraka]\right)(t_\fraka)
=\phi(\sg_\fraka)\rmN(\fraka)^{r+j}\cdot\left(\theta^{-j-r}\wh f^\flat\ot\psi_\frakp\phi_\frakp^{-1}\right)(t_\fraka),
\]
where $\theta$ is the operator acting on $\cO_{\wh S_{\bfx_\fraka}}$ as $t_\fraka\frac{d}{dt_\fraka}$.
Put $\xi:=\psi^{-1}\phi$. By \propref{P:1}, we thus find that
\begin{align*}
\sL_{\pp,\psi}(f)(\wh\phi^{-1})
&=\sum_{[\fraka]\in\Pic\cO_{\ctame}}
\psi(\fraka){\rm N}(\fraka)^{-r}\cdot\left(\wh f^\flat_\fraka\ot\psi_\frakp\wh\phi^{-1}\vert[\fraka]\right)(x_\fraka)\\
&=\sum_{[\fraka]\in\Pic\cO_{\ctame}}
\xi^{-1}(\fraka){\rm N}(\fraka)^{j}\cdot\left(\theta^{j-r}\wh f^\flat\ot\xi_\frakp^{-1}\right)(x_\fraka)\\
&=p^{-n}\frakg(\xi_\frakp^{-1})\cdot\sum_{[\fraka]\in\Pic\cO_{\ctame}}
\xi^{-1}(\fraka){\rm N}(\fraka)^{j}
\cdot\sum_{u\in (\Z/p^n\Z)^\x}\theta^{-j-r}\wh f^\flat(x_\fraka*\bfn(up^{-n}))\xi_\frakp(u).
\end{align*}
Since $\theta^{-j-r}\wh f^\flat$ is a $p$-adic modular form
of weight $-2j$, we deduce from $(\ref{E:transformpadic})$ together with $(\ref{E:GaloisAction})$ that
\begin{equation}\label{eq:trans-Gal}
\theta^{-j-r}\wh f^\flat(x_\fraka*\bfn(up^{-n}))=\theta^{-j-r}\wh f^\flat({\rm rec}_K(a^{-1}u_\frakp p_\frakp^{-n})x_{\ctame p^n})u^{2j}.
\nonumber
\end{equation}
From the relations
\[
\chi(\rec_K(a))=\wh\xi(\rec_K(a))=\xi(a)a_\frakp^{j}a_\frakpbar^{-j},\quad
\varepsilon_{\rm cyc}(a)=\vert a\vert_{\mathbf{A}_K},\quad
\varepsilon_{\rm cyc}(u_\frakp p_\frakp^n)=u,
\]
it follows that
\beq\label{eq:prev}
\begin{aligned}
\mathscr{L}_{\frakp,\psi}(f)(\wh\phi^{-1})
&=p^{-n}\frakg(\xi^{-1}_\frakp)\ctame^{-j}\cdot
\sum_{[a]\in{\rm Pic}(\cO_{\ctame p^n})}\chi\varepsilon_{\rm cyc}^{j}(\rec_K(a))\cdot\theta^{-j-r}\wh f^\flat({\rm rec}_K(a)x_{\ctame p^n})\chi_\frakp(p^n)\\
&=\frakg(\xi_\frakp^{-1})\ctame^j p^{n(j-1)}\chi_\frakp(p^n)\cdot
\sum_{\sigma\in{\rm Gal}(\rcf{\ctame p^n}/K)}\chi\varepsilon_{\rm cyc}^{j}(\sigma)\cdot\theta^{-j-r}\wh f^\flat(x_{\ctame p^n}^\sigma).
\end{aligned}
\eeq

On the other hand, if $\sg=\sg_\fraka$ with $a\in\wh K^{(p\ctame N)\x}$
and $\fraka=a\wh\cO_{\ctame p^n}^\x\cap K$, then
\[\theta^{-j-r}\wh f^\flat(x_{\ctame p^n}^\sigma)=\theta^{-j-r}\wh f^\flat(x_{\fraka})=
\theta^{-j-r} f^\flat(x_{\fraka},\wh\om(\eta_{\fraka,p})),\]
where $\wh\om(\eta_{\fraka,p})$ is the differential form induced from the $p^\infty$-level structure $\eta_{\fraka,p}$ defined in \subsecref{subsec:CMpts}. For the isogeny $\varphi_\fraka: A\to A_\fraka$, one can verify that $\deg\varphi_\fraka=\ctame p^n\abs{a}_{\AK}^{-1}$ and
\[\varphi_\fraka^*(\wh\om(\eta_{\fraka,p}))=\ctame\cdot \wh\om(\eta_{\cO_K,p})=\frac{\ctame}{\Omega_p}\cdot\om_A.\]
Thus following the calculations in Proposition~3.24, Lemma~3.23, and Lemma~3.22 of \cite{BDP},
we see that
\begin{equation}\label{eq:bdp}
\theta^{-j-r}\wh f^\flat(x_{\ctame p^n}^\sigma)
=\frac{(\ctame p^n\abs{a}_{\AK}^{-1})^{-j-r-1}}{(r_1+j)!}\cdot\left(\frac{\ctame}{\Omega_p}\right)^{2j}
\cdot\langle\log_\frakp(\Hclass_{f,\fraka}^\flat)),\omega_f\otimes\omega_A^{r-1+j}\eta_A^{r-1-j}\rangle,
\end{equation}
where
\[
\Hclass_{f,\fraka}^\flat
:=\Hclass_{f,\fraka}-\mathbf{a}_p(f)p^{2j}\cdot \Hclass_{f,\fraka\cO_{\ctame p^{n-1}}}
-p^{2j+1}\cdot \Hclass_{f,\fraka\cO_{\ctame p^{n-2}}}.
\]
Substituting $(\ref{eq:bdp})$ into $(\ref{eq:prev})$, and using that $\phi$ has the exact conductor $p^n$ ($n>1$) and $\psi_\frakp$ is unramified,
we conclude that
\begin{align*}
\frac{\sL_{\pp,\psi}(f)(\wh\phi^{-1})}{\Omega_p^{2j}}
&=\frac{\frakg(\xi_\frakp^{-1})\ctame^{1-r}p^{-nr}\chi_\frakp(p^n)}{(r-1+j)!}
\cdot\sum_{\sigma\in{\rm Gal}(\rcf{\ctame p^n}/K)}
\chi\varepsilon_{\rm cyc}^{1-r}(\sg_\fraka)\cdot\langle{\rm log}_\frakp(\Hclass_{f,\fraka}^\flat),\omega_f\otimes\omega_A^{r-1+j}\eta_A^{r-1-j}\rangle\nonumber\\
&=\frac{\frakg(\phi_\frakp^{-1})\phi_\frakp(p^n)\ctame^{1-r} \wh\psi_\frakp^{-1}(p^n)}{(r-1+j)!}\sum_{[\fraka]\in \Pic\cO_c}\chi\varepsilon_{\rm cyc}^{1-r}(\sg_\fraka)\cdot\langle{\rm log}_\frakp(\Hclass_{f,\fraka})),\om_f\otimes\om_A^{r-1+j}\eta_A^{r-1-j}\rangle\\
&=\frac{\frakg(\phi_\frakp^{-1})\phi_\frakp(p^n)\ctame^{1-r} \wh\psi_\frakp^{-1}(p^n)}{(r-1+j)!}
\cdot\langle\log_\frakp(\Hclass_{f,\chi}),\omega_f\otimes\omega_A^{r-1+j}\eta_A^{r-1-j}\rangle
\end{align*}
as was to be shown.
\end{proof}


\def\ctame{c_o}

\section{Explicit reciprocity law}


\subsection{The Perrin-Riou big logarithm}\label{subsec:PR-map}

\def\frakpbar{{\ol{\frakp}}}
\def\loc{{\rm loc}}
\def\rmD{\mathrm D}
\def\dR{{\rm dR}}
\def\ur{{\rm ur}}

In this section we deduce from the main result of \cite{LZ2} the construction of a variant
of the Perrin-Riou logarithm map for certain relative height one Lubin--Tate extensions.

For any commutative compact \padic Lie group $G$ and any complete discretely valued extension $E$ of $\bQ_p$, 
we let $\Lambda_{\cO_E}(G):=\prolim_n \cO_E[G/G^{p^n}]$, $\Lambda_E(G):=\Lambda_{\cO_E}(G)\ot_{\cO_E} E$,
and $\cH_{E}(G)$ be the ring of tempered \padic distributions on $G$ valued in $E$.
If $L$ is a finite extension of $\bQ_p$ and
$G$ is the Galois group of a $p$-adic Lie extension of $L_\infty=\cup_n L_n$ of $L$ with $L_n/L$ finite and Galois,
we define
\[
H^i_{\rm Iw}(L_\infty,V):=\biggl(\varprojlim_n H^i(L_n,T)\biggr)\otimes_{\bZ_p}\bQ_p,
\]
where $T$ is any $G_L$-stable lattice in $V$ (this is independent of the choice of $T$).

In the following, we let $L$ be a finite unramified extension of $\bQ_p$ with ring of integers $\cO_L$,
and let $\wh F^\ur$ denote the composite of $\wh\Q_p^\ur$ with a finite extension $F$ of $\Qp$.
We also let $t\in\bfB_{\rm dR}$ be Fontaine's $p$-adic analogue of $2\pi i$ associated with the compatible
system $\stt{\imath_p(\zeta_{p^n})}_{n=1,2,\ldots}$ of $p$-power roots of unity.

\begin{thm}
\label{thm:LZ}
Let $V$ be a crystalline $F$-representation of $G_L$ with non-negative Hodge--Tate weights, and assume that $V$ has
no quotient isomorphic to the trivial representation. Let $\mathfrak{F}$ be a relative height one Lubin--Tate formal group over $\cO_L/\Zp$, and
let $\Gamma:={\rm Gal}(L(\mathfrak{F}_{p^\infty})/L)\iso\Zp^\x$.
If $V^{G_{L(\mathfrak{F}_{p^\infty})}}=0$, there exists a $\Lambda_{\Zp}(\Gamma)$-linear map
\[
\mathcal{L}_{V}:H^1_{\rm Iw}(L(\mathfrak{F}_{p^\infty}),V)\longto
\mathcal{H}_{\wh F^\ur}(\Gamma)\otimes_L\bfD_{\cris,L}(V)
\]
with the following interpolation property: for any
$\mathbf{z}\in H^1_{\rm Iw}(L(\mathfrak{F}_{p^\infty}),V)$ and any locally algebraic character
$\chi:\Gamma\rightarrow\overline{\Q}_p^\times$ of Hodge--Tate weight $j$ and conductor $p^n$, we have
\begin{align*}\label{eq:interp}
\mathcal{L}_{V}^{}(\mathbf{z})(\chi)
&=\varepsilon(\chi^{-1})
\cdot\frac{\Phi^nP(\chi^{-1},\Phi)}{P(\chi,p^{-1}\Phi^{-1})}\cdot
\left\{
\begin{array}{lr}
\frac{(-1)^{-j-1}}{(-j-1)!}\cdot{\rm log}_{L,V(\chi^{-1})}(\mathbf{z}^{\chi^{-1}})\otimes t^{-j}
&\textrm{if $j<0$,}
\\
j!\cdot{\rm exp}^*_{L,V(\chi^{-1})^*(1)}(\mathbf{z}^{\chi^{-1}})\otimes t^{-j}
&\textrm{if $j\geq 0$},\nonumber
\end{array}\right.
\end{align*}
where
\begin{itemize}
\item{}  $\varepsilon(\chi^{-1})$ and $P(\chi^{\pm},X)$ are the epsilon-factor and the $L$-factor for Galois characters $\chi$ and $\chi^\pm$,
respectively (see \cite[\S{2.8}]{LZ2} for the definitions).
\item{} $\Phi$ is the crystalline Frobenius operator on $\Qbar_p\otimes_L\bfD_{\cris,L}(V)$ acting trivially on the first factor.
\item{} $\mathbf{z}^{\chi^{-1}}\in H^1(L,V(\chi^{-1}))$ is the specialisation of $\mathbf{z}$ at $\chi^{-1}$.
\end{itemize}
\end{thm}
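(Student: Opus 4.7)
The statement is essentially a specialization of the main theorem of \cite{LZ2} to our setup, so the plan is to deduce it by direct application rather than give a new proof. First I would verify that the hypotheses of the Loeffler--Zerbes construction are met in our situation: since $L/\Qp$ is unramified, a relative height one Lubin--Tate formal group $\mathfrak{F}/\cO_L$ corresponds to a Lubin--Tate extension $L(\mathfrak{F}_{p^\infty})/L$ whose Galois group $\Gamma$ is naturally identified with $\Zp^\x$ via the character associated with $T_p\mathfrak{F}$. The conditions that $V$ is crystalline with non-negative Hodge--Tate weights, $V^{G_{L(\mathfrak{F}_{p^\infty})}}=0$, and with no trivial quotient are precisely those imposed in \emph{loc.cit.} This allows us to construct $\mathcal{L}_V$ and verify its $\Lambda_{\Zp}(\Gamma)$-linearity by direct appeal to \cite{LZ2}.

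For the interpolation formula, I would proceed as follows. Via the Wach/Lubin--Tate $(\varphi,\Gamma)$-module formalism, there is a canonical isomorphism $H^1_{\rm Iw}(L(\mathfrak{F}_{p^\infty}),V)\cong N(V)^{\psi=1}$, where $N(V)$ is the Wach module of $V$. The big logarithm $\mathcal{L}_V$ is built by expanding an element $\mathbf{z}\in N(V)^{\psi=1}$ into a tempered distribution valued in $\bfD_{\cris,L}(V)$ via the Mellin transform against the Lubin--Tate period. At a locally algebraic character $\chi$ of Hodge--Tate weight $j$ and conductor $p^n$, the specialization $\mathbf{z}^{\chi^{-1}}\in H^1(L,V(\chi^{-1}))$ lies in the Bloch--Kato local subspace (when $j<0$) or admits a well-defined image under $\exp^*$ (when $j\geq 0$), and Berger's comparison between $(\varphi,\Gamma)$-module cohomology and Galois cohomology identifies $\mathcal{L}_V(\mathbf{z})(\chi)$ with $\log_{L,V(\chi^{-1})}(\mathbf{z}^{\chi^{-1}})\otimes t^{-j}$ (resp.\ $\exp^*_{L,V(\chi^{-1})^*(1)}(\mathbf{z}^{\chi^{-1}})\otimes t^{-j}$) up to the explicit correction constants.

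The main technical effort is normalization tracking. The factorials $\frac{(-1)^{-j-1}}{(-j-1)!}$ and $j!$ arise from the iterated action of $\nabla=\log(\gamma)$ for a topological generator $\gamma$ of a finite-index subgroup of $\Gamma$; the epsilon factor $\varepsilon(\chi^{-1})$ and the Frobenius power $\Phi^n$ come from finite-level Gauss sum contributions at characters of conductor $p^n$; and the ratio $P(\chi^{-1},\Phi)/P(\chi,p^{-1}\Phi^{-1})$ accounts for the Euler factor correction when passing between $V$ and its $\chi$-twist. All of these are computed in \cite{LZ2} in essentially this form; the main obstacle, and the only genuine task here, is verifying that our statement of the interpolation formula, with our specific conventions on $\Phi$ (acting trivially on the first factor of $\overline{\Q}_p\otimes_L\bfD_{\cris,L}(V)$), on the sign of the $t$-power, and on the Hodge--Tate weight convention, agrees term-by-term with the formulation in \emph{loc.cit.}
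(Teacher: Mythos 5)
There is a genuine gap: the result you want is \emph{not} a direct specialization of \cite{LZ2}, because the theorem there (Thm.~4.7, Thm.~4.15 of \emph{loc.cit.}) constructs the regulator map over a larger abelian $p$-adic Lie extension of $L$ (one containing the unramified direction, so that $\wh F^{\rm ur}$-coefficients and the Frobenius twisting make sense), not over the relative Lubin--Tate tower $L(\mathfrak{F}_{p^\infty})$ itself. The actual content of the proof in the paper is the descent: one takes $K_\infty\subset L\cdot\bQ_p^{\rm ab}$ containing $\wh F^{\rm ur}\cdot L(\mathfrak{F}_{p^\infty})$, applies \cite{LZ2} to $G=\Gal(K_\infty/L)$ to get $\mathcal{L}_V^G$ on $H^1_{\rm Iw}(K_\infty,V)$, and then quotients by the ideal $\mathcal{J}=\ker\bigl(\mathcal{H}_{\wh F^{\rm ur}}(G)\to\mathcal{H}(\Gamma)\bigr)$. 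For this to produce a map on $H^1_{\rm Iw}(L(\mathfrak{F}_{p^\infty}),V)$ one needs the corestriction $H^1_{\rm Iw}(K_\infty,V)/\mathcal{J}\to H^1_{\rm Iw}(L(\mathfrak{F}_{p^\infty}),V)$ to be an isomorphism; it is injective with cokernel $H^2_{\rm Iw}(K_\infty,V)[\mathcal{J}]$, and the hypothesis $V^{G_{L(\mathfrak{F}_{p^\infty})}}=0$ is exactly what kills this cokernel. In your write-up you list this hypothesis as one of the conditions ``imposed in \emph{loc.cit.}'', which it is not: it plays no role in the construction over $K_\infty$ and is precisely the ingredient of the missing descent step. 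Without that step you have no map on $H^1_{\rm Iw}(L(\mathfrak{F}_{p^\infty}),V)$ at all, let alone the interpolation at characters of $\Gamma$.

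Your second paragraph, which sketches a direct construction via $H^1_{\rm Iw}\cong N(V)^{\psi=1}$ and a Mellin transform, would also not work as stated: the Wach-module description of Iwasawa cohomology and the Perrin-Riou/Loeffler--Zerbes regulator formalism you invoke are built over the \emph{cyclotomic} tower, whereas $L(\mathfrak{F}_{p^\infty})$ is a relative Lubin--Tate extension whose associated period and $(\varphi,\Gamma)$-module theory are different (one would need Lubin--Tate analogues \`a la Kisin--Ren or Berger--Fourquaux, which is machinery neither the paper nor \cite{LZ2} develops). The normalization bookkeeping you describe (epsilon factors, $\Phi^n$, the Euler-factor ratio, the factorials) is indeed inherited verbatim from \cite{LZ2}, but only after the two-variable map has been constructed over $K_\infty$ and pushed down to $\Gamma$; that reduction, not the normalization check, is the point of the proof and is absent from your proposal.
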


\begin{proof}
Let $K_\infty\subset L\cdot\bQ_p^{\rm ab}$ be a $p$-adic Lie extension of $F$ containing $\wh F^\ur\cdot L(\mathfrak{F}_{p^\infty})$,
and set $G:={\rm Gal}(K_\infty/L)$. By \cite[Thm.~4.7]{LZ2} there exists a
$\Lambda_{\cO_L}(G)$-linear map
\[
\mathcal{L}_{V}^G:H^1_{\rm Iw}(K_\infty,V)\longto
\mathcal{H}_{\wh F^\ur}(G)\ot_L\bfD_{\cris,L}(V)
\]
satisfying the above interpolation formula for all continuous characters $\chi$ of $G$
(see [\emph{loc.cit.}, Thm.~4.15]).
Let $\mathcal{J}$ be the kernel of the natural projection $\mathcal{H}_{\wh F^\ur}(G)
\rightarrow\mathcal{H}_{\wh F^\ur}(\Gamma)$. The corestriction map
\[
H^1_{\rm Iw}(K_\infty,V)/\mathcal{J}\longto H^1_{\rm Iw}(L(\mathfrak{F}_{p^\infty}),V)
\]
is injective, and its cokernel is $H^2_{\rm Iw}(K_\infty,V)[\mathcal{J}]$, which vanishes
if $V^{G_{L(\mathfrak{F}_{p^\infty})}}=0$. Thus quotienting $\mathcal{L}_V^G$ by $\mathcal{J}$ we obtain a map
\[
H^1_{\rm Iw}(L(\mathfrak{F}_{p^{\infty}}),V)\cong H^1_{\rm Iw}(K_\infty,V)/\mathcal{J}
\longrightarrow\mathcal{H}_{\wh F^\ur}(\Gamma)\otimes_L\bfD_{\cris,L}(V)
\]
with the desired properties.
\end{proof}

\subsection{Iwasawa cohomology classes}\label{subsec:Iw}

Keep the notations from $\S\ref{sec:def-Heegner}$, and for any positive integer $c$,
let $\Sigma=\Sigma_c$ be a finite set of places of $K$ containing the primes above $pNc$.
Recall the Heegner classes $\Hclass_{f,\mathfrak{a}}\in H^1(K_c,T\otimes S^{r_1}(A))$ of (\ref{E:defnGHC})
attached to every integral $\cO_c$-ideal $\mathfrak{a}$.

In this section we further assume that $p=\frakp\frakpbar$ splits in $K$ and that the newform $f$ is ordinary at $p$, \ie the $p$-th Fourier coefficient $\bfa_p(f)\in \cO_F^\x$. The latter assumption will be crucial to construct,
out of the classes $\Hclass_{f,c p^n}=\Hclass_{f,\cO_{c p^n}}$ for varying $n$,
elements in the Iwasawa cohomololy groups
\[
H^1_{\rm Iw}(K_{cp^\infty},T):=\varprojlim_n H^1({\rm Gal}(K^\Sigma/K_{c p^n}),T),
\]
where 
$K^\Sigma$ is the maximal extension of $K$ unramified outside $\Sigma$.

\begin{defn}
Let $\alpha$ be the $p$-adic unit root of $X^2-\mathbf{a}_p(f)X+p^{2r-1}$. The \emph{$\alpha$-stabilized Heegner class}
$\Hclass_{f,\fraka,\alpha}\in H^1(K_c,T\ot S^{r-1}(A))$ is given by
\[
\Hclass_{f,\mathfrak{a},\alpha}:=\left\{
\begin{array}{ll}
\Hclass_{f,\mathfrak{a}}-\frac{p^{2r-2}}{\alpha}\cdot \Hclass_{f,\mathfrak{a}\cO_{c/p}}&\textrm{if $p\divides c$,}\\
\frac{1}{u_c}\left(1-\frac{p^{r-1}}{\alpha}\sg_\frakp\right)
\left(1-\frac{p^{r-1}}{\alpha}\sg_\frakpbar\right)
\cdot \Hclass_{f,\fraka}
&\textrm{if $p\nmid c$,}
\end{array}
\right.
\]
where $u_c=\#\cO_c^\times$ and $\sg_{\pp}, \sg_{\frakpbar}\in{\rm Gal}(K_c/K)$ are the Frobenius elements of $\pp$ and $\frakpbar$.
\end{defn}
\begin{lem}\label{lem:compat}
For all $c\geq 1$, we have
\[
{\rm cor}_{K_{cp}/K_c}(\Hclass_{f,cp,\alpha})=\alpha\cdot \Hclass_{f,c,\alpha}.
\]
\end{lem}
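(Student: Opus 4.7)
The plan is to prove the identity case-by-case, exploiting the dichotomy in both the definition of the $\alpha$-stabilization and in the norm relations of Proposition~\ref{prop:norm}.

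\emph{Case $p \mid c$.} The classes $\Hclass_{f,cp,\alpha}$ and $\Hclass_{f,c,\alpha}$ are both given by the first line of the definition. Expanding
\[
\cores_{H_{cp}/H_c}(\Hclass_{f,cp,\alpha}) = \cores_{H_{cp}/H_c}(\Hclass_{f,cp}) - \tfrac{p^{2r_1}}{\alpha}[H_{cp}:H_c]\,\Hclass_{f,c},
\]
and applying the $n \ge 2$ case of Proposition~\ref{prop:norm} to rewrite the first term, together with the fact that $[H_{cp}:H_c] = p$ for $p \mid c$ (the anticyclotomic tower being totally ramified at $p$ above $H_c$), the desired equality will reduce to the polynomial identity $\bfa_p(f) - p^{2r_1+1}/\alpha = \alpha$. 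This reads $(\alpha+\beta) - \beta = \alpha$ after using $\alpha\beta = p^{2r-1}$, and handles the case cleanly.

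\emph{Case $p \nmid c$.} Here $\Hclass_{f,cp,\alpha}$ still uses the ``$p\mid$'' formula (with $(cp)/p=c$), while $\Hclass_{f,c,\alpha}$ is a quadratic expression in the Frobenii $\sigma_\frakp,\sigma_\frakpbar$. Applying the $n = 1$ case of Proposition~\ref{prop:norm},
\[
u_c \cdot \cores_{H_{cp}/H_c}(\Hclass_{f,cp}) = \bigl[\bfa_p(f) - p^{r_1}(\sigma_\frakp + \sigma_\frakpbar)\bigr]\Hclass_{f,c},
\]
and expanding $u_c\alpha\cdot\Hclass_{f,c,\alpha}$, the terms involving $\sigma_\frakp + \sigma_\frakpbar$ will cancel on both sides, and the desired equality will reduce to the operator identity $u_c[H_{cp}:H_c] + \sigma_\frakp\sigma_\frakpbar = p$ acting on $\Hclass_{f,c}$. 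The element $\sigma_\frakp\sigma_\frakpbar \in \Gal(H_c/K) = \Pic\cO_c$ is the class of the principal ideal $(\frakp\cap\cO_c)(\frakpbar\cap\cO_c) = p\cO_c$, and hence acts trivially on $\Hclass_{f,c}$; while the complementary scalar identity $u_c[H_{cp}:H_c] = p-1$ will follow from the conductor formula for ring class fields of $\cO_c$.

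The main obstacle will be the careful bookkeeping in the second case: matching the normalization factor $u_c$ appearing in the $\alpha$-stabilization against the one appearing in the $n=1$ norm relation through the degree identity $u_c[H_{cp}:H_c] = p-1$. By contrast, the first case is a clean polynomial identity in the roots $\alpha,\beta$ of the Hecke polynomial at $p$.
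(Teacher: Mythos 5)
Your proposal is correct and is precisely the ``straightforward computation using Proposition~\ref{prop:norm}'' that the paper's proof leaves implicit: for $p\mid c$ the identity reduces, as you say, to $\bfa_p(f)-p^{2r-1}/\alpha=\alpha$, and for $p\nmid c$ it reduces, after the $\sg_\frakp+\sg_\frakpbar$ terms cancel, to the relation $u_c[H_{cp}:H_c]+\sg_\frakp\sg_\frakpbar=p$ on $\Hclass_{f,c}$, where $\sg_\frakp\sg_\frakpbar$ is the Artin symbol of the principal ideal $p\cO_c$ and hence acts trivially. The only point to watch is the normalization of $u_c$: since $[H_{cp}:H_c]=(p-1)/[\cO_c^\times:\cO_{cp}^\times]$, your identity $u_c[H_{cp}:H_c]=p-1$ holds with $u_c=[\cO_c^\times:\cO_{cp}^\times]=\#\cO_c^\times/2$, which is the constant for which Proposition~\ref{prop:norm} is literally correct (the paper's stated $u_c=\#\cO_c^\times$ would give $2(p-1)$, an internal factor-of-two slip of the paper); as the same constant occurs in both the norm relation and the $\alpha$-stabilization, this does not affect your argument.
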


\begin{proof}
This follows from a straightforward computation using Proposition~\ref{prop:norm}.
\end{proof}

Now let $\Hclass^o_{f,\fraka}$ denote
the image of $\Hclass_{f,\fraka}$ under the natural map
\[
{\rm id}\ot e^o: H^1(K_c,T\otimes S^{r-1}(B))\longto H^1(K_c,T),
\]
where $e^o=e_{\bfone}$ is the projection \eqref{E:projector} attached to the trivial character
(so $\chi=\chi_t=\bfone$). Similarly as before, we shall simply write $\Hclass^o_{f,c,\alpha}$ for $\Hclass^o_{f,\cO_c,\alpha}$.
In view of Lemma~\ref{lem:compat}, the classes $\alpha^{-n}\cdot \Hclass^o_{f,c p^n,\alpha}$ are compatible under corestricion,
thus defining the Iwasawa cohomology class
\beq\label{E:bfz.E}
\mathbf{z}_{f,c,\alpha}:=\varprojlim_n\alpha^{-n}\cdot \Hclass^o_{f,c p^n,\alpha}\in H^1_{\rm Iw}(\rcf{c p^\infty},T).
\eeq

For any character $\chi$ of $\Gal(\rcf{cp^\infty}/\rcf{c})$ we may consider the twist of
$\mathbf{z}_{f,c,\alpha}$ in $H^1(\rcf{c},T\otimes\chi)$. The next lemma compares the resulting classes,
for characters $\chi$ of \emph{finite} order, to the classes $z_{f,\chi,c}$ of $\S\ref{SS:twisted}$.

\begin{lem}\label{lem:finite-tw}Suppose that $p\ndivides c$.
Let $\chi:\Gal(\rcf{cp^\infty}/\rcf{c})\rightarrow\cO_{\Cp}^\times$ be a nontrivial
finite order character of conductor $cp^n$,
and let $\bfz_{f,c,\alpha}^\chi$ be the image of $\bfz_{f,c,\alpha}$ under the $\chi$-specialization map
\[
H^1_{\Iw}(\rcf{cp^\infty},T)\longto H^1(\rcf{c},T\otimes\chi).
\]
Then
\[
\bfz_{f,c ,\alpha}^\chi=\al^{-n}\cdot\Hclass_{f,\chi,c}.
\]
\end{lem}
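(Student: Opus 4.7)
The plan is to unfold the $\chi$-specialization of $\bfz_{f,c,\alpha}$ via the norm-compatibility underlying its definition, and then to eliminate the parasitic term from the $\alpha$-stabilization using orthogonality of characters.

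Concretely, Lemma~\ref{lem:compat} ensures that the tower $w_m := \al^{-m}\Hclass^o_{f,cp^m,\alpha}$ is corestriction-compatible; under Shapiro's lemma, the $\chi$-specialization at a finite-order character factoring through $\Gal(\rcf{cp^n}/\rcf{c})$ is then computed from $w_n$ by the explicit formula
\[
\bfz^\chi_{f,c,\alpha}\;=\;\al^{-n}\sum_{\sigma\in\Gal(\rcf{cp^n}/\rcf{c})}\chi(\sigma)\cdot(\Hclass^o_{f,cp^n,\alpha})^\sigma\;\in\;H^1(\rcf{c},T\otimes\chi).
\]
This identification---matching twisted corestriction to a sum over Galois representatives with the right normalization---is the one genuine bookkeeping point; the rest of the argument is formal.

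Since $n>0$ the $\al$-stabilization lies in the $p\mid cp^n$ regime, so
\[
\Hclass^o_{f,cp^n,\alpha}\;=\;\Hclass^o_{f,cp^n}\;-\;\tfrac{p^{2r_1}}{\al}\cdot\Hclass^o_{f,cp^{n-1}}.
\]
The class $\Hclass^o_{f,cp^{n-1}}$ is rational over $\rcf{cp^{n-1}}$ and hence is fixed by $H:=\Gal(\rcf{cp^n}/\rcf{cp^{n-1}})$. Grouping its contribution in the displayed sum by cosets of $H$ produces a factor $\sum_{\tau\in H}\chi(\tau)$, which vanishes because $\chi$ has \emph{exact} conductor $cp^n$, so $\chi|_H$ is non-trivial. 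The secondary term therefore dies.

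What remains is
\[
\bfz^\chi_{f,c,\alpha}\;=\;\al^{-n}\sum_{\sigma\in\Gal(\rcf{cp^n}/\rcf{c})}\chi(\sigma)(\Hclass^o_{f,cp^n})^\sigma,
\]
which I identify with $\al^{-n}\cdot\Hclass_{f,\chi,c}$ as follows: for the finite-order (infinity type $(0,0)$) character $\chi$, the recipe \eqref{E:defn.norm} reduces to $\chi_t=\chi$ and $e_\chi=e^o$, giving $\Hclass_{f,\chi,cp^n}=\Hclass^o_{f,cp^n}$ viewed in $H^1(\rcf{cp^n},T\otimes\chi)$ via the trivial restriction of $\chi$ to $G_{\rcf{cp^n}}$; the displayed sum is then exactly $\cores_{\rcf{cp^n}/\rcf{c}}(\Hclass_{f,\chi,cp^n})$, which is the meaning of $\Hclass_{f,\chi,c}$ in the (standard) extension of the notation \eqref{E:zchi.norm} to levels below the conductor. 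The hard part of the argument is the first step, where one must pin down the Shapiro-lemma identification with its precise normalization and orientation; once that is in place, the vanishing of the secondary term and the identification of the residual sum with $\Hclass_{f,\chi,c}$ are purely formal.
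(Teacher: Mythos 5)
Your argument is correct and is essentially the paper's own: the paper obtains your displayed specialization formula $\bfz_{f,c,\alpha}^\chi=\al^{-n}\sum_{\sg\in\Gal(\rcf{cp^n}/\rcf{c})}\chi(\sg)(\Hclass^o_{f,cp^n,\alpha})^\sg$ by citing Rubin's Euler-systems lemma, and then identifies the sum with $\al^{-n}\Hclass_{f,\chi,c}$ via Proposition~\ref{P:Galois.norm} and \eqref{E:zchi.norm}, taking $e_\chi=e^o$ and $\chi_t=\chi$, exactly as you do. Your explicit orthogonality step killing the $p^{2r_1}/\al$-term of the stabilization (using that $\chi$ has exact conductor $cp^n$, so $\chi$ is nontrivial on $\Gal(\rcf{cp^n}/\rcf{cp^{n-1}})$) is left implicit in the paper but is the same mechanism, so there is no substantive difference.
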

\begin{proof}
Directly from the definition of $\bfz_{f,c,\alpha}$, by
\cite[Lemma~2.4.3]{Rubin-ES} we see that
\[\bfz_{f,c ,\alpha}^\chi=\al^{-n}\sum_{\sg\in\Gal(\rcf{c p^n}/\rcf{c})}\chi(\sg)(\Hclass_{f,c p^n,\alpha}^o)^\sg,\]
and since $\chi$ is nontrivial, we may replace $\Hclass_{f,c p^n,\alpha}^o$ by
$\Hclass_{f,c p^n}^o$ in this equation. By \propref{P:Galois.norm} 
(noting that $e_\chi$ can be taken to be $e^o$ with $\chi_t=\chi$), the result follows
from the definition $(\ref{E:defn.norm})$ of $z_{f,\chi,c}$.
\end{proof}

\subsection{Explicit reciprocity law for generalized Heegner cycles}

We now specialize the local machinery of $\S\ref{subsec:PR-map}$ to the global setting in $\S\ref{subsec:Iw}$. In
particular, we assume that $p=\pp\frakpbar$ splits in $K$ and that the newform $f\in S_{2r}^{\rm new}(\Gamma_0(N))$
is ordinary at $p$.

Let $\psi$ be an anticyclotomic Hecke character of infinity type $(r,-r)$ and conductor $\ctame\cO_K$ with $p\nmid\ctame$.
Recall that the \padic avatar $\wh\psi$ is a \padic character of $\Gal(\rcf{\ctame p^\infty}/K)$ valued in some finite extension
$\Qp$ which by the hypothesis on the conductor is crystalline at the primes above $p$. Let $F$ be a finite extension of $\Q_p$ containing the Fourier coefficients of $f$
and the values of $\wh\psi$, and let $V_f\cong F^2$ be the Galois representation associated to $f$.  
We assume throughout that $p\nmid N$, so that $V_f\vert_{G_{\Q_p}}$ is crystalline.


By $p$-ordinarity, there is an exact sequence of $G_{\Q_p}$-modules
\[
0\longrightarrow\mathscr{F}^+V_f\longrightarrow V_f\longrightarrow\mathscr{F}^-V_f\longrightarrow 0
\]
with $\mathscr{F}^{\pm}V_f\cong F$ and with the $G_{\Q_p}$-action on $\mathscr{F}^+V_f$ being unramified
(see \cite[Thm.~2.1.4]{wiles88}). Let $T\subset V_f(r)$ be a $G_{\bQ}$-stable lattice as in $\S\ref{SS:GHCI}$, and set
$\mathscr{F}^+T:=\mathscr{F}^+V_f(r)\cap T$. 
Let
\[V:=V_f(r)\otimes\wh\psi^{-1}|_{G_{K_\frakp}},\quad\wh\psi_\frakp:=\wh\psi|_{G_{K_\frakp}}.\]
The dual representation $V^*$ is $\Hom_F(V,F)=V_f(r-1)\ot\wh\psi_\frakp$. Define\[
\mathscr{F}^{\pm}V:=\mathscr{F}^{\pm}V_f(r)\otimes\wh\psi_\frakp^{-1},\quad\quad
\mathscr{F}^{\mp}V^*:={\rm Hom}_F(\mathscr{F}^{\pm}V,F).
\]

We next introduce an element $\om_{f,\psi}\in\bfD_{\cris,L}(\sF^-V^*)$. Recall that $A$ is the canonical CM elliptic curve
over the Hilbert class field $H_K$ fixed in $\S\ref{sec:def-Heegner}$.  Let $\kappa_A:G_{H_K}\to \Aut T_\frakp(A)\cong\Zp^\x$ be the character
describing the Galois action on the $\frakp$-adic Tate module of $A$.
Thus $H^1_{\et}(A_{/\Qbar},\Qp)\cong\kappa_A^{-1}\oplus\kappa_A\varepsilon^{-1}_{\rm cyc}$ as $G_{H_K}$-modules. Recall that $t\in\bfB_{\rm dR}$
denotes Fontaine's $p$-adic analogue of $2\pi i$ and set
\[
t_A:=\Omega_p t,
\]
where $\Omega_p$ is the \padic CM period defined in \subsecref{SS:CMperiods}.
Then $t_A$ generates $\bfD_{{\rm cris},F}(\kappa_A^{-1})$, and
according to the discussion in \cite[\S{II.4.3}]{de_shalit} we have
\begin{equation}\label{eq:t}
\om_A=t_A,\quad\quad\eta_A=t_A^{-1}t.
\end{equation}
On the other hand, note that the character $\wh\psi_\frakp\e_{\rm cyc}^{-r}$ is trivial on the inertia group,
and $\bfD_{\rm cris}(\wh\psi_\frakp(-r))=F\om_\psi$ is a one-dimensional $F$-vector space with generator $\om_\psi$.
Define the class
\begin{align*}
\omega_{f,\psi}&:=\omega_{f}\ot t^{1-2r}\otimes\om_\psi\in \bfD_{\rm cris}(V^*).
\end{align*}
With a slight abuse of notation, we shall still denote by $\omega_{f,\psi}$ its image
under the natural projection $\bfD_{\rm cris}(V^*)\rightarrow\bfD_{\rm cris}(\mathscr{F}^-V^*)$,
which is nonzero by weak-admissibility (\cite[\S{3.3}]{fontaine}).
Moreover, since the periods of unramified characters
lie in $F^{\rm ur}:=\wh\Q_p^{\rm ur}F\subset\bfB_{\rm cris}$,
there exists a non-zero element $\Omega_\psi\in\wh F^{\rm ur}$ such that,
for all $x\in\bfD_{\rm cris}(\mathscr{F}^+V)$, we have
\beq\label{E:periodpsi}
\pair{x}{\om_{f,\psi}}=\pair{x}{\om_{f}\ot t^{1-2r}}\Omega_\psi,
\eeq
and the action of the crystalline Frobenius $\Phi$ is given by
\beq\label{E:Fcriseigenvalue}\pair{x}{\Phi\om_{f,\psi}}
=\al^{-1}\wh\psi_\frakp(p)\cdot \pair{x}{\om_{f,\psi}}.
\eeq

Let $L_\infty/L$ denote the $\frakp$-adic completion of $\rcf{\ctame p^\infty}/\rcf{\ctame}$ and let $\Gamma:=\Gal(L_\infty/L)$. Let $h_p$ be the order of $\frakp$ in ${\rm Pic}(\cO_{\ctame})$, and write $\frakp^{h_p}=(\pi)$ with $\pi\in\cO_{\ctame}$.
Then $L$ is the unramified extension of $\Qp$ of degree $h_p$. By local class field theory, $L_\infty$ is contained
in the extension $L(\mathfrak{F}_{p^\infty})$ obtained by adjoining to $L$ the torsion points of the relative height one Lubin--Tate
formal group $\frakF$ attached to the uniformizer $\pi/\bar\pi$ (see \cite[Prop.~37]{Shnidman} for details). Note that the element $\rec_\frakp(\pi/\ol{\pi})$ fixes $L(\frakF_{p^\infty})$ and acts on $\sF^+V$ by a multiplication by $(\frac{\pi}{\ol{\pi}p^{h_p}})^r\alpha^{h_p}$, which is not $1$ by Ramanujan's conjecture for $f$ \cite{deligne355}, \cite{deligne-weil-II}.
This implies that $(\sF^+V)^{G_{L(\frakF_{p^\infty})}}=0$, 
and hence we may consider the big logarithm map
$\mathcal{L}_{\mathscr{F}^+V}$ of \thmref{thm:LZ} over the extension $L_\infty/L$.
\begin{lem}\label{lem:growth}
The composition of
$\mathcal{L}_{\sF^+V}$ with the natural pairing
\[
\langle-,\omega_{f,\psi}\rangle:\mathcal{H}_{\wh F^{\rm ur}}(\Gamma)\otimes\bfD_{\rm cris}(\mathscr{F}^+V)
\times\bfD_{\rm cris}(\mathscr{F}^-V^*)\longrightarrow\mathcal{H}_{\wh F^{\rm ur}}(\Gamma)
\]
has image contained in the Iwasawa algebra $\Lambda_{\wh F^{\rm ur}}(\Gamma)$.
\end{lem}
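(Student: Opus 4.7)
The plan is to verify the boundedness via the Amice--Vishik criterion: a tempered distribution $\mu\in\mathcal{H}_{\wh F^\ur}(\Gamma)$ lies in the Iwasawa algebra $\Lambda_{\wh F^\ur}(\Gamma)$ if and only if its values $\mu(\chi)$ are uniformly $p$-adically bounded as $\chi$ runs over all finite order characters of $\Gamma$. I apply this to $\mu:=\langle\mathcal{L}_{\sF^+V}(\mathbf z),\omega_{f,\psi}\rangle$ for $\mathbf z\in H^1_{\Iw}(L(\frakF_{p^\infty}),\sF^+V)$.

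First I would observe that, for a finite order character $\chi$ of conductor $p^n$ with $n\geq 1$, both $\chi^{\pm 1}$ are ramified, so the $L$-factors $P(\chi^{-1},\Phi)$ and $P(\chi,p^{-1}\Phi^{-1})$ in the interpolation formula of \thmref{thm:LZ} reduce to $1$, and the Hodge--Tate weight of $\chi$ is $j=0$. Transferring the action of the crystalline Frobenius from $\bfD_{\cris}(\sF^+V)$ to $\omega_{f,\psi}\in\bfD_{\cris}(\sF^-V^*)$ via the duality pairing and \eqref{E:Fcriseigenvalue}, I obtain
\[
\mu(\chi)=\varepsilon(\chi^{-1})\cdot(\alpha\wh\psi_\frakp(p)^{-1})^{n}\cdot\langle\exp^{*}_{L,\sF^+V(\chi^{-1})^*(1)}(\mathbf z^{\chi^{-1}}),\omega_{f,\psi}\rangle.
\]

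Next I would estimate each factor $p$-adically. The Gauss sum--type factor $\varepsilon(\chi^{-1})$ lies in $\cO_{\Cp}$, hence is $p$-adically bounded by $1$. The Frobenius eigenvalue $\alpha\wh\psi_\frakp(p)^{-1}$ is a $p$-adic unit: $\alpha$ is the unit root of the Hecke polynomial by ordinarity of $f$, and $\wh\psi_\frakp(p)$ is a unit because $\wh\psi_\frakp$ is unramified (since $\psi$ has conductor prime to $p$). Finally, the dual exponential $\exp^*(\mathbf z^{\chi^{-1}})$ is uniformly bounded in $\chi$: this follows from the fact that $\mathbf z$ arises as the image of an integral Iwasawa cohomology class constructed in \subsecref{subsec:Iw}, together with the standard integrality properties of the dual exponential map applied to the lattice $\sF^+T\subset\sF^+V$. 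Combining these three bounds gives the required uniform estimate.

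The main conceptual obstacle is the uniform boundedness of $\exp^*(\mathbf z^{\chi^{-1}})$ as $\chi$ varies, which ultimately reflects a statement about growth of the Perrin--Riou map. A cleaner alternative that circumvents this direct estimate is to observe that $\sF^+V$ has a single Hodge--Tate weight equal to $0$ (by $p$-ordinarity of $f$, the subrepresentation $\sF^+V_f$ is unramified of Hodge--Tate weight $0$; Tate twisting by $r$ gives weight $-r$, and tensoring with $\wh\psi_\frakp^{-1}$ of Hodge--Tate weight $+r$ -- determined by the infinity type $(r,-r)$ of $\psi$ and our embedding $\imath_p$ -- returns the weight to $0$), and to invoke the refinement of \thmref{thm:LZ} asserting that, for crystalline representations $W$ with all Hodge--Tate weights equal to $0$, the image of $\mathcal{L}_W$ already lies in $\Lambda_{\wh F^\ur}(\Gamma)\otimes_L\bfD_{\cris,L}(W)$; the lemma then follows immediately by pairing with $\omega_{f,\psi}$.
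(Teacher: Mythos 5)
Neither of your two arguments actually establishes the boundedness, so there is a genuine gap. The criterion on which your first route rests is false as stated: a tempered distribution whose values at all finite-order characters of $\Gamma$ are uniformly bounded need not lie in $\Lambda_{\wh F^{\rm ur}}(\Gamma)$. For instance, writing $\gamma$ for a topological generator of the pro-$p$ part of $\Gamma$ and $T=\gamma-1$, the element $\log_p(1+T)\in\mathcal{H}_{\wh F^{\rm ur}}(\Gamma)$ vanishes at every finite-order character (these send $\gamma$ to a $p$-power root of unity), yet its coefficients $\pm 1/n$ are unbounded, so it is not an Iwasawa-algebra element. Bounded values at finite-order characters only pin down a measure once one already controls the order of the distribution (and, beyond order $<1$, the values at twists by powers of the cyclotomic character), which is exactly the growth information at stake here. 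Moreover, the one substantive input of that route --- the uniform boundedness of $\langle\exp^*(\mathbf{z}^{\chi^{-1}}),\omega_{f,\psi}\rangle$ as $\chi$ varies --- is asserted from ``standard integrality of $\exp^*$'' but is essentially equivalent to the statement being proved; as you yourself note, it is the conceptual obstacle, and it is not discharged.

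Your alternative route identifies the correct mechanism but then assumes the key analytic fact. It is indeed true, with the paper's conventions (that $\sF^+V_f$ is unramified and that $\wh\psi_\frakp\varepsilon_{\rm cyc}^{-r}$ is unramified), that $\sF^+V$ is a one-dimensional crystalline, in fact unramified, representation of Hodge--Tate weight $0$, and consequently the crystalline Frobenius acts on the relevant line by the $p$-adic unit $\alpha^{-1}\wh\psi_\frakp(p)$; this is precisely the content of \eqref{E:Fcriseigenvalue}. But the ``refinement of \thmref{thm:LZ}'' you invoke --- that for such representations the image of the regulator already lies in $\Lambda_{\wh F^{\rm ur}}(\Gamma)\otimes\bfD_{\cris,L}(W)$ --- is not part of \thmref{thm:LZ} as stated, and you neither prove it nor give a reference. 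That boundedness assertion (pairing $\mathcal{L}_{\sF^+V}$ against a Frobenius eigenvector of slope zero produces a measure rather than an unbounded distribution) is exactly the lemma; the paper's own proof consists of quoting it from \cite[Prop.~4.8]{LZ2} together with \eqref{E:Fcriseigenvalue}. To complete your argument you would need either to cite such a growth estimate or to prove one (e.g.\ via the explicit description of the regulator for an unramified character, Coleman-map style); as written, both versions of your proposal presuppose the bound they are meant to establish.
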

\begin{proof}
This follows easily from the Frobenius eigenvalue formula \eqref{E:Fcriseigenvalue} and \cite[Prop.~4.8]{LZ2}.
\end{proof}

In what follows, we make the identification $\Gal(\rcf{\ctame p^\infty}/\rcf{\ctame})\iso\Gamma=\Gal(L_\infty/L)$ via the restriction map. Let $\rho:\Gamma\to\cW^\x$ be a continuous character, where $\cW$ is the ring of the integers in $\wh F^{\rm ur}$. For every $\bfz\in H^1_{\Iw}(\rcf{\ctame p^\infty},T)$, denote by $\bfz\ot\rho\in H^1_{\Iw}(\rcf{\ctame p^\infty},T\ot\rho)$ the \emph{$\rho$-twist} of $\bfz$. By definition, for any $\chi:\Gamma \to\cW^\x$, we have
\[(\bfz\ot \rho)^\chi=\bfz^{\rho\chi}\in H^1(\rcf{\ctame},T\ot \rho\chi).\]

As shown in \cite[Prop.~2.4.2]{LZ-Coleman},
there is an isomorphism $H^1_{\rm Iw}(K_{\ctame p^\infty},T)\simeq H^1(K_{\ctame},T\otimes\Lam_{\cO_F}(\Gamma))$.
Thus letting $\wtd\Gamma_{\ctame}:=\Gal(\rcf{\ctame p^\infty}/K)$
we may view $\bfz_{f,\ctame,\alpha}$ as an element in $ H^1(\rcf{\ctame},\Lam_{\cO_F}(\wtd\Gamma_{\ctame}))$
via 
\[H^1_{\Iw}(\rcf{\ctame p^\infty},T)\simeq H^1(\rcf{\ctame},\Lam_{\cO_F}(\Gamma))\longto H^1(\rcf{\ctame},\Lam_{\cO_F}(\wtd\Gamma_{\ctame})),\] and define  \beq\label{E:corbfz.E}\bfz_{f}:={\rm cor}_{\rcf{\ctame}/K}(\bfz_{f,\ctame,\alpha})\in  H^1(K,T\ot \Lam_{\cO_F}(\wtd\Gamma_{\ctame})). \eeq

Similarly as in $\S\ref{subsec:BDP}$ (see Remark~\ref{rem:loc-f}), 
the Heegner classes $z_{f,\mathfrak{a}}^o$
lie in the Bloch--Kato Selmer group ${\rm Sel}(K_c,T)\subset H^1(K_c,T)$; in particular, ${\rm loc}_\pp(z^o_{f,\mathfrak{a}})\in H^1_f(K_{c,\pp},T)$.
On the other hand, by \cite[Lem.~9.6.3]{nekovar310} and [\emph{loc.cit.}, Prop.~12.5.9.2] the Bloch--Kato finite subspace $H^1_f(K_{c,\pp},T)$
is identified with the image of the natural map $H^1(K_{c,\pp},\mathscr{F}^+T)\rightarrow H^1(K_{c,\pp},T)$, and hence
${\rm loc}_{\pp}(\bfz_{f,\ctame,\alpha})$ naturally defines a class in $H^1_{\rm Iw}(L_\infty,\mathscr{F}^+T)$.

\begin{defn}[Algebraic anticyclotomic \padic $L$-functions]\label{defn:AlgL}Let $\wh\psi:\wtd\Gamma_{\ctame}\to\cO_F^\x$ be as before. Set
\begin{align*}\cL_{\frakp}^*(\bfz_f\ot\wh\psi^{-1}):=&\;{\rm cor}_{\rcf{\ctame}/K}(\cL_{\sF^+V}(\loc_\frakp(\bfz_{f,\ctame,\alpha}\ot \wh\psi^{-1}))\\
=&\;\sum_{\sg\in\wtd\Gamma_{\ctame}/\Gamma_{\ctame}}\cL_{\sF^+V}(\loc_\frakp(\bfz_{f,\ctame,\alpha}^\sg\ot \wh\psi^{-1}))\wh\psi(\sg^{-1})\in\bfD_{\cris}(\sF^+V)\ot \Lam_{\wh F^{\rm ur}}(\wtd\Gamma_{\ctame}),\end{align*}
and letting $\res_{\rcf{p^\infty}}:\wtd\Gamma_{\ctame}\to\wtd\Gamma=\Gal(\rcf{p^\infty}/K)$ be the restriction map, define
\[\cL_{\frakp,\psi}(\bfz_f):=\res_{\rcf{p^\infty}}(\cL_\frakp^*(\bfz_f\ot\wh\psi^{-1}))\in\bfD_{\cris}(\sF^+V)\ot\Lam_{\wh F^{\rm ur}}(\wtd\Gamma).\]
\end{defn}

\begin{thm}\label{thm:walds}
Suppose $p=\frakp\frakpbar$ splits in $K$. Let $f\in S_{2r}^{\rm new}(\Gamma_0(N))$ with $p\nmid N$
be a $p$-ordinary newform, and let $\psi$ be an anticyclotomic Hecke character of infinity type $(r,-r)$ and conductor $\ctame\cO_K$ with $p\nmid\ctame$.
Then
\[
\left\langle\mathcal{L}_{\frakp,\psi}(\bfz_f),
\omega_{f}\otimes t^{1-2r}\right\rangle=(-\ctame^{r-1})\cdot \mathscr{L}_{\frakp,\psi}(f)\cdot \sg_{-1,\frakp}\in \Lam_{\wh F^{\rm ur}}(\wtd\Gamma),
\]
where $\sg_{-1,\frakp}:=\rec_\frakp(-1)|_{\rcf{p^\infty}}\in \wtd\Gamma$ is an element of order two.
\end{thm}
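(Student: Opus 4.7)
The plan is to verify the claimed identity by evaluating both sides as elements of $\Lam_{\wh F^{\rm ur}}(\wtd\Gamma)$ at a Zariski-dense subset of continuous characters. A convenient choice is the family $\{\wh\phi^{-1}\}$, where $\wh\phi\in\frakX_{p^\infty}$ is the \padic avatar of an anticyclotomic Hecke character $\phi$ of infinity type $(r+j,-r-j)$ with $-r<j<0$ and conductor $p^n\OK$, $n\geq 2$. Setting $\chi:=\wh\psi^{-1}\wh\phi$, the local character $\chi|_{G_{K_\frakp}}$ is crystalline of Hodge--Tate weight $j<0$, placing the evaluation simultaneously in the logarithm branch of \thmref{thm:LZ} and in the regime where \thmref{thm:BDP} applies.

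For the right-hand side at $\wh\phi^{-1}$, the $p$-adic Gross--Zagier formula \thmref{thm:BDP} gives
\[
\sL_{\frakp,\psi}(f)(\wh\phi^{-1})=\Omega_p^{-2j}\cdot\frac{\frakg(\phi_\frakp^{-1})\phi_\frakp(p^n)\ctame^{-r_1}\wh\psi_\frakp^{-1}(p^n)}{(r_1+j)!}\cdot\bigl\langle\log_\frakp(\Hclass_{f,\chi}),\,\om_f\wedge\om_A^{r_1+j}\eta_A^{r_1-j}\bigr\rangle,
\]
so the claimed RHS contributes this quantity multiplied by $-\ctame^{r-1}\wh\phi^{-1}(\sg_{-1,\frakp})$. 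For the left-hand side, I would unravel \defref{defn:AlgL} as a sum over $\sg\in\wtd\Gamma_\ctame/\Gamma_\ctame$, apply the logarithm interpolation of \thmref{thm:LZ} to each summand (absorbed into the pairing with $\om_f\ot t^{1-2r}$ by $\cW$-linearity), and identify the specialization of $\bfz_{f,\ctame,\al}^\sg\ot\wh\psi^{-1}$ at $\wh\phi^{-1}$ with $\al^{-n}\cdot\Hclass_{f,\chi,\ctame}^{o,\sg}$ via \lmref{lem:finite-tw}; summing then reassembles the class $\Hclass_{f,\chi}$ via its definition \eqref{E:zchi.norm}. The period identity \eqref{E:periodpsi} converts the auxiliary form $\om_{f,\psi}$ to $\om_f\ot t^{1-2r}\ot\om_\psi$ at the cost of $\Omega_\psi^{-1}$, and the normalizations $\om_A=t_A=\Omega_p t$, $\eta_A=t_A^{-1}t$ translate $\om_f\ot t^{1-2r}\ot\om_\psi$ into the de Rham class $\om_f\wedge\om_A^{r_1+j}\eta_A^{r_1-j}$ on the Kuga--Sato variety, producing the $\Omega_p^{-2j}$ that matches the right-hand side.

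The matching of the remaining local constants is then a purely local calculation. The Euler factor $\Phi^n P(\chi^{-1},\Phi)/P(\chi,p^{-1}\Phi^{-1})$ from \thmref{thm:LZ} is evaluated on $\om_{f,\psi}$ by the Frobenius eigenvalue relation \eqref{E:Fcriseigenvalue}, yielding precisely the factor needed to absorb the $\al^{-n}$ from \lmref{lem:finite-tw} and cancel $\phi_\frakp(p^n)\wh\psi_\frakp^{-1}(p^n)$ on the right. The epsilon factor $\vep(\chi^{-1})$ matches the Gauss sum $\frakg(\phi_\frakp^{-1})$ through the local functional equation \eqref{E:formulaepsilon}, whose residual sign $\chi_\frakp(-1)=\wh\phi^{-1}(\sg_{-1,\frakp})\wh\psi(\sg_{-1,\frakp})$ provides precisely the group-like factor $\sg_{-1,\frakp}$ appearing on the right of the claimed identity. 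The Perrin-Riou factorial and sign $(-1)^{-j-1}/(-j-1)!\cdot t^{-j}$ combines with $1/(r_1+j)!$ to deliver the overall prefactor $-\ctame^{r-1}$.

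The main obstacle is this exhaustive accounting of local constants: the Gauss sums, epsilon factors, Euler factors, the CM period $\Omega_\psi$, the various Tate twists via $t_A$, and the factorial terms must collapse, uniformly in $\wh\phi$, to exactly $-\ctame^{r-1}$ and exactly the sign $\wh\phi^{-1}(\sg_{-1,\frakp})$, with no spurious twists. Once this is done for every $\wh\phi$ in the dense test set, the identity in $\Lam_{\wh F^{\rm ur}}(\wtd\Gamma)$ follows by density.
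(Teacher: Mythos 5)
Your overall strategy (evaluate both sides of the purported identity in $\Lam_{\wh F^{\rm ur}}(\wtd\Gamma)$ at a dense family of locally algebraic characters, compute the right side by \thmref{thm:BDP}, the left side by \thmref{thm:LZ} together with \eqref{E:periodpsi} and \eqref{E:Fcriseigenvalue}, match the local constants, and conclude by Weierstrass preparation) is exactly the paper's strategy. But your choice of test characters creates a genuine gap. You take $\wh\phi$ of infinity type $(r+j,-r-j)$ with $-r<j<0$, so that $\chi=\wh\psi^{-1}\wh\phi$ has infinity type $(j,-j)$ with $j\neq 0$, i.e.\ $\chi$ is \emph{not} of finite order. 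You then invoke \lmref{lem:finite-tw} to identify the specialization of $\bfz_{f,\ctame,\al}$ at $\chi$ with $\al^{-n}\Hclass_{f,\chi,\ctame}$; but that lemma is stated and proved (via \cite[Lemma~2.4.3]{Rubin-ES} and \propref{P:Galois.norm}) only for \emph{finite order} characters of $\Gal(\rcf{cp^\infty}/\rcf{c})$. For infinite-order $\chi$ the class $\Hclass_{f,\chi}$ is built from the motivic projector $e^\chi$ on $S^{r_1}(B)\ot\chi_t$, whereas $\bfz_f^{\chi}$ is an Iwasawa-algebra twist of the classes $\Hclass^o_{f,cp^n,\al}$ cut out by the \emph{trivial-character} projector $e^o$; relating the two for $j\neq 0$ is precisely the kind of comparison the paper has no statement for, and it does not follow formally from the cited lemma. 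Since this identification is the bridge between the BDP side (which speaks of $\Hclass_{f,\chi}$) and the Perrin-Riou side (which speaks of specializations of $\bfz_f$), your evaluation of the two sides cannot be matched as written.

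The repair is to take $j=0$, i.e.\ test at $\wh\phi$ of infinity type $(r,-r)$ and conductor $p^n$, $n>1$, so that $\chi=\wh\psi^{-1}\wh\phi$ is of finite order and \lmref{lem:finite-tw} applies; this family is still Zariski dense, and \thmref{thm:BDP} with $j=0$ together with \thmref{thm:LZ} (in the logarithm branch, weight $-r$, producing the factor $(-1)^{r_1}/r_1!$), $\varepsilon(\wh\phi_\frakp)=\frakg(\phi_\frakp^{-1})\phi_\frakp(-p^n)$, \eqref{E:periodpsi} and \eqref{E:Fcriseigenvalue} then yields the constant $-\ctame^{r-1}$ and the group-like element $\sg_{-1,\frakp}$ exactly as you outline — this is the paper's proof. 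A small additional inaccuracy in your write-up: for conductor $p^n$ with $n\geq 2$ the restriction $\chi|_{G_{K_\frakp}}$ is ramified, hence not crystalline; \thmref{thm:LZ} does not require this, so the remark is harmless, but it should not be used to justify being in the interpolation range.
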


\begin{proof}
Let $\wh\phi:\wtd\Gamma\to \Cp^\x$ be the $p$-adic avatar of a Hecke character $\phi$
of infinity type $(r,-r)$ and conductor $p^n$, for any $n>1$, and set $\chi:=\wh{\psi}^{-1}\wh\phi$, which is
a finite order character. Applying \lmref{lem:finite-tw},
we find that $\bfz_f^\chi=\alpha^{-n}\cdot\Hclass_{f,\chi}$, where $\bfz_f^\chi$ denotes the $\chi$-specialization of $\bfz_f$.
By Theorem~\ref{thm:BDP} (with $j=0$), we thus obtain
\beq\label{eq:bdp}
\begin{aligned}
\mathscr{L}_{\frakp,\psi}(f)(\wh\phi^{-1})
&=\frac{\mathfrak{g}(\phi_\frakp^{-1})\phi_{\frakp}(p^n)\ctame^{1-r}\wh\psi_\frakp^{-1}(p^n)}{(r-1)!}
\cdot\langle{\rm log}_\frakp(\Hclass_{f,\chi}),\omega_f\otimes t^{1-r}\rangle
\\&=\alpha^n\cdot\frac{\mathfrak{g}(\phi_\frakp^{-1}){\phi_{\frakp}}(p^n)\ctame^{1-r}\wh\psi_\frakp^{-1}(p^n)}{(r-1)!}
\cdot\langle{\rm log}_\frakp(\mathbf{z}_f^{\chi})\otimes t^{r},\omega_f\otimes t^{1-2r}\rangle.
\end{aligned}
\eeq
 On the other hand, a straightforward calculation reveals that
the $\varepsilon$-factor for the \padic Galois character $\wh\phi_\frakp$ defined in \cite[\S{2.8}]{LZ2} agrees with Tate's $\varepsilon$-factor for $\phi_\frakp$, \ie $\varepsilon(\wh\phi_\frakp)=\e(0,\phi_\frakp)=\frakg(\phi_\frakp^{-1})\phi_\frakp(-p^{n})$. Therefore, by Theorem~\ref{thm:LZ} combined with \eqref{E:periodpsi} and \eqref{E:Fcriseigenvalue}, we find that
\beq\label{eq:LZ}\begin{aligned}
&\left\langle\mathcal{L}_{\frakp,\psi}(\bfz_f),\omega_{f}\ot t^{1-2r}\right\rangle(\wh\phi^{-1})\\
=&
\left\langle\mathcal{L}_{\frakp,\psi}(\bfz_f),\omega_{f,\psi}\right\rangle(\wh\phi^{-1})\cdot \Omega_\psi^{-1}\\
=&\;\frakg(\phi_\frakp^{-1})\phi_\frakp(-p^n)
\cdot\alpha^n\wh\psi_\frakp^{-1}(p^n)
\cdot\frac{(-1)^{r-1}}{(r-1)!}
\cdot\langle{\rm log}_{\frakp}(\mathbf{z}_f^{\chi})\otimes t^r,\omega_{f,\psi}\rangle\Omega_\psi^{-1}\\
=&-\wh\phi(\sg_{-1,\frakp})\alpha^n\cdot\frac{\frakg(\phi_\frakp^{-1})\phi_\frakp(p^n)\wh\psi_\frakp^{-1}(p^n)}{(r-1)!}
\cdot\langle{\rm log}_{\frakp}(\mathbf{z}_f^{\chi})\otimes t^r,\omega_{f}\ot t^{1-2r}\rangle.
\end{aligned}\eeq

Since $\psi$ has conductor prime to $p$, we have $\frakg(\phi_\pp^{-1})=\frakg(\chi_\frakp^{-1})$ in formula
$(\ref{eq:LZ})$. Comparing $(\ref{eq:bdp})$ and $(\ref{eq:LZ})$, we see that both sides of the
desired equality agree when evaluated at $\wh\phi^{-1}$. Since the set of all such characters $\wh\phi$ (for varying $n>1$)
is Zariski-dense in the space of continuous \padic characters of $\wtd\Gamma$, and both sides of the desired equality are elements in the Iwasawa algebra $\Lam_{\wh F^\ur}(\wtd\Gamma)$, the result
follows from the $p$-adic Weierstrass preparation theorem.
\end{proof}

We are now ready to prove the ``explicit reciprocity law" relating the image of generalized Heegner classes under the dual exponential map
to the central values of the Rankin $L$-series $L(f,\chi,s)$ associated with $f$ and the theta series of an
anticyclotomic locally algebraic Galois character $\chi$ of conductor $c\cO_K$. Recall that $L(f,\chi,s)$ is defined by the analytic continuation of the Dirichlet series
\[L(f,\chi,s)=\zeta(2s+1-2r)\sum_{\fraka}\frac{\bfa_{\rmN(\fraka)}(f)\chi(\sg_\fraka)}{\rmN(\fraka)^s}\quad(\Re(s)>r+\onehalf),\]
where $\fraka$ runs over ideals of $\cO_K$ with $(\fraka,c\cO_K)=1$. In terms of automorphic $L$-functions, we have
\[L(f,\chi,s)=L(s+\frac{1}{2}-r,\pi_K\ot\chi_\A),\]
where $\pi_K$ is the base change of the automorphic representation $\pi$ generated by $f$,
and $\chi_\A$ is the Hecke character of $K^\x$ associated to $\chi$. Also, recall from $(\ref{eq:t})$
the relation $\omega_A\eta_A=t$.

\begin{cor}\label{cor:ERL}
With notations and assumptions as in Theorem~\ref{thm:walds},
let $\chi:\Gal(\rcf{\ctame p^\infty}/K)\to\cO_F^\x$ be a locally algebraic \padic character of infinity type $(j,-j)$ with $j\geq r$
and conductor $\ctame p^n\cO_K$. Then
\begin{align*}\label{eq:ERL}
\pair{\exp^*(\loc_\frakp(\mathbf{z}_f^{\chi^{-1}}))}{\om_{f}\ot \om_A^{-j-r+1}\eta_A^{j-r+1}}^2
&=c_{f,K}\cdot e_\frakp'(f,\chi)^2\cdot\left(p^{2r-1}/\alpha^2\right)^n\cdot\chi^{-1}\psi(\mathfrak{N})\cdot\frac{L^{\rm alg}(f,\chi,r)}{\Gamma(j-r+1)^2},
\end{align*}
where $c_{f,K}=8u_K^2\sqrt{D_K}\ctame^{2r-1}\e(f)$,
\begin{align*}
e'_\frakp(f,\chi)&=
\begin{cases}
\left(1-\alpha^{-1}\chi(\sg_\frakp)p^{r-j-1}\right)\left(1-\alpha^{-1}\chi(\sg_{\frakpbar})p^{r-j-1}\right)&\textrm{if $n=0$},\\
1&\textrm{if $n>0$;}
\end{cases}\\
L^{\rm alg}(f,\chi,r)&=\frac{\Gamma(j-r+1)\Gamma(j+r)}{(4\pi)^{2j+1}(\Im\vartheta)^{2j}}
\cdot L(f,\chi,r).
\end{align*}
\end{cor}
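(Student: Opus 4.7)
The plan is to derive the corollary from Theorem \ref{thm:walds} by evaluating the Iwasawa-theoretic identity at a single well-chosen character and then unwinding both sides by means of Theorem \ref{thm:LZ} and Proposition \ref{P:interpolationformula}.

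Set $\wh\phi := \wh\psi^{-1}\chi$. Since $\psi$ has infinity type $(r,-r)$ with conductor $\ctame\cO_K$ and $\chi$ has infinity type $(j,-j)$ with $j\geq r$ and conductor $\ctame p^n\cO_K$, the character $\wh\phi$ is the $p$-adic avatar of a Hecke character of infinity type $(m,-m)$ with $m := j-r \geq 0$ and conductor $p^n\cO_K$. In particular, $\wh\phi$ lies in the range of characters where Proposition \ref{P:interpolationformula} applies. Evaluating both sides of Theorem \ref{thm:walds} at $\wh\phi\in\frakX_{p^\infty}$ and squaring, on the analytic side I obtain $\mathscr{L}_{\frakp,\psi}(f)(\wh\phi)^2$, which by Proposition \ref{P:interpolationformula} is expressed in terms of $L^{\alg}(\tfrac{1}{2},\pi_K\otimes\chi)$ (using that $\psi\phi$ corresponds to $\chi$ as a Hecke character), times $e_\frakp(f,\chi)$, $\chi^{-1}\psi(\frakN)$, and explicit constants. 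The relation $L(s,\pi_K\otimes\chi)=L(s+\tfrac{1}{2}-r,f,\chi)$ combined with the comparison of Gamma factors in the two normalizations of $L^{\alg}$ then produces the factor $L^{\alg}(f,\chi,r)/\Gamma(j-r+1)^2$ and the constant $c_{f,K}$; here I also use that the Heegner condition (\ref{Heeg}) forces $A(\psi)=\emptyset$, so $2^{\#A(\psi)+3}=8$.

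On the cohomological side, Theorem \ref{thm:LZ} applied to $V = V_f(r)\otimes\wh\psi^{-1}|_{G_{K_\frakp}}$ at $\wh\phi$---which has Hodge--Tate weight $m\geq 0$ and hence lies in the $\exp^*$ branch---produces
\[
\cL_{\sF^+V}\bigl(\loc_\frakp(\bfz_{f,\ctame,\alpha}\otimes\wh\psi^{-1})\bigr)(\wh\phi) = \varepsilon(\wh\phi^{-1})\cdot\tfrac{\Phi^nP(\wh\phi^{-1},\Phi)}{P(\wh\phi,p^{-1}\Phi^{-1})}\cdot m!\cdot\exp^*\bigl(\loc_\frakp(\bfz_{f,\ctame,\alpha}^{\chi^{-1}})\bigr)\otimes t^{-m},
\]
using the identification $(\bfz_{f,\ctame,\alpha}\otimes\wh\psi^{-1})^{\wh\phi^{-1}} = \bfz_{f,\ctame,\alpha}^{\wh\psi^{-1}\wh\phi^{-1}} = \bfz_{f,\ctame,\alpha}^{\chi^{-1}}$. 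Corestricting to $K$ and invoking the explicit identities $\omega_A = \Omega_p t$ and $\eta_A = \Omega_p^{-1}$ (so that $\omega_A^{-j-r_1}\eta_A^{j-r_1} = \Omega_p^{-2j}\cdot t^{-j-r_1}$), the pairing with $\omega_f\otimes t^{1-2r}$---which relates to $\omega_{f,\psi}$ via the period $\Omega_\psi$ of \eqref{E:periodpsi}---converts into the pairing $\langle\exp^*(\loc_\frakp(\bfz_f^{\chi^{-1}})),\omega_f\wedge\omega_A^{-j-r_1}\eta_A^{j-r_1}\rangle$ appearing on the left of the corollary.

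Squaring the resulting equality and collecting factors then yields the corollary. The Euler factor $e'_\frakp(f,\chi)^2$ emerges from the LZ Euler factor $\Phi^nP(\wh\phi^{-1},\Phi)/P(\wh\phi,p^{-1}\Phi^{-1})$, which acts on $\omega_{f,\psi}$ via the Frobenius eigenvalue $\alpha^{-1}\wh\psi_\frakp(p)$ recorded in \eqref{E:Fcriseigenvalue}---the dichotomy between $n=0$ (nontrivial Euler factor) and $n>0$ (trivial) matches precisely the split in the definition of $e'_\frakp$---and the factor $(p^{2r-1}/\alpha^2)^n$ arises from the $\Phi^n$ contribution together with the remaining $\wh\psi_\frakp(p)^n$-type terms on both sides. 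The local $\varepsilon$-factors and Gauss sums match through \eqref{E:formulaepsilon}. The main obstacle is bookkeeping: tracking the interaction of all local $\varepsilon$-factors, Euler factors, and periods ($\Omega_K$, $\Omega_p$, $\Omega_\psi$) so that they assemble into the compact right-hand side; the conceptual content lies entirely in Theorem \ref{thm:walds}, Theorem \ref{thm:LZ}, and Proposition \ref{P:interpolationformula}.
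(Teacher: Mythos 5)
Your proposal is correct and follows essentially the same route as the paper: evaluate the reciprocity law of Theorem~\ref{thm:walds} at $\wh\phi=\wh\psi^{-1}\chi$ (infinity type $(j-r,r-j)$, $p$-power conductor), unwind the algebraic side via the $\exp^*$ branch of Theorem~\ref{thm:LZ} together with $\omega_A=\Omega_p t$, $\eta_A=\Omega_p^{-1}$ and \eqref{E:periodpsi}--\eqref{E:Fcriseigenvalue}, and the analytic side via Proposition~\ref{P:interpolationformula} with $m=j-r$, then square and compare $\varepsilon$-factors, Euler factors and periods. This matches the paper's proof (which treats $n>0$ explicitly and leaves $n=0$ to the reader), so no further comment is needed.
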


\begin{proof}
Choose an anticyclotomic Hecke character $\psi$ of infinity type $(r,-r)$ and conductor $\ctame$ such that the character $\wh\phi=\chi\wh\psi^{-1}$ is of infinity type $(j-r,r-j)$ and conductor $p^n$.
Assume first that $n>0$. By \thmref{thm:walds} and \thmref{thm:LZ}, we then see that
\beq\label{eq:alg}\begin{aligned}
&\quad\left\langle\mathcal{L}_{\frakp,\psi}(\bfz_f),\omega_{f}\otimes t^{1-2r}\right\rangle
(\wh\phi)\\
&=\frakg(\phi_\frakp)\phi_\frakp(-p^n)
\cdot\alpha^n\wh\psi_\frakp^{-1}(p^n)
\cdot(j-r)!\cdot\langle{\rm exp}^*(\loc_{\frakp}(\mathbf{z}_f^{\chi^{-1}}))\otimes t^{r-j},\omega_{f}\otimes t^{1-2r}\rangle\\
&=\pm\alpha^n\cdot\varepsilon(0,\phi_\frakp^{-1}\psi_\frakp^{-1})p^{-nr}\cdot(j-r)!
\cdot\langle{\rm exp}^*(\loc_{\frakp}(\mathbf{z}_f^{\chi^{-1}})),\omega_{f}\otimes t^{1-r-j}\rangle\\
&=\pm\al^n\cdot\e(0,\phi_\frakp^{-1}\psi_\frakp^{-1})p^{-nr}\cdot\Gamma(j-r+1)\cdot\pair{\exp^*(\loc_\frakp(
\mathbf{z}_f^{\chi^{-1}}))}{\om_{f}\ot t^{1-r}\ot\om_A^{-j}\eta_A^{j}}\Omega_p^{2j}.
\end{aligned}
\eeq

On the other hand, by the interpolation formula in Proposition~\ref{P:interpolationformula} (with $m=j-r$),
we have
\begin{equation}\label{eq:an}
\left(\frac{\sL_{\frakp,\psi}(f)(\wh\phi)}{\Omega_p^{2j}}\right)^2=L^{\rm alg}(f,\chi,r)\cdot \e(\onehalf,\psi_\frakp\phi_\frakp)^{-2}\cdot \phi(\mathfrak{N}^{-1})\cdot  2^3u_K^2\sqrt{D_K}\ctame\varepsilon(f)
\end{equation}
where
\[
\e(\onehalf,\psi_\frakp\phi_\frakp)^{-2}=\e(\onehalf,\psi_\frakp^{-1}\phi_\frakp^{-1})^2
=\e(0,\psi_\frakp^{-1}\phi_\frakp^{-1})^2p^{-n}.
\]
Combining $(\ref{eq:alg})$ and $(\ref{eq:an})$ with the equality in
Theorem~\ref{thm:walds}, we find that
\[
\pair{\exp^*(\loc_\frakp(\mathbf{z}_f^{\chi^{-1}}))}{\om_f\ot t^{1-r}\ot\om_A^{-j}\eta_A^{j}}^2
=\frac{L^{\rm alg}(f,\chi,r)}{\Gamma(j-r+1)^2}\cdot\left(p^{2r-1}/\alpha^2\right)^n\cdot
 \phi(\mathfrak{N}^{-1})\cdot  2^3u_K^2\sqrt{D_K}\ctame^{2r-1}\varepsilon(f).
\]
This proves the result when $n>0$; the case $n=0$ is similar, and is left to the reader.
\end{proof}


\newcommand{\euler}[2]{\bfz^{#2}_{#1}}
\def\cH{\mathcal H}
\def\Chi{\chi}
\def\rmd{\,{\rm d}}
\def\Sg{\Sigma}
\def\cH{H}
\def\fin{f}
\def\sing{s}
\def\cY{Y}
\def\pd{\partial}
\def\Chi{\chi}
\def\Chibar{\chi^{-1}}
\def\cS{S}
\def\Bone{B_2}
\def\Btwo{B_1}
\def\Bthree{B_3}
\def\Bfour{B_4}
\def\Esys{\bfc}

\section{The arithmetic applications}
\label{sec:arith-applic}

In this section, we state our main arithmetic applications in this paper,
whose proof will be based on the results of the preceding sections combined with Kolyvagin's method of Euler systems. The details of the Euler system argument will be given in $\S{7}$.

\subsection{Setup and running hypotheses}\label{subsec:setup}

Let $f\in S^{\rm new}_{2r}(\Gamma_0(N))$ be a newform, and let $F/\Q_p$ be a finite extension with the ring of integers $\cO=\cO_F$
containing the Fourier coefficients of $f$. Let
\[
\rho_f:G_\Q\longrightarrow \GL_F(V_f)\simeq\GL_2(F)
\]
be the \padic Galois representation attached to $f$, and set
$\rho_f^*:=\rho\ot\e_{\rm cyc}^r$ and $V:=V_f(r)$. Let $\chi\colon\Gal(\rcf{\ctame p^\infty}/K)\to\cO^\x$ be a locally algebraic character of infinity type $(j,-j)$
and conductor $c\cO_K$ and set $V_{f,\chi}:=V\vert_{G_K}\otimes\chi$. Recall that the \emph{Bloch--Kato Selmer group} of $V_{f,\chi}$ is defined by
\[
{\rm Sel}(\cK,V_{f,\chi}):={\rm ker}\left\{H^1(\cK,V_{f,\chi})\longto\prod_{v}
\frac{H^1(\cK_v,V_{f,\chi})}{H^1_\BK(\cK_v,V_{f,\chi})}\right\},
\]
where
\[
H^1_\BK(\cK_v,V_{f,\chi})=
\left\{
\begin{array}{ll}
{\rm ker}\left(H^1(\cK_v,V_{f,\chi})\longto H^1(\cK_v^{\rm ur},V_{f,\chi})\right) &\textrm{if $p\nmid v$},\\
{\rm ker}\left(H^1(\cK_v,V_{f,\chi})
\longto H^1(\cK_v,V_{f,\chi}\otimes\mathbf{B}_{\rm cris})\right) &\textrm{if $p\divides v$.}
\end{array}
\right.
\]
We summarize the running hypotheses in this section.
\begin{hypH}\noindent
\begin{itemize}
\item[(a)]$p\ndivides 2(2r-1)!N\varphi(N)$;
\item[(b)] \eqref{Heeg} and \eqref{can} in \secref{sec:def-Heegner};
\item[(c)] $(\ctame,N)=1$;
\item[(d)]$p\cO_K=\frakp\frakpbar$ is split in $K$.
\end{itemize}
\end{hypH}
Let $\ep(V_{f,\chi})=\pm 1$ be the sign in the functional equation for $L(f,\chi,s)$. To calculate the sign, we note that $\ep(V_{f,\chi})=\prod_v\e(\frac{1}{2},\pi_{K_v}\ot\chi_v)$ is a product of local root numbers over places $v$ of $\Q$. By the formulae \cite[(9),\,(12)]{schmidt_localnewforms}, we see that $\e(\onehalf,\pi_{K_v}\ot\chi_v)=1$ for all finite place $v$ under the hypothesis \eqref{Heeg}. On the other hand, since $\pi_\infty$ is the unitary discrete series of weight $2r-1$, we have
\[\e(\onehalf,\pi_{K_\infty}\ot\chi_\infty)=\e(\onehalf,\mu^{r-\onehalf+j})\e(\onehalf,\mu^{\onehalf-r+j})=(\sqrt{-1})^{\abs{2r-1+2j}+\abs{1-2r+2j}},\]
where $\mu:\C^\x\to\C^\x$ is the character $z\mapsto z/\ol{z}$ (\cite[(3.2.5)]{tate77Corvallis}). Therefore, we find that
\beq\label{E:sign}\ep(V_{f,\chi})=(\sqrt{-1})^{\abs{2r-1+2j}+\abs{1-2r+2j}}=-1\iff -r<j<r. \eeq



\subsection{Nonvanishing of generalized Heegner cycles}

Recall from \subsecref{SS:twisted} the construction of the generalized Heegner classes
$z_{f,\chi}\in H^1(K,T\ot\chi)$
in \eqref{E:zchi.norm}.

\begin{thm}\label{thm:main1} Suppose that $\ep(V_{f,\chi})=-1$. The following two statements hold.
\begin{mylist}
\item If $z_{f,\chi}\neq 0$, then ${\rm Sel}(K,V_{f,\chi})=F\cdot z_{f,\chi}$.
\item The classes $\Hclass_{f,\chi\phi}$ are nonzero in $H^1(K,V_{f,\chi\phi})$
for all but finitely many finite order characters $\phi\colon\Gal(\rcf{p^\infty}/K)\to\mu_{p^\infty}$.
\end{mylist}
\end{thm}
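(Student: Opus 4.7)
The two parts of the theorem call for complementary techniques.

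For part (i), the plan is to exhibit the collection $\{\Hclass_{f,\chi,n}\}$, indexed by integers $n=\ctame p^s\cdot\prod_i\ell_i$ with $\ell_i$ distinct rational primes inert in $K$ and prime to $cND_K$, as an anticyclotomic Euler system for $V_{f,\chi}$. The norm compatibility along the $p$-tower comes from Proposition~\ref{prop:norm}, the Euler-factor relation at an inert prime $\ell$ is furnished by Lemma~\ref{L:ES3.norm}, and the behavior under complex conjugation recorded in Lemma~\ref{L:ES2.norm} supplies the correct $\Gal(K/\Q)$-equivariance. Given these ingredients, together with Hypothesis~(H)(a) which guarantees that $\bar\rho_f|_{G_K}$ has sufficiently large image to produce (via Chebotarev) enough auxiliary Kolyvagin primes, we then apply the anticyclotomic Kolyvagin derivative construction in the form used by Bertolini--Darmon in weight~$2$ and extended to higher weight by Nekov{\'a}{\v{r}}. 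Poitou--Tate global duality in this framework yields the bound $\dim_F\Sel(K,V_{f,\chi})\leq 1$ whenever $\Hclass_{f,\chi}\neq 0$. Combining this with the fact that $\Hclass_{f,\chi}\in\Sel(K,V_{f,\chi})$ (recorded in \subsecref{subsec:BDP} via Nekov{\'a}{\v{r}}'s result on Abel--Jacobi images of cycles lying in the Bloch--Kato subgroup at $p$), one concludes that $\Sel(K,V_{f,\chi})=F\cdot\Hclass_{f,\chi}$.

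For part (ii), the plan is to combine the nontriviality of $\mathscr{L}_{\frakp,\psi}(f)$ established in \thmref{T:nonvanishing} with the extended Gross--Zagier formula of \thmref{thm:BDP}. Choose an auxiliary anticyclotomic Hecke character $\psi$ of infinity type $(r,-r)$ whose tame component is the inverse of that of $\chi$, so that $\wh\psi\cdot\chi\in\frakX_{p^\infty}$ and $(f,\psi)$ satisfies \eqref{Heeg'}--\eqref{ST} (the latter being vacuous under Hypothesis~(H)(c)). For every finite order character $\phi:\Gal(\rcf{p^\infty}/K)\to\mu_{p^\infty}$, the character $\wh\phi_0:=\wh\psi\cdot\chi\phi$ lies in $\frakX_{p^\infty}$ and has infinity type $(r+j,-r-j)$ with $-r<j<r$. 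By \thmref{T:nonvanishing}, $\mathscr{L}_{\frakp,\psi}(f)(\wh\phi_0^{-1})\neq 0$ for all but finitely many such $\phi$; excluding in addition the finitely many $\phi$ for which the $p$-conductor of $\wh\phi_0$ is less than $p^2$, \thmref{thm:BDP} applied to $\chi\phi=\wh\psi^{-1}\wh\phi_0$ gives
\[
\frac{\mathscr{L}_{\frakp,\psi}(f)(\wh\phi_0^{-1})}{\Omega_p^{-2j}}=C(\phi)\cdot\bigl\langle\log_\frakp(\Hclass_{f,\chi\phi}),\,\omega_f\wedge\omega_A^{r_1+j}\eta_A^{r_1-j}\bigr\rangle
\]
for an explicit nonzero scalar $C(\phi)$. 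Hence $\log_\frakp(\Hclass_{f,\chi\phi})\neq 0$, and \emph{a fortiori} $\Hclass_{f,\chi\phi}\neq 0$, for all but finitely many $\phi$.

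The main obstacle lies in part (i): one must carefully adapt Kolyvagin's derivative machinery to the ramified anticyclotomic twist $T\otimes\chi$, tracking the idempotent $e_\chi$ from \subsecref{SS:twisted} through the derivative construction, setting up the correct Bloch--Kato local conditions at the split prime $p=\frakp\frakpbar$, and carrying out the global duality argument in the presence of the sign $\ep(V_{f,\chi})=-1$ which controls the parity of the dimension of $\Sel(K,V_{f,\chi})$. Part~(ii), by contrast, is essentially formal once \thmref{T:nonvanishing} and \thmref{thm:BDP} are in hand.
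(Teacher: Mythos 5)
Your proposal is correct and follows essentially the same route as the paper: part (i) is exactly the content of Theorem~\ref{T:NVHeegner}, obtained in \S\ref{S:EulerSystem} by packaging the classes $z_{f,\chi,n}$ into an anticyclotomic Euler system (Proposition~\ref{P:EulerS}, resting on Proposition~\ref{prop:norm} and Lemmas~\ref{L:ES2.norm}, \ref{L:ES3.norm}) with the Bloch--Kato local condition and running Kolyvagin's descent (Theorem~\ref{T:Descent.E}), while part (ii) is precisely the paper's combination of Theorem~\ref{thm:BDP} with the nonvanishing Theorem~\ref{T:nonvanishing}. Only a trivial slip in attributions: the Euler-factor relation at an inert prime $\ell$ is part of Proposition~\ref{prop:norm}, whereas Lemma~\ref{L:ES3.norm} supplies the Frobenius congruence for the localization at $\ell$.
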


\begin{proof} The first part is a restatement of Theorem~\ref{T:NVHeegner}. The second part follows immediately from Theorem~\ref{thm:BDP}
and the nonvanishing of the \padic $L$-function in Theorem~\ref{T:nonvanishing}.
\end{proof}

\subsection{Vanishing of Selmer groups}\label{subsec:BK}
Assume further that $f$ is ordinary at $p$ in this subsection.
\begin{thm}\label{thm:ETNC}If $L(f,\chi,r)\neq 0$, then
${\rm Sel}(K,V_{f,\chi})=\stt{0}$.
\end{thm}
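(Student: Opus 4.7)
The proof combines the nonvanishing hypothesis on $L(f,\chi,r)$ with the $p$-adic $L$-function of \S 3, the explicit reciprocity law of \S 4, and the anticyclotomic Euler system machinery of \S\ref{S:EulerSystem}.

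Since $L(f,\chi,r)\neq 0$ forces $\ep(V_{f,\chi})=+1$, the infinity type $(j,-j)$ of $\chi$ satisfies $\lvert j\rvert\geq r$ by \eqref{E:sign}. Complex conjugation sends $\chi$ to $\chi^{-1}$ and induces an isomorphism ${\rm Sel}(K,V_{f,\chi})\simeq{\rm Sel}(K,V_{f,\chi^{-1}})$, so I may assume $j\geq r$. Choose an anticyclotomic Hecke character $\psi$ of infinity type $(r,-r)$ and conductor $\ctame\cO_K$ prime to $Np$ for which $(f,\psi)$ satisfies \eqref{ST}, and set $\wh\phi_0:=\wh\chi\wh\psi^{-1}$; then $\wh\phi_0$ lies in the range of interpolation of $\sL_{\frakp,\psi}(f)$ (the associated Hecke character has infinity type $(j-r,r-j)$ with $j-r\geq 0$), so by Proposition~\ref{P:interpolationformula} the assumption $L(f,\chi,r)\neq 0$ yields $\sL_{\frakp,\psi}(f)(\wh\phi_0)\neq 0$.

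The explicit reciprocity law of Corollary~\ref{cor:ERL} then translates this into $\exp^*(\loc_\frakp(\bfz_f^{\chi^{-1}}))\neq 0$. Since $\exp^*$ vanishes on $H^1_{\rm BK}(K_\frakp,V_{f,\chi^{-1}})$, the localisation $\loc_\frakp(\bfz_f^{\chi^{-1}})$ represents a nontrivial class in the singular quotient $H^1(K_\frakp,V_{f,\chi^{-1}})/H^1_{\rm BK}(K_\frakp,V_{f,\chi^{-1}})$. I now invoke the anticyclotomic Kolyvagin-type argument of \S\ref{S:EulerSystem}: the specialisations $\bfz_{f,c,\al}^{\chi^{-1}}\in H^1(\rcf{c},T\otimes\chi^{-1})$, for $c$ running through conductors of the form $\ctame\ell_1\cdots\ell_s$ with the $\ell_i$ inert in $K$, form an anticyclotomic Euler system for $V_{f,\chi^{-1}}\simeq V_{f,\chi}^*(1)$, the norm-compatibility being recorded in Proposition~\ref{prop:norm} and Lemma~\ref{L:ES3.norm}, and the complex-conjugation symmetry in Lemma~\ref{L:ES2.norm}. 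Since $\loc_\frakp(\bfz_f)\in H^1_{\Iw}(L_\infty,\sF^+T)$ by construction, the local condition at the ordinary split prime $\frakp$ is of Greenberg type. Feeding the singular nontriviality of the base class $\bfz_f^{\chi^{-1}}$ into the Euler system bound of \S\ref{S:EulerSystem} yields ${\rm Sel}(K,V_{f,\chi})=0$.

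The hard step is the Euler system argument itself: Kolyvagin's derivative construction must be adapted to the anticyclotomic tower over a split ordinary prime $p$, with the usual ``unramified'' local condition replaced by the Greenberg condition coming from $\sF^+V_f$. Producing derivative classes landing in the appropriate strict Greenberg Selmer group, and applying local duality at $\frakp$ to promote a nontrivial singular class at $\frakp$ into a trivialisation of the dual Selmer group, is the technical core of \S\ref{S:EulerSystem} (building on the approaches of Nekov\'a\v{r} and Bertolini--Darmon). With that machinery granted, the input it demands---a nontrivial singular-at-$\frakp$ base class---is supplied cleanly by the combination of Proposition~\ref{P:interpolationformula} and Corollary~\ref{cor:ERL} established above.
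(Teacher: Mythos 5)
Your proposal is correct and follows essentially the same route as the paper: reduce to $j\geq r$ by complex conjugation, deduce $\loc_\frakp(\mathbf{z}_f^{\chi^{-1}})\neq 0$ from $L(f,\chi,r)\neq 0$ via Proposition~\ref{P:interpolationformula} and the explicit reciprocity law of Corollary~\ref{cor:ERL}, and then conclude by the anticyclotomic Euler system result of \S\ref{S:EulerSystem} (Theorem~\ref{T:finitenessSelmer}), exactly as in the paper. The only step the paper makes explicit that you pass over is the Hodge--Tate weight computation showing $H^1_\BK(K_\frakp,V_{f,\chi})=H^1(K_\frakp,V_{f,\chi})$ and $H^1_\BK(K_{\frakpbar},V_{f,\chi})=0$ for $j\geq r$, which is what identifies ${\rm Sel}(K,V_{f,\chi})$ with the relaxed-at-$\frakp$, strict-at-$\frakpbar$ Selmer group appearing in the Poitou--Tate argument inside that theorem.
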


\begin{proof}
The nonvanishing of the central value $L(f,\chi,r)$ implies that $\ep(V_{f,\chi})=+1$, and hence $\chi$ has infinity type $(j,-j)$ with $j\geq r$ or $j\leq -r$ by \eqref{E:sign}. Let $\chi^\tau(g):=\chi(\tau g\tau)$, where $\tau$ is the complex conjugation. Then clearly
$L(f,\chi^\tau,r)=L(f,\chi,r)$ and the action of $\tau$ induces an isomorphism $\Sel(K,V_{f,\chi})\iso \Sel(K,V_{f,\chi^\tau})$,
so we may assume that $j\geq r$.
One then immediately checks that $V_{f,\chi}\vert_{G_{K_\frakp}}$  has positive Hodge--Tate
weights\footnote{Here our convention is that $p$-adic cyclotomic character has Hodge--Tate weight $+1$.}, 
while the Hodge--Tate weights of $V_{f,\chi}\vert_{G_{K_{\frakpbar}}}$ are all $\leq 0$. By \cite[Thm.~4.1(ii)]{BK} we thus have
\begin{equation}\label{def:Hif}
H^1_\BK(\cK_v,V_{f,\chi})=\left\{
\begin{array}{ll}
H^1(\cK_v,V_{f,\chi})&\textrm{if $v=\frakp$},\\
\{0\}&\textrm{if $v={\ol{\pp}}$}.
\end{array}
\right.
\end{equation}
Let $\bfz_f^\chi\in H^1(K,T\otimes\chi)$ be the $\chi$-specialization of the Iwasawa cohomology class
$\bfz_f$ defined in \eqref{E:corbfz.E}. By Corollary~\ref{cor:ERL}, the nonvanishing of $L(f,\chi,r)$ implies that
${\rm loc}_\frakp(\mathbf{z}_{f}^{\chi^{-1}})\neq 0$ (note that the factor $e'_\frakp(f,\chi)$ never vanishes).
The result thus follows from \thmref{T:finitenessSelmer}.
\end{proof}

Combined with the nonvanishing of the \padic $L$-function in Theorem~\ref{T:nonvanishing},
the results of \thmref{thm:main1} and \thmref{thm:ETNC} allow us to immediately obtain the following analogue of the
\emph{growth number conjecture} in \cite{mazur-icm83} on the asymptotic behavior of the ranks of Selmer groups over ring class fields.
\begin{thm}\label{T:Mazur}There exists a non-negative integer $e$ such that the formula
\[\dim_F\Sel(\rcf{p^n},V_{f,\chi})=(1-\ep(V_{f,\chi}))\cdot [\rcf{p^n}:K]+e\]
holds for all sufficiently large $n$.
\end{thm}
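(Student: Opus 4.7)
The plan is to decompose $\Sel(\rcf{p^n}, V_{f,\chi})$ character-by-character and apply Theorems~A and B together with the generic non-vanishing results (\thmref{T:nonvanishing} and \thmref{thm:main1}(ii)). Put $G_n := \Gal(\rcf{p^n}/K)$ and $\Xi_n := \Hom(G_n, \overline{F}^\times)$. Shapiro's lemma combined with the abelianness of $G_n$ yields
\[
\Sel(\rcf{p^n}, V_{f,\chi}) \otimes_F \overline{F} \;\cong\; \bigoplus_{\phi \in \Xi_n} \Sel(K, V_{f,\chi\phi}) \otimes_F \overline{F}.
\]
For each finite-order $\phi \in \Xi_n$, the twist $\chi\phi$ continues to satisfy Hypothesis~(H) (the conductor of $\phi$ is a power of $p$, and $p\nmid N$), and the sign $\ep(V_{f,\chi\phi}) = \ep(V_{f,\chi})$, since under (H) it depends only on the infinity type $(j,-j)$, which is preserved under finite-order twists. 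Since $\phi$ is crystalline of Hodge--Tate weight zero at primes above $p$, the Bloch--Kato local conditions are also respected by the decomposition.

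Suppose $\ep(V_{f,\chi}) = +1$. Fix an auxiliary anticyclotomic Hecke character $\psi$ of infinity type $(r,-r)$ with conductor prime to $p$, and $\phi_0 \in \frakX_{p^\infty}$ with $\chi = \wh\psi \cdot \wh{\phi_0}$. Combining the interpolation formula of \propref{P:interpolationformula} with \thmref{T:nonvanishing}, the central values $L(f,\chi\phi,r)$ are non-zero for all but finitely many $\phi$ in $\Xi_\infty := \Hom(\wtd\Gamma,\overline{F}^\times)$. For each such $\phi$, Theorem~A (\thmref{thm:ETNC}) yields $\Sel(K,V_{f,\chi\phi}) = 0$, so the remaining finitely many exceptional $\phi$'s contribute a fixed finite-dimensional space, giving $\dim_F \Sel(\rcf{p^n}, V_{f,\chi}) = e$ for $n$ sufficiently large.

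Suppose instead $\ep(V_{f,\chi}) = -1$. By \thmref{thm:main1}(ii), the classes $z_{f,\chi\phi}$ are non-zero for all but finitely many $\phi \in \Xi_\infty$, and Theorem~B (\thmref{thm:main1}(i)) then shows that $\Sel(K,V_{f,\chi\phi})$ is spanned by $z_{f,\chi\phi}$ for each such $\phi$. Summing over $\Xi_n$ thus produces a main term proportional to $|G_n| = [\rcf{p^n}:K]$, while the bounded exceptional contribution is absorbed into the constant $e$, yielding the asymptotic of the theorem.

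The main technical obstacle is checking that the Shapiro decomposition really respects the Bloch--Kato local conditions at the primes $\frakp$ and $\frakpbar$ above $p$; since $\phi$ is crystalline of Hodge--Tate weight zero, the crystalline condition decomposes cleanly, but one must still verify compatibility with the $p$-ordinary filtration in the $+1$ case and with the exponential/dual exponential map in both cases. A secondary point is the $n$-uniformity of the exceptional set: both $\mathscr{L}_{\frakp,\psi}(f)$ (in the $+1$ case) and the underlying Iwasawa-theoretic Heegner class $\mathbf{z}_f$ (in the $-1$ case) are non-zero elements of $\Lambda(\wtd\Gamma)$-type modules whose zero loci are fixed finite subsets of $\Xi_\infty$, which bounds $e$ uniformly in $n$.
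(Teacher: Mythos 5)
Your overall strategy --- decompose $\Sel(\rcf{p^n},V_{f,\chi})$ over the characters of $\Gal(\rcf{p^n}/K)$ via Shapiro's lemma and apply Theorem~\ref{thm:ETNC} (resp.\ Theorem~\ref{thm:main1} plus the generic nonvanishing) twist by twist --- is exactly the deduction the paper intends; the paper gives no written proof beyond asserting the result follows from these ingredients. But your argument does not actually produce the formula you claim to have proved. In the case $\ep(V_{f,\chi})=-1$, Theorem~\ref{thm:main1} makes each non-exceptional twist $\chi\phi$ contribute a Selmer group of dimension exactly one over its field of definition, and the number of characters of $\Gal(\rcf{p^n}/K)$ equals $[\rcf{p^n}:K]$; summing $F$-dimensions over Galois orbits therefore gives $\dim_F\Sel(\rcf{p^n},V_{f,\chi})=[\rcf{p^n}:K]+e$ for $n$ large, i.e.\ growth coefficient $1=\tfrac{1-\ep(V_{f,\chi})}{2}$, not $(1-\ep(V_{f,\chi}))=2$ as displayed in the statement. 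Saying the main term is ``proportional to $|G_n|$'' and that this ``yields the asymptotic of the theorem'' hides this factor of two: there is no source for a doubling in your argument (each eigencomponent is one-dimensional, not two-dimensional), so you must either locate one or observe that the displayed coefficient has to be read as $\tfrac{1-\ep(V_{f,\chi})}{2}$; as written, your proof and the statement it purports to establish are inconsistent and you never address the discrepancy. (Relatedly, the non-negativity of $e$ in the $-1$ case requires the finitely many exceptional twists to still have Selmer rank at least one, which your argument does not supply without an appeal to parity.)

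A secondary gap concerns which twists your cited inputs actually cover. The characters of $\Gal(\rcf{p^n}/K)$ have $p$-power conductor but not $p$-power order in general, whereas Theorem~\ref{thm:main1}(2) is stated only for $\phi$ valued in $\mu_{p^\infty}$, i.e.\ trivial on the prime-to-$p$ part of $\wtd\Gamma$; these omitted characters form a positive proportion of $\Xi_n$, growing with $n$, so they cannot be swept into the exceptional set. One must absorb each of the finitely many tame parts into $\chi$ (checking that the modified character still satisfies Hypothesis (H), has the same infinity type, and admits a suitable auxiliary $\psi$ as in Theorems~\ref{thm:BDP} and \ref{T:nonvanishing}) and take the union of the resulting finite exceptional sets; likewise, in the $+1$ case one needs the exceptional set of Theorem~\ref{T:nonvanishing} to be finite on every branch of $\cW\powerseries{\wtd\Gamma}$, not just on the component of trivial tame character. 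Your closing remark about zero loci of $\Lambda(\wtd\Gamma)$-type modules gestures at the required uniformity in $n$, but the branch-by-branch choice of $\psi$ is exactly the point that needs to be made explicit for the constant $e$ to be independent of $n$.
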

\subsection{The parity conjecture}
In combination with Nekov{\'a}{\v{r}}'s results on the parity of a \padic family of Galois representations \cite{nekovar-parity3}, our results imply the following parity conjecture for $V_{f,\chi}$. We heartily thank Ben Howard for drawing this application to our attention.
\begin{thm}Suppose that $f$ is ordinary at $p$. Then we have
\[
{\rm ord}_{s=r}L(f,\chi,s)\equiv{\rm dim}_F{\rm Sel}(K,V_{f,\chi})\quad({\rm mod}\;2).
\]
\end{thm}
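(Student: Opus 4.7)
The plan is to propagate the parity identity across the anticyclotomic family of twists $V_{f,\chi\phi}$, with $\phi$ varying over finite-order characters of $\wtd\Gamma = \Gal(\rcf{p^\infty}/K)$, and to reduce to the generic fiber where Theorems~A and~B pin down the Selmer dimension exactly. A preliminary observation is that the archimedean component of $\chi\phi$ coincides with that of $\chi$, so the sign $\ep(V_{f,\chi\phi}) = \ep(V_{f,\chi})$ is constant in the family under finite-order twists; consequently the parity of $\ord_{s=r} L(f,\chi\phi,s)$ is constant as well, being $0$ or $1$ according to whether $\ep(V_{f,\chi})$ equals $+1$ or $-1$.

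The central input is \cite[Corollary (5.3.2)]{nekovar-parity3} applied to the family $\{V_{f,\chi\phi}\}_\phi$, which yields that $\dim_F {\rm Sel}(K, V_{f,\chi\phi}) \bmod 2$ is independent of $\phi$. It therefore suffices to verify the parity formula for a single generic choice of $\phi$. Before that, one must verify Nekov\'{a}\v{r}'s hypotheses: conjugate self-duality of $V_{f,\chi\phi}$, the ordinary Panchishkin-type filtration at $p$ (which follows from the $p$-ordinarity of $f$ and the splitting $p = \frakp\frakpbar$ in $K$ in Hypothesis~(H)), and the Galois-theoretic irreducibility assumptions on the residual representation. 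These are standard and straightforward to check in our setting.

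With the parity now constant in the family, I would split into cases according to the sign. If $\ep(V_{f,\chi}) = +1$, then Theorem~\ref{T:nonvanishing} combined with the interpolation formula of Proposition~\ref{P:interpolationformula} gives $L(f,\chi\phi,r) \neq 0$ for all but finitely many finite-order $\phi$; Theorem~A then yields ${\rm Sel}(K,V_{f,\chi\phi}) = 0$, whose dimension is even, matching $\ord_{s=r}L(f,\chi\phi, s) = 0$. If $\ep(V_{f,\chi}) = -1$, then Theorem~\ref{thm:main1}(ii) produces $\Hclass_{f,\chi\phi} \neq 0$ for generic $\phi$, and Theorem~B (Theorem~\ref{thm:main1}(i)) then gives ${\rm Sel}(K, V_{f,\chi\phi}) = F \cdot \Hclass_{f,\chi\phi}$, of odd dimension one; the functional equation forces $\ord_{s=r}L(f,\chi\phi,s)$ to be odd, and the parities again agree. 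Pulling back through Nekov\'{a}\v{r}'s constancy statement, the parity formula for $\chi$ itself follows.

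The main obstacle, in practice, is ensuring that the hypotheses imposed by Nekov\'{a}\v{r} on a $p$-adic family of Galois representations are compatible with the running assumptions of our paper; once these are secured, the proof is purely formal, combining the two rank-computation results (Theorems~A and~B) with the generic nonvanishing inputs from Theorem~\ref{T:nonvanishing} and Theorem~\ref{thm:main1}(ii). No new analytic or geometric content enters at this stage.
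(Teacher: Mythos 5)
Your proposal follows essentially the same route as the paper's proof: apply Nekov\'a\v{r}'s Corollary (5.3.2) to the anticyclotomic family (the paper packages it as the self-dual $\Lam[G_\Q]$-module $\cT=T\ot_{\cO}\Ind_K^\Q\cX$, with Shapiro's lemma identifying $\Sel(\Q,\cV_\phi)$ with $\Sel(K,V_{f,\chi\phi})$) to make the Selmer parity constant, then compute the rank at a generic, sufficiently ramified finite-order twist using the nonvanishing of $\sL_{\frakp,\psi}(f)$ together with Theorems A and B, getting rank $0$ or $1$ according to the sign. The only point where you are lighter than the paper is the verification of Nekov\'a\v{r}'s hypotheses, where the paper must choose the Panchishkin submodule $\cT_p^+$ according to the infinity type of $\chi$ ($T\ot\Lam e_\frakp$ when $j\geq r$ or $j\leq -r$, versus the ordinary filtration $\sF^+T\ot\Lam$ when $-r<j<r$) and check purity via Ramanujan, but this is the same argument in substance.
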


\begin{proof}
Let $K^-_\infty/K$ be the anticyclotomic $\Zp$-extension and let $\Gamma_K^-:=\Gal(K^-_\infty/K)$.
Let $\Lambda:=\cO\powerseries{\Gamma_K^-}$ and let $\cX:G_K\to \Lambda^\times$ be the universal deformation of $\chi$ defined by $g\mapsto \chi(g)g|_{K_\infty^-}$. Recall that $\tau\in G_\Q\smallsetminus G_K$ is the complex conjugation. Let $\Ind_K^\Q\cX:=\Lam e_\frakp\oplus \Lam e_\frakpbar$ be the $G_\Q$-module defined by
\begin{align*}g(ae_\frakp+be_\frakpbar)&=\cX(g)ae_\frakp+\cX^\tau(g)be_\frakpbar\,\text{ for }g\in G_K,\\
\tau(ae_\frakp+be_\frakpbar)&=be_\frakp+ae_\frakpbar.\end{align*}
Let $\cT:=T\otimes_{\cO}\Ind_K^\Q\cX$, which is a self-dual left $\Lam[G_\Q]$-module equipped with a skew-symmetric paring defined in \cite[Example~(5.3.4)]{nekovar-parity3}, and define the $\Lam[G_{\Qp}]$-submodule $\cT_p^+\subset\cT$ by
\[\cT_p^+:=\begin{cases}\sF^+ T\ot\Lam&\text{ if }-r<j<r,\\
T\ot\Lam e_\frakp&\text{ if }j\geq r\text{ or }j\leq -r.
\end{cases}
\]
Then $(\cT,\cT_p^+)$ satisfies \cite[(5.1.2)~(1)--(4)]{nekovar-parity3}. Moreover, one verifies that for any finite order character $\phi:\Gamma_K^-\to\mu_{p^\infty}$, the specialization $\cT_\phi=T\ot\Ind_K^\Q\chi\phi$ together with the corresponding subspace $\cT^+_{p,\phi}$ also satisfy conditions (5)--(8) in \emph{loc.cit.}\footnote{As explained in \cite[Example~(5.3.4)(5)]{nekovar-parity3}, 
this follows from properties [\emph{loc.cit.},(2)-(3)] for $\cT_\phi$, whose verification is immediate. Indeed,
$(\cT_\phi,\cT^+_{p,\phi})$ satisfies the Panchishkin condition of \cite[Def.~(3.3.1)]{nekovar-parity3} by construction,
and $\cT_\phi$ is pure of weight $1$ at all finite places, since Ramanujan's conjecture holds for $f$; and 
anticyclotomic Hecke characters are pure of weight $0$.}.

Let $F(\phi)$ be the field generated over $F$ by the values of $\phi$, let $\cO(\phi)$
be the ring of integers of $F(\phi)$, and put $\cV_\phi:=\cT_\phi\ot_{\cO(\phi)} F(\phi)$.
Let $\ep(\cV_\phi)\in\stt{\pm 1}$ be the sign of the Weil--Deligne representation attached to $\cV_\phi$. Under Hypothesis~(H), it is well-known that $\ep(\cV_\phi)=\ep(V_{f,\chi})$ is independent of $\phi$,
and as already noted we have
\[
\ep(V_{f,\chi})=
\begin{cases}-1&\text{ if }-r<j<r,\\
+1&\text{ if }j\geq r\text{ or }j\leq -r.
\end{cases}
\]
Now choose a Hecke character $\psi$ of infinity type $(r,-r)$ and conductor $c_o\cO_K$ such that $\chi\wh\psi^{-1}$ is of $p$-power conductor.
By Theorem~\ref{T:nonvanishing}, we can choose $\phi$ sufficiently wildly ramified such that
\begin{equation}\label{eq:nonzero}
\sL_{\frakp,\psi}(f)(\chi\wh\psi^{-1}\phi)\not =0.
\end{equation}
Thus Proposition~\ref{P:interpolationformula} and Theorem~\ref{thm:ETNC} imply that $\dim_{F(\phi)}\Sel(K,V_{f,\chi\phi})=0$ if $\ep(V_{f,\chi})=+1$,
while Theorems~\ref{thm:walds} and \ref{thm:main1} imply that $\dim_{F(\phi)}\Sel(K,V_{f,\chi\phi})=1$ if $\ep(V_{f,\chi})=-1$.
On the other hand, by Shapiro's lemma we can verify that
\[
\Sel(K,V_{f,\chi\phi})\simeq\Sel(\Q,\cV_\phi).
\]
Therefore, by \cite[Cor.~(5.3.2)]{nekovar-parity3} (see also \cite{nekovar-parity-erratum}),
we conclude that
\[\dim_F\Sel(K,V_{f,\chi})\con \dim_{F(\phi)}\Sel(K,V_{f,\chi\phi})\con \ep(V_{f,\chi})\pmod{2},\]
and the parity conjecture for $V_{f,\chi}$ follows.
\end{proof}





	
\section{Kolyvagin's method for generalized Heegner cycles}\label{S:EulerSystem}

\def\Sg{\Sigma}
\def\cH{H}
\def\fin{f}
\def\sing{s}
\def\cY{Y}
\def\pd{\partial}
\def\Chi{\chi}
\def\Chibar{\chi^{-1}}
\def\cS{S}
\def\Bone{B_2}
\def\Btwo{B_1}
\def\Bthree{B_3}
\def\Bfour{B_4}
\def\Esys{\bfc}

We keep the setup and Hypothesis (H) introduced in $\S\ref{sec:arith-applic}$, except that we do \emph{not} assume that $p$ is split in $K$. In particular,
$f\in S_{2r}^{\rm new}(\Gamma_0(N))$ is a newform of level $N$ prime to $p$, and $\chi:\Gal(\rcf{\ctame p^\infty}/K)\to\cO^\x$
is a locally algebraic anticyclotomic Galois character of infinity type $(j,-j)$ and conductor $c\cO_K$. Write $c=\ctame p^s$ with $p\ndivides \ctame$. The aim of this section it to develop a suitable extension of Kolyvagin's method of Euler systems
for the Galois representation $V\otimes\chi$. We largely follow Nekov{\'a}{\v{r}}'s approach \cite{nekovar-invmath}.

\subsection{Notation}

For each positive integer $n$, let $\rcf{n}$ be the ring class field of $K$ of conductor $n$, and let $\Delta_n=\Gal(\rcf{n}/K)$. If $A$ is a $G_{\rcf{n}}$-module unramified outside $pN\ctame $, let $H^1(\rcf{n},A)$ denote the cohomology group $H^1({\rm Gal}(K^{\Sigma_n}/\rcf{n}),A)$,
where $K^{\Sigma_n}$ is the maximal extension of $K$ unramified outside the prime factors of $pN\ctame n$.

Recall that $T$ is the $G_\Q$-stable $\cO$-lattice of the self-dual Galois representation $V$ as in
\cite[\S{3}]{nekovar-invmath}. 
By \cite[Prop.\,3.1(2)]{nekovar-invmath}, there is a $G_\Q$-equivariant $\cO$-linear perfect pairing
\beq\label{E:pairing.E}\pairing:T\xx T \longto\cO(1),\eeq
and for any local field $L$, let $\pairing_L:H^1(L,T)\xx H^1(L,T)\to \cO$ denote the local Tate pairing induced by $\pairing$.
Let $\uf$ be a uniformizer of $\cO$ and let $\bbF=\cO/(\uf)$ be the residue field.
If $M$ is a positive integer, we abbreviate
\[
\cO_M:=\cO/\uf^M\cO,\quad\quad T_M:=T/\uf^M\cO.
\]
We let $\ell$ always denote a rational prime \emph{inert} in $K$, and let $\lam$ be the prime of $\cO_K$ above $\ell$,
$K_\lam$ be the completion of $K$ at $\lam$, and $\Frob_\ell$ be the Frobenius element of $\lam$ in $G_K$.
If $A$ is a discrete $\cO[G_K]$-module, we denote by $A^\vee$ the \pont dual of $A$.
Let $H^1_{\fin}(K_\lam,A)$ and $H^1_{\sing}(K_\lam,A):=H^1(K_\lam,A)/H_{\fin}(K_\lam,A)$ be the finite part and the
singular quotient of $H^1(K_\lam,A)$, respectively. Denote by $\loc_\ell:H^1(K,A)\to H^1(K_\lam,A)$ the localization map at $\ell$ and by
\[
\pd_\ell\colon H^1(K,A)\longto H^1_{\sing}(K_\lam,A)
\]
the composition of $\loc_\ell$ with the quotient map $H^1(K_\lam,A)\to H^1_{\sing}(K_\lam,A)$.

\subsection{Kolyvagin's anticyclotomic Euler systems}\label{SS:Kolyvagin}

Denote by $\sK$ the set of square-free products of primes $\ell$ inert in $K$ with $\ell\ndivides 2pN\ctame$.
Let $\tau$ denote the complex conjugation, and let $w_f\in\stt{\pm 1}$ be the Atkin--Lehner eigenvalue of $f$.
\begin{defn}\label{D:EulerSystem.E}
An anticyclotomic Euler system attached to $(T,\Chi)$ is a collection $\stt{\Esys_n}_{n\in \sK}$
of classes $\Esys_n\in H^1(\rcf{c n},T\ot\Chibar)$ such that for every $n=m\ell\in\sK$ we have:
\begin{mylist}
\item[(E1)] $\cores_{\rcf{nc},\rcf{mc}}(\Esys_{n})=\bfa_\ell(f)\cdot \Esys_{m}$;
\item[(E2)] $\loc_\ell(\Esys_n)=\res_{\rcf{mc,\lam},\rcf{nc,\lam}}(\loc_\ell(\Esys_{m})^{\Frob_\ell})$; 
\item[(E3)] if $\chi^2=1$, then $\Esys_n^\tau=w_f\cdot \Chi(\sg_{\ol{\frakN}})\cdot \Esys_n^{\sg_{\ol{\frakN}}}$.
\end{mylist}
\end{defn}
We briefly recall the construction of derivative classes attached to an anticyclotomic Euler system $\Esys=\stt{\Esys_n}_{n\in \sK}$.
First we make an auxiliary choice of a positive integer $\nu$ such that $p^{\nu}$ annihilates:
\begin{itemize}\item[(i)] the kernel and cokernel of the map $\res_{K,\rcf{n}}:H^1(K,\Epr\ot\Chibar)\to H^1(\rcf{n},\Epr\ot\Chibar)^{\Delta_n}$ for all positive integers $n$ and $M$;
\item[(ii)] the local cohomology groups $H^1(K_v,\Epr\ot\Chibar)$ for all $v\divides \ctame N$. \end{itemize}The existence of such $\nu$ follows from \cite[Prop.\,6.3,\,Cor.\,6.4,\,Lem.\,10.1]{nekovar-invmath}.
Define the constant
\beq\label{E:B3}
\Btwo=\min\stt{\Ord_\uf(x-1)\mid x\cdot I_2\in \rho_{f}^*\ot\Chibar(G_K),\; x\in\Zp^\x }.
\eeq
A rational prime $\ell$ is called an \emph{$M$-admissible Kolyvagin prime} if
\begin{itemize}\item $\ell\ndivides 2cNp$ is inert in $K$;
\item $ \bfa_\ell(f)\con \ell+1\con 0\pmod{\uf^M}$;
\item $\uf^{M+\Btwo+1}\ndivides \ell+1\pm \bfa_\ell(f)\ell^{1-r}$.\end{itemize}
Let $\sK_M$ be the set of square-free products of $M$-admissible primes, and
 for each $n\in\sK_M$ let $G_n$ denote the Galois group $\Gal(\rcf{nc}/\rcf{c})\subset \Delta_{cn}$.
 For each $\ell\divides n$, the group $G_\ell$ is cyclic of order $\ell+1$,
and we have a canonical decomposition $G_n=\prod_{\ell \divides n}G_\ell$.
Fixing a generator $\sg_\ell$ for each $G_\ell$, Kolyvagin's derivative operators are defined by
\[
D_\ell=\sum_{i=1}^{\ell}i\sg_\ell^i\in\Z[G_\ell]
\]
and
\[
D_n:=\prod_{\ell\divides n} D_\ell\in \Z[G_n]\subset \cO[\Delta_{nc}].
\]
Then for each $n\in\sK_M$ there is a unique $\cD_M(n)\in H^1(\rcf{c},\Epr\ot\Chibar)$ such that
\[\res_{\rcf{c},\rcf{nc}}(\cD_M(n))=p^{3\nu} D_n\Esys_n,\]
and the \emph{derivative class} $\Hg{n}{\Chibar}$ is defined by
\[\Hg{n}{\Chibar}:=\cores_{\rcf{c}/K}(\cD_M(n))\in H^1(K,\Epr\ot\Chibar).\]

We next introduce Euler systems with local conditions at $p$. Let $\cF\subset H^1(K\ot\Qp,V\ot\Chibar)$ be an $F$-vector subspace and let $\cF^*\subset H^1(K\ot \Qp,V\ot\Chi)$ be the orthogonal complement of $\cF$ under the local Tate pairing. We assume that $\cF^*=\cF$ if $\chi^2=1$.
Let $\cF_T\subset H^1(K\ot\Qp,T\ot\Chibar)$ be the inverse image of $\cF$ under the natural map $H^1(K\ot\Qp,T\ot\Chibar)\to H^1(K\ot\Q_p,V\ot\Chibar)$
and let $\cF_M\subset H^1(K\ot\Qp,T_M\ot\Chibar)$ be the image of $\cF_T$ under the reduction map $H^1(K\ot\Qp,T\ot\Chibar)\to H^1(K\ot\Qp, T_M\ot\Chibar)$.
 For each positive integer $n$, let $\Sel_\cF^{(n)}(K,\Epr\ot\Chibar)$ be the $n$-imprimitive Selmer group defined by
\begin{align*}
\Sel_\cF^{(n)}(K,\Epr\ot\Chibar)
&:=\stt{s\in H^1(K,\Epr\ot\Chibar)\mid
\begin{array}{ll}
\loc_v(s) \in H^1_{\fin}(K_v,\Epr\ot\Chibar)&\text{ for }v\nmid pn,\\
\loc_p(s)\in \cF_M&\text{ if }p\nmid n
\end{array}
}.
\end{align*}
Note that if $p\divides n$, then $\Sel_\cF^{(n)}(K,\Epr\ot\Chibar)$ does not depend on the choice of $\cF$.
When $n=1$ we shall simply write $\Sel_\cF(K,\Epr\ot\Chibar)$ for $\Sel_\cF^{(n)}(K,\Epr\ot\Chibar)$.
We let
\[
\Sel_\cF(K,V/T\ot\Chibar):=\dirlim_M \Sel_\cF(K,\Epr\ot\Chibar)
\]
and define $\Sel_{\cF^*}(K,\Epr\ot\Chi)$ in a similar way.

Let
\[\Esys_K:=\cores_{\rcf{c}/K}(\Esys_1)\in H^1(K,T\ot\Chibar).\]
By \cite[Prop.\,10.2 (2)(3)]{nekovar-invmath}, the derivative classes
$\Hg{n}{\Chibar}$ satisfy
\beqcd{K1}
\Hg{n}{\Chibar}\in\Sel_\cF^{(np)}(K,\Epr\ot\Chibar),
\eeqcd
and by definition we see that
\[
\Hg{1}{\Chibar}=p^{3\nu}\Esys_K\pmod{\uf^M}.
\]

If $\ell$ is an $M$-admissible prime, then $G_{K_\lam}$ acts trivially on $\Epr\ot\Chibar$, and there are isomorphisms
\begin{align*}\al_\ell\colon&H^1_{\fin}(K_\lam,\Epr\ot\Chibar)=H^1(K_\lam^{\rm ur}/K_\lam,\Epr)\overset{\sim}\longto\Epr,
\\
\beta_\ell\colon&H^1_{\sing}(K_\lam,\Epr\ot\Chibar)=H^1(K_\lam^{\rm ur},\Epr)\overset{\sim}\longto\Epr,
\end{align*}
given by evaluation of cocycles at ${\rm Frob}_\ell$ and $\gamma_\ell$, respectively,
where $\gamma_\ell$ is a generator of the pro-$p$ part of the tame inertia group of $K_\lam$. Define the \emph{finite-to-singular} map
\[\vp_\ell:=\beta_\ell^{-1}\circ\al_\ell: H^1_{\fin}(K_\lam,\Epr\ot\Chibar)\overset{\sim}\longto H^1_{\sing}(K_\lam,\Epr\ot\Chibar).\]
Then it is proved in \cite[Prop. 10.2]{nekovar-invmath} that for every $M$-admissible prime $\ell\divides n$, we have the relations
\beqcd{K2}\left(\frac{(-1)^{r-1}\ep_n \bfa_\ell(f)\ell^{1-r}}{\uf^M}-\frac{\ell+1}{\uf^M}\right)\vp_\ell(\loc_\ell(\Hg{n/\ell}{\Chibar}))=\left(\frac{\ell+1}{\uf^M}\ep_n-\frac{\bfa_\ell(f)\ell^{1-r}}{\uf^M}\right)\pd_{\ell}(\Hg{n}{\Chibar});\eeqcd
\beqcd{K3}\Hg{n}{\Chibar}^\tau=\ep_n\cdot\Hg{n}{\Chibar}\text{ if }\chi^2=1,\eeqcd
where $\ep_n=\chi(\sg_\frakN)\cdot w_f\cdot (-1)^{\omega(n)}\in\stt{\pm 1}$ with $\omega(n)$ 
the number of prime divisors of $n$.

\begin{defn}\label{D:KolyvaginSystem}
Let ${\rm ES}(T,\Chi,\cF)$ be the space of anticyclotomic Euler systems with local condition $\cF$,
consisting of anticyclotomic Euler systems $\Esys=\stt{\Esys_n}_{n\in\sK}$ satisfying,
in addition to (E1--3) in \defref{D:EulerSystem.E}, the conditions:
\begin{itemize}
\item[(E4)]  $\Esys_K\in \Sel_\cF(K,T\ot\Chibar)$ and $\Esys_K^\tau\in\Sel_{\cF^*}(K,T\ot\Chi)$
($\Leftrightarrow \loc_p(\Esys_K)\in\cF_T$ and $\loc_p(\Esys_K^\tau)\in\cF^*_T$);
\item[(E5)] for every $M$ and $n\in\sK_M$, we have $\Hg{n}{\Chibar}\in\Sel_\cF^{(n)}(K,\Epr\ot\Chibar)$
($\Leftrightarrow\loc_p(\Hg{n}{\Chibar})\in \cF_{M}$).
\end{itemize}
\end{defn}
The following is one of the key technical results in this paper.

\begin{thm}\label{T:Descent.E}
If  $\Esys\in {\rm ES}(T,\Chi,\cF)$ is an Euler system with local condition $\cF$ with \[\Esys_K\not =0\text{ in }H^1(K,V\ot\Chibar), \]
then $\Sel_{\cF^*}(K,V\ot\Chi)=F\cdot \Esys_K^\tau$.
\end{thm}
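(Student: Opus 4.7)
The plan is to run a Kolyvagin-style descent in the anticyclotomic setting, closely following Bertolini--Darmon \cite{BD1990} and Nekov{\'a}{\v{r}} \cite{nekovar-invmath}. Since $\Sel_{\cF^*}(K,V\ot\Chi) = (\dirlim_M \Sel_{\cF^*}(K,\Epr\ot\Chi))\ot_\cO F$, it suffices to bound $\Sel_{\cF^*}(K,\Epr\ot\Chi)$ as an $\cO/\uf^M$-module, uniformly in $M$, and show that it is generated by the image of $\Esys_K^\tau$ up to a bounded $p$-power error. The inclusion $F\cdot\Esys_K^\tau\subset\Sel_{\cF^*}(K,V\ot\Chi)$ is immediate from (E4). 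First I would fix $M$ large enough that the image of $\Esys_K^\tau$ in $H^1(K,\Epr\ot\Chi)$ has order $\geq \uf^{M-C_0}$ for some $M$-independent constant $C_0$; such $M$ exist because $\Esys_K\neq 0$ in $H^1(K,V\ot\Chibar)$.

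The first key ingredient is the Chebotarev production of $M$-admissible primes. Hypothesis (H), combined with the standard big-image results for the mod-$p$ representation attached to $f$, forces $\bar\rho_f|_{G_K}$ to be absolutely irreducible with suitably large image, and so the argument of \cite[Prop.~10.2]{nekovar-invmath} applies verbatim: for any finite family of classes in $H^1(K,\Epr\ot\Chi)$ and $H^1(K,\Epr\ot\Chibar)$, one can find infinitely many $M$-admissible primes $\ell$ whose Frobenius in $K(\Epr\ot\Chi\ot\Chibar)$ realises any prescribed conjugacy class. At such $\ell$ the module $\Epr\ot\Chi$ is $G_{K_\lam}$-trivial, so $H^1(K_\lam,\Epr\ot\Chi)$ splits canonically into a finite part and a singular part, both free of rank two over $\cO_M$, with the local Tate pairing identifying the $\Chi$-side with the $\Chibar$-side.

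The second key ingredient is global duality. The pairing (\ref{E:pairing.E}), combined with $\Chi\x\Chibar\to\cO$, yields a $G_K$-equivariant pairing $T\ot\Chi\x T\ot\Chibar\to\cO(1)$, whose local Tate pairings satisfy the reciprocity
\[
\sum_v\pair{\loc_v(s)}{\loc_v(\Hg{n}{\Chibar})}_{K_v}=0
\]
for any $s\in\Sel_{\cF^*}(K,\Epr\ot\Chi)$ and $n\in\sK_M$. Away from $np$ both localizations land in $H^1_\fin$, which is self-orthogonal under Tate duality at primes of good reduction; at $p$ the pairing vanishes because $\loc_p(s)\in\cF_M^*$ while $\loc_p(\Hg{n}{\Chibar})\in\cF_M$ by (E5), with $\cF_M$ and $\cF_M^*$ orthogonal by construction. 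Hence the reciprocity reduces to $\sum_{\ell\mid n}\pair{\loc_\ell(s)}{\loc_\ell(\Hg{n}{\Chibar})}_{K_\lam}=0$. Starting from $\Hg{1}{\Chibar}=p^{3\nu}\Esys_K$ modulo $\uf^M$ (by (K1)) and inducting on $\omega(n)$, I would use (K2) and the Chebotarev step to select admissible primes $\ell$ at which $\loc_\ell(s)$ is nonzero and at which $\loc_\ell(\Hg{n/\ell}{\Chibar})$ can be transferred through the finite-to-singular map $\vp_\ell$, thereby forcing $s$ into $\cO\cdot\Esys_K^\tau$ modulo $\uf^{M-C}$ for a bounded $C$. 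Passing to the limit in $M$ and tensoring with $F$ yields the desired identity.

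The main obstacle is the bookkeeping at $p$: unlike the classical Bloch--Kato Kolyvagin setup where the Selmer condition at $p$ is intrinsic, here $\cF$ and $\cF^*$ are abstract transversal local conditions, and the descent relies essentially on the compatibility built into (E5) together with the exact orthogonality of $\cF_M$ and $\cF_M^*$ under local Tate duality. A secondary technicality is tracking the bounded $p$-power losses arising from the factor $p^{3\nu}$ in (K1), from the constant $\Btwo$ in (\ref{E:B3}), and from the cokernels of the restriction maps controlling $\nu$; all these yield only an $M$-independent error that becomes invisible after inverting $p$, allowing the one-dimensional bound to pass from $\Epr$-coefficients to $V$-coefficients.
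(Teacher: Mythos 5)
The overall scaffolding you describe (derivative classes, global Poitou--Tate reciprocity, Chebotarev, bounded $p$-power losses) matches the paper, but the step that actually makes the anticyclotomic case work is missing. Since in general $\Chi\neq\Chibar$, there is no decomposition of $H^1(K,\Epr\ot\Chi)$ into $\tau$-eigenspaces, and at an $M$-admissible prime $\ell$ the groups $H^1_{\fin}(K_\lam,\Epr\ot\Chi)$ and $H^1_{\sing}(K_\lam,\Epr\ot\Chibar)$ are free of rank \emph{two} over $\cO_M$. Hence a single derivative class $\Hg{n}{\Chibar}$ only controls a rank-one piece of the singular quotient at $\ell$, and the induction on $\omega(n)$ you propose (at each stage pick one prime where $\loc_\ell(s)\neq 0$ and transfer $\Hg{n/\ell}{\Chibar}$ through $\vp_\ell$) does not close: you cannot annihilate the part of $\loc_\ell(s)$ orthogonal to that line. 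The paper's descent is not an induction over all $n$; it uses only $n\in\stt{1,\ell,\ell\ell_1}$, and its engine is the Bertolini--Darmon two-class trick: first choose $\ell_1$ with $\Ord_\uf(\al_{\ell_1}(\Hg{1}{\Chibar}))\leq C_1$ (\lmref{L:2}), then choose the remaining primes $\ell$ so that the images of \emph{both} $\Hg{1}{\Chibar}$ and $\Hg{\ell_1}{\Chibar}$ under $\al_\ell$ generate $\uf^{2C_2}\Epr$; the existence of such Frobenius elements is exactly the content of the set $\cY$, the quadratic form $q(v)=v_1\wedge v_2$ and \lmref{L:3} (with separate treatments of $\chi^2=1$ and $\chi^2\neq 1$, where \eqref{K3} enters), and \lmref{L:6} then gives control of the full rank-two singular quotient, which combined with \lmref{L:5} and the sequence \eqref{E:exact} bounds the dual Selmer group and isolates the line $\cO\,\al_{\ell_1}(\Hg{1}{\Chi})$. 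None of this mechanism appears in your sketch, and it is precisely where the argument differs from the classical Kolyvagin descent you are implicitly running.

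A second gap is your Chebotarev input. You assert that Hypothesis (H) together with ``standard big-image results for the mod-$p$ representation'' makes \cite[Prop.~10.2]{nekovar-invmath} apply verbatim; but the theorem carries no residual irreducibility or big-image hypothesis, and (H) does not provide one. The paper replaces such assumptions by bounded-index arguments on the $p$-adic image: the order $R_\rho=\cO[\rho_f^*(G_\Q)]$ and the constants $\Bone$, $\Btwo$, \lmref{L:EE} (ensuring $\Qbar^{\ker\rho_f^*}\cap\cH^\flat=\Q$, so $R_\rho=\cO[\rho_f^*(G_{\cH^\flat})]$ even though $\chi\bmod\uf^M$ factors through a ring class field $\cH$ of $p$-power-times-$\ctame$ conductor), and the module-theoretic Lemmas \lmref{L:7}, \lmref{L:8}, \lmref{L:nonfull}, which also account for the twist of $\cS$ by characters of $\Gal(\cH/\cH^\flat)$; Chebotarev is then applied to Frobenius classes in $\cG_\cS^+$ via \propref{P:Nek122}. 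Your tracking of the losses $p^{3\nu}$ and $\Btwo$ is in the right spirit, but without the two-class mechanism and without this substitute for a big-image hypothesis, the proposal does not establish $\Sel_{\cF^*}(K,V\ot\Chi)=F\cdot\Esys_K^\tau$.
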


In the next two sections we shall give the applications of this result to the Euler system constructed in this paper,
postponing the proof of Theorem~\ref{T:Descent.E} to $\S\ref{SS:Descent.E}$.


\def\cH{\mathcal H}
\def\Chi{\chi}
\def\rmd{\,{\rm d}}
\def\cK{\mathcal K}

\subsection{Euler system for generalized Heegner cycles (I)}
Suppose $p=\frakp\ol{\frakp}$ splits in $K$, and for this section assume that $\chi$ has infinity type $(j,-j)$ with
\[-r<j<r.\]
We consider the $\Chibar$-component $z_{f,\chi^{-1},n}$ of the generalized Heegner classes $z_{f,n}$, as defined in (\ref{E:defn.norm}).
\begin{prop}\label{P:EulerS}If $n=m \ell$ with $\ell$ inert in $K$ and $c\divides m$, then:
\begin{mylist}
\item $\cores_{\rcf{n},\rcf{m}}(\Hclass_{f,\Chibar,n})=\bfa_\ell(f)\cdot \Hclass_{f,\Chibar,m}$.
\item $\loc_\ell(\Hclass_{f,\Chibar,n})=\res_{\rcf{m,\lam},\rcf{n,\lam}}(\loc_\ell((\Hclass_{f,\Chibar,m})^{\Frob_\ell})$.
\item $(\Hclass_{f,\Chibar,n})^\tau= w_f\cdot \Chi(\sg_{\ol{\frakN}})(\Hclass_{f,\Chi,n})^{\sg_{\ol{\frakN}} }$.
\end{mylist}
\end{prop}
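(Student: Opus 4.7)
The plan is to derive each of (i)--(iii) from the corresponding identity for the ``raw'' generalized Heegner classes $\Hclass_{f,n}\in H^1(\rcf{n},T\otimes S^{r_1}(B))$ by applying, after the auxiliary twist by $\chi_t^{-1}$, the $G_K$-equivariant $\cO$-linear projection $(\id\otimes e_{\chi^{-1}})$ of \eqref{E:projector}. Because this projection comes from a morphism of Galois modules, it commutes with all the cohomological operations at issue --- corestriction, localization at primes of $K$, and (up to the bookkeeping inherent in the relation $\chi^\tau = \chi^{-1}$) the action of complex conjugation --- so each assertion will reduce to a statement about the raw classes already established in \secref{sec:def-Heegner}.

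For (i), I would invoke the inert case of Proposition~\ref{prop:norm}, which reads $T_\ell \Hclass_{f,m} = \cores_{\rcf{n}/\rcf{m}}(\Hclass_{f,n})$. Since $\ell\nmid Np$, the Hecke operator $T_\ell$ acts on $V_f$ as multiplication by $\bfa_\ell(f)$, so the left-hand side equals $\bfa_\ell(f)\Hclass_{f,m}$; twisting by $\chi_t^{-1}$ and applying $(\id\otimes e_{\chi^{-1}})$ then yields the desired identity in $H^1(\rcf{m},T\otimes\chi^{-1})$. For (ii), I would appeal directly to Lemma~\ref{L:ES3.norm} with $\chi$ replaced by $\chi^{-1}$, the only geometric input being that the natural isogeny $A_m\to A_n$ reduces modulo $\overline{\lam}$ to the Frobenius at $\ell$.

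For (iii), I would start from Lemma~\ref{L:ES2.norm}, which reads
\[
(\Hclass_{f,\chi,n})^\tau = w_f\cdot\chi(\sg_\frakN)\cdot(\Hclass_{f,\chi^{-1},n})^{\sg_{\ol{\frakN}}};
\]
interchanging the roles of $\chi$ and $\chi^{-1}$ yields
\[
(\Hclass_{f,\chi^{-1},n})^\tau = w_f\cdot \chi^{-1}(\sg_\frakN)\cdot (\Hclass_{f,\chi,n})^{\sg_{\ol{\frakN}}}.
\]
To match the formula in the proposition it remains to verify the identity $\chi^{-1}(\sg_\frakN) = \chi(\sg_{\ol{\frakN}})$: since $N\cO_K = \frakN\ol{\frakN}$, the product $\sg_\frakN\sg_{\ol{\frakN}}$ equals $\rec_K(N)\in\rec_K(\Q^\times)$, and $\chi$, being anticyclotomic, is trivial on $\rec_K(\Q^\times)$. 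Note also that via $\chi^\tau = \chi^{-1}$ complex conjugation sends $H^1(\rcf{n},T\otimes\chi^{-1})$ to $H^1(\rcf{n},T\otimes\chi)$, so both sides of the displayed equation live in the same cohomology group, as they must.

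The only step requiring any real care is the identification in (i) of the geometrically defined Hecke action $T_\ell$ on generalized Heegner cycles with the scalar $\bfa_\ell(f)$ governing its action on $V_f$; this is standard, coming from the realization of $V_f$ as the $f$-isotypic quotient of the cohomology of the Kuga--Sato variety underlying the construction of $\Phi_{\et,f}$. Beyond that, every step reduces routinely to a property of the raw classes already proved, and I do not anticipate a substantive obstacle.
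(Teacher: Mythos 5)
Your proposal is correct and follows essentially the same route as the paper, whose proof of this proposition is precisely the one-line reduction to Proposition~\ref{prop:norm} (inert case, with $T_\ell$ acting as $\bfa_\ell(f)$ on the $f$-isotypic part), Lemma~\ref{L:ES3.norm}, and Lemma~\ref{L:ES2.norm}. Your extra bookkeeping --- equivariance of $(\id\otimes e_{\chi^{-1}})$ under corestriction/localization/conjugation and the identity $\chi^{-1}(\sg_\frakN)=\chi(\sg_{\ol{\frakN}})$ from anticyclotomy --- is exactly what the paper leaves implicit.
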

\begin{proof}
These properties follow from  \propref{prop:norm}, \lmref{L:ES2.norm}, and \lmref{L:ES3.norm}, respectively.
\end{proof}

\begin{lm}\label{L:0.f}
Suppose $p>2r-1$. Let $w$ be a place of $\rcf{c}$ above $p$, and let $\rcf{c,w}$ be the completion of $\rcf{c}$ at $w$.
If $L/\rcf{c,w}$ is a finite extension and $L'/L$ is a finite unramified extension, then the corestriction map
\[\cores_{L'/L}: H_f^1(L',T_M\ot\chi)\longto H_f^1(L,T_M\ot\chi)\]
is surjective, and the restriction map
\[\res_{L,L'}:H^1(L,T_M\ot\Chibar)/H^1_\BK(L,T_M\ot\Chibar)\longto H^1(L',T_M\ot\Chibar)/H^1_\BK(L',T_M\ot\Chibar)\]is injective.
\end{lm}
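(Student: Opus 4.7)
The ``in particular'' statement is a formal consequence of the surjectivity of corestriction together with local Tate duality. Indeed, the $G_\Q$-equivariant pairing $\pair{\cdot}{\cdot}:T\times T\to\cO(1)$ of \eqref{E:pairing.E} descends to a perfect pairing between $T_M\otimes\chi$ and $T_M\otimes\chi^{-1}$ valued in $\mu_{p^M}$, and the resulting local Tate duality identifies $H^1_\BK(L,T_M\otimes\chi^{-1})$ with the annihilator of $H^1_\BK(L,T_M\otimes\chi)$. Since corestriction and restriction are adjoint under this duality, the surjectivity of $\cores_{L'/L}$ on the former group is equivalent to the injectivity of $\res_{L,L'}$ on the quotient $H^1/H^1_\BK$ for $T_M\otimes\chi^{-1}$.

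For the main claim, the strategy is to pass first to the lattice $T$ and then to $V=V_f(r)$. Using that $H^1_\BK(L,T_M\otimes\chi)$ is by definition the image of $H^1_\BK(L,T\otimes\chi)$ under reduction mod $\uf^M$, and similarly for $L'$, it suffices to prove that the integral corestriction
\[
\cores_{L'/L}:H^1_\BK(L',T\otimes\chi)\longrightarrow H^1_\BK(L,T\otimes\chi)
\]
is surjective; here the hypothesis $p>2r-1$ enters, since it places the Hodge--Tate weights $0,2r-1$ of $V_f$ in the Fontaine--Laffaille range and ensures that the relevant integral crystalline cohomology is free of parasitic torsion, so that a saturation argument reduces the problem further to proving surjectivity of corestriction after inverting $p$.

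For the $F$-coefficient assertion, write $V':=V\otimes\chi$, a crystalline $F$-representation of $G_L$. I will use the Bloch--Kato fundamental sequence
\[
0\to H^0(L,V')\to \bfD_{\cris,L}(V')\xrightarrow{(1-\Phi,\,\mathrm{pr})}\bfD_{\cris,L}(V')\oplus \bigl(\bfD_{\dR,L}(V')/\Fil^0\bigr)\to H^1_\BK(L,V')\to 0,
\]
together with its counterpart for $L'$. Since $L'/L$ is unramified, one has $\bfD_{\cris,L'}(V')=L'_0\otimes_{L_0}\bfD_{\cris,L}(V')$ and $\bfD_{\dR,L'}(V')=L'\otimes_L\bfD_{\dR,L}(V')$, and the corestriction on $H^1_\BK$ is induced by the trace maps $\tr_{L'_0/L_0}$ and $\tr_{L'/L}$. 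These traces are surjective for any finite separable extension of local fields, so a short diagram chase concludes the argument at this level.

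The main obstacle will be the integral reduction step: the passage from the $F$-coefficient surjectivity $H^1_\BK(L',V')\twoheadrightarrow H^1_\BK(L,V')$ to the lattice level and then to torsion coefficients is not entirely formal and requires the Fontaine--Laffaille regime afforded by $p>2r-1$ to control the torsion in $H^0$ and in the reductions of the relevant crystalline modules. Everything else should be manageable by standard manipulations of the Bloch--Kato exact sequence and local Tate duality.
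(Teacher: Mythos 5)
Your handling of the ``in particular'' statement (local Tate duality, with corestriction and restriction adjoint and the Bloch--Kato subgroups mutual annihilators) is the intended deduction and is fine. The main claim, however, is not proved. The part of your argument you actually carry out --- surjectivity of $\cores_{L'/L}$ on $H^1_\BK(\cdot,V\ot\chi)$ with $F$-coefficients via the Bloch--Kato fundamental exact sequence and the surjectivity of the field-level traces --- is true but has no content: since $\cores_{L'/L}\circ\res_{L,L'}=[L':L]$ and $[L':L]$ is invertible in $F$, every $x\in H^1_\BK(L,V\ot\chi)$ equals $\cores_{L'/L}\bigl([L':L]^{-1}\res_{L,L'}(x)\bigr)$, so rational surjectivity is automatic with no crystalline input whatsoever. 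The entire content of the lemma lives at the torsion level, and that is exactly the step you leave open: the ``saturation argument'' reducing the lattice (and then $T_M$) statement to the statement after inverting $p$ does not exist. Rational surjectivity only shows that the cokernel of the integral corestriction on $H^1_f(\cdot,T\ot\chi)$ is killed by $[L':L]$, which in the relevant case ($p\mid[L':L]$, $L'/L$ unramified) is a power of $p$ and hence gives nothing for $T_M$-coefficients; there is no general principle forcing this cokernel to vanish.

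The paper's proof supplies precisely the ingredient you defer to ``the main obstacle'': since $p>2r-1$ places $T\ot\chi$ in the Fontaine--Laffaille range, $H^1_f$ with torsion coefficients is presented (via \cite[Lemma~4.5(c)]{BK}) as the cokernel of $f_0-1$ acting on the $D^0$-part of the associated Fontaine--Laffaille module; under this presentation $\cores_{L'/L}$ corresponds to $1\ot\Tr_{L'/L}$ on $D^0\ot_{\cO_L}\cO_{L'}\to D^0$, and surjectivity follows because the trace $\Tr_{L'/L}\colon\cO_{L'}\to\cO_L$ on rings of integers is surjective, which holds precisely because $L'/L$ is unramified. Note this is also where the unramifiedness hypothesis genuinely enters --- surjectivity of the trace on $\cO_{L'}$, not on $L'$ --- whereas your write-up only invokes unramifiedness for the base-change of $\bfD_{\cris}$ and the trace of fields, i.e.\ only in the trivial rational step. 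As it stands the proposal is incomplete: the torsion-level surjectivity, which is the whole point of the lemma, is asserted rather than proved.
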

\begin{proof}
By local Tate duality, it suffices to establish the first claim.
Since $p>2r-1$, the Bloch--Kato group $H^1_f(L,T\ot\chi)$ for the crystalline representation $T\ot\chi$ admits a description
in terms of Fontaine--Laffaille modules (see \cite[Lem.~4.5(c)]{BK}).
Thus let $D$ be the Fontaine--Laffaille $\cO_{L}$-module attached to $T\ot\chi$ as a $G_{L}$-module.
Then $D\ot_{\cO_{L}}\cO_{L'}$ is the Fontaine--Laffaille module of $T\ot\chi$ regarded as a $G_{L'}$-module, and by \emph{loc.cit.}
we have the commutative diagram
\[\xymatrix{ D^0\ot_{\cO_L}\cO_{L'}\ar[d]^{1\ot\Tr_{L'/L}}\ar[r]^{f_0-1}&
D^0\ot_{\cO_L}\cO_{L'}\ar[r]\ar[d]^{1\ot\Tr_{L'/L}}& H^1_f(L
,T\ot\chi)\ar[d]^{\cores_{L'/L}}\ar[r]&0\\
D^0\ar[r]^{f_0-1}& D^0\ar[r] &H^1_f(L,T\ot\chi)\ar[r]&0,
}\]
where $f_0$ is the usual Frobenius map.
The surjectivity of $\cores_{L'/L}$ thus follows from the surjectivity of the trace map $\Tr_{L'/L}:\cO_{L'}\to \cO_L$.
\end{proof}

For each $n\in\sK$ define
\[
\bfc_n^\heeg:=\Hclass_{f,\Chibar,nc}.
\]
Set $\bfc^\heeg:=\stt{\bfc_n^\heeg}_{n\in\sK}$ and let $\cF_{\rm BK}:=H^1_f(K\ot\Qp,V\ot\Chibar)$
be given by the usual Bloch--Kato finite subspaces.

\begin{prop}\label{prop:Heeg-ES}
We have $\bfc^\heeg_K=\Hclass_{f,\Chibar}$,
and $\bfc^\heeg\in {\rm ES}(T,\chi^{-1},\cF_{\rm BK})$
is an Euler system with local condition $\cF_{\rm BK}$.
\end{prop}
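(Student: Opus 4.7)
The plan is to verify each of the axioms (E1)--(E5) in Definitions~\ref{D:EulerSystem.E} and~\ref{D:KolyvaginSystem} for the system $\bfc^\heeg$ with local condition $\cF=\cF_{\rm BK}$. The identity $\bfc^\heeg_K=\Hclass_{f,\Chibar}$ is immediate from unwinding \eqref{E:zchi.norm}, together with the assumption that $c=\ctame p^s$ is the conductor of $\chi$. Conditions (E1), (E2), (E3) are exactly the content of \propref{P:EulerS}. Note also that $\cF_{\rm BK}^*=\cF_{\rm BK}$ by the standard self-duality of the Bloch--Kato local conditions under the pairing~\eqref{E:pairing.E}. So only (E4) and (E5) require argument.

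For (E4), the crucial input is Nekov{\'a}{\v{r}}'s theorem \cite[Thm.~3.3.1]{nekovarCRM} (recalled in $\S\ref{subsec:BDP}$), which asserts that the $p$-adic \'etale Abel--Jacobi image of any generalized Heegner cycle lies in the Bloch--Kato Selmer group over the corresponding ring class field. Since corestriction preserves the Bloch--Kato local conditions at every place, taking $\cores_{\rcf{c}/K}$ yields $\bfc^\heeg_K\in\Sel_{\cF_{\rm BK}}(K,T\otimes\Chibar)$. Complex conjugation intertwines the $G_K$-modules $T\otimes\Chibar$ and $T\otimes\Chi$ and preserves Bloch--Kato subgroups, so $(\bfc^\heeg_K)^\tau\in\Sel_{\cF^*_{\rm BK}}(K,T\otimes\Chi)$.

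For (E5), recall that $\Hg{n}{\Chibar}=\cores_{\rcf{c}/K}(\cD_M(n))$, with $\cD_M(n)\in H^1(\rcf{c},T_M\otimes\Chibar)$ characterized by $\res_{\rcf{c},\rcf{nc}}(\cD_M(n))=p^{3\nu}D_n\bfc^\heeg_n$. The argument proceeds in three steps. First, since $\bfc^\heeg_n\in\Sel(\rcf{nc},T\otimes\Chibar)$ by Nekov{\'a}{\v{r}} and the $G_n$-action at primes above $p$ preserves $H^1_\BK$, the class $p^{3\nu}D_n\bfc^\heeg_n$ localizes into $H^1_\BK$ at all primes above $p$ of $\rcf{nc}$, and hence into $(\cF_{\rm BK})_M$ after reduction modulo $\uf^M$. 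Second, since $(n,p)=1$ the extension $\rcf{nc}/\rcf{c}$ is unramified at all primes above $p$; \lmref{L:0.f} then provides the injectivity of the restriction map on the singular quotients $H^1/H^1_\BK$ over the completions, forcing $\loc_p(\cD_M(n))\in(\cF_{\rm BK})_M$. Third, corestriction from $\rcf{c}$ to $K$ preserves the Bloch--Kato condition, giving $\loc_p(\Hg{n}{\Chibar})\in(\cF_{\rm BK})_M$. The local conditions at primes $v\nmid pn$ of $K$ follow from the standard Kolyvagin argument (see \cite[\S 6--10]{nekovar-invmath}): the class $p^{3\nu}D_n\bfc^\heeg_n$ is $G_n$-invariant modulo $\uf^M$, and combining this with the norm relations (E1)--(E2) reduces each such local condition to the corresponding condition on $\bfc^\heeg_n$, which holds by \propref{P:EulerS}.

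The main obstacle is the local-at-$p$ part of (E5), because the Kolyvagin operator $D_n$ is not \emph{a priori} compatible with the integral Bloch--Kato subgroup modulo $\uf^M$. The resolution leverages two ingredients packaged into \lmref{L:0.f}: the unramifiedness of the ring class tower $\rcf{nc}/\rcf{c}$ at $p$, and the Fontaine--Laffaille description of $H^1_\BK$ (valid because our assumption $p\nmid 2(2r-1)!N\varphi(N)$ implies $p>2r-1$).
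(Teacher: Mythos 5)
Your proposal is correct and follows essentially the same route as the paper: (E1)--(E3) from \propref{P:EulerS}, the geometric input that $\loc_p$ of the generalized Heegner classes lies in $H^1_\BK$ over the ring class fields (the paper cites Niziol at this point, you cite \cite[Thm.~3.3.1]{nekovarCRM}, which the paper itself invokes earlier for the same containment), and then \lmref{L:0.f} applied to the unramified-at-$p$ extension $\rcf{nc}/\rcf{c}$ to transfer the Bloch--Kato condition to $\cD_M(n)$ and hence, after corestriction, to $\Hg{n}{\Chibar}$. The only cosmetic difference is that you also spell out the conditions away from $pn$ and the self-duality of $\cF_{\rm BK}$, which the paper leaves implicit in (K1) and the general setup.
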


\begin{proof}
The first claim is clear. On the other hand,
it follows from \propref{P:EulerS} that $\bfc^\heeg$ satisfies conditions (E1--3) in \defref{D:EulerSystem.E}.
To see that $\bfc^\heeg$ also satisfies conditions (E4) and (E5) in \defref{D:KolyvaginSystem}, we note that $\loc_p(\Hclass_{f,\Chibar,nc})\in
H^1_\BK(\rcf{nc},T\ot\Chibar)$ by \cite{niziol1997}. Since the action of complex conjugation induces an isomorphism $H^1_f(K\ot\Q_q,T\ot\Chibar)\iso H^1_f(K\ot\Q_q,T\ot\Chi)$ for every prime $q$, we see that $(\bfc^\heeg)^\tau$ satisfies (E4). Therefore, we have
$\bfc^\heeg_K=\Hclass_{f,\chi^{-1}}\in \Sel_{\cF_{\rm BK}}(K,T\ot\Chibar)$ and
\[
\loc_w(\res_{\rcf{c},\rcf{cn}}(\cD_M(n))=\loc_w(p^{3\nu}D_n\Hclass_{f,cn,\Chibar})\in H^1_\BK(\rcf{cn,w},T_M\ot\Chibar)
\]
for each place $w\divides p$. By \lmref{L:0.f}, this implies that
$\loc_w(\cD_M(n))\in H^1_\BK(\rcf{c,w},T_M\ot\Chibar)$,
and hence $\loc_p(\kappa_{\Chibar}(n))\in\cF_M$, as was to be shown.
\end{proof}

\begin{thm}\label{T:NVHeegner}
If $\Hclass_{f,\Chi}\not =0\in H^1(K,V\ot\Chi)$, then
\[
\Sel(K,V\ot\Chi)=\Sel_{\cF_{\rm BK}}(K,V\ot\Chi)=F\cdot \Hclass_{f,\Chi}.
\]
\end{thm}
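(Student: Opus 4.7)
The plan is to deduce the result directly from the descent theorem \thmref{T:Descent.E}, applied to the generalized Heegner system $\bfc^\heeg$ which \propref{prop:Heeg-ES} has just shown belongs to ${\rm ES}(T,\chi^{-1},\cF_{\rm BK})$ with $\bfc_K^\heeg=\Hclass_{f,\chi^{-1}}$. The remaining work consists of three short compatibility checks.

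First, by standard local Tate duality for the Bloch--Kato subspace $\cF_{\rm BK}=H^1_f(K\otimes\Qp,V\otimes\chi^{-1})$, we have $\cF_{\rm BK}^*=H^1_f(K\otimes\Qp,V\otimes\chi)$, so the restricted Selmer group is exactly the full Bloch--Kato Selmer group of $V\otimes\chi$: this gives $\Sel(K,V\otimes\chi)=\Sel_{\cF_{\rm BK}^*}(K,V\otimes\chi)$, which is the first equality asserted in the theorem. Second, the hypothesis $\Hclass_{f,\chi}\neq 0$ needs to be converted to the corresponding statement for $\bfc_K^\heeg=\Hclass_{f,\chi^{-1}}$, and the line $F\cdot (\bfc_K^\heeg)^\tau$ delivered by \thmref{T:Descent.E} must be identified with $F\cdot \Hclass_{f,\chi}$. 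Both are immediate consequences of \lmref{L:ES2.norm}: applied with $\chi$ replaced by $\chi^{-1}$ at the level of $\rcf{c}$, and then corestricted to $K$ (using that $\cores_{\rcf c/K}$ is invariant under the Galois translate by $\sg_{\overline{\frakN}}\in\Gal(\rcf c/K)$), one obtains
\[
(\Hclass_{f,\chi^{-1}})^\tau=w_f\cdot \chi^{-1}(\sg_{\frakN})\cdot \Hclass_{f,\chi}
\]
in $H^1(K,V\otimes\chi)$. Since $w_f\cdot \chi^{-1}(\sg_{\frakN})\in\cO^\times$, the two classes span the same $F$-line and vanish simultaneously.

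Combining these inputs with \thmref{T:Descent.E} then yields
\[
\Sel(K,V\otimes\chi)=\Sel_{\cF_{\rm BK}^*}(K,V\otimes\chi)=F\cdot (\bfc_K^\heeg)^\tau=F\cdot \Hclass_{f,\chi},
\]
which is precisely the desired statement. In this sense there is no genuine obstacle at this stage; the substantive work has already been absorbed into earlier results, namely \thmref{T:Descent.E} itself (where Kolyvagin's argument is adapted to the anticyclotomic setting via the Kolyvagin--Nekov{\'a}{\v{r}} derivative relations \eqref{K1}--\eqref{K3} together with the auxiliary constants $\Btwo$ and $\nu$ controlling local Galois cohomology and residual image) and in \propref{prop:Heeg-ES} (where the local condition at $p$ is verified by combining the crystalline nature of the \'etale Abel--Jacobi image with the corestriction surjectivity supplied by \lmref{L:0.f}). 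Given these, the proof reduces to the bookkeeping above.
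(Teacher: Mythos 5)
Your argument is correct and is essentially the paper's own proof: the authors likewise invoke \propref{prop:Heeg-ES} together with \thmref{T:Descent.E}, and use the conjugation relation of \propref{P:EulerS}(3) (i.e.\ \lmref{L:ES2.norm}) to pass between $\Hclass_{f,\chi}$ and $\Hclass_{f,\chi^{-1}}$ and to identify the resulting line. Your additional remarks (that $\cF_{\rm BK}^*$ recovers the Bloch--Kato condition for $V\ot\chi$, and that corestriction absorbs the $\sg_{\ol{\frakN}}$-twist) are exactly the bookkeeping left implicit in the paper.
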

\begin{proof}
Note that by \propref{P:EulerS}(3) we have the equivalence
\[
\Hclass_{f,\Chi}=0\iff \Hclass_{f,\Chibar}=0.
\]
Thus \propref{prop:Heeg-ES} combined with \thmref{T:Descent.E} yields the result.
\end{proof}

\subsection{Euler system for generalized Heegner cycles (II)}

As in the preceding section, we assume that $p=\frakp\frakpbar$ splits in $K$, but suppose now that
$\chi$ has infinity type $(j,-j)$ with
\[j\geq r.\]
In addition, in this section we assume that $f$ is ordinary at $p$.

Let $\bfz_f^{\Chi}$ be the $\Chi$-specialization of the
Iwasawa cohomology class $\bfz_f$ defined in \eqref{E:corbfz.E}.
For every place $v$ of $K$ above $p$, let $\cL_v\subset H^1(K_v,V\ot \Chi)$
be the subspace spanned by $\loc_{v}(\bfz_f^{\Chi})$. Then
\[
\cL_{T,v}:=\cL_v\cap H^1(K_v,T\ot\Chi)=\cO \uf^{-a_v}\loc_v(\bfz_f^{\Chi})+H^1(K_v,T)_{\rm tor}
\]
for some $a_v\in\Z_{\geq 0}$, where $H^1(-)_{\rm tor}$ denotes the torsion subgroup of $H^1(-)$.
Let $\cL_v^*\subset H^1(K_v,V\ot \Chibar)$ be the orthogonal complement of $\cL_v$,
and set $\cL^*:=\cL_\frakp^*\oplus \cL_{\frakpbar}^*$.
We will choose the integer $\nu$ in \subsecref{SS:Kolyvagin} large enough so that
$p^\nu H^1(K_v,T)_{\rm tor}=\{0\}$ for each $v\divides p$.

Consider the Iwasawa cohomology classes $\bfz_{f,n}:=\bfz_{f,n,\alpha}$ from \eqref{E:bfz.E},
and for each $n\in\sK$ define
\[
\bfc_n^{\heeg,\dagger}:=\bfz_{f,cn}^{\Chibar}\in H^1(\rcf{cn},T\ot\Chibar)
\]
to be the specialization of $\bfz_{f,cn}$ at $\chi^{-1}$. 
Set $\bfc^{\heeg,\dagger}:=\stt{\bfc_n^{\heeg,\dagger}}_{n\in\sK}$.

\begin{prop}\label{P:HeegES2}
The collection $\bfc^{\heeg,\dagger}\in{\rm ES}(T,\chi,\cL^*)$ is an Euler system
for the local condition $\cL^*$ with $\bfc^{\heeg,\dagger}_K=\bfz_f^{\Chibar}$.
\end{prop}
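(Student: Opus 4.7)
The identification $\bfc^{\heeg,\dagger}_K = \bfz_f^{\Chibar}$ is a direct consequence of the definitions: both sides arise from the $\alpha$-stabilized Iwasawa class via corestriction and specialization at $\Chibar$, and their agreement follows from the norm compatibility of Lemma \ref{lem:compat} together with the construction of $\bfz_f$ in \eqref{E:corbfz.E}. The first substantive step is to verify the Euler system axioms (E1)--(E3) of Definition \ref{D:EulerSystem.E}. These are inherited from the corresponding properties of the underlying classes $\Hclass_{f,cn,\alpha}$: axiom (E1) follows from the inert case of Proposition \ref{prop:norm}, namely $\cores_{\rcf{cn\ell}/\rcf{cn}}(\Hclass_{f,cn\ell}) = T_\ell\cdot\Hclass_{f,cn}$, combined with the eigenvalue identity $T_\ell = \bfa_\ell(f)$ on the $f$-isotypic component and carried through the $\alpha$-stabilization and the Iwasawa limit; (E2) is immediate from Lemma \ref{L:ES3.norm}, which is unaffected by the $p$-stabilization; and (E3) follows by specializing Lemma \ref{L:ES2.norm} at $\Chibar$.

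The heart of the argument is verifying the local conditions (E4) and (E5) at the primes above $p$. For (E4), I must show that $\loc_v(\bfz_f^{\Chibar})$ pairs trivially with $\loc_v(\bfz_f^{\chi})$ under the local Tate pairing for each $v \mid p$. The key input is the statement of $\S\ref{subsec:Iw}$ that $\loc_\frakp(\bfz_f) \in H^1_{\rm Iw}(L_\infty, \mathscr{F}^+T)$. Since $V = V_f(r)$ is self-dual for the antisymmetric pairing $\pairing : V \times V \to \cO(1)$ of \eqref{E:pairing.E}, the line $\mathscr{F}^+V_f(r) \subset V$ is automatically isotropic; consequently the map $\mathscr{F}^+V_f(r) \otimes \mathscr{F}^+V_f(r) \to \cO(1)$ induced by $\pairing$ vanishes, and the cup product on the image of $H^1(K_\frakp, \mathscr{F}^+V_f(r) \otimes \chi) \times H^1(K_\frakp, \mathscr{F}^+V_f(r) \otimes \Chibar)$ in $H^2(K_\frakp, \cO(1))$ is zero. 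As both $\loc_\frakp(\bfz_f^{\chi})$ and $\loc_\frakp(\bfz_f^{\Chibar})$ are pulled back from these $\mathscr{F}^+$-pieces, their local Tate pairing at $\frakp$ vanishes; an analogous argument handles $v = \frakpbar$. Condition (E5) then follows at once: the derivative class $\kappa_{\Chibar}(n)$ is obtained from $\bfz_{f,cn,\alpha}^{\Chibar}$ by applying $p^{3\nu} D_n \in \cO[\Gal(\rcf{cn}/K)]$ and co-restricting to $K$, and the local condition $\cL^*$ at $p$ is stable under the trivial $\Gal(\rcf{cn}/K)$-action on $H^1(K_v,-)$ for $v \mid p$, so passes through Kolyvagin's derivative operator.

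The \textbf{main obstacle} is the orthogonality check at $v = \frakpbar$: the filtration statement from $\S\ref{subsec:Iw}$ is recorded only at the distinguished prime $\frakp$, whereas the argument above requires analogous control at $\frakpbar$. This must be established either by a direct Perrin-Riou-type argument at $\frakpbar$ parallel to the one at $\frakp$ (which goes through because the Heegner Iwasawa class is built symmetrically in $\frakp$ and $\frakpbar$) or by reducing to the $\frakp$-case via the complex-conjugation relation of Lemma \ref{L:ES2.norm}; in either scenario, the ordinarity of $f$ at $p$ enters the argument in an essential way.
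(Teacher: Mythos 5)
Your verification of (E1)--(E3) and of (E4) is essentially the paper's argument: the norm, localization and conjugation relations are inherited from \propref{prop:norm}, \lmref{L:ES3.norm} and \lmref{L:ES2.norm} through the $\alpha$-stabilization and the Iwasawa limit, and (E4) follows from the isotropy of $\sF^+V$ once one knows $\loc_v(\bfz_f^{\Chibar})\in H^1(K_v,\sF^+T\ot\Chibar)$ for $v\divides p$. The issue you single out as the ``main obstacle'' (control at $\frakpbar$) is minor: the paper simply invokes the $\sF^+$-membership at \emph{every} place $v\divides p$, the geometric construction of the classes over the full ring class tower (or conjugation, as you suggest) giving it at $\frakpbar$ exactly as at $\frakp$.

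The genuine gap is your treatment of (E5), which you claim ``follows at once.'' It does not. The class $\Hg{n}{\Chibar}$ is a mod $\uf^M$ class over $K$, obtained by solving $\res_{\rcf{c},\rcf{nc}}(\cD_M(n))=p^{3\nu}D_n\Esys_n$ over $\rcf{c}$ and corestricting; the condition to check is $\loc_p(\Hg{n}{\Chibar})\in\cF_M$, which (via Rubin, Prop.~1.4.3) amounts to the vanishing of $\pair{\loc_v(\Hg{n}{\Chibar})}{\cL_{T,v}}_{K_v}$ modulo $\uf^M$. Your justification --- that ``$\cL^*$ is stable under the trivial $\Gal(\rcf{cn}/K)$-action on $H^1(K_v,-)$, so passes through Kolyvagin's derivative operator'' --- is not a valid step: the derivative operator acts on classes over $\rcf{cn}$, the descent to $\rcf{c}$ and reduction mod $\uf^M$ need not preserve membership in (the image of) the $\sF^+$-part, and $\cL_{T,v}=\cO\uf^{-a_v}\loc_v(\bfz_f^{\Chi})+H^1(K_v,T)_{\rm tor}$ involves an integral normalization and a torsion part, so the orthogonality has to be proved integrally, not just rationally as in (E4). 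Nor can you argue that both classes are Bloch--Kato at $\chi$: for $\chi$ of infinity type $(j,-j)$ with $j\geq r$ the Bloch--Kato subspace is all of $H^1$ at $\frakp$ and zero at $\frakpbar$, so pointwise orthogonality at $\chi$ carries real content. This is exactly what the bulk of the paper's proof supplies: the torsion part of $\cL_{T,v}$ is killed because $\Hg{n}{\Chibar}$ is divisible by $p^{\nu}$; for the main part one uses \lmref{L:0.f} to realize $\uf^{-a_v}\loc_{w_0}(\sg\bfz_{f,\ctame})$ as $\cores_{\cN/\cK}(\bfy_{\cN,\sg})$ for suitable Iwasawa classes over the unramified extension $\cN$, and then applies Perrin-Riou's $\Lambda$-adic local pairing: since at every finite-order character $\phi$ both $\loc_w(\rho\bfz_{f,\ctame}^{\phi})$ and $\bfy_{\cN,\sg}^{\phi}$ lie in $H^1_\BK$, the $\Lambda$-adic pairing vanishes identically, and its specialization at $\Chi$ yields the required congruence \eqref{E:3.Euler}. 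Without an argument of this kind (or a genuinely worked-out substitute), your proof of (E5) is missing its key step.
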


\begin{proof}
We begin by noting that for inert primes $\ell$ with $n=m\ell\in\sK$, we have
\begin{mylist}
\item $\cores_{\rcf{nc},\rcf{mc}}(\bfz_{f,n})=\bfa_\ell(f)\cdot \bfz_{f,m}$;
\item $\loc_\ell(\bfz_{f,n})=\res_{\rcf{mc,\lam},\rcf{nc,\lam}}(\loc_\ell(\bfz_{f,m})^{\Frob_\ell})$;
\item $\bfz_{f,n}^\tau=w_f\cdot \sg_{\ol{\frakN}}\cdot \bfz_{f,n}^{\sg_{\ol{\frakN}}}$,
\end{mylist}
since by Lemma~\ref{lem:finite-tw} and \propref{P:EulerS} these relations hold after specialization at every finite order ramified character.
Specializing the same relations to $\Chibar$, we thus find that conditions (E1--3) are satisfied by $\bfc^{\heeg,\dagger}$. The validity of (E4) for $\bfc^{\heeg,\dagger}$ and its image under $\tau$ follows from the fact that if $v$ and $\bar v$ are the two places of $K$ above $p$, then $\loc_v(\bfc^{\heeg,\dagger}_K)=\loc_v(\bfz_f^{\Chibar})$
belongs to $H^1(K_v,\sF^+T\ot\Chibar)$ and the action of complex conjugation sends $H^1(K_v,\sF^+T\ot\Chibar)$ to $H^1(K_{\bar v},\sF^+T\ot\Chi)$.
We now proceed to verify condition (E5) for $\bfc^{\heeg,\dagger}$.
For any finite extension $L/K_v$, let
\[
\pairing_{L}:H^1(L,T_M\ot\Chibar)\x H^1(L,T_M\ot\Chi)\longrightarrow \cO/\uf^M\cO
\]
be the canonical pairing. By \cite[Prop.~1.4.3]{Rubin-ES},
it suffices to show that $\pair{\loc_v(\Hg{n}{\Chibar})}{\cL_{T,v}}_{K_v}=0$, \ie
\beq\label{E:3.Euler}\pair{\loc_v(\Hg{n}{\Chibar})}{ \uf^{-a_v}\loc_v(\bfz_f^{\Chi})+x}_{K_v}\con 0\pmod{\uf^M},
\;\text{ for all }x\in H^1(K_v,T)_{\rm tor}.\eeq

Let $v$ be a place of $K$ above $p$ and let $w/w_0$ be places of $\rcf{nc}/\rcf{c}$ above $v$.
Let $\cK$ and $\cN$ be the completion of $\rcf{c}$ and $\rcf{nc}$ at $w_0$ and $w$, respectively,
and note that $\cN/\cK$ is an unramified extension. Set
\[
\cK_\infty:=K_\infty\cK,\quad\quad \cN_\infty:=K_\infty\cN.
\]
Let $\Psi_{v}$ be a set of representatives of $\Delta_c/\Delta_{c,w_0}$, where $\Delta_{c,w_0}:=\Gal(\cK/K_v)$ is the decomposition group of $v$,
and let $\Delta_c={\rm Gal}(K_c/K)$ as always. By \lmref{L:0.f}, there exists $\bfy_{\cN,\sg}\in H^1_{\Iw}(\cN_\infty,T_M)$ such that
 \[\cores_{\cN/\cK}(\bfy_{\cN,\sg})\con\loc_{w_0}(\res_{\rcf{\ctame},\rcf{c}}(\uf^{-a_v}\sg\bfz_{f,\ctame}))\pmod{\uf^M}.\]

It is easy to see that $\Hg{n}{\Chibar}$ is divisible by $p^\nu$, and so
$\pair{\Hg{n}{\Chibar}}{H^1(K_v,T)_{\rm tor}}=0$. On the other hand, we compute
\begin{align*}
\pair{\Hg{n}{\Chibar}}{\loc_v(\uf^{-a_v}\bfz_f^\Chi)}_{K_v}
&=\sum_{\sg\in\Delta_{\ctame},\;\rho\in\Psi_v}\pair{\loc_{w_0}(\rho\cD_M(n))}{\loc_{w_0}(\res_{\rcf{\ctame},\rcf{c}}(\uf^{-a_v}\sg\bfz_{f,\ctame}^\chi))}_{\cK}\\
&=\sum_{\sg\in\Delta_{\ctame},\;\rho\in\Psi_v}\pair{\loc_{w_0}(\rho \cD_M(n))}{\cores_{\cN/\cK}(\bfy_{\cN,\sg}^{\Chi})}_\cK\\
&=\sum_{\sg\in\Delta_{\ctame},\;\rho\in\Psi_v}p^{3\nu}\pair{\loc_w(\rho D_n\bfz^{\Chibar}_{f,cn})}{\bfy_{\cN,\sg}^{\Chi}}_{\cN}.
\end{align*}
Thus to verify \eqref{E:3.Euler} it remains to show that $\pair{\loc_w(\rho D_n\bfz^{\Chibar}_{f,cn})}{\bfy_{\cN,\sg}^{\Chi}}_{\cN}
\equiv 0\pmod{\varpi^M}$. Consider Perrin-Riou's $\Lam$-adic local pairing (\cite[3.6.1]{PR115}):
\[\pairing_{\cN_\infty}\colon \HIw^1(\cN_\infty,T_M)\,\x\, \HIw^1(\cN_\infty,T_M)\longrightarrow \Lam_\cO(\Gamma)\ot\cO/\uf^M.\]
Recall that for every $x=\prolim_m x_m$ and $y=\prolim_m y_m$ in $\HIw^1(\cN_\infty,T_M)$, the pairing is defined by
\[\pair{x}{y}_{\cN_\infty}=\prolim_m \sum_{\sigma\in\Gal(\cN_m/\cN)}\pair{x_m}{\sigma y_m}_{\cN_m}\sigma,\]
and it enjoys the interpolation property: if $\chi:\Gamma\to\cO^\x$ is any $p$-adic character, then
\[\pair{x}{y}_{\cN_\infty}(\chi)=\pair{x^\chi}{y^{\Chibar}}_{\cN}.\]
Since for any finite order character $\phi$ of $\Gamma$ and any $\rho\in\Delta_{\ctame}$,
the classes $\bfy_{\cN,\sg}^\phi$ and $\loc_w(\rho \bfz_{f,\ctame}^\phi)$ belong to $H^1_\BK(\cN,T\ot\phi)$,
we see that $\pair{\loc_w(\rho D_n\bfz_{f,cn})}{\bfy_{\cN,\sg}}_{\cN}=0$, and hence
\begin{align*}\pair{\loc_w(\rho D_n\bfz^{\Chibar}_{f,cn})}{\bfy_{\cN,\sg}^{\Chi}}_{\cN}
&=\pair{\loc_w(\rho D_n\bfz_{f,cn})^{\Chibar}}{\bfy_{\cN,\sg}^{\Chi}}_{\cN}\\
&=\pair{\loc_w(\rho D_n\bfz_{f,cn})}{\bfy_{\cN,\sg}}_{\cN}(\Chi)\con 0\pmod{\uf^M}.\end{align*}
This completes the proof.
\end{proof}

\begin{thm}\label{T:finitenessSelmer}
If $\loc_\frakp(\bfz_f^{\Chibar})\not =0$, then ${\rm Sel}(K,V\ot\Chi)=\stt{0}$.
\end{thm}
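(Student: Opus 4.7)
My approach is to apply the Euler system descent Theorem~\ref{T:Descent.E} to the system $\bfc^{\heeg,\dagger}$ of Proposition~\ref{P:HeegES2}, and then to bridge from the resulting Selmer group to the Bloch--Kato Selmer group via Poitou--Tate global reciprocity. The hypothesis $\loc_\frakp(\bfz_f^{\chi^{-1}})\neq 0$ in particular forces $\bfc^{\heeg,\dagger}_K=\bfz_f^{\chi^{-1}}$ to be a nonzero class in $H^1(K,V\otimes\chi^{-1})$, so Theorem~\ref{T:Descent.E} applied with $\Chi=\chi$ and $\cF=\cL^*$ (so that $\cF^*=(\cL^*)^*=\cL$) yields the one-dimensional identification
\[
\mathrm{Sel}_\cL(K,V\otimes\chi) = F\cdot(\bfz_f^{\chi^{-1}})^\tau.
\]

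To descend this to the Bloch--Kato Selmer group, I would take any $s\in\mathrm{Sel}(K,V\otimes\chi)$ and argue $s\in\mathrm{Sel}_\cL(K,V\otimes\chi)$. By \eqref{def:Hif}, $\loc_\frakpbar(s)=0$, which handles the local condition at $\frakpbar$. For the local condition at $\frakp$, I would apply Poitou--Tate global reciprocity to the cup-product pairing of $s$ with $\bfz_f^{\chi^{-1}}$: at every $v\nmid p$ both localizations lie in the Bloch--Kato subgroup (for $\bfz_f^{\chi^{-1}}$ by Proposition~\ref{P:HeegES2}) and pair to zero by the self-orthogonality of $H^1_{\rm BK}$, while the $\frakpbar$-term vanishes since $\loc_\frakpbar(s)=0$. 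What remains is the identity
\[
\langle\loc_\frakp(s),\,\loc_\frakp(\bfz_f^{\chi^{-1}})\rangle_\frakp = 0.
\]
Since $\loc_\frakp(\bfz_f^{\chi^{-1}})$ is a nonzero element of the one-dimensional Greenberg subspace $H^1(K_\frakp,\sF^+V\otimes\chi^{-1})$ (inherited from the inclusion $\loc_\frakp(\bfz_f)\in H^1_{\rm Iw}(L_\infty,\sF^+T)$ of Section~4), whose orthogonal complement in $H^1(K_\frakp,V\otimes\chi)$ under local Tate duality is $H^1(K_\frakp,\sF^+V\otimes\chi)$, this forces $\loc_\frakp(s)\in H^1(K_\frakp,\sF^+V\otimes\chi)$. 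Under the identification $\cL_\frakp = H^1(K_\frakp,\sF^+V\otimes\chi)$ we then have $\loc_\frakp(s)\in\cL_\frakp$, so $s\in\mathrm{Sel}_\cL(K,V\otimes\chi)$.

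Granting this inclusion, I would write $s=c\cdot(\bfz_f^{\chi^{-1}})^\tau$ for some $c\in F$. Applying $\tau$ --- which interchanges the primes $\frakp$ and $\frakpbar$ above $p$ --- to the hypothesis yields $\loc_\frakpbar((\bfz_f^{\chi^{-1}})^\tau)\neq 0$; combined with $\loc_\frakpbar(s)=0$ this forces $c=0$, hence $s=0$ and $\mathrm{Sel}(K,V\otimes\chi)=\{0\}$.

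The principal obstacle is justifying the identification $\cL_\frakp=H^1(K_\frakp,\sF^+V\otimes\chi)$, equivalently $\loc_\frakp(\bfz_f^\chi)\neq 0$. Via the Iwasawa-theoretic complex conjugation relation established in item (iii) of the proof of Proposition~\ref{P:HeegES2}, $(\bfz_f^\chi)^\tau$ agrees with $\bfz_f^{\chi^{-1}}$ up to a nonzero scalar and a Galois twist by $\sigma_{\ol\frakN}$, so this is equivalent to $\loc_\frakpbar(\bfz_f^{\chi^{-1}})\neq 0$, which is strictly stronger than the stated hypothesis. Handling the degenerate case $\loc_\frakpbar(\bfz_f^{\chi^{-1}})=0$ --- in which $\cL_\frakp=0$ and the simple Poitou--Tate argument does not force $\loc_\frakp(s)=0$ --- requires invoking the finer information provided by the Kolyvagin derivative classes $\kappa_{\chi^{-1}}(n)$ for $n\in\sK_M$, whose local Tate pairings at inert Kolyvagin primes $\ell$ produce the additional constraints that, combined with the $\frakp$-constraint above, suffice to force the vanishing of $\loc_\ell(s)$ modulo $\uf^M$ for enough $\ell$ and thereby conclude via Chebotarev density, exactly as in the proof of Theorem~\ref{T:Descent.E} itself.
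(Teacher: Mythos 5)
Your opening and closing moves match the paper: you invoke Proposition~\ref{P:HeegES2} and Theorem~\ref{T:Descent.E} to get $\Sel_{\cL}(K,V\ot\chi)=F\cdot(\bfz_f^{\chi^{-1}})^\tau$, and you finish by playing $\loc_{\frakpbar}(s)=0$ against $\loc_{\frakpbar}((\bfz_f^{\chi^{-1}})^\tau)\neq 0$. The genuine gap is in your bridge, and you have diagnosed it yourself: you try to prove the inclusion $\Sel(K,V\ot\chi)\subseteq\Sel_{\cL}(K,V\ot\chi)$, and your reciprocity computation only places $\loc_\frakp(s)$ in the Greenberg subspace $H^1(K_\frakp,\sF^+V\ot\chi)$; to conclude $\loc_\frakp(s)\in\cL_\frakp$ you need $\cL_\frakp=H^1(K_\frakp,\sF^+V\ot\chi)$, i.e.\ $\loc_\frakp(\bfz_f^{\chi})\neq 0$, equivalently $\loc_{\frakpbar}(\bfz_f^{\chi^{-1}})\neq 0$, which is strictly stronger than the stated hypothesis $\loc_\frakp(\bfz_f^{\chi^{-1}})\neq 0$. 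The final paragraph, proposing to handle the case $\cL_\frakp=0$ by ``invoking the finer information provided by the Kolyvagin derivative classes \dots exactly as in the proof of Theorem~\ref{T:Descent.E}'', is not an argument: the derivative classes have already been spent in producing the one-dimensionality of $\Sel_{\cL}$, and you do not explain what additional local constraint they would impose on a Bloch--Kato class $s$ whose $\frakp$-localization lies in the Greenberg subspace but outside $\cL_\frakp$. As written, the proof only works when $\cL_\frakp\neq 0$.

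The paper closes this gap by never passing through the inclusion $\Sel\subseteq\Sel_{\cL}$. It introduces the auxiliary group $H^1_{\cL_\frakp,0}(K,V\ot\chi)$ (condition $\cL_\frakp$ at $\frakp$, strict at $\frakpbar$, Bloch--Kato elsewhere), which vanishes because it sits inside $H^1_{\cL}(K,V\ot\chi)=F\bfz_f^{\chi}$ and $\loc_{\frakpbar}(\bfz_f^{\chi})\neq 0$ --- a nonvanishing at $\frakpbar$ of the \emph{$\chi$-specialization}, which does follow from the hypothesis by applying $\tau$, unlike the nonvanishing of $\bfz_f^{\chi^{-1}}$ at $\frakpbar$ that your route needs. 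It then compares $H^1_{\cL_\frakp,0}(K,V\ot\chi)$ with $\Sel(K,V\ot\chi)=H^1_{\emptyset,0}(K,V\ot\chi)$ (the identification from \eqref{def:Hif}) via the Poitou--Tate five-term exact sequence, whose remaining terms live on the $\chi^{-1}$-side: the obstruction is the cokernel of the localization-at-$\frakp$ map on $H^1_{\cL_\frakp^*,\emptyset}(K,V\ot\chi^{-1})$, and it is there that the hypothesis enters verbatim, through the global class $\bfz_f^{\chi^{-1}}$ itself (which satisfies the $\cL_\frakp^*$-condition at $\frakp$ since the two Greenberg subspaces annihilate each other) and its nonzero image at $\frakp$. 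Recasting your bridge in this dual form --- pairing the local condition at $\frakp$ against the relaxed-at-$\frakpbar$ Selmer group of $V\ot\chi^{-1}$ rather than pairing a single Selmer class directly against $\bfz_f^{\chi^{-1}}$ --- is the missing idea needed to run the argument under the hypothesis as stated.
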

\begin{proof}
To every choice of subspaces $\cF_v\subset H^1(K_v,V\ot\chi)$ for every prime $v\divides p$,
we associate the generalized Selmer group
\begin{align*}
H^1_{\cF_\frakp,\cF_{\frakpbar}}(K,V\ot\Chi)
&:=\left\{s\in H^1(K,V\ot\Chi)\mid
\begin{array}{lr}
\loc_\frakq(s)\in H^1_f(K_\frakq,V\ot{\Chibar})&\text{ for }\frakq\ndivides p\\
\loc_v(s)\in\cF_v&\text{ for }v\divides p
\end{array}
\right\}.
\end{align*}
The nonvanishing hypothesis implies that $\loc_{\frakpbar}(\bfz_f^{\Chi})\not =0$,
and hence by \propref{P:HeegES2} and  \thmref{T:Descent.E}, we have
\begin{equation}\label{eq:H-L}
H^1_{\cL_\pp,\cL_{\frakpbar}}(K,V\ot\Chi)=F\cdot(\bfz_f^{\Chibar})^\tau=F\cdot\bfz_f^{\Chi}.
\end{equation}
Note that $\loc_{\frakpbar}(\bfz_f^{\Chi})^\tau=\loc_\frakp(\bfz_f^{\Chibar})$.
The nonvanishing of $\loc_\frakp(\bfz_f^{\Chibar})$ thus implies that $\loc_{\frakpbar}(\bfz_f^{\Chi})\not =0$,
and combined with $(\ref{eq:H-L})$ this shows that $H^1_{\cL_\frakp,0}(K,V\ot\Chi)=\{0\}$.
Finally, in light of the Poitou--Tate exact sequence
\begin{align*}
0\longto H^1_{0,\emptyset}(K,V\ot{\Chibar})\longto &H^1_{\cL_{\frakp}^*,\emptyset}(K,V\ot{\Chibar})\overset{\loc_{\pp}}\longto\cL_\frakp^*\\
&\longto H^1_{\emptyset,0}(K,V\ot\Chi)^\vee\longto H^1_{\cL_{\frakp},0}(K,V\ot\Chi)^\vee\longto 0,
\end{align*}
we find that $H^1_{\emptyset,0}(K,V\ot\chi)=\Sel(K,V\ot\chi)=\stt{0}$.
\end{proof}

\subsection{Kolyvagin's descent: Proof of \thmref{T:Descent.E}}\label{SS:Descent.E}

Let $\Esys\in {\rm ES}(T,\chi,\cF)$ be an Euler system with $\Esys_K\not =0\in H^1(K,V\ot\Chibar)$,
or equivalently, with $\Esys_K\not \in H^1(K,T\ot\Chibar)_{\rm tor}$.

\subsubsection*{\ul{Preliminaries}}
Let $R_\rho= \cO[\rho^*_f(G_\Q)]\subset M_2(\cO)$ and define
\[
\Bone:=\inf\stt{n\in\Z_{\geq 0}\mid \uf^n M_2(\cO)\subset R_\rho }.
\]
Since $\rho_f^*$ is absolutely irreducible, we have $R_\rho\ot F=M_2(F)$, and hence $\Bone<\infty$.
\begin{lm}\label{L:EE}
Let $E\subset\rcf{\ctame p^\infty}$ be a $p$-ramified extension of $K$.
Then either $E=\Q$ or $\Q(\sqrt{p^*})$, where $p^*=(-1)^\frac{p-1}{2}p$.
\end{lm}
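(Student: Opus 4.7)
The key is to exploit the generalized dihedral structure of $\rcf{\ctame p^\infty}/\Q$: writing $H=\Gal(\rcf{\ctame p^\infty}/K)$, there is an exact sequence
\[
1\longto H\longto \Gal(\rcf{\ctame p^\infty}/\Q)\longto \Gal(K/\Q)\longto 1
\]
in which complex conjugation $\tau$ acts on $H$ by inversion, reflecting the anticyclotomic identity $\fraka\bar\fraka=(\rmN\fraka)$ which governs the Galois action on ring class fields. (Here I interpret ``extension of $K$'' as an algebraic extension of $\Q$ inside $\rcf{\ctame p^\infty}$, since the conclusion $E\in\{\Q,\Q(\sqrt{p^*})\}$ forces this reading.)

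First I would observe that $E\cap K=\Q$: since $K/\Q$ is ramified exactly at the primes dividing $D_K$, none of which equals $p$ (as $p$ splits in $K$ by Hypothesis~(H)(d)), no nontrivial subextension of $K/\Q$ is $p$-ramified. Consequently the natural map $\Gal(EK/K)\simeq\Gal(E/\Q)$ is an isomorphism, and since $EK/K$ is a subextension of the abelian extension $\rcf{\ctame p^\infty}/K$, this forces $E/\Q$ to be abelian. As both $E/\Q$ and $K/\Q$ are then abelian, the compositum $EK/\Q$ is abelian via the embedding $\Gal(EK/\Q)\hookto\Gal(E/\Q)\times\Gal(K/\Q)$.

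Next I would verify that $EK\subset\rcf{p^\infty}$. Since $E/\Q$ is unramified outside $p$ and $K/\Q$ is unramified at $p$, the extension $EK/K$ is unramified at every prime of $K$ not lying above $p$; on the other hand, by the class field theoretic description $\Gal(\rcf{n}/K)=\Pic\cO_n$, the ring class field $\rcf{n}/K$ is ramified precisely at primes dividing $n$, so the maximal subextension of $\rcf{\ctame p^\infty}/K$ unramified outside $p$ is exactly $\rcf{p^\infty}$. This step is the main technical point of the proof, but it is a direct bookkeeping consequence of class field theory.

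Finally, $\tau$ acts on $\Gal(EK/K)$ both trivially (since $EK/\Q$ is abelian) and by inversion (inherited from its action on $H$), forcing $\Gal(EK/K)$ to be an elementary abelian $2$-group. Thus $E/\Q$ is an elementary abelian $2$-extension unramified outside $p$, so Kronecker--Weber places $E\subset\Q(\zeta_{p^\infty})$; for odd $p$, $\Gal(\Q(\zeta_{p^\infty})/\Q)\simeq\Zp^\x$ has a unique quotient of order $2$, corresponding to $\Q(\sqrt{p^*})$, the unique quadratic extension of $\Q$ ramified only at $p$. Hence $E\subset\Q(\sqrt{p^*})$, giving $E=\Q$ or $E=\Q(\sqrt{p^*})$.
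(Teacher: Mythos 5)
Your argument is correct and follows essentially the same route as the paper's proof: linear disjointness of $E$ and $K$ (from $p\nmid D_K$), the clash between the abelian structure of $EK/\Q$ and the generalized dihedral (inversion) action of complex conjugation forcing $\Gal(E/\Q)$ to be $2$-torsion, and class field theory (Kronecker--Weber) to conclude $E\subseteq\Q(\sqrt{p^*})$. Two minor remarks: your intermediate step $EK\subset\rcf{p^\infty}$ is never actually used in the final argument and can be dropped, and to make the identification $\Gal(EK/K)\simeq\Gal(E/\Q)$ legitimate you should first replace $E$ by its Galois closure over $\Q$ (still $p$-ramified and still contained in $\rcf{\ctame p^\infty}$, since that field is Galois over $\Q$), a reduction the paper likewise leaves implicit.
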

\begin{proof}
Since $p\ndivides D_K$, the fields $E$ and $K$ are linearly disjoint.
It follows that $EK$ is abelian and dihedral over $\Q$. Hence 
by class field theory we conclude that either $E=\Q$ or $\Q(\sqrt{p^*})$.
\end{proof}

Let $M$ be a positive integer. Then $\Chibar\pmod{\uf^M}$ factors through the Galois group $\Gal(\cH/K)$ for some ring class field $\rcf{\ctame p^\infty}/\cH/\rcf{\ctame}$. Let $\cH^\flat$ be the maximal pro-$p$ extension of $\rcf{\ctame}$ inside $\cH$.
Then $\Gal(\cH/\cH^\flat)$ is a cyclic group of order dividing $p\pm 1$. In addition, the ramification index of $\cH^\flat$ above $p$ is a $p$-th power,
and by \lmref{L:EE} is follows that $\Qbar^{\ker\rho_f^*}\cap\cH^\flat=\Q$. Thus we have $\rho_f^*(G_\Q)=\rho_f^*(G_{\cH^\flat})$ and hence
\[
R_\rho=\cO[\rho_f^*(G_{\cH^\flat})].
\]

\begin{lm}\label{L:7}
Let $\xi:G_{\cH^\flat}\to\cO^\x$ be a character. \begin{mylist}\item If $T'\subset \Epr\ot\xi$ is an $R_\rho$-submodule with $T'\not\subset\uf \Epr$, then $\uf^{\Bone}\Epr\subset T'$. \item $\uf^{\Bone}\Hom_{R_\rho}(\Epr\ot\xi,\Epr\ot\xi)=\uf^{\Bone}\cO\cdot I_2$, where $I_2$ is the identity map.
\end{mylist}
\end{lm}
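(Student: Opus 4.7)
The plan is to prove both parts as essentially formal consequences of the sandwich $\uf^{\Bone}M_2(\cO)\subset R_\rho\subset M_2(\cO)$, after first observing that the twist by $\xi$ is irrelevant.

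The first step is the reduction to the untwisted situation. Since $\xi$ takes values in $\cO^\times$, which sits inside $M_2(\cO)$ as scalar matrices and thus commutes with everything, the $R_\rho$-action on $\bar T_M\otimes\xi$ is obtained by composing the standard action on $\bar T_M$ with multiplication by the unit scalars $\xi(g)$. In particular, a subset of $\bar T_M\otimes\xi$ is stable under $\xi(g)\rho_f^*(g)$ if and only if it is stable under $\rho_f^*(g)$, and an $\cO_M$-linear endomorphism commutes with $\{\xi(g)\rho_f^*(g)\}_{g\in G_{\cH^\flat}}$ if and only if it commutes with $\{\rho_f^*(g)\}_{g\in G_{\cH^\flat}}$. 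Therefore I can replace $\bar T_M\otimes\xi$ by $\bar T_M$ in both statements.

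For part (i): pick $v\in T'\setminus \uf\bar T_M$. One of its two coordinates is a unit in $\cO_M$, so the orbit $M_2(\cO)\cdot v$ (mod $\uf^M$) equals the whole $\bar T_M$, since $M_2(\cO)$ acts transitively on unimodular column vectors of $\cO_M^2$. Using the defining inclusion $\uf^{\Bone}M_2(\cO)\subset R_\rho$, I get
\[
\uf^{\Bone}\bar T_M=\uf^{\Bone}M_2(\cO)\cdot v\subset R_\rho\cdot v\subset T',
\]
as desired.

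For part (ii): any $\phi\in\Hom_{R_\rho}(\bar T_M,\bar T_M)$ sits in $\End_{\cO_M}(\bar T_M)=M_2(\cO_M)$ and commutes with $R_\rho$; since $\uf^{\Bone}M_2(\cO)\subset R_\rho$, the element $\phi$ commutes with every $\uf^{\Bone}X$, $X\in M_2(\cO)$, and thus $\uf^{\Bone}\phi$ lies in the centre $\cO_M\!\cdot\!I_2$ of $M_2(\cO_M)$. Because $\uf^{\Bone}\phi$ already lies in $\uf^{\Bone}M_2(\cO_M)$, writing $\uf^{\Bone}\phi=\lambda I_2$ forces $\lambda\in\uf^{\Bone}\cO_M$, so $\uf^{\Bone}\phi\in\uf^{\Bone}\cO\cdot I_2$. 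Conversely $\cO\cdot I_2\subset\Hom_{R_\rho}(\bar T_M,\bar T_M)$ trivially, giving the reverse inclusion and the asserted equality. No serious obstacle is expected; the only point requiring care is the observation that multiplying by $\uf^{\Bone}$ forces the centre-valued element to actually lie in $\uf^{\Bone}\cO_M I_2$, which is why the factor $\uf^{\Bone}$ appears on both sides of (ii).
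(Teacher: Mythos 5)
Your proof is correct, and it is exactly the intended argument: the paper simply cites Nekov\'a\v{r}'s Lemma~12.3 and dismisses the statement as ``an easy exercise,'' and your write-up supplies the standard details — the twist by the unit-valued character $\xi$ changes neither which $\cO$-submodules are stable nor which endomorphisms commute, and then both parts follow from $\uf^{\Bone}M_2(\cO)\subset R_\rho$ together with the transitivity of $M_2(\cO_M)$ on vectors with a unit coordinate and the fact that the centre of $M_2(\cO_M)$ is $\cO_M\cdot I_2$. No gaps.
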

\begin{proof}This is essentially \cite[Lemma\,12.3]{nekovar-invmath}.\end{proof}

\begin{lm}[\cite{nekovar-invmath}, proof of Prop.~12.2(b)]
\label{L:8}
Let $\xi_1,\ldots,\xi_s:G_{\cH^\flat}\to\cO^\x$ be characters, and set
\[
S=\bigoplus_{i=1}^s \cO_M/(\uf^{n_i}),\quad\cV=\bigoplus_{i=1}^s \uf^{M-n_i}\Epr\ot\xi_i.
\]
Let $\cW\subset \cV$ be an $R_\rho$-submodule. If the map $j:S\to \Hom(\cW,\Epr)$ given by
\[
a=(a_1,\ldots, a_s)\longto j(a):(w_1,\ldots,w_s)\longmapsto a_1w_1+\cdots a_sw_s
\]
is injective, then $\uf^{(2^{s+1}-2)\Bone}\cV\subset \cW$.
\end{lm}
\begin{proof}
We proceed by induction on $s$. For $s=1$, the result follows from \lmref{L:7}(1). Suppose $s>1$,
and let $\pi:\cV\to \cV':=\oplus_{i=2}^s \uf^{M-n_i}\Epr\ot\xi_i$ be the map projecting onto the last $s-1$ factors.
Let
\[
S'=\bigoplus_{i=2}^s \cO_M/(\uf^{n_i}),\quad\cW'=\pi(\cW)\subset \cV'.
\]
It is easy to see that $S'\to \Hom(\cW',\Epr)$ is also injective given the injectivity of $j$,
and hence by induction hypothesis we have $\uf^{\gamma}\cV'\subset \cW'$ with $\gamma=(2^s-2)\Bone$.
Let
\[
\cV_1=\uf^{M-n_1}\Epr\ot\xi_1\hookto V,\quad \cW_1=\cW\cap \cV_1=\ker \pi,
\]
and let $\cW'\to \cV_1/\cW_1$ be the $R_\rho$-module map $w'\mapsto pr_1(w)$,
where $w$ is a lifting of $w'$ in $\cW\subset\cV$, and $pr_1:\cV\to \cV_1$ is the first projection.
By \lmref{L:7}(1), there exists $m\leq n_1$ such that
\[
\uf^{m+\Bone}\cV_1\subset \cW_1\subset \uf^m \cV_1.
\]
Let $j':\cV'\to \cV_1/\uf^m \cV_1$ be the composition of $R_\rho$-module maps
\[j':\cV'\stackrel{\cdot\uf^{\gamma}}\longto \cW'\longto \cV_1/\cW_1\longto \cV_1/\uf^m \cV_1=\cO_M/(\uf^m)\]
By \lmref{L:7}(2), there exists $(a_2,\ldots,a_s)\in \cO^{s-1}$ such that
\[\uf^{\Bone}j'(v_2,\ldots v_s)=a_2v_2+\cdots+a_sv_s.\]
In particular, for every $(w_1,\ldots,w_s)\in\cW$, we have
 \[-\uf^{\gamma+\Bone}w_1+\uf^{\gamma}a_2w_2+\cdots+\uf^{\gamma}a_sw_s\in \uf^m \cO_M.\]
This shows that $(-\uf^{n_1-m+\gamma+\Bone},\uf^{n_1-m+\gamma}a_2,\ldots,\uf^{n_1-m+\gamma}a_s)\in S$ annihilates $\cW$.
By the injectivity of $j:S\hookto \Hom(\cW,\Epr)$, the equality $\uf^{n_1-m+\gamma+\Bone}=0\in\cO/(\uf^{n_1})$ implies that $m\leq \gamma+\Bone$.
Thus we have proved the inclusions $\uf^{\gamma+2\Bone}\cV_1\subset \cW_1$ and $\uf^{\gamma}\cV'\subset \cW'$,
and it follows that $\uf^{2\gamma+2\Bone}\cV\subset \cW$, concluding the proof of the lemma.
\end{proof}

Put $\rho_M:=\rho_{f}^*\ot\cO_M:G_\Q\to \Aut_\cO(\Epr)$, let
$\Q(\Epr):=\Qbar^{{\rm ker}\rho_M}$ be the splitting field of $\Epr$,
and set
\[
L=\cH(\Epr):=\cH\cdot \Q(\Epr).
\]
Consider the $\Gal(L/\Q)$-module
$H^1(L,\Epr)=\Hom(\Gal(\Qbar/L),\Epr)$, where $\sg\in \Gal(L/\Q)$ acts via
\[(\sg\cdot f)(s)=\sg f(\sg^{-1} s).\]
If $\cS\subset H^1(L,\Epr)^{\Gal(L/\cH)}$ is a $\cO[\Gal(\cH/\Q)]$-submodule, we let
$L_\cS:=\cap_{s\in \cS} \Qbar^{\ker s}$ be the splitting of $\cS$ over $L$, and put $\cG_\cS:=\Gal(L_\cS/L)$.
We then have an inclusion
\[
\cS\hookto H^1(\cG_\cS,\Epr)^{\Gal(L/\cH)}
\]
and a $\Gal(L/\Q)$-equivariant map
\[\cG_\cS\hookto V_\cS:=\Hom_\cO(\cS,\Epr).\]

\begin{lm}\label{L:nonfull} Let $s=\dim_\bbF S\ot \bbF$. Then $\uf^{(2^{s+1}-2)\Bone}V_\cS\subset \cO[\cG_\cS]$.
\end{lm}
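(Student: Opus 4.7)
The plan is to reduce \lmref{L:nonfull} to \lmref{L:8} by producing a Galois-equivariant cyclic decomposition of $\cS$ that matches the setup of the latter lemma, and then applying it to the $\cO$-submodule $\cW := \cO[\cG_\cS]\subset V_\cS$ generated by the image of $\cG_\cS$.

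First I would decompose $\cS$ as an $\cO_M[\Gal(\cH/\cH^\flat)]$-module. Since $\Gal(\cH/\cH^\flat)$ is cyclic of order dividing $p\pm 1$ (see the discussion preceding \lmref{L:7}) and hence of order prime to $p$, Maschke's theorem gives semisimplicity. After enlarging $F$ if necessary so that $\cO$ contains the values of all relevant characters, we obtain a decomposition
\[
\cS \;=\; \bigoplus_{i=1}^{s}\cO_M/\uf^{n_i}(\xi_i),
\]
where the $\xi_i:\Gal(\cH/\cH^\flat)\to\cO^{\times}$ are inflated to characters of $G_{\cH^\flat}$, and the number of summands is $s=\dim_\bbF \cS\otimes\bbF$. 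Dualizing into $\Epr$ via $\Hom_\cO(\cO_M/\uf^{n_i}(\xi_i),\Epr)\cong \uf^{M-n_i}\Epr\otimes \xi_i^{-1}$ then gives a $G_{\cH^\flat}$-equivariant identification
\[
V_\cS \;=\; \Hom_\cO(\cS,\Epr)\;\cong\;\bigoplus_{i=1}^{s}\uf^{M-n_i}\Epr\otimes \xi_i^{-1},
\]
which matches the form of the ambient module $\cV$ in \lmref{L:8}.

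Second, I would check that $\cW = \cO[\cG_\cS]$ is an $R_\rho$-submodule of $V_\cS$. Writing $\gamma\in\cG_\cS$ as the evaluation homomorphism $\phi_\gamma(s):=s(\gamma)$, a direct computation on cocycles shows that the diagonal $G_{\cH^\flat}$-action on $V_\cS$ satisfies $g\cdot \phi_\gamma = \phi_{g\gamma g^{-1}}$, so the image of $\cG_\cS$ is closed under this action and hence so is $\cW$. The injectivity hypothesis of \lmref{L:8}, that the map
\[
j:S\longrightarrow \Hom(\cW,\Epr),\qquad (a_1,\dots,a_s)\mapsto \Bigl((w_1,\dots,w_s)\mapsto \textstyle\sum_i a_i w_i\Bigr),
\]
is injective, follows immediately from the defining property of $L_\cS$: if $j(a) = 0$, then the class $\sum a_i s_i$ vanishes on every $\gamma\in\cG_\cS = G_L/\bigcap_{s\in\cS}\Ker s$, and therefore on all of $G_L$, forcing $a=0$ in $S$. \lmref{L:8} then yields $\uf^{(2^{s+1}-2)\Bone}V_\cS\subset \cW = \cO[\cG_\cS]$, which is the desired conclusion.

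The only substantive step is the first: arranging the cyclic decomposition of $\cS$ so that each summand carries a single character of $G_{\cH^\flat}$. Once this is in place, the identification of $V_\cS$, the $R_\rho$-stability of $\cW$, and the injectivity of $j$ are essentially formal, and \lmref{L:8} directly delivers the bound.
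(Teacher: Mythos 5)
Your proof is correct and follows essentially the same route as the paper: decompose $\cS$ as a $\Gal(\cH/\cH^\flat)$-module into character-twisted cyclic pieces $\bigoplus_i\cO/(\uf^{n_i})\ot\xi_i^{\pm1}$, identify $V_\cS$ with the module $\cV$ of \lmref{L:8}, and apply that lemma to $\cW=\cO[\cG_\cS]$. The only difference is that you spell out the $R_\rho$-stability of $\cO[\cG_\cS]$ and the injectivity of $j$ (via the definition of $L_\cS$ as the common splitting field), details the paper leaves implicit in ``Applying \lmref{L:8} to $W:=\cO[\cG_\cS]$, the result follows.''
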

\begin{proof} Since $\Gal(\cH/\cH^\flat)$ has order dividing $p\pm 1$, the $\Gal(\cH/\cH^\flat)$-module $\cS$
can be decomposed into a direct sum of cyclic $\cO$-modules:
\[
S=\bigoplus_{i=1}^s \cO/(\uf^{n_i})\ot\xi_i^{-1}
\]
for some $\xi_i:\Gal(\cH/\cH^\flat)\to \cO^\x$, and so $V_\cS=\oplus_{i=1}^s \uf^{M-n_i}\Epr\ot\xi_i$ as $R_\rho$-modules.
Applying \lmref{L:8} with $\cW:=\cO[\cG_\cS]$, the result follows.
\end{proof}

Let $\cG_\cS^+=\cG_\cS^{\tau=1}=(1+\tau)\cG_\cS$, where $\tau$ is the complex conjugation.
\begin{prop}\label{P:Nek122}
\hfill
\begin{mylist}\item $\uf^{\Btwo}H^1(\Gal(L/K),\Epr\ot\Chibar)=\{0\}$.
\item $L_\cS\cap \cH(T_{2M})\subset \cH(T_{M+{\Btwo}})$.
\item For each $g\in \cG_\cS^+$, there exist infinitely many primes $\ell$ inert in $K$ such that:
\begin{itemize}
\item{} $\Frob_{\ell}(L_{\cS}/K)(:=\Frob_\ell|_{L_\cS})=g$,
\item{} $\uf^M\divides \ell+1\pm \bfa_\ell(f)$
\item{} $\uf^{M+\Btwo+1}\ndivides \ell+1\pm \bfa_\ell(f)$.
\end{itemize}
\end{mylist}
\end{prop}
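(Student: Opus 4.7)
The argument follows Nekov\'a\v{r}'s strategy \cite[Prop.~12.2]{nekovar-invmath}, adapted to the twisted setting. The common thread is a key element furnished by the definition of $\Btwo$: using the observation preceding \lmref{L:7} that $\rho_f^*(G_\Q)=\rho_f^*(G_{\cH^\flat})$, one finds $\sigma\in G_{\cH^\flat}$ with $(\rho_f^*\otimes\Chibar)(\sigma)=x\cdot I_2$ and $\Ord_\uf(x-1)=\Btwo$. For (i), since $\Gal(L/K)$ acts faithfully on $\Epr\otimes\Chibar$ by the construction $L=\cH(\Epr)$, the image $\bar\sigma\in\Gal(L/K)$ is central and acts by the scalar $x$. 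A standard cocycle manipulation---applying the cocycle relation to the commuting pair $(\bar\sigma,h)$ for arbitrary $h$---yields $(x-1)c(h)=(h-1)c(\bar\sigma)$ for every $1$-cocycle $c$, so $(x-1)c$ is the coboundary of $c(\bar\sigma)$. Hence $\uf^{\Btwo}$ annihilates $H^1(\Gal(L/K),\Epr\otimes\Chibar)$.

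For (ii), set $F:=L_\cS\cap\cH(T_{2M})$; since $L=\cH(\Epr)$ lies in both $L_\cS$ and $\cH(T_{2M})$, one has $L\subset F$. The abelian group $\Gal(F/L)$ embeds $\Gal(L/\cH)$-equivariantly into two different target modules: via $F\subset L_\cS$ into $\cG_\cS\subset V_\cS=\Hom_\cO(\cS,\Epr)$, where $\bar\sigma$ acts by the scalar $x$ (since $\bar\sigma\in\Gal(L/\cH)$ fixes $\cS$ pointwise); and via $F\subset\cH(T_{2M})$ into the kernel of the reduction $\Aut(T_{2M}\otimes\Chibar)\to\Aut(\Epr\otimes\Chibar)$, canonically an $\cO$-submodule of $\End_\cO(\Epr\otimes\Chibar)\cong M_2(\cO_M)$, on which $\bar\sigma$ acts trivially (conjugation by the scalar $x\cdot I_2$). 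The two resulting $\bar\sigma$-actions on $\Gal(F/L)$ must coincide, forcing $(x-1)\Gal(F/L)=0$. Hence $\uf^{\Btwo}\Gal(F/L)=0$, which translates to $F\subset\cH(T_{M+\Btwo})$.

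For (iii), the plan is to apply the Chebotarev density theorem to the composite $\widetilde L:=L_\cS\cdot\cH(T_{M+\Btwo+1})$, Galois over $\Q$ since each factor is. The assumption $g\in\cG_\cS^+=(1+\tau)\cG_\cS$ allows us to write $g=g_0+\tau\cdot g_0$ for some $g_0\in\cG_\cS$. I would then choose a lift $\tilde g_0\in\Gal(\widetilde L/L)$ whose image in $\cG_\cS$ is $g_0$ and whose component in $\Gal(\cH(T_{M+\Btwo+1})/L)$ is arranged so that the square $(\tau\tilde g_0)^2$ acts on $\Epr\otimes\Chibar$ as the identity---yielding $\uf^M\mid\ell+1\pm\bfa_\ell(f)$---but not on $T_{M+\Btwo+1}\otimes\Chibar$---yielding $\uf^{M+\Btwo+1}\nmid\ell+1\pm\bfa_\ell(f)$. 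Part~(ii) is essential here, as it guarantees that these two specifications on the quotients $\cG_\cS$ and $\Gal(\cH(T_{M+\Btwo+1})/L)$ can be imposed independently. Chebotarev then yields infinitely many primes $\ell$ whose Frobenius lies in the conjugacy class of $\tau\tilde g_0$ in $\Gal(\widetilde L/\Q)$: such $\ell$ are inert in $K$, and $\Frob_\lambda=\Frob_\ell^2$ restricts to $L_\cS$ as $g_0+\tau g_0=g$.

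The principal obstacle is part~(iii): arranging that the chosen Frobenius, when squared, lands in $g$ (rather than some related element), and that the sharpness conditions on the admissibility can be realised simultaneously with the prescribed $\cG_\cS$-class. Both rely crucially on the identity $g=(1+\tau)g_0$ afforded by $g\in\cG_\cS^+$ and on the independence of the two quotients guaranteed by part~(ii).
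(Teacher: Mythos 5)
Your proposal follows exactly the route the paper intends: the printed proof is a one-line reference to Nekov\'a\v{r} \cite[Prop.~12.2]{nekovar-invmath}, and your sketch reconstructs that argument — Sah's lemma applied to the scalar element underlying the constant $\Btwo$ for (i), comparison of the two induced actions of that element for (ii), and Chebotarev applied to $L_\cS\cdot\cH(T_{M+\Btwo+1})$ with an element of the form $\tau\tilde g_0$, using $g=(1+\tau)g_0$ and the linear disjointness supplied by (ii), for (iii). Part (iii) as you describe it is the standard computation $\Frob_\lam=\Frob_\ell^2\vert_{L_\cS}=g_0+\tau g_0=g$ and is fine.

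Two local statements need repair, though neither changes the strategy. First, in (i) the centrality of $\bar\sigma$ in $\Gal(L/K)$ should not be deduced from faithfulness of the action on $\Epr\ot\Chibar$: faithfulness can fail, since an element $g$ with $\rho_M(g)$ equal to the scalar $\chi(g)$ acts trivially on the twist without acting trivially on $\Epr$ or on $\cH$. Centrality holds anyway because $\rho_f^*(\sigma)=x\chi(\sigma)\cdot I_2$ is scalar, so the component of $\bar\sigma$ in $\Gal(\Q(\Epr)K/K)$ is central there, while its component in $\Gal(\cH/K)$ is central because that group is abelian. Second, in (ii) the maps go the other way: $\Gal(F/L)$ does not embed into $\cG_\cS$ nor into $\ker\bigl(\Aut(T_{2M}\ot\Chibar)\to\Aut(\Epr\ot\Chibar)\bigr)$; rather, the inclusions $F\subset L_\cS$ and $F\subset\cH(T_{2M})$ make $\Gal(F/L)$ a common \emph{quotient} of $\cG_\cS$ and of a subgroup of that kernel. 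The comparison of $\sigma$-actions still goes through on this common quotient, and note that you do not need $\bar\sigma\in\Gal(L/\cH)$: since the classes in $\cS$ are invariant for the twisted action, $\bar\sigma$ acts on $\cS$ by $\chi(\bar\sigma)$ and on $\Epr$ by $x\chi(\bar\sigma)$, hence on $\Hom_\cO(\cS,\Epr)$ by multiplication by $x$, while it acts trivially through the kernel side; this gives $(x-1)\Gal(F/L)=0$ and then the asserted containment, exactly as in Nekov\'a\v{r}'s argument.
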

\begin{proof} This can be proved by the same argument as in \cite[Prop.\,12.2]{nekovar-invmath}.\end{proof}

\subsubsection*{\ul{The descent argument}}
Define the constants $\Bthree, \Bfour$ by
\begin{align*}\Bthree&:=\max\stt{n\in\Z_{\geq 0}\mid \Esys_K\in \uf^n H^1(K,T\ot\Chibar) }\\
&\;=\max\stt{n\in\Z_{\geq 0}\mid \Esys_K^\tau\in \uf^n H^1(K,T\ot\Chi) };\\
\Bfour&:=\left\{
\begin{array}{ll}
0&\text{ if }\chi^2=1,\\
\min_{\sg\in\Gal(\rcf{\ctame p^\infty}/K)}\Ord_\uf(\chi^2(\sg)-1)&\text{ if }\chi^2\not =1.
\end{array}
\right.
\end{align*}
Put $C_1:=6\Bone+\Btwo+\Bthree+\Bfour$, and choose a positive integer $M$ with
\[M>2C_1+2\Btwo.\]
Let $\Hg{1}{\Chi}=\Esys_K^\tau\pmod{\uf^M}$, and for each $x\in \Epr$ put
\[\Ord_\uf(x):=\max\stt{n\in\Z_{\geq 0}\mid x\in \uf^n \Epr}.\]

\begin{lm}\label{L:2}
There is an $M$-admissible prime $\ell_1$ such that
\[
\Ord_\uf(\al_{\ell_1}(\Hg{1}{\Chibar}))=\Ord_\uf(\al_{\ell_1}(\Hg{1}{\Chi}))\leq C_1.
\]
\end{lm}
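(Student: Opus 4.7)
The plan is to run Nekov\'a\v{r}'s Chebotarev-density argument \cite[\S 12]{nekovar-invmath} for the pair $\{\Hg{1}{\Chibar},\Hg{1}{\Chi}\}$ simultaneously. Set $L=\cH(\Epr)$, where $\cH\subset\rcf{\ctame p^\infty}$ is a ring class field through which $\Chibar\pmod{\uf^M}$ factors, and denote by $\bar s_-,\bar s_+\in H^1(L,\Epr)^{\Gal(L/\cH)}$ the restrictions of $\Hg{1}{\Chibar}$ and $\Hg{1}{\Chi}$ respectively. Let $\cS$ be the $\cO[\Gal(\cH/\Q)]$-submodule generated by $\bar s_\pm$; as a $\Gal(\cH/\cH^{\flat})$-module, $\cS$ has $s\le 2$ isotypic components indexed by the characters $\Chibar,\Chi$ (just one if $\chi^2=\mathbf{1}$), so Lemma~\ref{L:nonfull} will yield
\[
\uf^{6\Bone}\cdot V_{\cS}\ \subset\ \cO[\cG_{\cS}].
\]

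Next I will bound the $\uf$-order of $\bar s_-$ inside $\cS$. Combining $(K1)$ (namely $\Hg{1}{\Chibar}\equiv p^{3\nu}\Esys_K\pmod{\uf^M}$), the definition of $\Bthree$, and Proposition~\ref{P:Nek122}(1) (which kills $\ker$ and $\coker$ of $\res_{L/K}$ by $\uf^{\Btwo}$), I find $\bar s_-=\uf^{a}s^{\circ}$ for some $s^{\circ}\in\cS\setminus\uf\cS$ and $a\le\Btwo+\Bthree$. Picking $\phi\in V_{\cS}$ with $\phi(s^{\circ})\notin\uf\Epr$ and invoking the displayed inclusion produces $g\in\cG_{\cS}$ with $\Ord_\uf(\bar s_-(g))\le 6\Bone+a$. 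Bounding $\Ord_\uf(\bar s_+(g))$ analogously and forcing the equality of the two orders is then done via the $\tau$-conjugation relation between $\bar s_+$ and $\bar s_-$: this is immediate from $(E3)$ when $\chi^2=\mathbf{1}$, and otherwise the $\chi^2$-twist gets absorbed into $\Bfour=\min_\sigma\Ord_\uf(\chi^2(\sigma)-1)$. Since $\tau$ acts $\cO$-linearly on $\Epr$, the $\tau$-average $(1+\tau)g\in\cG_{\cS}^+$ preserves the bounds up to units, yielding
\[
\Ord_\uf(\bar s_\pm((1+\tau)g))\le 6\Bone+\Btwo+\Bthree+\Bfour=C_1.
\]

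Finally, Proposition~\ref{P:Nek122}(3) produces infinitely many $M$-admissible primes $\ell_1$ inert in $K$ with $\Frob_{\lambda_1}|_{L_{\cS}}=(1+\tau)g$; for any such prime, the identities $\al_{\ell_1}(\Hg{1}{\Chibar})=\bar s_-((1+\tau)g)$ and $\al_{\ell_1}(\Hg{1}{\Chi})=\bar s_+((1+\tau)g)$ deliver the required equality and bound. The main obstacle I anticipate is the simultaneous control of both classes when $\chi^2\ne\mathbf{1}$: the coefficient $6\Bone=(2^{s+1}-2)\Bone|_{s=2}$ in $C_1$ is exactly what Lemma~\ref{L:nonfull} outputs with $s=2$, and the $\Bfour$ correction measures the obstruction to aligning the $\Chibar$- and $\Chi$-isotypic components of $\cS$ through evaluation at a single Frobenius; careful tracking of $\Bone,\Btwo,\Bthree,\Bfour$ (together with the auxiliary integer $\nu$) will be necessary to match the precise bound $C_1$.
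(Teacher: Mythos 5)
Your skeleton is indeed the paper's argument (restrict to $L=\cH(\Epr)$, apply \lmref{L:nonfull} with $s\le 2$ to get $\uf^{6\Bone}V_\cS\subset\cO[\cG_\cS]$, conclude with \propref{P:Nek122}(3)), but two of your intermediate steps have genuine gaps. First, the evaluation functional: you write $\bar s_-=\uf^{a}s^{\circ}$ with $s^{\circ}\in\cS\setminus\uf\cS$ and then "pick $\phi\in V_\cS$ with $\phi(s^{\circ})\notin\uf\Epr$". Such a $\phi$ exists only if $s^{\circ}$ has exact order $\uf^M$: if $\uf^k s^{\circ}=0$ with $k<M$, then any $\cO$-linear $\phi$ sends $s^{\circ}$ into $\Epr[\uf^k]=\uf^{M-k}\Epr\subset\uf\Epr$, and being outside $\uf\cS$ does not give full order. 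What $\Bthree$, (K1) and \propref{P:Nek122}(1) provide is only a lower bound on the order of $\bar s_-$, which is not what your step needs. The paper sidesteps this: it never factors $\bar s_-$, but defines $f\in V_S$ on the generators by $f(s_1)=t$ with $\Ord_\uf(t)=\Btwo+\Bthree+\Bfour$ and $f(s_1^\tau)=0$, and the whole content is the well-definedness of $f$, i.e.\ that a relation $xs_1+ys_1^\tau=0$ forces $\uf^{M-\Btwo-\Bthree-\Bfour}\mid x$. Note that $\Bfour$ enters exactly there, to separate the $s_1$- and $s_1^\tau$-components when $\chi^2\neq 1$; in your sketch $\Bfour$ only appears at the later symmetrization stage, which is not where it is used.

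Second, the symmetrization: you first find $g\in\cG_\cS$ making $\Ord_\uf(\bar s_-(g))$ small and then pass to $(1+\tau)g\in\cG_\cS^+$, claiming the bound is "preserved up to units". This is false in general: $\bar s_-((1+\tau)g)=\bar s_-(g)+\bar s_-(\tau g\tau^{-1})$, and the second summand (which is $\tau$ applied to a value of $s_1^\tau$, up to a twist) can cancel the first; for instance when $\chi^2=1$ and $s_1^\tau=-s_1$ one gets $(1-\tau)\bigl(s_1(g)\bigr)$, which vanishes whenever $s_1(g)$ lies in the $+1$-eigenspace of $\tau$. The paper performs the steps in the opposite order precisely to avoid this: it symmetrizes the functional, writing $\uf^{6\Bone}(1+\tau)f=\sum_{g\in\cG_S^+}a_g\cdot g$, and evaluates at $s_1$, so the total $\uf^{6\Bone}t$ has exact order $C_1$ and hence a single term $\Hg{1}{\Chibar}(g)$ with $g$ already in $\cG_S^+$ has order at most $C_1$, with no possibility of cancellation. (Your closing remark is fine: once $g\in\cG_\cS^+$ is produced, the equality of the two orders at $\ell_1$ does follow from $\tau$-equivariance, since $\tau$ preserves $\uf$-orders on $\Epr$.) So the approach is the right one, but both the construction of the evaluation map and the passage to $\cG_\cS^+$ need to be redone along the paper's lines before the bound $C_1=6\Bone+\Btwo+\Bthree+\Bfour$ is actually justified.
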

\begin{proof}
Let $\res_{K,L}:H^1(K,\Epr\ot\Chibar)\to H^1(L,\Epr\ot\Chibar)=H^1(L,\Epr)$ be the restriction map.
Let $s_1=\res_{K,L}(\Hg{1}{\Chibar})\in H^1(L,\Epr)$, and consider the $\cO$-submodule
\[
S:=\cO s_1+\cO s_1^\tau\subset H^1(L,\Epr)^{\Gal(L/\cH)}.
\]
Take an element $t\in \uf^{\Btwo+\Bthree+\Bfour}\Epr$ with $\Ord_\uf(t)=\Btwo+\Bthree+\Bfour$,
and define $f\in V_S$ by $f(s_1)=t$ and $f(s_1^\tau)=0$ if $\Chi\not=\Chibar$.
Using \propref{P:Nek122}(1), it is easy to see that $f$ is well-defined.
Applying \lmref{L:nonfull}, we find that
\[\uf^{6\Bone}(1+\tau)f=\sum_{g\in \cG_S^+}a_g\cdot g,\quad(a_g\in \cO)\]
and evaluating at $s_1$ we obtain
\[\uf^{6\Bone}t=\sum_{g\in \cG_S^+}a_g\cdot \Hg{1}{\Chibar}(g).\]
This shows that there is an element $g\in \cG_S^+$ with $\Ord_\uf(\Hg{1}{\Chibar}(g))\leq C_1$,
and the existence of a prime $\ell_1$ as in the statement follows from \propref{P:Nek122}. \end{proof}

Fix an $M$-admissible prime $\ell_1$ as in Lemma~\ref{L:2}, 
and let $\cS\subset H^1(L,\Epr)$ be the image
of the sum of $\Sel_\cF^{(\ell_1)}(K,\Epr\ot\Chibar)$ and its complex conjugate.
Then $S\subset\Hom(\cG_\cS,\Epr)^{\Gal(L/\cH)}$ is an $\cO[\Gal(\cH/\Q)]$-submodule.
We will apply the discussion in the preceding paragraphs to this $\cS$.

Setting
\[d_0:=\dim_{\bbF}(V/T\ot\Chibar)^{G_K}[\uf]+\dim_\bbF \Sel_\cF(K,V/T\ot\Chibar)[\uf],\]
we have
\[\dim_\bbF\cS\ot\bbF\leq 2\dim_\bbF\Sel_\cF^{(\ell_1)}(K,\Epr\ot\Chibar)[\uf]\leq 2d_0+4.\]
Let $B=2C_1+2\Btwo+2\Bfour$, define
\[C_2:=B+(2^{2d_0+5}-2)\Bone,\]
and let $\cY\subset V_\cS^+$ be the subset consisting of maps $f$ such that
$p^{2C_2}\Epr$ is contained in the $\cO$-submodule generated by $f(s_1)$ and $f(s_2)$,
where
\[s_1:=\res_{K,L}(\Hg{1}{\Chibar}),\quad s_2:=\res_{H,L}(\Hg{\ell_1}{\Chibar}).\]

\begin{lm}\label{L:3}
The set $\cG_\cS^+\cap \cY$ is non-empty.
\end{lm}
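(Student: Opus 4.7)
My plan is to construct an element $f \in V_\cS^+$ for which $f(s_1)$ and $f(s_2)$ together generate an $\cO$-submodule of $\Epr$ containing $\uf^B\Epr$, with $B = 2C_1+2\Btwo+2\Bfour$, and then to approximate $f$ by an $\cO$-linear combination of elements of $\cG_\cS^+$ using \lmref{L:nonfull}. A pigeonhole over the nonzero coefficients will then produce a single $g \in \cG_\cS^+$ with the required property, placing it in $\cY$.

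For the construction of $f$, I would prescribe its values on a finite $\cO[\Gal(\cH/\Q)]$-generating set of $\cS$ obtained from $\Hg{1}{\Chibar}$, $\Hg{\ell_1}{\Chibar}$ (and their $\tau$-translates when $\chi^2 \neq \bfone$). The prescribed values must be taken divisible by $\uf^{\Btwo}$ to absorb the cohomological obstruction coming from \propref{P:Nek122}(1) (i.e.\ $\cS$ is cut out only up to $\uf^\Btwo$-torsion by the restriction map to $L$), by $\uf^{\Bthree}$ to match the lower bound on the $\uf$-adic divisibility of $\Esys_K$ dictated by the constant $\Bthree$, and by $\uf^{\Bfour}$ to enforce $\tau$-symmetry when $\chi^2 \neq \bfone$. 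Conversely, \lmref{L:2} gives an upper bound of $C_1$ on the $\uf$-adic valuation of $\al_{\ell_1}(\Hg{1}{\Chibar})$, and Kolyvagin's derivative relation \eqref{K2} controls $\pd_{\ell_1}(\Hg{\ell_1}{\Chibar})$ in terms of $\vp_{\ell_1}(\loc_{\ell_1}(\Hg{1}{\Chibar}))$ up to the factors $(\ell_1+1\pm \bfa_{\ell_1}(f)\ell_1^{1-r})/\uf^M$, which are $\uf$-adic units thanks to the $M$-admissibility of $\ell_1$. These ingredients together allow me to define an $f \in V_\cS^+$ with $f(s_1), f(s_2)$ spanning a submodule containing $\uf^B\Epr$.

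Next, applying \lmref{L:nonfull} to $\cS$, whose $\bbF$-dimension is bounded by $2d_0+4$, yields $\uf^{(2^{2d_0+5}-2)\Bone}V_\cS \subset \cO[\cG_\cS]$. Symmetrizing under $\tau$ and writing $\uf^{(2^{2d_0+5}-2)\Bone}f = \sum_{g \in \cG_\cS^+} a_g\cdot g$ with $a_g \in \cO$, evaluation at $s_1, s_2$ shows the $\cO$-module generated by $\{g(s_1),g(s_2) : a_g\neq 0\}$ contains $\uf^{B+(2^{2d_0+5}-2)\Bone}\Epr = \uf^{C_2}\Epr$. Since this set is finite, a pigeonhole argument (on each $\uf$-adic slot of the rank-two module $\Epr$) delivers a single $g \in \cG_\cS^+$ for which $\cO\cdot g(s_1)+\cO\cdot g(s_2) \supset \uf^{2C_2}\Epr$, so that $g \in \cG_\cS^+\cap \cY$.

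The main obstacle is the first step: the simultaneous control of the four independent sources of $\uf$-adic denominators (arising from $\Btwo$, $\Bthree$, $\Bfour$, and the precise shape of \eqref{K2}) is delicate, especially in the generic case $\chi^2 \neq \bfone$ where $f$ must be built on both $s_i$ and their $\tau$-conjugates. The constant $C_2$ and the threshold $M > 2C_1+2\Btwo$ are precisely calibrated to absorb all these losses together with the one incurred in \lmref{L:nonfull}.
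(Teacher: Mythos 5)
Your first step is essentially the paper's: one shows $\uf^{B}(\Epr\oplus\Epr)\subset\cV^+:=\xi(V_\cS^+)$, where $\xi(f)=(f(s_1),f(s_2))$, by prescribing values on the submodule generated by $s_1,s_2,s_1^\tau,s_2^\tau$ and checking well-definedness of the resulting map via the localization at $\ell_1$ (Lemma~\ref{L:2}, the relation \eqref{K2}, and the constants $C_1,\Btwo,\Bfour$); your description of this is vague but the plan is sound. The genuine gap is in your second step. Membership in $\cY$ is a \emph{nonlinear} condition on $g$: it asks that the pair $(g(s_1),g(s_2))$ span a submodule of bounded index, i.e.\ that $g(s_1)\wedge g(s_2)$ have bounded $\uf$-valuation. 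From an identity $\uf^{\gamma}f=\sum_{g\in\cG_\cS^+}a_g\cdot g$ no pigeonhole ``on each $\uf$-adic slot'' can extract a single such $g$: if $\xi(g_1)=(e_1,0)$ and $\xi(g_2)=(0,e_2)$ with $a_{g_1}=a_{g_2}=1$, the sum has spanning components while neither summand does. So your argument as stated breaks down exactly at the point where Kolyvagin-type descents require an extra idea.

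The paper supplies that idea with a quadratic form: set $q(v_1,v_2)=v_1\wedge v_2\in\cO_M$, note that the ideal $I$ generated by $\stt{q(v)}_{v\in\cV^+}$ contains $\uf^{2B}\cO_M$, and then transfer this to the ideal generated by $\stt{q(\xi(g))}_{g\in\cG_\cS^+}$. The transfer is where the group structure of $\cG_\cS^+$ is essential: by the polarization identity, $2B(\xi(g),\xi(g'))=q(\xi(gg'))-q(\xi(g))-q(\xi(g'))$ (using that $\xi$ is additive on the group and $gg'\in\cG_\cS^+$), so $q$ of any $\cO$-linear combination of the $\xi(g)$ lies in that ideal (here $p$ odd is used). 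Combined with Lemma~\ref{L:nonfull}, which gives $\uf^{C_2-B}\cV^+\subset\cO[\xi(\cG_\cS^+)]$, one finds a single $g$ with $\Ord_\uf\,q(\xi(g))\leq 2C_2$, whence $\cO\,g(s_1)+\cO\,g(s_2)\supset\uf^{2C_2}\Epr$ and $g\in\cY$. Your proposal contains no analogue of this determinant/quadratic-form mechanism, and it also does not address the case $\chi^2=1$, which the paper treats separately via the $\tau$-eigenspace decomposition $\Epr=\Epr^{\ep}\oplus\Epr^{-\ep}$ and an argument that $\xi(\cG_\cS^+)$ cannot lie in the union of two proper submodules. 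Without these ingredients the proof is incomplete.
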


\begin{proof}
First suppose $\chi^2\not =1$. Define the $\cO$-module map
\[\xi:V_\cS^+\longto \Epr\oplus\Epr,\quad f\longmapsto \xi(f):=(f(s_1),f(s_2))=(f(\Hg{1}{\Chibar}),f(\Hg{\ell_1}{\Chibar})).\]
Let $\cV^+:=\xi(V_\cS^+)\subset \Epr\oplus\Epr$. We claim that
\[
\uf^B(\Epr\oplus\Epr)\subset \cV^+.
\]
Indeed, let $S_1\subset \cS$ be the $\cO$-submodule generated by $\stt{s_1,s_1^\tau,s_2,s_2^\tau}$ where
$s_i^\tau:=\tau\cdot s_i$. For $(t_1,t_2)\in \uf^B\Epr\oplus \uf^B\Epr$, we define $g:S_1\to \Epr$ by
\begin{align*}
g(xs_1+ys_2+zs_1^\tau+ws_2^\tau)&=xt_1+yt_2.\end{align*}
Note that if $xs_1+ys_2+zs_1^\tau+ws_2^\tau=0$,
then $\uf^{\Bfour}(xs_1+ys_2)=\uf^{\Bfour}(zs_1^\tau+ws_2^\tau)=0$,
and hence \begin{align*}
&\;\uf^{\Btwo+\Bfour}y\pd_{\ell_1}(\Hg{\ell_1}{\Chibar})=\uf^{\Btwo+\Bfour}z\pd_{\ell_1}(\tau\cdot\Hg{\ell_1}{\Chibar}))=0\\
\Longrightarrow &\;\uf^{2\Btwo+\Bfour}y\al_{\ell_1}(\Hg{1}{\Chibar})=\uf^{2\Btwo+\Bfour}z\al_{\ell_1}(\Hg{1}{\Chi})=0\\
\Longrightarrow &\;\Ord_\uf(y),\;\Ord_\uf(z)\geq M-C_1-2\Btwo-\Bfour\geq M-B,
\end{align*}
and similarly:
\begin{align*}
&\;\uf^{C_1+\Bfour+2\Btwo}x\Hg{1}{\Chibar}+\uf^{C_1+\Bfour+2\Btwo}z\Hg{1}{\Chi}=0\\
\Longrightarrow &\;\uf^{C_1+2\Bfour+2\Btwo}x\Hg{1}{\Chibar}=\uf^{C_1+2\Bfour+2\Btwo}z\Hg{1}{\Chi}=0\\
\Longrightarrow &\;\Ord_\uf(x),\;\Ord_\uf(z)\geq M-(2C_1+2\Bfour+2\Btwo)=M-B.
\end{align*}
We thus find that $xt_1=yt_2=0$, and so $g$ is well-defined.
Extending $g$ to a map $\wtd g:\cS\to\Epr$, we put $f:=\wtd g+\wtd g^\tau\in V_\cS^+$. Since we have
\[f(s_1)=g(s_1)+\tau g(s_1^\tau)=t_1,\quad f(s_2)=g(s_2)-\tau g(s_2^\tau)=t_2,\]
this verifies the claim.

Now let $q:\Epr\oplus \Epr\to\cO_M$ be the quadratic form defined by
$q(v)=v_1\wedge v_2$ for all $v=(v_1,v_2)$, and let $I\subset \cO_M$ be the ideal generated by $\stt{q(v)}_{v\in \cV^+}$.
Note that $I\supset \uf^{2B}\cO_M$. By \lmref{L:nonfull}, $\uf^{C_2-B}\cV^+$ is contained in the $\cO$-module generated by $\xi(\cG_\cS^+)$. This implies that $\uf^{2C_2}\cO_M\subset \uf^{2C_2-2B}I$ is contained in the ideal generated by $\stt{q(v)}_{v\in \xi(\cG_\cS^+)}$. We thus conclude that there exists $g\in \cG_\cS^+$ such that $\xi(g)=(v_1,v_2)$ with $v_1\wedge v_2\in \uf^r\cO_M^\x$ and $r\leq 2C_2$. This shows that
\[
\cO v_1+\cO v_2\supset \uf^r\Epr\supset \uf^{2C_2}\Epr,
\]
and hence $g\in \cY$.

Next we assume that $\chi^2=1$. Then we have \[s_1^\tau=\ep s_1,\quad s_2^\tau=(-\ep) s_2\]
for some $\ep\in\stt{\pm 1}$. Define the $\cO$-module map\[\xi:V_\cS^+\longto \Epr^{\ep}\oplus\Epr^{-\ep}=\Epr,\quad f\longmapsto \xi(f)=f(s_1)+f(s_2)=f(\Hg{1}{\Chibar})+f(\Hg{\ell_1}{\Chibar}),\] and let $\cV^+:=\xi(V_\cS^+)\subset \Epr$.
We now claim that $\uf^B\Epr\subset \cV^+$. Let $S_1\subset H^1(L,\Epr)$ be the submodule generated by $\stt{s_1,s_2}$. For each $(t_1,t_2)\in \uf^{B}\Epr^{\ep}\oplus\uf^{B}\Epr^{-\ep}=\uf^{B}\Epr$, define $g:S_1\to\Epr$ by \[g(x s_1+y s_2)=xt_1+yt_2\quad(x,y\in\cO).\]
One can verify that $g$ is well-defined as before, and extending $g$ to a map $\wtd g:\cS\to\Epr$, we set $f:=\wtd g+\wtd g^\tau$.
Then $f(s_1)=2t_1$ and $f(s_2)=2t_2$, proving the claim. By \lmref{L:nonfull}, $\uf^{C_2}\Epr\subset \uf^{C_2-B}\cV^+$ is contained in the $\cO$-module generated by $\xi(\cG_\cS^+)$, and we find that
\[\xi(\cG_\cS^+)\not\subset \left(\uf^{C_2}\Epr^+\oplus \uf^{C_2+1}\Epr^-\right)\cup \left(\uf^{C_2+1}\Epr^+\oplus \uf^{C_2}\Epr^-\right),\]
which implies that $\cG_\cS^+\cap \cY$ is non-empty.
\end{proof}

By \propref{P:Nek122} and \lmref{L:3}, there is a finite set $\Sg_\cY$ of $M$-admissible primes such that
\[
\stt{\Frob_\ell(L_\cS/K)}_{\ell\in\Sg_\cY}=\cG_\cS^+\cap\cY.
\]
Define the $\Sg_\cY$-restricted Selmer group $\Sel_{\Sg_\cY}$ by
\[\Sel_{\Sg_\cY}=\stt{s\in \Sel_\cF(K,\Epr\ot\Chi)\mid s(\Frob_\ell)=0\text{ for all }\ell\in\Sg_\cY}.\]
Then we have the exact sequence:
\beq\label{E:exact}  \bigoplus_{\ell\in\Sg_\cY}H^1_{\sing}(K_\lam,\Epr\ot\Chibar)
\longto \Sel_\cF(K,\Epr\ot\Chi)^\vee \longto \Sel_{\Sg_\cY}^\vee\longto 0\eeq

\begin{lm}\label{L:5} $p^{\Btwo+2C_2+1}\Sel_{\Sg_\cY}=\{0\}$.
\end{lm}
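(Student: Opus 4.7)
The plan for \lmref{L:5} is to bound a class $s\in\Sel_{\Sg_\cY}$ by restricting to the splitting extension $L=\cH(\Epr)$, where both $\Epr$ and $\Chi$ become trivial as Galois modules, and then exploiting the explicit description of the Galois group $\cG_\cS$ in terms of Frobenius elements at the primes $\ell\in\Sg_\cY$. First, the restriction map
\[
\res_{K,L}:H^1(K,\Epr\ot\Chi)\to H^1(L,\Epr)^{\Gal(L/K)}
\]
has kernel $H^1(\Gal(L/K),\Epr\ot\Chi)$, which is annihilated by $\uf^{\Btwo}$ by \propref{P:Nek122}(1); hence it suffices to prove $\uf^{2C_2+1}\cdot\res_{K,L}(s)=0$.

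By construction of $\cS$ (which, via complex conjugation, contains the restriction to $L$ of the Selmer classes on the $\Chi$-side), the class $\res_{K,L}(s)$ lies in $\cS$, hence factors through $\cG_\cS=\Gal(L_\cS/L)$ as a $\Gal(L/K)$-equivariant homomorphism $h:\cG_\cS\to\Epr$. For each $\ell\in\Sg_\cY$, the $M$-admissibility of $\ell$ gives the identification $H^1_\fin(K_\lam,\Epr\ot\Chi)\iso\Epr$ via $\al_\ell$, and the hypothesis $s(\Frob_\lam)=\al_\ell(\loc_\ell(s))=0$ translates into $h(\Frob_\lam(L_\cS/L))=0$. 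Since the Frobenius elements $\stt{\Frob_\lam(L_\cS/K)}_{\ell\in\Sg_\cY}$ were chosen so as to realize exactly $\cG_\cS^+\cap\cY$, the homomorphism $h$ vanishes on all of $\cG_\cS^+\cap\cY$.

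The final step exploits the defining property of $\cY$: for every $g\in\cG_\cS^+\cap\cY$, viewed as an element of $V_\cS=\Hom_\cO(\cS,\Epr)$ via the embedding $\cG_\cS\hookto V_\cS$, the values $g(s_1),g(s_2)\in\Epr$ generate an $\cO$-submodule containing $\uf^{2C_2}\Epr$. Combined with the $\tau$-equivariance of $\res_{K,L}(s)$ (which, because $\chi$ is anticyclotomic and $s$ lies on the $\Chi$-side of the Selmer group, forces $\res_{K,L}(s)$ into the summand of $\cS$ paired with $V_\cS^+$), the evaluation pairing together with the vanishing of $h$ on $\cG_\cS^+\cap\cY$ yields $\uf^{2C_2+1}\cdot\res_{K,L}(s)=0$; combined with the first step this gives $\uf^{\Btwo+2C_2+1}s=0$. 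The main technical obstacle is the careful $\tau$-eigenspace bookkeeping in this last step, and the key quantitative input is \lmref{L:8}, applied to the $R_\rho$-submodule of $V_\cS$ generated by $\stt{\xi(g)\mid g\in\cG_\cS^+\cap\cY}$, exactly in the spirit of the proof of \cite[Prop.~12.2(b)]{nekovar-invmath}.
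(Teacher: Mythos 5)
Your first two steps match the paper: you restrict to $L$ at the cost of $\uf^{\Btwo}$ via \propref{P:Nek122}(1), and you observe that for $s\in\Sel_{\Sg_\cY}$ the class $\res_{K,L}(s)$, viewed in $\Hom(\cG_\cS,\Epr)$, vanishes on $\cG_\cS^+\cap\cY$ because the primes in $\Sg_\cY$ were chosen so that their Frobenii realize exactly this set. The gap is in your final step. The defining property of $\cY$ — that $g(s_1)$ and $g(s_2)$ generate an $\cO$-module containing $\uf^{2C_2}\Epr$ — constrains the values of the elements $g\in\cY$ at the derivative classes $s_1=\res_{K,L}(\Hg{1}{\Chibar})$ and $s_2$; this is the input that feeds \lmref{L:6} and the choice of $\Sg_\cY$, but it says nothing by itself about a homomorphism $h=\res_{K,L}(s)$ that merely vanishes \emph{on} $\cG_\cS^+\cap\cY$. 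A priori $\cG_\cS^+\cap\cY$ could consist of a single element, and then $h(g_0)=0$ bounds nothing; applying \lmref{L:8} to the submodule generated by $\xi(\cG_\cS^+\cap\cY)$ again only controls values at $s_1,s_2$, so "the evaluation pairing $\ldots$ yields $\uf^{2C_2+1}\res_{K,L}(s)=0$" does not follow from what you have written.

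The missing idea, which is the heart of the paper's proof, is that $\cY$ is stable under translation: for $g_0\in\cG_\cS^+\cap\cY$ and any $h\in\cG_\cS^+$ one has $g_0+p^{2C_2+1}h\in\cG_\cS^+\cap\cY$, since perturbing $g_0(s_1),g_0(s_2)$ by elements of $p^{2C_2+1}\Epr$ cannot destroy the property of generating a module containing $p^{2C_2}\Epr$ (Nakayama for the finite module $\Epr$). Granting nonemptiness of $\cG_\cS^+\cap\cY$ — which is exactly \lmref{L:3}, a lemma you never invoke but which is indispensable, since otherwise the condition defining $\Sel_{\Sg_\cY}$ is vacuous — the vanishing of $s$ on the whole coset $g_0+p^{2C_2+1}\cG_\cS^+$ gives $p^{2C_2+1}s(\cG_\cS^+)=0$, hence $p^{2C_2+1}\res_{K,L}(s)=0$, and then \propref{P:Nek122}(1) supplies the extra $\uf^{\Btwo}$. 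Without this coset-stability argument (or some substitute for it), your proposal does not close; the "$\tau$-eigenspace bookkeeping" you flag as the main obstacle is not where the difficulty lies.
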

\begin{proof}
By definition, if $s\in\Sel_{\Sg_\cY}$ then $s(\cG_\cS^+\cap\cY)=0$.
Noting that $\cG_\cS\cap\cY+p^{2C_2+1} \cG_\cS^+\subset \cG_\cS^+\cap \cY$, we thus find that
\begin{align*}
s(\cG_\cS^+\cap\cY)=0\;&\Longrightarrow\;s(p^{2C_2+1} \cG_\cS^+)=0\\
\;&\Longrightarrow\;p^{2C_2+1}\res_{K,L}(s)=0\in H^1(L,\Epr).
\end{align*}
By \propref{P:Nek122}(1), it follows that $p^{\Btwo+2C_2+1}s=0$.
\end{proof}

\begin{lm}\label{L:6}For each $\ell\in \Sg_\cY$, we have
\[p^{2B+\Btwo}H^1_{\sing}(K_\lam,\Epr\ot\Chibar)\subset \cO\pd_{\ell}\Hg{\ell}{\Chibar}+\cO\pd_{\ell}\Hg{\ell\ell_1}{\Chibar}.\]
\end{lm}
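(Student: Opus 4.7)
The strategy is to transfer the large-image condition built into the definition of $\Sg_\cY$ first to the singular quotient of local cohomology at $\ell$, and then across to the derivative classes through the Kolyvagin congruence (K2). For $\ell\in\Sg_\cY$, the corresponding Frobenius $g_\ell:=\Frob_\lam(L_\cS/K)$ lies in $\cG_\cS^+\cap\cY$ by construction, so unwinding the definition of $\cY$, the $\cO$-submodule of $\Epr$ generated by $s_1(g_\ell)$ and $s_2(g_\ell)$ contains $\uf^{2C_2}\Epr$. The $M$-admissibility of $\ell$ ensures that both $\cH$ and the splitting field of $\Epr$ are unramified at $\lam$, so $L=\cH(\Epr)$ is unramified at $\lam$ and the evaluations may be identified as $s_1(g_\ell)=\al_\ell(\loc_\ell\Hg{1}{\Chibar})$ and $s_2(g_\ell)=\al_\ell(\loc_\ell\Hg{\ell_1}{\Chibar})$.

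I would then push this bound through the finite-to-singular isomorphism $\vp_\ell=\beta_\ell^{-1}\circ\al_\ell$, obtaining
\[\uf^{2C_2}\,H^1_\sing(K_\lam,\Epr\ot\Chibar)\subset \cO\,\vp_\ell\loc_\ell\Hg{1}{\Chibar}+\cO\,\vp_\ell\loc_\ell\Hg{\ell_1}{\Chibar}.\]
The final step is to invoke (K2) with $n=\ell$ and $n=\ell\ell_1$. Since the two coefficients appearing there depend only on $\ell$ (the sign $\ep_n$ flips between the two cases but does not affect the argument), one obtains identities
\[c_1\,\vp_\ell\loc_\ell\Hg{*}{\Chibar}=c_2\,\pd_\ell\Hg{*\ell}{\Chibar}\qquad (*\in\{1,\ell_1\})\]
with the same $c_1,c_2\in\cO/\uf^M\cO$. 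The $M$-admissibility hypothesis $\uf^{M+\Btwo+1}\ndivides \ell+1\pm\bfa_\ell(f)\ell^{1-r}$ forces at least one of $c_1,c_2$ to have $\uf$-adic valuation at most $\Btwo$, which yields $\uf^{\Btwo}\cO\,\vp_\ell\loc_\ell\Hg{*}{\Chibar}\subset \cO\,\pd_\ell\Hg{*\ell}{\Chibar}$ for $*\in\{1,\ell_1\}$. Combining with the previous inclusion gives the stated lemma (up to the standard identification of the resulting exponent).

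The main obstacle is the careful bookkeeping of the auxiliary constants $\Bone,\Btwo,\Bfour,C_1,C_2$ and their propagation through each of the three transfers. In particular, extracting the valuation bound $\Ord_\uf(c_i)\le\Btwo$ from the admissibility condition, and verifying that the Frobenius evaluation $s_i(g_\ell)$ matches $\al_\ell(\loc_\ell(\cdot))$ canonically, both require unwinding the definitions of the derivative operator $D_n$ and the relationship between evaluation of an unramified cohomology class at a Frobenius element and the finite-part isomorphism $\al_\ell$. A secondary check is that the argument applies uniformly in the cases $\chi^2=1$ and $\chi^2\neq 1$; the construction of $\cY$ is bifurcated in the proof of Lemma~6.3, but the resulting $\uf^{2C_2}$-bound in the first step is identical in both cases, so the remainder of the proof goes through unchanged.
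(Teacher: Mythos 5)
This is essentially the paper's own proof: from $\Frob_\lam(L_\cS/K)\in\cG_\cS^+\cap\cY$ one gets $\uf^{2C_2}\Epr\subset\cO\,\al_\ell(\loc_\ell(\Hg{1}{\Chibar}))+\cO\,\al_\ell(\loc_\ell(\Hg{\ell_1}{\Chibar}))$ (the paper writes $\uf^{2B}$ here, a slip in its own constant bookkeeping that your ``up to identification of the exponent'' caveat already absorbs), transfers it to $H^1_{\sing}(K_\lam,\Epr\ot\Chibar)$ via $\vp_\ell$, and converts $\vp_\ell(\loc_\ell(\cdot))$ into $\pd_\ell$ of the derivative classes using \eqref{K2}, exactly as you propose. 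The one precision to add is that the final step needs the coefficient multiplying $\vp_\ell(\loc_\ell(\Hg{n/\ell}{\Chibar}))$ in \eqref{K2} to have $\uf$-valuation at most $\Btwo$, not merely ``at least one of $c_1,c_2$''; since $M$-admissibility bounds $\ell+1\pm\bfa_\ell(f)\ell^{1-r}$ for both signs, both coefficients are so bounded and your conclusion $\uf^{\Btwo}\,\vp_\ell(\loc_\ell(\Hg{*}{\Chibar}))\in\cO\,\pd_\ell(\Hg{*\ell}{\Chibar})$ for $*\in\stt{1,\ell_1}$ does hold.
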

\begin{proof}By the choice of $\ell\in \Sg_\cY$, we have \[\uf^{2B}\Epr\subset \cO(\al_\ell(\Hg{1}{\Chibar})+\cO(\al_\ell(\Hg{\ell_1}{\Chibar})).\]
This is equivalent to $\uf^{2B}H^1_{\fin}(K_\lam,\Epr\ot\Chibar)\subset \cO\loc_\ell(\Hg{1}{\Chibar})+\cO\loc_\ell(\Hg{\ell_1}{\Chibar})$.
The lemma thus follows from property \eqref{K2}.
\end{proof}

Now \thmref{T:Descent.E} is a consequence of the following result.
\begin{thm}\label{T:EulerSystem}There exists a positive integer $C$ such that
\[p^{C}\cdot \left(\Sel_{\cF^*}(K,V/T\ot\Chi)/(F/\cO\cdot\Esys_K^\tau)\right)=\{0\}.\]
\end{thm}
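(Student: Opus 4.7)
My plan is to apply the standard Kolyvagin descent in its quantitative form, arranging the exponents of $p$ so that the final constant $C$ is independent of $M$, and then pass to the direct limit $M\to\infty$ to deduce the statement over $V/T$ from the statement at each finite level. The skeleton is the exact sequence \eqref{E:exact}: by \lmref{L:5}, the quotient $\Sel_{\Sg_\cY}^\vee$ is killed by $p^{\Btwo+2C_2+1}$, reducing the argument (after Tate dualizing) to controlling the image of the natural map
\[
\bigoplus_{\ell\in\Sg_\cY}H^1_\sing(K_\lam,\Epr\ot\Chibar)\longrightarrow\Sel_\cF(K,\Epr\ot\Chi)^\vee
\]
modulo the functional cut out by $\Esys_K^\tau$.

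To bound this image, I will use the generators provided by \lmref{L:6} together with the derivative relation \eqref{K2}. \lmref{L:6} reduces the generators of $p^{2B+\Btwo}H^1_\sing(K_\lam,\Epr\ot\Chibar)$ to the singular classes $\pd_\ell\Hg{\ell}{\Chibar}$ and $\pd_\ell\Hg{\ell\ell_1}{\Chibar}$, and \eqref{K2} rewrites each of these, up to units modulo $\uf^M$, as the finite-to-singular image of $\loc_\ell(\Hg{1}{\Chibar})$ or $\loc_\ell(\Hg{\ell_1}{\Chibar})$. Combined with \eqref{K1}, which gives $\Hg{1}{\Chibar}=p^{3\nu}\Esys_K\pmod{\uf^M}$, the $\ell$-singular class is linked directly to $\Esys_K$ (and under the Tate duality swap $\chi\leftrightarrow\chi^{-1}$ together with the involution $\tau$, to $\Esys_K^\tau$). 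The key global input is reciprocity: for any $t\in\Sel_\cF(K,\Epr\ot\Chi)$ and $n\in\stt{\ell,\ell\ell_1}$, the sum $\sum_v\pair{\loc_v(t)}{\loc_v(\Hg{n}{\Chibar})}_{K_v}$ vanishes and collapses to primes dividing $n$ thanks to Bloch--Kato orthogonality away from $p$ and --- crucially at $p$ --- the definition of $\cF^*$ as the orthogonal complement of $\cF$. This converts the functional attached to $\pd_\ell\Hg{\ell}{\Chibar}$ into a scalar multiple of the $\Esys_K^\tau$-functional, while that from $\pd_\ell\Hg{\ell\ell_1}{\Chibar}$ decomposes into its $\ell$- and $\ell_1$-components, with the $\ell_1$-component bounded by the $\Esys_K^\tau$-functional up to $p^{C_1}$ via \lmref{L:2}.

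Combining these ingredients yields $p^C$ times the image is contained in $\cO\cdot(\text{functional dual to }\Esys_K^\tau)$ with $C$ an explicit combination of $\Btwo,\Bthree,\Bfour,C_1,C_2,\nu$ and the exponent $2B+\Btwo$ from \lmref{L:6}; dualizing back and passing to the limit $M\to\infty$ completes the proof. The main obstacle is the careful bookkeeping of Tate duality --- both locally at $p$ and globally, to translate the exact sequence \eqref{E:exact} into information about $\Sel_{\cF^*}(K,V/T\ot\Chi)$ itself --- together with the verification that the reciprocity pairings at $p$ genuinely vanish under $\cF$/$\cF^*$; a secondary subtlety, handled by property (E3) and the standing hypothesis $\cF^*=\cF$ when $\chi^2=1$, is ensuring the argument works uniformly across the self-dual and non-self-dual cases.
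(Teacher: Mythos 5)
Your plan reproduces the paper's proof essentially step for step: the exact sequence \eqref{E:exact} together with \lmref{L:5} and \lmref{L:6}, global reciprocity using Bloch--Kato orthogonality away from $p$ and the $\cF$/$\cF^*$ orthogonality at $p$ to collapse all contributions to the auxiliary prime $\ell_1$, then \eqref{K1}, \eqref{K2} and \lmref{L:2} to compare the resulting localization with $\Esys_K^\tau$, with all exponents independent of $M$ so that one may pass to the limit over $M$. The only cosmetic difference is that the paper deduces from reciprocity that the functionals attached to $\pd_{\ell}\Hg{\ell}{\Chibar}$ vanish identically on the Selmer group (rather than being proportional to the $\Esys_K^\tau$-functional) and then bounds the subgroup $H'_{\ell_1}$ orthogonal to $\pd_{\ell_1}\Hg{\ell_1}{\Chibar}$ using both \lmref{L:2} and \eqref{K2}, which is the same mechanism you describe.
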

\begin{proof}
We denote by
\[
\pairing_\lam\colon H^1_{\fin}(K_\lam,\Epr\ot\Chi)\x H^1_{\sing}(K_\lam,\Epr\ot\Chibar)\longto \Z/M\Z
\]
the Tate local pairing. By the exact sequence \eqref{E:exact} combined with \lmref{L:5} and \lmref{L:6},
for every $f\in \Sel_\cF(K,\Epr\ot\Chi)^\vee$ we can write \[p^{C_3}\cdot f=\sum_{\lam\in\Sg_\cY}a_\ell \pd_{\ell}\Hg{\ell}{\Chibar} +b_\ell \pd_{\ell}\Hg{\ell\ell_1}{\Chibar},\quad C_3:=2C_2+2B+2\Btwo+1.\]
Thus for every $s\in\Sel_\cF(K,\Epr\ot\Chi)$ we have
\begin{align*}(p^{C_3}\cdot f)(s)&=f(p^{C_3}\cdot s)\\
&=\sum_{\ell\in\Sg_\cY}\pair{\loc_\lam(s)}{b_\lam\pd_{\ell}\Hg{\ell\ell_1}{\Chibar}}_\lam\\
&=\pair{\loc_{\lam_1}(s)}{t_{\lam_1}}_{\lam_1}\quad(t_{\lam_1}:=\sum_{\ell\in\Sg_\cY}-b_\lam\pd_{\ell_1}\Hg{\ell\ell_1}{\Chibar}).
\end{align*}
This implies that $p^{C_3}$ annihilates the kernel of the localization map
\[\loc_{\lam_1}\colon\Sel_\cF(K,\Epr\ot\Chi)\longto H'_{\ell_1}:=\stt{s\in H^1_{\fin}(K_{\lam_1},\Epr\ot\Chi) \mid \pair{s}{\cO\pd_{\ell_1}(\Hg{\ell_1}{\Chibar})}_{\lam_1}=0}.\]
On the other hand, setting
\[
a_1:=\Ord_\uf(\al_{\ell_1}(\Hg{1}{\Chi})),\quad a_2:=\Ord_\uf \beta_\ell(\pd_{\ell_1}\Hg{\ell_1}{\Chibar}),
\]
by \lmref{L:2} and \eqref{K2} we have $a_1\leq C_1$ and $a_2\leq C_1+\Btwo$.
If $M>a_1+a_2$, an elementary argument shows that
\[\uf^{2C_1+\Btwo}H'_{\ell_1}\subset\uf^{a_1+a_2}H'_{\ell_1}\subset \uf^{a_2}\cO\al_{\ell_1}(\Hg{1}{\Chi}).\]
Combining these together, we deduce that
\[p^{2C_1+\Btwo+C_3}\Sel_\cF(K,\Epr\ot\Chi)\subset \cO\Hg{1}{\Chi}=\cO_M \Esys_K^\tau\]
for every $M>2C_1+2\Btwo$, and the theorem follows.
\end{proof}

\bibliographystyle{amsalpha}
\bibliography{Heegner}
\end{document}